\theoremstyle{plain}
\newtheorem{theorem}{Theorem}[section]
\newtheorem{corollary}[theorem]{Corollary}
\newtheorem{lemma}[theorem]{Lemma}
\newtheorem{proposition}[theorem]{Proposition}
\newtheorem{definition}[theorem]{Definition}
\newtheorem*{definition*}{Definition}
\theoremstyle{remark}
\newtheorem{remark}[theorem]{Remark}
\newtheorem{example}[theorem]{Example}
\newtheorem*{claim*}{Claim}
\newtheorem*{remark*}{Remark}
\newtheorem*{example*}{Example}
\newtheorem*{notation*}{Notation}
\def\N{{\mathbb N}}
\def\R{{\mathbb R}}
\newcommand{\LL}{\mathrm{L}}
\newcommand{\RR}{\mathrm{R}}
\newcommand{\HH}{\mathrm{H}}
\newcommand{\Hess}{\mathrm{Hess}}
\newcommand{\Ent}{\mathrm{Ent}}
\newcommand{\Dom}{\mathit{Dom}}
\newcommand{\A}{\mathcal{A}}
\newcommand{\Ric}{\mathrm{Ric}}
\newcommand{\Cpl}{\mathrm{Cpl}}
\newcommand{\Pz}{\mathcal{P}}
\begin{document}

\title{Super-Ricci Flows for Metric Measure Spaces}

\author{ Karl-Theodor Sturm}

\thanks{
\begin{minipage}[l]{0.17\textwidth}
\includegraphics[width=2.4cm]{%prodcoupling.pdf
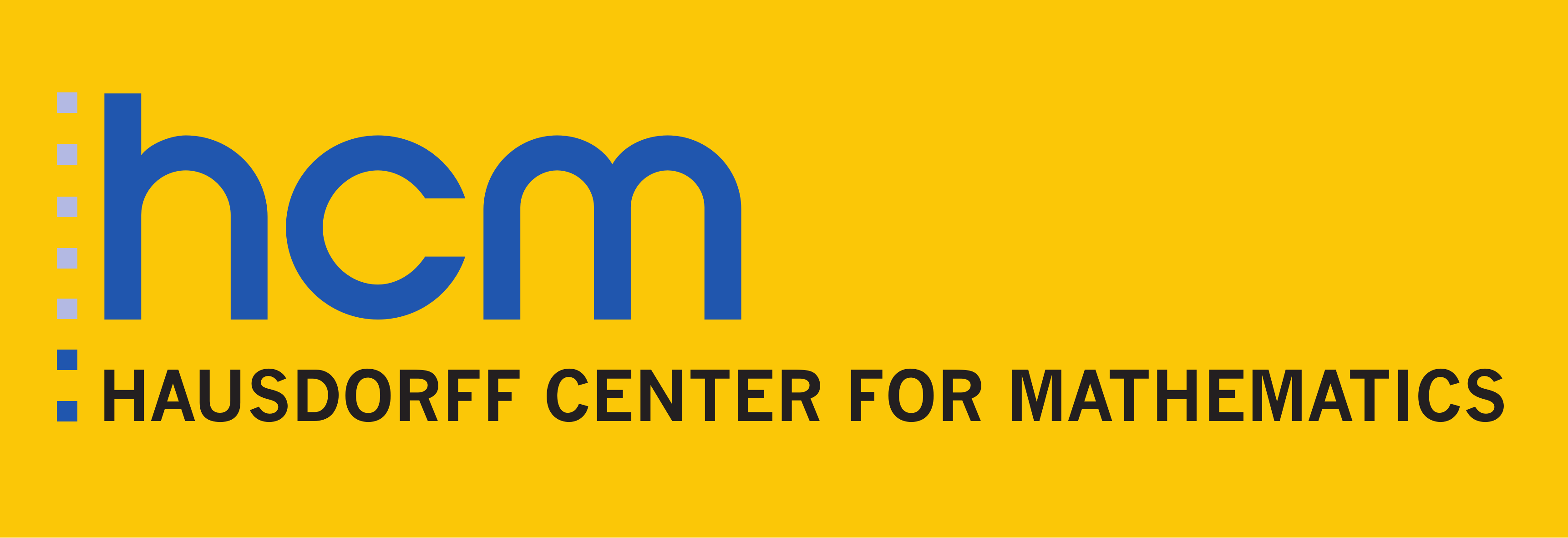}
\end{minipage}
\begin{minipage}[l]{0.65\textwidth}
The author gratefully acknowledges  support by the German Research Foundation through the Hausdorff Center for Mathematics and the Collaborative Research Center 1060 ``The Mathematics of Emergent Effects''.
\end{minipage}
\begin{minipage}[l]{0.14\textwidth}
\includegraphics[width=2.4cm]{%prodcoupling.pdf
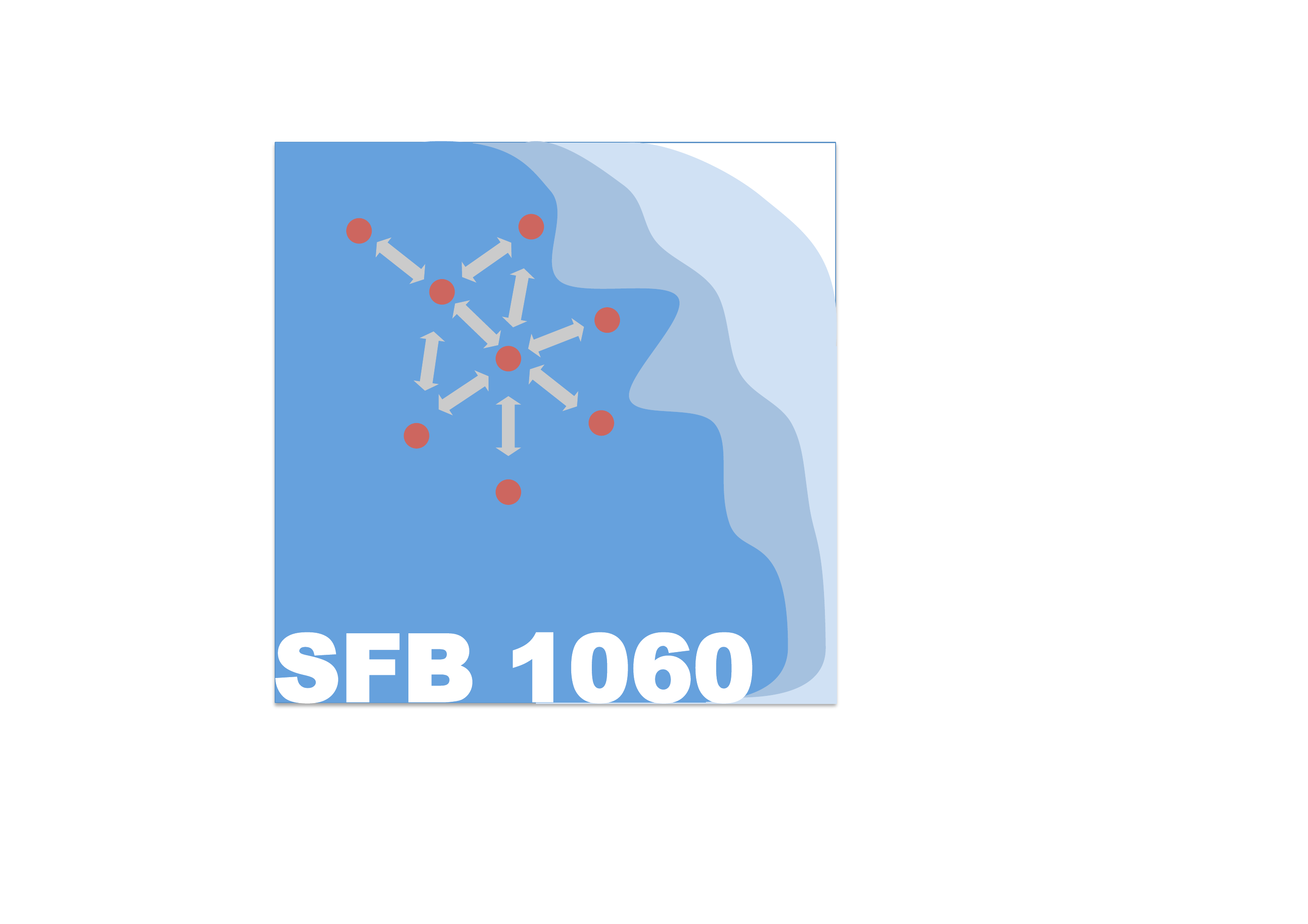}
\end{minipage}
}

\maketitle
\begin{abstract}
We introduce the notions of `super-Ricci flows' and `Ricci flows' for time-dependent families of metric measure spaces $(X,d_t,m_t)_{t\in I}$. 
The former property is proven to be stable under suitable space-time versions of mGH-convergence. Uniformly bounded families of super-Ricci flows are compact.
In the spirit of the synthetic lower Ricci bounds of Lott-Sturm-Villani for static metric measure spaces, the defining property for super-Ricci flows is the `dynamic convexity' of the Boltzmann entropy $\Ent(.|m_t)$ regarded as a functions on the time-dependent geodesic space $(\Pz(X),W_t)_{t\in I}$. For Ricci flows, in addition a nearly dynamic concavity of the Boltzmann entropy is requested.

Alternatively, super-Ricci flows will be studied in the framework of the $\Gamma$-calculus of Bakry-\'Emery-Ledoux and equivalence to gradient estimates will be derived.

For both notions of super-Ricci flows, also enforced versions involving an `upper dimension bound' $N$ will be presented.
\end{abstract}

\setcounter{tocdepth}{1}
\tableofcontents

%\noindent
\section*{Introduction}

\noindent
{\bf A.\ }
Given a manifold $M$ and a smooth 1-parameter family $(g_t)_{t\in I}$ of Riemannian tensors on $M$, we say that the `time-dependent Riemannian manifold' $(M,g_t)_{t\in I}$ evolves as a \emph{Ricci flow} if 
$\Ric_{g_t}=-\frac12\partial_t g_t$
for all $t\in I$. It is called \emph{super-Ricci flow} if instead only $$\Ric_{g_t}\ge-\frac12\partial_t g_t$$ holds true (regarded as inequalities between quadratic forms on the tangent bundle of $(M,g_t)$).
Due to the groundbreaking work of Perelman \cite{Pe1,Pe2,Pe3}, see also \cite{CZ,KL,MoTian}, Ricci flow is known to be  a powerful tool and inspiring source for many questions and new developments.

The objective of this paper are new characterizations in terms of optimal transport for both, Ricci flows and super-Ricci flows. More precisely, super-Ricci flows will be characterized by the \emph{dynamic convexity} of the Boltzmann entropy
$S: (t,\mu)\mapsto \Ent(\mu|\mathrm{vol}_t)$
regarded as a function on the time-dependent geodesic space $(\Pz(M),W_t)_{t\in I}$. This is an innovative concept of convexity in space-time settings. For instance, a smooth function $V$ on $I\times M$ is dynamically convex iff
$$\Hess_{g_t}\, V_t \ge -\frac12\partial_t g_t$$
for all $t\in I$. This equivalently can be re-formulated in terms of metric quantities as follows: 
\begin{equation}\partial_\tau V_t(\gamma^\tau)\Big|_{\tau=1}-\partial_\tau V_t(\gamma^\tau)\Big|_{\tau=0}\ge -\frac1{2}\,\big(\partial_td_t^2\big)(\gamma^0,\gamma^1)
%\cdot |\dot\gamma|_t
\end{equation}
for all $t\in I$ and all  $g_t$-geodesic $(\gamma^\tau)_{\tau\in[0,1]}$ in $M$.
Here $d_t$ denotes the Riemannian distance induced by $g_t$ and $\partial_td_t$ its variation in time. 
%$|\dot\gamma|_t=d_t(x^0,x^1)$ is the speed of $\gamma$ w.r.t.\ $g_t$ and
 $\partial_\tau V_t(\gamma^\tau)$ is the slope of  $V_t$ along $\gamma$. Note that we strictly separate space and time derivatives: time derivatives are considered for fixed pairs of points in space and spatial differentiation is performed  along geodesics in fixed time-slices.

Recall that in the time-independent case, a function $V$ on $(M,g)$ is $K$-convex iff $\Hess_gV\ge K\cdot g$ or equivalently iff
%\begin{equation*}
$\partial_\tau V(\gamma^\tau)\big|_{\tau=1}-\partial_\tau V(\gamma^\tau)\big|_{\tau=0}\ge  K\cdot d^2(\gamma^0,\gamma^1)$
%\cdot |\dot\gamma|_t
%\end{equation*}
for all  geodesics $(\gamma^\tau)_{\tau\in[0,1]}$ in $M$.
As for the classical concept of $K$-convexity, the new concept of dynamic convexity easily extends to the setting of time-dependent geodesic spaces.

The innovative notion of dynamic convexity will allow us to study and analyze super-Ricci flows in great generality.

 Even more, in combination with a related concept of nearly dynamic concavity it allows to give a precise meaning of Ricci flows in a quite general framework.
 
 \bigskip
 
\noindent {\bf B. \ }
One of the main contributions of this paper is to present synthetic notions of super-Ricci flows and Ricci flows for time-dependent metric measure spaces 
$(X,d_t,m_t)_{t\in I}$. Here and in the sequel, $X$ is a Polish space equipped with
\begin{itemize}
\item a 1-parameter family of geodesic metrics $d_t$ which generate the topology of $X$
\item and a 1-parameter family of  Borel measures $m_t$, absolutely continuous w.r.t.\ each other, say $m_t=e^{-f_t}m$ for some probability measure $m$ and suitable `weight' functions $f_t$.
\end{itemize}
To simplify the presentation here, let us assume that all the measures $m_t$ are finite, that all the metrics $d_t$ are bounded and  that   $t\mapsto d_t(x,y)$ is  absolutely continuous uniformly in $x$ and $y$.

The basic quantity for the subsequent considerations will be the time-dependent Boltzmann entropy
$$S: (t,\mu)\mapsto \Ent(\mu|m_t)$$
regarded as a function on the time-dependent geodesic space $(\Pz(X),W_t)_{t\in I}$ where for each $t$ under consideration $W_t$ denotes the $L^2$-Wasserstein distance on $\Pz(X)$, the space of probability measures on $X$, induced by the metric $d_t$.

\begin{definition}
 We say that the time-dependent mm-space $\big(X,d_t,m_t\big)_{t\in I}$  is a \emph{super-Ricci flow} if the Boltzmann entropy $S$ is  \emph{strongly dynamical convex} in the following sense:
for   a.e.\ $t\in I$ and every $W_t$-geodesic $(\mu^\tau)_{\tau\in[0,1]}$ with  finite entropy at the endpoints, the function
$\tau\mapsto S_t(\mu^\tau)$ is absolutely continuous on $[0,1]$ and
\begin{eqnarray}\label{intro-dyn-conv-W}
\partial^+_\tau S_t(\mu^{\tau})\Big|_{\tau=1-}-\partial^-_\tau S_t(\mu^{\tau})\Big|_{\tau=0+}
&\ge&- \frac 12\partial_t^- W_{t-}^2(\mu^0,\mu^1).
\end{eqnarray}
Here 
$\partial_t^- W_{t-}^2(\mu^0,\mu^1)=\liminf\limits_{s\nearrow t}\frac1{t-s}\Big( W_t^2(\mu^0,\mu^1)- W_s^2(\mu^0,\mu^1)\big)$ and \\
$\partial^+_\tau S_t(\mu^{\tau})\big|_{\tau=1-}=\limsup\limits_{\tau\nearrow1}\frac1{\sigma-\tau}\big(
S_t(\mu^{\sigma})-S_t(\mu^{\tau})
\big), \quad \partial^-_\tau S_t(\mu^{\tau})\big|_{\tau=0+}=\liminf\limits_{\tau\searrow0}\frac1{\tau-\rho}\big(
S_t(\mu^{\tau})-S_t(\mu^{\rho})
\big)$.
\end{definition}

Among the many examples to which this concept applies, there are two prominent classes which are well studied:
\begin{itemize}
\item time-dependent weighted Riemannian manifolds
\item static metric measure spaces.
\end{itemize}
A time-dependent weighted Riemannian manifold is a family 
$\big(M,g_t,\tilde f_t\big)_{t\in I}$ where $M$ is a manifold equipped with a 1-parameter family 
$(g_t)_{t\in I}$ of smooth Riemannian metric tensors and a family of measurable functions $\tilde f: M\to \R$ (the `weights'). It induces canonically a 
time-dependent mm-space $\big(X,d_t,m_t\big)_{t\in I}$ with $X=M$, 
$d_t$ being the Riemannian distance associated with $g_t$, and $dm_t=e^{-\tilde f_t}d\mathrm{vol}_{g_t}$.

\begin{theorem}
The time-dependent mm-space $\big(X,d_t,m_t\big)_{t\in I}$ 
induced by a time-dependent weighted Riemannian manifold  $\big(M,g_t,\tilde f_t\big)_{t\in I}$
is a super-Ricci flow if and only if for all $t\in I$
\begin{equation}%\label{riem-srf}
\Ric_{g_t}+\Hess_{g_t}\tilde f_t\ge -\frac12 \partial_t g_t.
\end{equation}
%in the sense of inequalities between quadratic forms on the tangent bundle of $(M,g_t)$.
\end{theorem}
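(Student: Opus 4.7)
The plan is to reduce the strong dynamic convexity of the Boltzmann entropy to a fibrewise displacement-convexity statement of classical Otto--Villani--McCann type, combined with an envelope identification of the right-hand side as a first variation of the Wasserstein distance.

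For the ``if'' direction, I would fix $t\in I$ and a $W_t$-geodesic $(\mu^\tau)_{\tau\in[0,1]}$ with finite entropy at the endpoints, and view it as a displacement interpolation on the weighted Riemannian manifold $(M,g_t,e^{-\tilde f_t}\mathrm{vol}_{g_t})$. McCann's formula then yields
\begin{equation*}
\frac{d^2}{d\tau^2}\Ent(\mu^\tau|m_t)\ \ge\ \int_M \bigl(\Ric_{g_t}+\Hess_{g_t}\tilde f_t\bigr)(\nabla_t\phi_\tau,\nabla_t\phi_\tau)\,d\mu^\tau,
\end{equation*}
where $\phi_\tau$ is the Hamilton--Jacobi-evolved Kantorovich potential at intermediate time $\tau$ and $\nabla_t\phi_\tau$ is the $g_t$-geodesic velocity field along the transport. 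Invoking the curvature hypothesis $\Ric_{g_t}+\Hess_{g_t}\tilde f_t\ge -\tfrac12\partial_t g_t$ and integrating in $\tau\in[0,1]$ converts this pointwise second-derivative estimate into
\begin{equation*}
\partial^+_\tau S_t(\mu^\tau)\big|_{\tau=1-}-\partial^-_\tau S_t(\mu^\tau)\big|_{\tau=0+}\ \ge\ -\tfrac12\int_0^1\!\!\int_M \partial_t g_t(\nabla_t\phi_\tau,\nabla_t\phi_\tau)\,d\mu^\tau\,d\tau.
\end{equation*}

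The next task is to recognise the right-hand side as $-\tfrac12\partial_t^- W_{t-}^2(\mu^0,\mu^1)$. Disintegrating $\mu^\tau$ over the associated dynamic plan $\Pi$ on $g_t$-geodesics and using the classical first-variation formula $\partial_t d_t^2(\gamma^0,\gamma^1)=\int_0^1 \partial_t g_t(\dot\gamma_\tau,\dot\gamma_\tau)\,d\tau$ along each geodesic $\gamma$, the iterated integral above coincides with $\int_{M\times M}\partial_t d_t^2(x,y)\,d\pi_t(x,y)$, where $\pi_t=(\mathrm{ev}_0,\mathrm{ev}_1)_*\Pi$ is the $W_t$-optimal coupling. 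An envelope argument---testing $\pi_t$ as a competitor in the variational definition of $W_s^2$ for $s<t$ and applying Fatou's lemma to the ensuing difference quotient---then gives $\partial_t^- W_{t-}^2(\mu^0,\mu^1)\ge \int \partial_t^- d_{t-}^2\,d\pi_t$; chaining with the previous display produces the required inequality.

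For the ``only if'' direction I would argue in the spirit of von Renesse--Sturm: given $t_0\in I$, $x_0\in M$ and $v\in T_{x_0}M$, construct a family of $W_{t_0}$-geodesics $(\mu^\tau_\epsilon)_{\tau\in[0,1]}$ between smooth densities concentrating, as $\epsilon\downarrow 0$, onto $x_0$ and $\exp_{x_0}^{t_0}(v)$ with small transverse spread, so that the associated dynamic plan is supported on $g_{t_0}$-geodesics of initial velocity approximately $v$. Plugging this family into the strong dynamic convexity inequality and carrying out a second-order expansion in $\epsilon$ isolates on the left $(\Ric_{g_{t_0}}+\Hess_{g_{t_0}}\tilde f_{t_0})(v,v)$ and on the right $-\tfrac12\partial_t g_{t_0}(v,v)$ (each weighted by the same normalisation); dividing out and letting $\epsilon\downarrow 0$ yields the pointwise bound.

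The principal technical obstacle will be the careful treatment of the one-sided time-derivatives: verifying that the Fatou step in the envelope bound is legitimate (which relies on the standing uniform-absolute-continuity hypothesis on $t\mapsto d_t(x,y)$), matching the $\liminf$ conventions on both sides of the dynamic convexity inequality, and, for the converse, arranging the approximating geodesic families so that the leading-order expansions of the entropy and of $W_{t_0}^2$ are clean while the lower-order error terms vanish uniformly in the limit.
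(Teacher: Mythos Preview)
Your proposal is correct and follows essentially the same route as the paper. For the ``if'' direction the paper also uses the Otto--Villani--McCann/Cordero-Erausquin--McCann--Schmuckenschl\"ager lower bound on $\partial_\tau^2 S_t(\mu^\tau)$ in terms of $\Ric_{g_t}+\Hess_{g_t}\tilde f_t$, integrates in $\tau$, and then applies exactly your envelope argument (phrased via the transport maps $F^\tau$ rather than the dynamic plan: $W_t^2=\int_0^1\int_X g_t(\dot F^\tau,\dot F^\tau)\,d\mu^0\,d\tau$ while $W_s^2\le\int_0^1\int_X g_s(\dot F^\tau,\dot F^\tau)\,d\mu^0\,d\tau$ for $s\neq t$); for the ``only if'' direction the paper argues by contradiction via the same von Renesse--Sturm localisation you invoke, citing \cite{St0}, Thm.~5.2.
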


The other important class consists of static spaces.
\begin{example}
A family $\big(X,d_t,m_t\big)_{t\in I}$ of mm-spaces with time-independent metrics $d_t=d$ and time-independent measures $m_t=m$ is a super-Ricci flow if and only if 
the static space $(X,d,m)$ has nonnegative Ricci curvature in the (`strong') sense of Lott-Sturm-Villani. 
%(that is, in more technical terms, if and only if it satisfies the curvature-dimension condition CD$(0,\infty)$).
\end{example}

\bigskip

\noindent {\bf C.\ }
Let us recall that more than a decade ago, the author \cite{St1,St2} -- and independently Lott \& Villani \cite{LV1} --  introduced a
 synthetic definition of generalized lower bounds for the Ricci curvature of mm-spaces. These definitions of lower Ricci bounds initiated a wave of investigations on analysis and geometry of mm-spaces and led to many new insights and results. Until now, however, no similar concept for upper bounds on the Ricci curvature was available. Surprisingly enough, such a definition can be given easily and very much in the same spirit as for the lower bounds. It will be briefly introduced in Appendix 1 (Chapter 4) of this paper.
The extension to  the dynamic setting will lead to a synthetic notion of sub-Ricci flows and thus also of Ricci flows.

\begin{definition}
A time-dependent mm-space $\big(X,d_t,m_t\big)_{t\in I}$ is called \emph{weak sub-Ricci flow} if for every $\epsilon>0$ there exists a partition $X=\bigcup_i X_i$ such that for all $i$, for any pair of nonempty open sets $U^0,U^1\subset X_i$ and for a.e.\ $t$    there exists
  a $W_t$-geodesic $(\mu^\tau)_{\tau\in[0,1]}$ 
  with 
  $\tau\mapsto S_t(\mu^\tau)$ being (finite and) absolutely continuous on $(0,1)$, 
  $\mathrm{supp}(\mu^\tau)\subset U^\tau$ for $\tau=0,1$ and 
\begin{eqnarray}\label{intro-sub-ricci-flow}
\partial_\tau^+ S_t(\mu^{\tau})\Big|_{\tau=\sigma-}-\partial^-_\tau S_t(\mu^{\tau})\Big|_{\tau=\rho+}
\le- \frac 1{2(\sigma-\rho)}\partial_t^+ W_{t+}^2(\mu^0,\mu^1)
+\epsilon
\end{eqnarray}
for all $0<\rho<\sigma<1$.

A time-dependent mm-space $\big(X,d_t,m_t\big)_{t\in I}$ is called \emph{weak Ricci flow} if it is a super-Ricci flow and and a weak sub-Ricci flow.
\end{definition}

\begin{theorem}%\label{thm-riem-sub-rf}
The mm-space $\big(X,d_t,m_t\big)_{t\in I}$ induced by a time-dependent weighted Riemannian manifold
$\big(M,g_t,\tilde f_t\big)_{t\in I}$  is a sub-Ricci flow if and only if for all $t\in I$
\begin{equation}\label{-introriem-sub-rf}
\Ric_{g_t}+\Hess_{g_t}\tilde f_t\le -\frac12 \partial_t g_t.
\end{equation}
\end{theorem}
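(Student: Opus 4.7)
The strategy is to show that, in the smooth Riemannian setting, both the tensor inequality \eqref{-introriem-sub-rf} and the weak sub-Ricci flow property encode the same second-order information about the Boltzmann entropy along Wasserstein geodesics, and are therefore equivalent through the standard machinery of optimal transport on smooth manifolds.

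Two classical identities drive the argument. First, the Otto-Villani / Cordero-Erausquin-McCann-Schmuckenschl\"ager / von Renesse-Sturm second-variation formula: along any $W_t$-geodesic $(\mu^\tau)$ driven by the $g_t$-geodesic flow with transport fields $\gamma^\tau$,
\begin{equation*}
\frac{d^2}{d\tau^2}\, S_t(\mu^\tau) \;=\; \int_M \bigl(\Ric_{g_t} + \Hess_{g_t}\tilde f_t\bigr)(\dot\gamma^\tau,\dot\gamma^\tau)\, d\mu^\tau.
\end{equation*}
Second, the envelope theorem applied to the Kantorovich dual (equivalently, to the Benamou-Brenier energy) yields, for absolutely continuous endpoints,
\begin{equation*}
\partial_t W_t^2(\mu^0,\mu^1) \;=\; \int_0^1\!\int_M (\partial_t g_t)(\dot\gamma^\tau,\dot\gamma^\tau)\,d\mu^\tau\, d\tau.
\end{equation*}
Integrating the first formula from $\tau=\rho$ to $\tau=\sigma$ and combining with a rescaled form of the second expresses both sides of the sub-Ricci inequality \eqref{intro-sub-ricci-flow} as weighted averages, along the same transport field, of the respective sides of \eqref{-introriem-sub-rf}.

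For the ``if'' direction, assume the tensor inequality \eqref{-introriem-sub-rf}. Given $\varepsilon>0$, cover $M=\bigcup_i X_i$ by small geodesically convex pieces lying well within the injectivity radius and on which $g_t$, $\partial_t g_t$ and $\tilde f_t$ vary by at most $\varepsilon$. For any open sets $U^0,U^1\subset X_i$ choose smooth compactly supported densities $\mu^0,\mu^1$ inside them; by Brenier-McCann they are joined by a unique smooth $W_t$-geodesic to which both identities apply, and the pointwise tensor bound propagates through the averages to deliver \eqref{intro-sub-ricci-flow}. The additive $\varepsilon$ absorbs the discrepancy between the one-sided $\partial_t^+ W_{t+}^2$ appearing in the definition and the two-sided derivative produced by the envelope identity, a discrepancy controlled by the local smoothness of $g_t$.

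For the ``only if'' direction, fix a.e.\ $t$, a point $x_0\in M$, and a unit vector $v\in T_{x_0}M$. Since the partition $X=\bigcup_i X_i$ is countable, a Lebesgue density argument forces some $X_i$ to accumulate near $x_0$ with positive $m_t$-mass in every neighbourhood, so that I may work inside a small convex ball around $x_0$ and pick $U^0,U^1$ inside that $X_i$ as shrinking balls around the endpoints of a short $g_t$-geodesic segment through $x_0$ with tangent $v$. Narrow compactness for optimal transport between concentrating marginals forces the $W_t$-geodesic supplied by the weak sub-Ricci flow definition (after mollification of the endpoints) to converge to transport along that segment. Plugging this into \eqref{intro-sub-ricci-flow} and rescaling by the squared length of the segment, the left-hand side tends to $(\Ric_{g_t}+\Hess_{g_t}\tilde f_t)(v,v)$ and the right to $-\tfrac12 (\partial_t g_t)(v,v)$; then letting the segment shrink and $\varepsilon\downarrow 0$ yields \eqref{-introriem-sub-rf} at $(x_0,v)$. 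The hardest step is precisely this last one: one must justify that the geodesics furnished by the definition, of which we control only the endpoint supports, can be forced in the limit to be \emph{the} optimal transport along the chosen segment, and hence share the same second-order asymptotics as the two envelope identities above; this is where the freedom to shrink $U^0, U^1$ inside the accumulating $X_i$, combined with uniqueness of optimal transport for smooth densities on convex charts, is crucial.
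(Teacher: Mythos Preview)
Your second-variation identity for the entropy is stated as an equality, but this is wrong and it is exactly the crux of the matter. The correct formula (see e.g.\ \cite{St0}, equation (4.16), or the proof of Theorem \ref{upper-Ric} in Appendix 1) reads
\[
\partial_\tau^2 S_t(\mu^\tau)=\int_M\Big[\mathrm{tr}\big((\mathcal U^\tau)^2\big)+\big(\Ric_{g_t}+\Hess_{g_t}\tilde f_t\big)(\dot F^\tau,\dot F^\tau)\Big]\,d\mu^0,
\]
with $\mathcal U^\tau$ built from the Hessian of the Kantorovich potential. The extra term $\mathrm{tr}((\mathcal U^\tau)^2)\ge 0$ is harmless for lower bounds (super-Ricci), but for the sub-Ricci direction it has the wrong sign: you cannot bound $\partial_\tau^2 S_t$ from above by the Ricci integral unless you control this term. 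Consequently, for \emph{arbitrary} smooth densities $\mu^0,\mu^1$ joined by Brenier--McCann transport, your ``if'' argument simply does not go through.

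The paper's remedy is a specific construction, not a generic choice of densities: pick a $g_t$-geodesic $\gamma$ from $U^0$ to $U^1$, choose a potential $\varphi$ with $\nabla\varphi|_{\gamma^0}=\dot\gamma^0$ and $\Hess\varphi|_{\gamma^0}=0$, and let $\mu^0$ be concentrated in a tiny neighbourhood of $\gamma^0$. Then $\mathcal U^0=0$, and comparison geometry keeps $\mathrm{tr}((\mathcal U^\tau)^2)\le\varepsilon|\dot F^\tau|^2$ along the short transport, which is precisely what produces the $\varepsilon$-slack in the definition. This is the missing idea in your ``if'' direction.

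For the ``only if'' direction your limiting argument is more delicate than necessary. The paper argues by contraposition: if the tensor inequality fails at some $(s,v)$, then $\Ric_{g_t}+\Hess_{g_t}\tilde f_t\ge -\tfrac12\partial_t g_t+\varepsilon g_t$ on a full neighbourhood in $I\times TM$. Since $\mathrm{tr}((\mathcal U^\tau)^2)\ge 0$, \emph{every} $W_t$-geodesic with supports in suitably small open sets near the base geodesic exhibits dynamic $\varepsilon$-convexity of the entropy, so no choice of geodesic can satisfy the near-concavity \eqref{sub-ricci-flow}. No compactness or convergence of optimal plans is needed.
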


\bigskip

 \noindent {\bf D.\ }
The concept of super-Ricci flows will allow for a huge class of examples. Occasionally, it might be desirable to focus on a more restrictive class controlled by an additional parameter $N\in [1,\infty]$ (playing the role of some upper bound on the dimension).
 
\begin{definition}%\label{super-N-mms}
A time-dependent mm-space $\big(X,d_t,m_t\big)_{t\in I}$  is called \emph{super-$N$-Ricci flow} if for
a.e.\ $t\in I$
and every $W_t$-geodesic $(\mu^\tau)_{\tau\in[0,1]}$ with  finite entropy at the endpoints, the function
$\tau\mapsto S_t(\mu^\tau)$ is absolutely continuous on $[0,1]$ and
\begin{eqnarray}
\partial^+_\tau S_t(\mu^{\tau})\Big|_{\tau=1-}-\partial^-_\tau S_t(\mu^{\tau})\Big|_{\tau=0+}
&\ge&- \frac 12\partial_t^- W_{t-}^2(\mu^0,\mu^1)
+\frac1N \big| S_t(\mu^0)-S_t(\mu^1)\big|^2.
\end{eqnarray}
\end{definition}

A super-$N$-Ricci flow is also a super-$N'$-Ricci flow for any $N'>N$. In particular, it is a super-$\infty$-Ricci flow.
Super-Ricci flow is the same as super-$N$-Ricci flow for $N=\infty$.

\begin{theorem}
The mm-space $\big(X,d_t,m_t\big)_{t\in I}$ induced by a time-dependent weighted $n$-dimensional Riemannian manifold
$\big(M,g_t,\tilde f_t\big)_{t\in I}$   is a
 super-$N$-Ricci flow if and only if $N\ge n$ and for all $t\in I$
\begin{equation}%\label{riem-N-srf}
\Ric_{g_t}+\Hess_{g_t}\tilde f_t-\frac1{N-n}\nabla_t \tilde f_t \otimes \nabla_t \tilde f_t
\ge -\frac12 \partial_t g_t.
\end{equation}
In particular for $N=n$ this requires $\tilde f_t$ to be constant. That is, $m_t=C_t\cdot \mathrm{vol}_t$ for each $t$.
\end{theorem}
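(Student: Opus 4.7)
The strategy is to combine the smooth optimal transport analysis that proved the $N=\infty$ case (the earlier theorem in part B of the introduction) with the dimensional refinement of the Bochner inequality underlying the Bakry--Émery $\Gamma_2$ calculus.

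\textbf{Forward direction.} Fix $t\in I$ and a $W_t$-geodesic $(\mu^\tau)$ between two compactly supported, smooth, positive densities. In the smooth Riemannian setting $\mu^\tau=(T^\tau)_\#\mu^0$ with $T^\tau(x)=\exp_x^{g_t}(\tau\nabla^{g_t}\varphi_t(x))$ for a (locally) $c$-concave Kantorovich potential $\varphi_t$, and with $v^\tau=\nabla^{g_t}\varphi^\tau$ the corresponding velocity field, the classical McCann--Otto--Villani identities read
\begin{equation*}
\partial_\tau S_t(\mu^\tau)=-\int(\Delta^{g_t}\varphi^\tau-\nabla\tilde f_t\cdot\nabla\varphi^\tau)\,d\mu^\tau,\qquad
\frac{d^2}{d\tau^2}S_t(\mu^\tau)=\int\bigl(|\Hess\varphi^\tau|_{HS}^2+\Ric_{f_t}(v^\tau,v^\tau)\bigr)\,d\mu^\tau,
\end{equation*}
where $\Ric_{f_t}=\Ric_{g_t}+\Hess_{g_t}\tilde f_t$. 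Abbreviating $L\varphi=\Delta\varphi-\nabla\tilde f_t\cdot\nabla\varphi$ and writing $\Delta\varphi=L\varphi+\nabla\tilde f_t\cdot\nabla\varphi$, the inequality $|\Hess\varphi|_{HS}^2\ge\tfrac1n(\Delta\varphi)^2$ together with an elementary completion-of-squares $((N-n)a+Nb)^2\ge0$ yields the pointwise \emph{dimensional} Bochner bound
\begin{equation*}
|\Hess\varphi|_{HS}^2\ \ge\ \frac1N(L\varphi)^2-\frac1{N-n}(\nabla\tilde f_t\cdot\nabla\varphi)^2
\qquad(N\ge n).
\end{equation*}
Combined with the hypothesis $\Ric_{f_t}-\tfrac1{N-n}\nabla\tilde f_t\otimes\nabla\tilde f_t\ge-\tfrac12\partial_t g_t$, this gives
\begin{equation*}
\frac{d^2}{d\tau^2}S_t(\mu^\tau)\ \ge\ \frac1N\!\int(L\varphi^\tau)^2\,d\mu^\tau-\frac12\!\int(\partial_t g_t)(v^\tau,v^\tau)\,d\mu^\tau.
\end{equation*}

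\textbf{Integration and Cauchy--Schwarz.} Integrating in $\tau\in[0,1]$ and applying the envelope identity $\partial_t W_t^2(\mu^0,\mu^1)=\int_0^1\!\!\int(\partial_t g_t)(v^\tau,v^\tau)\,d\mu^\tau\,d\tau$ (which follows by differentiating $W_t^2(\mu^0,\mu^1)=\int\!\!\int_0^1 g_t(\dot\gamma_{xy},\dot\gamma_{xy})\,d\tau\,d\pi$ with respect to $t$ and using the envelope theorem along fixed optimal geodesics), the $\partial_t g_t$-term produces exactly $-\tfrac12\partial_t W_t^2$. For the dimensional term, Cauchy--Schwarz in space gives $\int(L\varphi^\tau)^2d\mu^\tau\ge(\partial_\tau S_t(\mu^\tau))^2$, and Jensen in $\tau$ gives $\int_0^1(\partial_\tau S_t)^2d\tau\ge(S_t(\mu^1)-S_t(\mu^0))^2$. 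Putting everything together:
\begin{equation*}
\partial_\tau S_t\big|_{\tau=1-}-\partial_\tau S_t\big|_{\tau=0+}\ \ge\ -\frac12\partial_t W_t^2(\mu^0,\mu^1)+\frac1N|S_t(\mu^0)-S_t(\mu^1)|^2.
\end{equation*}
The extension from smooth, compactly supported data to the general class in the definition proceeds by the standard approximation scheme already used for the $N=\infty$ theorem.

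\textbf{Converse and the bound $N\ge n$.} To extract the pointwise inequality, I localize around an arbitrary Lebesgue point $t_0$ of the hypothesis and an arbitrary point $x_0\in M$ with prescribed tangent direction $v\in T_{x_0}M$. Choose a smooth $c$-concave potential $\varphi$ whose Hessian at $x_0$ equals $\lambda\,\mathrm{Id}+\mu\,(\nabla\tilde f_{t_0}\otimes\nabla\tilde f_{t_0})/|\nabla\tilde f_{t_0}|^2$ with $\nabla\varphi(x_0)=v$ (this is the configuration that saturates the dimensional Bochner inequality), and take $\mu^0_\epsilon,\mu^1_\epsilon$ as smooth measures of mass one supported in a ball of radius $\epsilon$ around $x_0$ and $T^1(x_0)$ respectively, with densities that concentrate to Dirac masses. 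A Taylor expansion as $\epsilon\to0$ shows that both sides of the super-$N$-Ricci flow inequality, after dividing by $\epsilon^2$, converge to the pointwise expressions, reducing the inequality to $\bigl(\Ric_{f_{t_0}}-\tfrac1{N-n}\nabla\tilde f_{t_0}\otimes\nabla\tilde f_{t_0}+\tfrac12\partial_t g_{t_0}\bigr)(v,v)\ge0$. Choosing $\tilde f$ locally constant at $x_0$ and $\Hess\varphi(x_0)$ a scalar multiple of the identity, the saturation in $|\Hess\varphi|_{HS}^2\ge\tfrac1n(\Delta\varphi)^2$ forces $N\ge n$; and in the case $N=n$ the resulting bound fails as soon as $\nabla\tilde f_{t_0}\ne 0$, so $\tilde f_t$ must then be spatially constant, i.e.\ $m_t$ is a constant multiple of $\mathrm{vol}_t$.

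\textbf{Main obstacle.} The bulk of the work is the algebraic manipulation of the dimensional Bochner identity and its bookkeeping together with the envelope formula for $\partial_t W_t^2$ --- this is routine once the $N=\infty$ case is in hand. The delicate step is the converse direction, where one must arrange that the shrinking measures $\mu^0_\epsilon,\mu^1_\epsilon$ are absolutely continuous with finite entropy whose $\tau$-derivatives pass cleanly to the limit through the one-sided $\liminf/\limsup$ appearing in the definition. As in the $N=\infty$ argument, this is handled by convolving with fixed smooth kernels and exploiting that $t_0$ can be chosen to be a Lebesgue point of both $t\mapsto\partial_t g_t$ and $t\mapsto\Ric_{f_t}$.
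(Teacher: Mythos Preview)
Your proposal is essentially correct and follows precisely the route the paper indicates: the paper does not give a detailed proof of this theorem but simply says to combine the argument for the $N=\infty$ case (Theorem~\ref{thm-riem-srf}) with the dimensional refinement from \cite{St0} and \cite{EKS}. Your forward direction does exactly this --- the optimal-transport second-derivative formula, the pointwise inequality $|\Hess\varphi|_{HS}^2\ge\tfrac1N(L\varphi)^2-\tfrac1{N-n}(\nabla\tilde f_t\cdot\nabla\varphi)^2$ obtained via the completion of squares $((N-n)a+Nb)^2\ge0$, the envelope inequality for $\partial_t^- W_{t-}^2$, and the two Cauchy--Schwarz steps are all standard [EKS]-style ingredients grafted onto the $N=\infty$ proof.

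One slip in the converse: you write ``Choosing $\tilde f$ locally constant at $x_0$'' to extract $N\ge n$. But $\tilde f$ is given data and cannot be chosen. The correct argument (which you have all the pieces for) is to fix any $x_0$, take $\nabla\varphi(x_0)=v$ and $\Hess\varphi(x_0)=\lambda\,\mathrm{Id}$, and let $|\lambda|\to\infty$. The localized inequality then reads, to leading order in $\lambda$, as $n\lambda^2\ge \tfrac{1}{N}(n\lambda-\nabla\tilde f_t(x_0)\cdot v)^2$, whose $\lambda^2$-coefficient forces $N\ge n$ independently of $\tilde f$. In the boundary case $N=n$ the $\lambda^1$-coefficient then forces $\nabla\tilde f_t(x_0)\cdot v=0$ for every $v$, hence $\tilde f_t$ is constant. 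This is the argument; just drop the phrase about choosing $\tilde f$.
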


%%%%%%%%%%%%%%%%%%%%%%%%%%

\bigskip

\noindent {\bf E. \ }
Under weak assumptions (being fulfilled in most cases of application) the defining property of super-Ricci flows allows for important simplifications and equivalent re-formulations:
\begin{itemize}
\item if the spaces $(X,d_t,m_t)$ are known to satisfy  RCD$(-\kappa_t,\infty)$-conditions then it suffices to check the dynamic convexity condition  for `sufficiently many' geodesics. More precisely, it suffices to check that for each pair of measures $\mu^0,\mu^1$ with finite entropy there exists a connecting geodesic for which  \eqref{intro-dyn-conv-W} is satisfied.
\item if the distances $t\mapsto d_t$ satisfy a certain lower exponential growth bound (with `control function' $\lambda\in L^1_{loc}(I)$) 
then \eqref{intro-dyn-conv-W} can be replaced by a  integrated version: integration w.r.t.\ $\tau$ allows to replace the spatial derivatives $\partial^\pm_\tau S_t(\mu^{\tau})$
by differences:
\begin{eqnarray}\label{intro-controlled-W}\nonumber
\frac1\tau\left(S_t(\mu^0))-S_t(\mu^\tau)\right)
&+&\frac1\tau\left(S_t(\mu^1)-S_t(\mu^{1-\tau})\right)\\
&\ge&- \frac 12\partial_t^- W_{t-}^2(\mu^0,\mu^1)- \lambda_t \tau \cdot W^2_t(\mu^0,\mu^1).
\end{eqnarray}
\item 
integration w.r.t.\ $t$ finally allows to replace the time derivative $\partial^-_tW^2_t$
by a difference. 
This %finally will
 results in a weak formulation of super-Ricci flows which is stable under convergence.
\end{itemize}
 The relevant convergence concept here is a space-time version of the 
${\mathbb D}$-convergence introduced by the author in \cite{St1} and being closely related to measured Gromov-Hausdorff convergence.
The ${\mathbb D}_I$-distance between two time-dependent mm-spaces with common time-interval $I$ will be defined as
 \begin{eqnarray}
\lefteqn{ {\mathbb D}_I\Big( (X,d_t,m_t), (\tilde X,\tilde d_t,\tilde m_t)\Big)}\nonumber\\
&=&
 \inf\Big\{
 \left(\frac1{|I|}\int_I\int_{X\times \tilde X} \hat d_t(x,y)^2\, d\hat m(x,y)\,dt\right)^{1/2}
 +
\frac1{|I|}\int_I\int_{X\times \tilde X} | f_t(x)-\tilde f_t(y)|\, d\hat m(x,y)\,dt:\nonumber\\
 && \qquad \quad\hat d_t\in \mathrm{Cpl}(d_t, \tilde d_t) \ \mbox{for a.e. }t\in I, \  \hat m\in \mathrm{Cpl}(m, \tilde m)\Big\}.
 \end{eqnarray}
 Here $m=\frac1{m_T(X)}\,m_T$ and  $\tilde m=\frac1{\tilde m_T(\tilde X)}\,\tilde m_T$ for some fixed $T\in I$ are chosen as `reference' measures such that
 $m_t=e^{-f_t}m$ and $\tilde m_t=e^{-\tilde f_t}\tilde m$  with suitable `weights' $f_t,\tilde f_t$ (for a.e.\ $t\in I$).
 
 \begin{theorem}
The class of averaged
 super-Ricci flows %(see \eqref{intro-controlled-W}) 
 with uniform controls, uniform bounds on diameter and lower Ricci curvature  is  closed w.r.t.\ ${\mathbb D}_I$-convergence.
\end{theorem}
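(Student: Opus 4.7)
The plan is to reduce everything to a \emph{derivative-free} formulation and then use soft compactness and semicontinuity properties compatible with $\mathbb{D}_I$.

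First I would invoke the integrated/averaged reformulation flagged in part E of the introduction: integrating the dynamic convexity inequality in $\tau$ converts the one-sided entropy slopes $\partial^\pm_\tau S_t(\mu^\tau)|_{0+,1-}$ into finite differences of $S_t(\mu^\tau)$ at interior points, and integrating in $t$ over a subinterval $[s,s'] \subset I$ converts $\partial_t^- W_{t-}^2(\mu^0,\mu^1)$ into $W_{s'}^2(\mu^0,\mu^1)-W_s^2(\mu^0,\mu^1)$ plus a control term involving $\lambda$. The upshot is that ``averaged super-Ricci flow'' can be characterized by an inequality whose only space-time inputs are values of $S_t$ and $W_t^2$ at explicit points, integrated against $t$ and $\tau$. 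This is the version I would work with, since it involves no derivatives in either variable.

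Next, let $(X^n, d_t^n, m_t^n)$ be a sequence of averaged super-Ricci flows with uniform controls, diameters and lower Ricci bounds, converging in $\mathbb{D}_I$ to some $(X,d_t,m_t)$. Fix $\mu^0,\mu^1 \in \Pz(X)$ with finite entropy, and choose near-optimal couplings $\hat d_t^n \in \Cpl(d_t, d_t^n)$ and $\hat m^n \in \Cpl(m,m^n)$ realizing the $\mathbb{D}_I$-distance. Using $\hat m^n$ (and the densities $e^{-f_t}, e^{-\tilde f_t^n}$ against the reference measures), I would transport $\mu^0,\mu^1$ to measures $\mu^{0,n},\mu^{1,n} \in \Pz(X^n)$ whose entropies converge to $S_t(\mu^0),S_t(\mu^1)$ (using the uniform RCD-bound and integrability of the weights) and whose $W_t^n$-distance converges to $W_t(\mu^0,\mu^1)$ for a.e.\ $t$, via the coupling $\hat d_t^n$. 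For each $n$ pick a $W_t^n$-geodesic $(\mu^{\tau,n})_\tau$ connecting the endpoints; the uniform $\mathrm{RCD}(-K,\infty)$ assumption gives uniform entropy bounds on the entire interpolation and enforces geodesic stability along $\mathbb{D}_I$-limits.

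Finally I would write the averaged super-Ricci flow inequality for $(X^n,d_t^n,m_t^n)$ along $(\mu^{\tau,n})_\tau$ and pass to the limit term by term. The Wasserstein-squared differences in $t$ and the $\tau$-integrated entropy differences are continuous along the coupling in the limits appearing on the \emph{right} side of the inequality, while the entropy differences appearing on the \emph{left} side are lower semicontinuous along $\mathbb{D}_I$; crucially, the averaged formulation arranges the terms so that the good direction of the inequality and the good direction of the semicontinuity agree, up to an error that vanishes with the coupling cost. Extracting a limit geodesic of the $(\mu^{\tau,n})$ by compactness of $\Pz(X)$ under $W_t$ (with uniform diameter) and Arzelà-Ascoli in $\tau$, I obtain an averaged super-Ricci flow inequality on the limit space.

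The main obstacle is the mismatch between lower semicontinuity of entropy and the direction in which derivative-like quantities appear. Without the integrated formulation one is forced to compare a $\limsup$ of upper slopes on the left with a $\liminf$ of time-derivatives on the right, which cannot be controlled directly. Replacing derivatives by differences and then using the uniform lower Ricci bound to control the entropy along approximating geodesics is exactly what makes the argument work; verifying the required uniform integrability of the weights $f_t^n$ and the stability of $W_t^n \to W_t$ uniformly (in $t$ after integration) under the space-time couplings is the technical heart of the proof.
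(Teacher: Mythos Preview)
Your outline is correct and matches the paper's argument closely: transport the endpoints to $X^n$ via the disintegration kernels of the near-optimal coupling $\hat m^n$, apply the averaged super-Ricci inequality there along a $W_t^n$-geodesic, transport the whole geodesic back to $X$, extract a limiting $W_t$-geodesic by compactness, and pass to the limit using lower semicontinuity of entropy at interior points together with genuine convergence at the endpoints and of the Wasserstein terms. One attribution worth sharpening: entropy convergence at the endpoints does not come from the RCD bound but from the fact that Markov kernels decrease relative entropy (yielding the sandwich $S_\diamond(\mu^b)\ge S^n_\diamond(\mu^{n,b})\ge S_\diamond(\tilde\mu^{n,b})\to S_\diamond(\mu^b)$ by lower semicontinuity), while the uniform $\mathrm{CD}(-K,\infty)$ bound enters through Rajala's theorem to select interpolating geodesics on $X^n$ with \emph{uniformly bounded densities}, which is precisely what makes the weight contribution $\int f_t\,d\mu$ controllable along the whole interpolation under the coupling.
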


As in the static case, such a stability result together with some additional growth bounds will lead to a compactness result.

 \begin{theorem}
 The class of  averaged super-Ricci flows with uniform controls, uniform bounds on diameter, curvature-dimension and uniform modulus of continuity for $d$ and $f$ is compact w.r.t.\ ${\mathbb D}_I$-convergence.
\end{theorem}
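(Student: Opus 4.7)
The plan is to combine the stability theorem stated just above with a precompactness argument of Gromov--Hausdorff type, adapted to the time-dependent setting. Fix a sequence $(X_n,d_t^n,m_t^n)_{t\in I}$ of averaged super-Ricci flows satisfying the uniform hypotheses, and fix a reference time $T\in I$. The static mm-spaces $(X_n,d_T^n,m_T^n)$ have uniformly bounded diameter and satisfy a uniform $\CD(K,N)$-condition (the hypothesis on uniform curvature-dimension bounds). By the classical static compactness theorem for $\CD(K,N)$-spaces from \cite{St1}, a subsequence $\mathbb{D}$-converges to some limit $(X_\infty,d_T^\infty,m_\infty)$, realized on a joint Polish space $Y$ by couplings $\hat d_T^n\in\Cpl(d_T^n,d_T^\infty)$ and a single measure coupling $\hat m^n\in\Cpl(m^n,m^\infty)$ whose squared-cost tends to $0$.

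Next, upgrade the fiber-wise convergence at $T$ to convergence of the entire $1$-parameter family. The uniform modulus of continuity provides, uniformly in $n$ and in $x,y\in X_n$,
$$|d_t^n(x,y)-d_T^n(x,y)|\le \omega(|t-T|),\qquad |f_t^n(x)-f_T^n(x)|\le \omega(|t-T|),$$
and similarly for the limit if we define $d_t^\infty,f_t^\infty$ as pointwise cluster values of $d_t^n,f_t^n$ under the couplings $\hat d_T^n,\hat m^n$. A diagonal extraction over a countable dense set of times, together with this equicontinuity in $t$, yields a further subsequence and a candidate limit family $(d_t^\infty,f_t^\infty)_{t\in I}$ such that for every $t\in I$ the couplings $\hat d_T^n$ serve (up to $\omega(|t-T|)$-error) as couplings $\hat d_t^n\in \Cpl(d_t^n,d_t^\infty)$, with the corresponding $L^2$-cost tending to $0$, and $\int|f_t^n-f_t^\infty|\,d\hat m^n\to 0$.

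Dominated convergence in the $t$-variable, using the uniform modulus $\omega$ together with the uniform diameter bound as majorants, then shows that the full $\mathbb{D}_I$-defining integrals tend to $0$, i.e.\ the extracted subsequence $\mathbb{D}_I$-converges to the candidate space-time limit. The limiting object is a genuine time-dependent mm-space: each $d_t^\infty$ is a length metric generating the topology of $X_\infty$ (as uniform limit, at the reference time, of geodesic metrics on a $\CD(K,N)$-sequence, combined with the modulus of continuity in $t$), $m_\infty$ is a probability measure, the weights $f_t^\infty$ inherit the prescribed modulus, and the uniform bounds on diameter and curvature-dimension pass to the limit. Finally, the preceding stability theorem applied to this $\mathbb{D}_I$-convergent subsequence shows that the limit is again an averaged super-Ricci flow within the same class, completing the compactness statement.

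The main obstacle is the space-time bookkeeping: one must choose, uniformly in $n$, couplings $\hat d_t^n$ that are controlled simultaneously for almost every $t\in I$, not just at the single reference time $T$. The role of the uniform modulus of continuity for $d$ and $f$ is precisely to make this possible, by propagating a single good coupling at $T$ to good couplings at every other time with quantitative error $\omega(|t-T|)$. A secondary technical point is verifying that the Arzelà--Ascoli argument can be run in the measure-theoretic framework of $\mathbb{D}_I$ (where only a.e.\ $t$ matters), which is straightforward given the equicontinuity but requires some care in handling null sets across the diagonal subsequence.
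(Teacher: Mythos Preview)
Your outline matches the paper's approach closely: reduce to static compactness at a reference time $T$ via the lemma controlling ${\mathbb D}$ at each fixed time in terms of ${\mathbb D}_I$, then run an Arzel\`a--Ascoli argument on the uniformly equicontinuous families $d^n:I\times X^n\times X^n\to\R$ and $f^n:I\times X^n\to\R$ (pulled back via $\epsilon_n$-isometries to the limit space), and finally invoke the stability theorem. The paper does exactly this.

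However, you gloss over the one step the paper flags as genuinely delicate. You write that ``the uniform bounds on \ldots\ curvature-dimension pass to the limit,'' but this is not automatic for $t\neq T$. The static stability theorem for CD$(-K,\infty)$ requires transport kernels coming from couplings of the \emph{reference} measures $m^n,m$; the couplings $\hat m^n$ you have chosen are optimal for ${\mathbb D}_I$, hence adapted to $m^n=m_T^n$ and $m=m_T$, not to $m_t^n=e^{-f_t^n}m^n$ and $m_t=e^{-f_t}m$. So one cannot simply cite static CD-stability at time $t$. The paper addresses this by rerunning the entropy-transport argument from the proof of the stability theorem at each fixed $t$: transport $\mu^0,\mu^1$ via the $T$-kernels, use the uniform density bounds (from CD$(-K,\infty)$ on the approximants) and the equicontinuity of $f$ in both $t$ and $x$ to control the weight contributions $\int f_t^n\,d\mu^{n,b}-\int f_t\,d\tilde\mu^{n,b}$, and conclude $(-K)$-convexity of $S_t$ along the limiting $W_t$-geodesic. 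Your ``main obstacle'' paragraph identifies the space-time bookkeeping for the metric couplings but misses this measure-side issue; it deserves at least a sentence, since without it the limit is not known to lie in ${\mathbb X}_I(K,L,\Phi)$ and the stability theorem cannot be applied.
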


\bigskip

\noindent {\bf F.\ }
In a forthcoming paper \cite{KoS}, accompanying this here, we will study the heat equation on time-dependent mm-spaces. Given $(X,d_t,m_t)_{t\in I}$, under suitable regularity assumptions we will prove existence, uniqueness and regularity for solutions to the heat equation. Among others, this will lead to  equivalent characterization of super-Ricci flows in terms of
\begin{itemize}
\item[$\triangleright$] dynamic convexity of the Boltzmann entropy
\item[$\triangleright$] monotonicity of Wasserstein distances for the dual/backward heat flow acting on probability measures
\item[$\triangleright$]
gradient estimates for the forward heat flow acting on functions
\item[$\triangleright$] a  Bochner inequality involving the time-derivative of the metric.
\end{itemize}
The latter two properties can also be formulated -- and proven to be equivalent --  in the framework of the $\Gamma$-calculus of Bakry-\'Emery-Ledoux. This setting  is partly more general (it also applies to non-reversible operators), partly more restrictive (it requires a dense `smooth' algebra of functions).
In Appendix 2 (Chapter 5) of the current paper, we extend the well-known $\Gamma$-calculus to time-dependent families of diffusion operators $(L_t)_{t\in I}$ defined on a common algebra $\A$. 

 For each $t$, the usual definitions yield square field operators $\Gamma_t$ and its iterate $\Gamma_{2,t}$. The family $(L_t)_{t\in I}$ is called super-Ricci flow if
 \begin{equation}\Gamma_{2,t}\ge\frac12 \partial_t\Gamma_t.\end{equation}
 This will lead to an alternative --  a priori seemingly unrelated -- approach to super-Ricci flows. In a follow-up   paper  \cite{KoS}, it will be linked with the concepts and results from previous chapters. The main result right now provides an important and elementary characterization for super-Ricci flows in terms of gradient estimates for the heat flow.
 
\begin{theorem}%\label{ric-heat} 
Under appropriate regularity assumptions, a propagator $(P^s_t)_{s\le t}$ for the heat equation $L_t u=\partial_t u$
on $I\times X$ will exist. It will satisfy the gradient estimate
\begin{equation}\Gamma_t(P^s_{t}u)\le P^s_{t}(\Gamma_s(u))\qquad (\forall u\in\A)\end{equation}
if and only if  $(L_t)_{t\in I}$ is a super-Ricci flow.
\end{theorem}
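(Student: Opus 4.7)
The plan is to run the classical Bakry--Émery interpolation argument, adapted to the non-autonomous setting. Fix $s<t$ and $u\in\A$, and consider the one-parameter family
\begin{equation*}
G(r)\ :=\ P^r_t\bigl(\Gamma_r(P^s_r u)\bigr),\qquad r\in[s,t].
\end{equation*}
By the semigroup property $P^s_t=P^r_t\circ P^s_r$, the endpoints evaluate to $G(s)=P^s_t\bigl(\Gamma_s(u)\bigr)$ and $G(t)=\Gamma_t(P^s_t u)$. Thus the gradient estimate $\Gamma_t(P^s_t u)\le P^s_t(\Gamma_s u)$ is exactly $G(t)\le G(s)$, i.e.\ $G'\le 0$ on $(s,t)$.

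The core step is a direct computation of $G'(r)$. Writing $v_r:=P^s_r u$, the forward equation gives $\partial_r v_r=L_r v_r$; the backward equation for the propagator (obtained by differentiating the identity $P^r_t=P^{r+\epsilon}_t\circ P^r_{r+\epsilon}$ in $\epsilon$ at $0$) reads $\partial_r P^r_t\phi=-P^r_t L_r\phi$. Using the product/chain rules and the bilinearity of $\Gamma_r$, one finds
\begin{align*}
G'(r)
&=-P^r_t\bigl(L_r\Gamma_r(v_r)\bigr)+P^r_t\!\left((\partial_r\Gamma_r)(v_r)\right)+2\,P^r_t\bigl(\Gamma_r(v_r,L_r v_r)\bigr)\\
&=-2\,P^r_t\!\left(\Gamma_{2,r}(v_r)-\tfrac12(\partial_r\Gamma_r)(v_r)\right),
\end{align*}
where the last equality is just the definition $2\Gamma_{2,r}(\phi)=L_r\Gamma_r(\phi)-2\Gamma_r(\phi,L_r\phi)$ applied to $\phi=v_r$. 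Since $P^r_t$ preserves positivity, the super-Ricci flow condition $\Gamma_{2,r}\ge\tfrac12\partial_r\Gamma_r$ immediately yields $G'\le 0$, which gives the ``if'' direction.

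For the converse, assume the gradient estimate holds for all $s\le t$ and all $u\in\A$. Fix $s$ and $u$; the estimate written as $G(t)\le G(s)$ can be rearranged to $\frac1{t-s}\bigl(G(t)-G(s)\bigr)\le 0$, and taking $t\searrow s$ produces $G'(s+)\le 0$. Evaluating the formula above at $r=s$ (where $v_s=u$ and $P^r_t$ degenerates to the identity in the limit) gives $\Gamma_{2,s}(u)-\tfrac12(\partial_s\Gamma_s)(u)\ge 0$ pointwise. Since $u\in\A$ and $s$ are arbitrary, this is exactly the super-Ricci flow condition.

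The main obstacle is not conceptual but regularity-theoretic: one must justify that $r\mapsto v_r=P^s_r u$ and $r\mapsto P^r_t\phi$ are differentiable in the appropriate sense, that $v_r$ stays in the algebra $\A$ so that $\Gamma_r(v_r)$ and $\Gamma_{2,r}(v_r)$ are well-defined, that the two differentiations in $r$ (for the outer propagator and for the inner argument) can legitimately be separated and combined, and that $\partial_r\Gamma_r$ can indeed be pulled out of the $\Gamma_2$-computation. These points are exactly what is encoded in the ``appropriate regularity assumptions''; typical hypotheses are invariance of $\A$ under all $P^s_r$, joint continuity/differentiability of $(r,\phi)\mapsto L_r\phi$, and $r$-differentiability of $\Gamma_r$ on $\A\times\A$. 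Once these are in place, the computation above is routine and both implications follow.
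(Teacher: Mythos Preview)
Your proof is correct and follows essentially the same route as the paper: both define the interpolation $r\mapsto P^r_t\Gamma_r(P^s_r u)$, compute its $r$-derivative as $P^r_t\bigl(-2\Gamma_{2,r}(v)+\partial_r\Gamma_r(v)\bigr)$, and use monotonicity for the forward direction and differentiation at an endpoint for the converse. The only cosmetic difference is that the paper sends $s\to t$ (obtaining $\partial_r q_r|_{r=t}\le 0$) while you send $t\searrow s$; your added discussion of the regularity hypotheses is a helpful elaboration of what the paper leaves implicit. One small point of phrasing: writing ``$G'(s+)\le 0$'' is slightly imprecise, since $G$ depends on the parameter $t$ and you are simultaneously letting $t\to s$; what you are really computing is the limit of $\frac{1}{t-s}\int_s^t G'(r)\,dr$ as $t\searrow s$, which by continuity equals $-2\Gamma_{2,s}(u)+\partial_s\Gamma_s(u)$ as you intend.
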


\bigskip

\noindent {\bf G.\ }
From the very first days when Lott \& Villani and the author presented their approach to synthetic bounds on the Ricci curvature for mm-spaces, 
the ubiquitous question was
how this concepts relates to Ricci flow. Besides some isolated results \cite{MT, Lo1, To1,To2, GiM}, however, during all these years no deeper relation -- and in particular no extension to mm-spaces -- could be established.

The study of Ricci flows with irregular  or incomplete Riemannian manifolds as initial data is a broad and hot field of current research, see e.g.\
\cite{GiT, KoL, Lak, MRS, PSSW, Si1, Si2, Yin}.
The recent papers by
Kleiner/Lott \cite{KL1} and 
by Haslhofer/Naber \cite{HN} provide the first weak characterizations of Ricci flows -- the results, however, still being formulated for smooth Riemannian manifolds;
 the former, moreover, essentially being restricted to the 3-dimensional case.

 This paper will lay the foundations for a broad systematic study of (super-)Ricci flows in the context of mm-spaces with various  subsequent publications in preparation:
 \begin{itemize}
 \item
 construction and detailed analysis of the heat flow and Brownian motion on time-dependent mm-spaces \cite{KoS},  \cite{Ko2}; Jordan-Kinderlehrer-Otto gradient flow scheme for the entropy \cite{Ko1};
 \item   geometric functional inequalities on mm-spaces -- in particular, local Poincar\'e, logarithmic Sobolev and dimension-free Harnack inequalities --
  and characterization of super-Ricci flows in terms of them \cite{KoS2};
  \item synthetic approaches to upper Ricci bounds \cite{St5} and rigidity results for Ricci flat metric cones \cite{ErS}.
\end{itemize}

\bigskip

\emph{Preliminary remarks.}
Throughout this paper, we use $\partial_t$ as a short hand notation for $\frac{d}{dt}$.
Moreover, we put
$$\partial_t^+u(t)=\limsup_{s\to t}\frac1{t-s}(u(t)-u(s)),\quad
\partial_t^-u(t)=\liminf_{s\to t}\frac1{t-s}(u(t)-u(s)),$$
$$\partial_t u(t+)=\lim_{s\searrow t}\frac1{t-s}(u(t)-u(s)),\quad
\partial_t u(t-)=\lim_{s\nearrow t}\frac1{t-s}(u(t)-u(s)),$$
and combinations of them.
We use the abbreviations \emph{usc, lsc,} and \emph{ac} for \emph{upper semicontinuous, lower semicontinuous} and \emph{absolutely continuous}, resp.

We denote by
 $\chi^{\tau,\sigma}=\min\{ \sigma(1-\tau),\tau(1-\sigma)\}$  the Green function on $[0,1]$
and   put
\begin{equation*}
\Lambda^{\tau,\sigma}= \frac1\tau(\chi^{\tau,\sigma}+\chi^{1-\tau,\sigma})=
\left\{
\begin{array}{ll}
\frac \sigma\tau, & \sigma\in[0,\tau]\\
1, & \sigma\in [\tau,1-\tau]\\
\frac{1-\sigma}\tau, & \sigma\in[1-\tau,1]
\end{array}
\right.
\end{equation*}
for $\tau\in [0,\frac12]$ and $\sigma\in [0,1]$.
In the sequel, lower indices always indicate `time' parameters (typically $r,s,t$) whereas upper indices will denote  `curve' parameters (typically $\sigma,\tau,a,b$).

\section{Dynamic Convexity }

\subsection{Convexity}
This chapter is devoted to introducing a new concept of convexity (`dynamic convexity'). Before doing so, let us recapitulate some basic facts on convex functions.

%\begin{definition}
A function $u: (\sigma,\tau)\to\R$ is called \emph{convex} iff 
\begin{equation}\label{convex}
u(b)\le \frac{c-b}{c-a} \cdot u(a) +\frac{b-a}{c-a}\cdot u(c)
\end{equation}
for all $\sigma<a<b<c<\tau$.
%\end{definition}
Every such convex function $u$ is absolutely continuous on $(\sigma,\tau)$ and \emph{differentiable} in a.e.\ point $b\in (\sigma,\tau)$ (more precisely, in all but at most countably many points). 
In particular,
\begin{equation*}
u(c)-u(a)=\int_a^c u'(b)\,db
\end{equation*}
for all $\sigma<a<b<c<\tau$.
In \emph{each} point $b\in (\sigma,\tau)$ both the right and left derivative exist. Each of them is a non-decreasing function of $b$.

Let us add the remarkable observation of \emph{Sierpinski} that $u: (\sigma,\tau)\to\R$ is convex if and only if it is {Lebesgue measurable} and if \eqref{convex} holds for all  $\sigma<a<c<\tau$ and $b=\frac{a+c}2$
(`midpoint convexity').

\begin{lemma} 
For any  function $u: (\sigma,\tau)\to\R$, the following are equivalent 
\begin{itemize}
\item[(i)] $u$ is convex
\item[(ii)] $u$ is absolutely continuous with  $u'$ being non-decreasing on its domain of definition
(or, equivalently,  with non-decreasing $b\mapsto \partial^+_bu(b)$ or with non-decreasing $b\mapsto \partial^-_bu(b)$).
\item[(iii)] $u$ is absolutely continuous with $u''\ge0$ in the sense of distributions
\end{itemize}

\begin{proof} ``(i) $\Rightarrow$ (ii)'' was already stated. Let us prove the implication ``(ii) $\Rightarrow$ (i)''.
Absolute continuity of $u$ and -- for the last inequality -- monotonicity of the derivative imply
\begin{eqnarray*}
\lefteqn{(c-b)u(a)+(b-a)u(c)-(c-a)u(b)}\\
&=&
-(c-b)\int_a^b u'(t)dt+(b-a)\int_b^c u'(t)dt\\
&=&(c-b)(b-a)\int_0^1 \left[ -u'\left( a+(b-a)t  \right)+u'\left(b+(c-b)t  \right)\right]dt\\
& \le&0.
\end{eqnarray*}
To apply the monotonicity assumption we used the fact that $a+(b-a)t\le b+(c-b)t$.

For ``(ii) $\Rightarrow$ (iii)'' it suffices to observe that the distributional derivative of a non-decreasing function is nonnegative.
For the converse, note that a nonnegative distribution (like $u''$) is a measure and thus - by integration - gives rise to a nondecreasing function (which in this case is $u'$).
\end{proof}
\end{lemma}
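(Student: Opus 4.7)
The plan is to prove the three equivalences in a cycle, with the non-trivial direction being an explicit computation using the fundamental theorem of calculus for absolutely continuous functions.

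\textbf{Step 1: (i) $\Rightarrow$ (ii).} I would invoke the standard facts on convex functions that are already quoted in the paragraph preceding the lemma, namely that a convex $u$ admits right and left derivatives $\partial^+_b u$, $\partial^-_b u$ at every point of $(\sigma,\tau)$ and that both are non-decreasing as functions of $b$. From the three-point inequality \eqref{convex}, one reads off
\[
\frac{u(b)-u(a)}{b-a}\le \frac{u(c)-u(a)}{c-a}\le \frac{u(c)-u(b)}{c-b},
\]
so the difference quotients, and hence $\partial^\pm u$, are locally bounded on $(\sigma,\tau)$. Local boundedness of $\partial^+ u$ gives local Lipschitz continuity of $u$ on compact subintervals of $(\sigma,\tau)$, hence absolute continuity, and then $u(c)-u(a)=\int_a^c u'(b)\,db$ follows from Lebesgue's differentiation theorem.

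\textbf{Step 2: (ii) $\Rightarrow$ (i).} This is the direction requiring the actual calculation shown in the excerpt. Given $\sigma<a<b<c<\tau$, I would write, using absolute continuity,
\[
(c-b)u(a)+(b-a)u(c)-(c-a)u(b) = -(c-b)\int_a^b u'(t)\,dt + (b-a)\int_b^c u'(t)\,dt.
\]
After the change of variables $t = a+(b-a)s$ in the first integral and $t = b+(c-b)s$ in the second, both turn into integrals over $s\in[0,1]$ against the common factor $(c-b)(b-a)$, and the pointwise inequality $a+(b-a)s\le b+(c-b)s$ together with monotonicity of $u'$ forces the combined integrand to be non-positive. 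This yields \eqref{convex}. The same argument works verbatim with $\partial^\pm u$ in place of $u'$, so the parenthetical alternatives in (ii) are handled simultaneously.

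\textbf{Step 3: (ii) $\Leftrightarrow$ (iii).} This rests on the classical fact that a distribution on an open interval whose distributional derivative is non-negative (i.e.\ a non-negative Radon measure) is represented by a non-decreasing function, and conversely the distributional derivative of a non-decreasing function is a non-negative Radon measure. Applying this to the distribution $u'$ (well-defined because $u$ is absolutely continuous, in particular locally integrable) gives the equivalence: non-negativity of $u''$ as a distribution is equivalent to monotonicity of $u'$ as an almost-everywhere defined function.

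\textbf{Main obstacle.} None of the steps is genuinely difficult; the only point that requires care is Step 1, where one has to justify absolute continuity of a merely convex function. This is standard but uses the geometric monotonicity of difference quotients rather than any calculus identity, so it needs to be stated explicitly rather than being absorbed into the FTC framework used in Steps 2 and 3.
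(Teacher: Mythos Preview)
Your proposal is correct and follows essentially the same route as the paper: (i)$\Rightarrow$(ii) is taken from the standard facts recalled before the lemma, (ii)$\Rightarrow$(i) is proved by the identical change-of-variables computation, and (ii)$\Leftrightarrow$(iii) is reduced to the classical correspondence between non-decreasing functions and non-negative distributional derivatives. The only difference is that you spell out the local Lipschitz argument in Step~1 more explicitly, whereas the paper simply refers back to the preceding paragraph.
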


A function defined on a closed interval, say $u: [\sigma,\tau]\to\R$, is {convex} iff 
\eqref{convex} holds for all $\sigma\le a<b<c\le\tau$.
Equivalently, $u$ is convex if and only if it is \emph{upper semicontinuous} (`usc') on $[\sigma,\tau]$ and absolutely  continuous (`ac') on $(\sigma,\tau)$ with  $u'$ being non-decreasing on its domain of definition.

A straightforward extension of the concept of convexity is $K$-convexity with some number $K\in\R$.

\begin{lemma} \label{K-conv-lem}
For any  $K\in \R$ and any function $u: (\sigma,\tau)\to\R$, the following are equivalent 
\begin{itemize}
\item[(i)] 
For all $\sigma<a<b<c<\tau$
\begin{equation}\label{K-convex}
u(b)\le \frac{c-b}{c-a} \cdot u(a) +\frac{b-a}{c-a}\cdot u(c)-\frac K2(c-b)(b-a)
\end{equation}

\item[(ii)] $u$ is ac and for all $a<c$ for which $u'$ exists
\begin{equation*}
u'(a)\le u'(c)+ K(c-a)
\end{equation*}
\item[(iii)] $u$ is ac with $u''\ge K$ in the sense of distributions.
\end{itemize}
A function with any these properties is called $K$-convex.
\end{lemma}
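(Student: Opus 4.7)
The plan is to reduce the three-way equivalence to the previous (plain convexity) lemma by means of the substitution $v(b):=u(b)-\frac{K}{2}b^2$ on $(\sigma,\tau)$. After this substitution I would check that each of the conditions (i), (ii), (iii) for $u$ translates, under $u\leftrightarrow v$, into the corresponding condition in the previous lemma applied to $v$, so that a single application of that lemma closes all three equivalences simultaneously.

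First I would establish that $u$ satisfies (i) if and only if $v$ is convex. The key algebraic ingredient is the identity
$$
\frac{c-b}{c-a}\,a^2+\frac{b-a}{c-a}\,c^2-b^2 \;=\;(c-b)(b-a),
$$
valid for all $a<b<c$, which one verifies by clearing denominators and expanding. Multiplying through by $\frac{K}{2}$ and comparing, the $K$-convex inequality \eqref{K-convex} for $u$ becomes term by term the plain convexity inequality \eqref{convex} for $v$; conversely every convexity inequality for $v$ unfolds into a $K$-convex inequality for $u$.

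Next I would handle (ii) and (iii). Since $b\mapsto\frac{K}{2}b^2$ is smooth, $u$ is absolutely continuous if and only if $v$ is, and at every point of differentiability of $u$ one has $v'(b)=u'(b)-Kb$. The statement that $v'$ is non-decreasing on its domain of definition, that is $v'(a)\le v'(c)$ for $a<c$, rewrites precisely as the inequality displayed in (ii) after transposing the linear terms $Ka,Kc$. For (iii), the identity $v''=u''-K$ in the sense of distributions is immediate, so $v''\ge 0$ distributionally is equivalent to $u''\ge K$ distributionally.

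With these three translations in place, applying the previous lemma to $v$ yields at once that (i), (ii), (iii) for $u$ are mutually equivalent. I do not expect any real obstacle: the whole argument is a routine calibration, and the only thing requiring care is the algebraic identity in the first step (which one can sanity-check by testing on the extremal profile $u(b)=\frac{K}{2}b^2$, for which both the $K$-convex inequality and the corresponding plain convexity statement for $v\equiv 0$ hold with equality).
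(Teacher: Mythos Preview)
Your approach is correct; the paper omits the proof entirely, saying only that it is ``a slight variation of the previous one,'' and your substitution $v(b)=u(b)-\tfrac{K}{2}b^2$ is the natural and cleanest way to make this precise. One minor point worth recording: when you transpose the linear terms in the translation of (ii), the monotonicity of $v'$ actually yields $u'(a)\le u'(c)-K(c-a)$ (from $u'(a)-Ka\le u'(c)-Kc$), not the inequality with $+K(c-a)$ as printed---this is a typographical slip in the statement rather than a flaw in your reduction, and your distributional check $v''=u''-K\ge 0\Leftrightarrow u''\ge K$ confirms which sign is intended.
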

The \emph{proof} is just a slight variation of the previous one and is left to the reader.

\begin{remark}
In \cite{EKS} the authors introduced the concept of $(K,N)$-convexity which also will play an important role throughout this presentation. It is a reinforcement of $K$-convexity involving an additional parameter $N\in\R_+$. A
 function $u: (\sigma,\tau)\to\R$ will be called $(K,N)$-convex iff it is absolutely continuous and if
 $$u''\ge K+\frac1N (u')^2$$
 in the sense of distributions.

\end{remark}

\bigskip

Let us now consider the concept of convex functions in the setting of geodesic spaces.
There exist several similar but slightly different definitions. The common basic idea behind these definitions is that convexity is requested `along geodesics'. That is $V:X\to\R$ will be called convex iff $u=V\circ \gamma: [0,1]\to\R$ is convex. The difference in the definitions essentially boils down to the question whether this is requested for all geodesics or for `sufficiently many' geodesics.
Another difference in the definitions arises from requesting the convexity inequality \eqref{convex} either for all triples $a,b,c$ of points within an interval, say $[0,1]$, or only for the triples $0,b,1$.

\medskip

A \emph{geodesic space} in the sequel always will mean a complete metric space $(X,d)$ 
such for each pair of points $x,y\in X$ there exists a curve $\gamma: [\sigma,\tau]\to X$ of length $d(x,y)$ which connects them (i.e.\ with $\gamma^\sigma=x$ and $\gamma^\tau=y$).
\emph{Geodesics} are curves of constant speed which (globally) minimize length.

 Given a function $V:  X\to (-\infty,\infty]$, its domain of finiteness will be denoted by
$\Dom(V)=\{x\in X: \ V(x)<\infty\}$.

%Before discussing the time-dependent case, let us recall some defintions from the static case.
\begin{definition}
Given a geodesic space $(X,d)$ and a number $K\in\R$, a function $V:  X\to (-\infty,\infty]$  is called
\begin{itemize}

\item
\emph{weakly $K$-convex} if for  every pair of points $x^0,x^1\in\Dom(V)$   there exists
  a geodesic $(\gamma^\tau)_{\tau\in[0,1]}$ connecting $x^0$ and $x^1$  such that for all $\tau\in[0,1]$
\begin{equation}
V(\gamma^\tau)\le (1-\tau)V(x^0)+\tau V(x^1)-K\tau(1-\tau)d^2(x^0,x^1)/2;
\end{equation}

\item
\emph{properly $K$-convex} if for  every pair of points $x^0,x^1\in \Dom(V)$   there exists
  a geodesic $(\gamma^\tau)_{\tau\in[0,1]}$ connecting $x^0$ and $x^1$  such that for all $\rho,\tau,\sigma\in[0,1]$ with $\rho<\tau<\sigma$
\begin{equation}\label{prop-conv}
V(\gamma^\tau)\le \frac{\sigma-\tau}{\sigma-\rho}V_t(\gamma^\rho)+\frac{\tau-\rho}{\sigma-\rho}V(\gamma^\sigma)-K{(\tau-\rho)(\sigma-\tau)}
d^2(x^0,x^1)/2;
\end{equation}

\item
\emph{strongly $K$-convex} if for  every geodesic $(\gamma^\tau)_{\tau\in[0,1]}$  and for every $\tau\in[0,1]$
\begin{equation}\label{stat-conv}
V(\gamma^\tau)\le (1-\tau)V(\gamma^0)+\tau V(\gamma^1)-K\tau(1-\tau)d^2(x^0,x^1)/2.
\end{equation}
\end{itemize}
In the case $K=0$, the function $V$ is simply called weakly convex or properly convex or strongly convex, resp.
\end{definition}

\begin{remark*}
One might study these concepts also in a more general setting where $d$ is just 
 a symmetric function on $X\times X$ with values in $[0,\infty]$ which satisfies the triangle inequality and which vanishes  on the diagonal.
Replacing $X$ by the quotient space $X':=X/d$ if necessary, one can achieve that $d$ only vanishes on the diagonal.
If $d$ is no longer assumed to be finite then in the above definitions the existence of connecting geodesics should be requested only for pairs of points with finite distance.
\end{remark*}

Obviously, strong $K$-convexity implies proper $K$-convexity which in turn implies weak convexity.
Surprisingly,  under weak regularity assumptions also the converse  implications hold true.

\begin{theorem}\label{mid-weak-str} Assume that $(X,d)$ is a locally compact geodesic space and $V:  X\to (-\infty,\infty]$ a lower semicontinuous function which in addition is upper semicontinuous along all geodesics with endpoints in  $\Dom(V)$.
Then the following are equivalent
\begin{itemize}
\item[(i)] $V$ is properly $K$-convex

\item[(ii)] $V$ is wekaly $K$-convex

\item[(iii)] $V$ is weakly midpoint $K$-convex in the sense that for  every pair of points $x^0,x^1\in \Dom(V)$   there exists
  a midpoint $x^{1/2}$ of $x^0$ and $x^1$  such that 
\begin{equation}\label{mp-conv}
V(\gamma^{1/2})\le \frac12 V(x^0)+\frac12V(x^1)-\frac K8d^2(x^0,x^1);
\end{equation}
\end{itemize}

\end{theorem}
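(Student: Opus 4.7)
The implications (i) $\Rightarrow$ (ii) $\Rightarrow$ (iii) are immediate by specialization: take $\rho=0,\sigma=1$ in (i), and evaluate at $\tau=1/2$ in (ii). The real content is (iii) $\Rightarrow$ (i), which I would establish by iterating a dyadic midpoint construction and passing to a compactness limit.

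First, (iii) $\Rightarrow$ (ii) via a base dyadic construction. Fix $x^0,x^1\in\Dom(V)$ and set $D=d(x^0,x^1)$. At each dyadic level $n+1$, use (iii) to select $\gamma((2j+1)/2^{n+1})$ as a midpoint of $(\gamma(j/2^n),\gamma((j+1)/2^n))$ satisfying the midpoint inequality. The triangle inequality forces $d(\gamma(k/2^n),\gamma(l/2^n))=|k-l|D/2^n$, so the map on dyadics is $D$-Lipschitz and extends by completeness (a consequence of local compactness and the geodesic property) to a continuous geodesic $\gamma\colon[0,1]\to X$. Induction on the dyadic level, using the quadratic identity $\phi((\tau_1+\tau_2)/2)=\tfrac12\phi(\tau_1)+\tfrac12\phi(\tau_2)-\tfrac{K(\tau_2-\tau_1)^2D^2}{8}$ for $\phi(\tau):=(1-\tau)V(x^0)+\tau V(x^1)-\tfrac{K}{2}\tau(1-\tau)D^2$, yields $V(\gamma(\tau))\le\phi(\tau)$ at dyadic $\tau$. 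Since LSC of $V$ combined with USC of $V$ along $\gamma$ makes $u:=V\circ\gamma$ continuous on $[0,1]$, the inequality extends to all $\tau$, giving (ii).

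To upgrade to (i), I would iterate the base construction on a shrinking grid. Starting from $\gamma_0$ above, define $\gamma_n\colon[0,1]\to X$ recursively so that $\gamma_n$ agrees with $\gamma_{n-1}$ at every dyadic point of level $\le n$, and on each level-$n$ sub-interval $[j/2^n,(j+1)/2^n]$ replace the restriction by the weakly $K$-convex geodesic supplied by the base construction applied to the endpoint pair. The key algebraic lemma is that the proper $K$-convexity inequality respects subdivision of the outer interval: if it holds at $(\rho,\tau,\sigma')$ and at $(\rho,\sigma',\sigma)$, then it holds at $(\rho,\tau,\sigma)$. This is a short calculation in which the cross-term $(\sigma'-\tau)(\sigma-\rho)+(\tau-\rho)(\sigma-\sigma')$ collapses to $(\sigma-\tau)(\sigma'-\rho)$, reproducing the desired convex-combination coefficients together with the quadratic correction $(\tau-\rho)(\sigma-\tau)D^2$. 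Combined with its symmetric counterpart and with the pinning of dyadic values across stages (so that midpoint inequalities from earlier stages still witness the required midpoint property at those scales), an induction shows that $\gamma_n$ is weakly $K$-convex on \emph{every} dyadic sub-interval of level $\le n$. Arzelà--Ascoli in the locally compact space $X$ then extracts a uniformly convergent subsequence $\gamma_{n_k}\to\gamma_\infty$; the limit is still a geodesic from $x^0$ to $x^1$ that agrees with each $\gamma_n$ on the dyadic skeleton from some stage on. Passing the inequalities to the limit using LSC of $V$ ($V(\gamma_\infty(\tau))\le\liminf V(\gamma_{n_k}(\tau))$, while the right-hand sides stabilize) yields weak $K$-convexity of $\gamma_\infty$ on every dyadic sub-interval $[\rho,\sigma]$. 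Using $d(\gamma_\infty(\rho),\gamma_\infty(\sigma))=(\sigma-\rho)D$, this is exactly the proper inequality at every dyadic triple, and continuity of $V\circ\gamma_\infty$ extends it to all triples, giving (i).

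The main obstacle is the refinement step: ensuring that passing from $\gamma_{n-1}$ to $\gamma_n$ does not destroy weak $K$-convexity on coarser sub-intervals. This rests on the subdivision lemma and on the coordinated choice of midpoints that pins dyadic values across scales, so that midpoint inequalities established at previous stages persist. Local compactness of $X$ is used only to run Arzelà--Ascoli in the final step; LSC of $V$ is what allows the inequalities to survive the limit; and USC of $V$ along geodesics is the essential regularity that makes $u$ continuous and lets the density of dyadics in $[0,1]$ extend the dyadic skeleton inequalities to all of $[0,1]$.
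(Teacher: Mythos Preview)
Your argument for (iii)$\Rightarrow$(ii) is correct: iterated midpoint selection plus the quadratic identity for $\phi$ gives $V(\gamma(\tau))\le\phi(\tau)$ at dyadics, and continuity of $V\circ\gamma$ (from lsc $+$ usc along geodesics) extends this to all $\tau$.

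The gap is in your upgrade (ii)$\Rightarrow$(i). Your refinement scheme pins the dyadic values $\gamma_n(k/2^m)=\gamma_m(k/2^m)$ for $n\ge m$, and you correctly observe that the subdivision lemma lets you combine the inequality at $(\rho,\tau,\sigma')$ with the inequality at $(\rho,\sigma',\sigma)$ to get $(\rho,\tau,\sigma)$. But to deduce weak $K$-convexity on an interval like $[1/4,3/4]$ you must at some point feed in the midpoint inequality for the triple $(\gamma_1(1/4),\gamma_0(1/2),\gamma_1(3/4))$, i.e.\ $V(\gamma_0(1/2))\le\tfrac12 V(\gamma_1(1/4))+\tfrac12 V(\gamma_1(3/4))-\tfrac{K}{32}D^2$. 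Nothing in your construction gives this: $\gamma_0(1/2)$ was chosen only to satisfy the midpoint inequality relative to $x^0,x^1$, and the later points $\gamma_1(1/4),\gamma_1(3/4)$ were chosen relative to $\gamma_0(1/2)$, not the other way around. The ``midpoint inequalities from earlier stages'' you invoke are the wrong ones---they compare $\gamma_0(1/2)$ to $x^0,x^1$, not to the next-level neighbours. In a space with branching geodesics there is no reason the particular midpoint $\gamma_0(1/2)$ you picked is the one witnessing (iii) for the pair $\gamma_1(1/4),\gamma_1(3/4)$.

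The paper closes exactly this gap with a single idea you did not use: at every stage choose the midpoint that \emph{minimizes} $V$ among all midpoints (existence by local compactness and lsc of $V$). Then for the triple $(\gamma(1/4),\gamma(1/2),\gamma(3/4))$, property (iii) provides \emph{some} midpoint $\tilde x^{1/2}$ of $\gamma(1/4),\gamma(3/4)$ satisfying the inequality; but any such $\tilde x^{1/2}$ is also a midpoint of $x^0,x^1$, and since $\gamma(1/2)$ was the $V$-minimizer among those, $V(\gamma(1/2))\le V(\tilde x^{1/2})$, so the inequality holds with $\gamma(1/2)$ as well. This makes the midpoint inequality hold for \emph{every} consecutive dyadic triple at every level, from which proper $K$-convexity at all dyadic triples follows by iteration---no refinement sequence, no Arzel\`a--Ascoli needed.
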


This result and the subsequent proof is due to the author. It has been presented  in several private communications to colleagues, e.g. in May 2012  during the
ERC Summer School ``Analysis and Geometry in Metric Measure Spaces'' in Pisa. Since then, it has been used (and partly reproduced) in various publications.

\begin{remark*}The local compactness of $X$ can be replaced by the weaker requirement that for each $r,x$ and $\lambda$ the set
$\{V\le \lambda\}\cap \overline B_r(x)$
is compact.
\end{remark*}

\begin{proof} It remains to prove the implication ``(iii) $\Rightarrow$ (i)''.
Fix $x^0,x^1\in\Dom(V)$ and consider the set of midpoints 
$$Z=\{z\in X: d(x,z)=d(y,z)=\frac12 d(x,y)\}.$$
Among this bounded, closed set of midpoints we select one which minimizes $V$. More precisely, local compactness of $X$ implies compactness of $Z$. Together with lower semicontinuity of $V$ it guarantees the existence of a minimizer of $V$ in $Z$. Call this midpoint  $x^{1/2}$.
In the next step define analogously a midpoint of $x^0$ and $x^{1/2}$ with minimal value of $V$, call it $x^{1/4}$, and 
 a midpoint of  $x^{1/2}$ and $x^1$ with minimal value of $V$, call it $x^{3/4}$ .
 
 The crucial point of this approach is that this construction implies that the inequality \eqref{mp-conv}
 now also holds true for the triple $x^{1/4}, x^{1/2},x^{3/4}$ in the place of the triple $x^0,x^{1/2},x^1$.
 Indeed, weak midpoint $K$-convexity applied to the pair $x^{1/4}, x^{3/4}$ provides a midpoint of them, say $\tilde x^{1/2}$, such that
 $x^{1/4}, \tilde x^{1/2},x^{3/4}$ satisfies inequality \eqref{mp-conv}.
But the previously selected point $x^{1/2}$ is also a midpoint of $x^{1/4}, x^{3/4}$. Indeed, it is a minimizer of $V$ among all these midpoints.
Thus \eqref{mp-conv} is satisfied for the triple $x^{1/4}, x^{1/2},x^{3/4}$.

The midpoint construction --  and the previous argumentation -- will now be iterated to define a family of points $x^\tau, \tau=k2^{-n}$ for $k=0,1,\ldots,2^n$ and $n\in\N$. All these points will lie on a geodesic connecting $x^0$ and $x^1$. Taking the metric completion of this `dyadic curve' yields the requested geodesic $(x^\tau)_{\tau\in[0,1]}$. 
Iteration of the midpoint inequality \eqref{mp-conv} will yield the convexity inequality \eqref{prop-conv}
for dyadic $\rho,\tau,\sigma\in[0,1]$.
Continuity of $V$ along the geodesic $(x^\tau)_{\tau\in[0,1]}$ finally allows to extend this property to all real triples $\rho,\tau,\sigma\in[0,1]$.
\end{proof}

An alternative version which completely avoids any compactness assumption reads as follows.

\begin{proposition} Assume that $V:  X\to \R$ is continuous along geodesics and weakly midpoint $K$-convex.
Then it is properly $K'$-convex for each $K'<K$.
\end{proposition}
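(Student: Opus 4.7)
The plan is to adapt the proof of Theorem~\ref{mid-weak-str}, replacing the minimizer selection (which required local compactness to guarantee a true minimizer of $V$ exists on the set of midpoints) with an \emph{approximate} minimizer selection, and paying for the approximation error with the slack $K-K'$.

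Fix $x^0,x^1\in\Dom(V)$ with $D=d(x^0,x^1)$, and set $\eta=(K-K')/2>0$. I would iteratively construct a family $\{x^\tau\}_{\tau\in\mathcal{D}\cap[0,1]}$ indexed by dyadic rationals: at step $n\ge 1$, for each odd $k\in\{1,\dots,2^n-1\}$, the point $x^{k/2^n}$ is chosen to be a midpoint of the standard pair $(x^{(k-1)/2^n},x^{(k+1)/2^n})$ with the approximate-minimizer property
\begin{equation*}
V(x^{k/2^n})\le V(w)+\delta_n\qquad\text{for every midpoint $w$ of the standard pair},
\end{equation*}
where $\delta_n=\eta D^2\cdot 4^{-n}$. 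Weak midpoint $K$-convexity guarantees at least one midpoint with $V$-value bounded by $\tfrac12 V(x^{(k-1)/2^n})+\tfrac12 V(x^{(k+1)/2^n})-\tfrac K8(2^{-n+1})^2 D^2$, so the infimum of $V$ over midpoints is finite and an approximate minimizer exists.

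The telescoping triangle-inequality argument of Theorem~\ref{mid-weak-str} carries over verbatim and gives $d(x^{k/2^n},x^{(k+1)/2^n})=D/2^n$ as well as $d(x^a,x^c)=|c-a|\,D$ for all dyadic $a,c$; by completeness of $(X,d)$ the dyadic family extends to a continuous curve $\gamma:[0,1]\to X$ of length $D$, hence a geodesic. The crucial verification replaces the minimizer step of Theorem~\ref{mid-weak-str} by its approximate version: for a dyadic center $b$ at level $n_b$ and a symmetric pair $(x^{b-\varepsilon},x^{b+\varepsilon})$ with $\varepsilon\le 2^{-n_b}$, weak midpoint $K$-convexity applied to the symmetric pair produces a midpoint $w$ which, by the same triangle-inequality argument as in Theorem~\ref{mid-weak-str}, is also a midpoint of the standard pair of $x^b$. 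The approximate-minimizer property then yields
\begin{equation*}
V(x^b)\le V(w)+\delta_{n_b}\le \tfrac12 V(x^{b-\varepsilon})+\tfrac12 V(x^{b+\varepsilon})-\tfrac{K}{2}\varepsilon^2 D^2+\delta_{n_b},
\end{equation*}
which is the midpoint $K'$-convexity inequality at the symmetric pair whenever $\delta_{n_b}\le \eta\,\varepsilon^2 D^2$. Feeding midpoint $K'$-convexity at standard and symmetric pairs into the iterated substitution scheme from the proof of Theorem~\ref{mid-weak-str} produces proper $K'$-convexity of $V\circ\gamma$ at every dyadic triple, and continuity of $V$ along $\gamma$ (our hypothesis) extends the inequality to all real triples.

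The main obstacle is the scale mismatch between the fixed tolerance $\delta_{n_b}$ and the variable scale $\varepsilon$: to get midpoint $K'$-convexity at a symmetric pair of scale $\varepsilon\ll 2^{-n_b}$ one would need $\delta_{n_b}\le\eta\varepsilon^2 D^2$, which forces $\delta_{n_b}\to 0$. The resolution is a careful accounting of which symmetric-pair inequalities are actually required in the iterated derivation of convexity at a given dyadic triple: the scales that arise remain commensurate with the levels of the centers invoked, so the geometric choice $\delta_n=\eta D^2\cdot 4^{-n}$ is just sufficient, and the summability of $\sum_n\delta_n$ is what ultimately spends the slack $K-K'$.
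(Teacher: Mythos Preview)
Your approach is essentially the paper's: replace the exact minimizer selection of Theorem~\ref{mid-weak-str} by $\delta_n$-approximate minimizers with $\delta_n\asymp 4^{-n}$, and absorb the accumulated error into the slack $K-K'$. The paper's proof is equally terse at the point you flag as ``the main obstacle,'' so at the level of strategy your proposal matches it.

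That said, your stated resolution --- ``the scales that arise remain commensurate with the levels of the centers invoked, so $\delta_n=\eta D^2\cdot 4^{-n}$ is just sufficient'' --- is not justified and, as written, is problematic for \emph{proper} $K'$-convexity. Consider the dyadic triple $(1/4,1/2,3/4)$: the center $b=1/2$ sits at level $n_b=1$, the half-width is $\varepsilon=1/4$, and the approximate-minimizer property of $x^{1/2}$ yields the midpoint inequality with additive error $\delta_1=\eta D^2/4$, whereas the $K'$-inequality at this scale tolerates only $\eta\varepsilon^2 D^2=\eta D^2/16$. Standard triples alone cannot recover this inequality either (try combining the level-1 and level-2 standard inequalities --- the signs go the wrong way). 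So the step from ``midpoint $K'$-convexity at standard triples along $\gamma$'' to ``proper $K'$-convexity of $V\circ\gamma$'' needs a genuinely different bookkeeping of errors than what you sketch; merely invoking summability of $\sum\delta_n$ does not close it. The paper does not spell this out either, so you are in good company, but the claim as you state it is a gap.
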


\begin{proof}
We essentially follow the previous argumentation. However, due to lack of compactness we cannot insist that $x^{1/2}$ is a minimizer of $V$ among the midpoints. Instead, we will choose a midpoint which minimizes $V$ up to $\epsilon/2$. In all the subsequent iterative steps we allow small errors: the point $x^{(2i+1)/2^n}$ will be chosen among the midpoints of $x^{(2i)/2^n}$  and $x^{(2i+2)/2^n}$ to minimize $V$ up to
$\epsilon 4^{-n}$.
It finally 
will yield the convexity inequality \eqref{prop-conv} with an extra term $\epsilon$. Choosing $\epsilon$ sufficiently small (depending on $(K-K') \,d^2(x^0,x^1)$) will prove the claim.
\end{proof}

\begin{proposition}\label{slope-conv-str}
For every $V:  X\to (-\infty,\infty]$  the following are equivalent
\begin{itemize}
\item[(i)] $V$ is strongly $K$-convex

\item[(ii)] 
for  every geodesic $(\gamma^\tau)_{\tau\in[0,1]}$ 
  with
$\gamma^0,\gamma^1\in\Dom(V)$ the function $\tau\mapsto V(\gamma^\tau)$ is usc on $[0,1]$, ac on $(0,1)$, and
  \begin{equation}
\partial^+_\tau
V(\gamma^{1-})\ge 
\partial^-_\tau
V(\gamma^{0+})+K
d^2(\gamma^0,\gamma^1).
\end{equation}
\end{itemize}
\end{proposition}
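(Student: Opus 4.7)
The plan is to reduce both implications to purely one-dimensional statements about the real-valued function $u(\tau):=V(\gamma^\tau)$ on $[0,1]$, where $\gamma$ is a fixed geodesic with $\gamma^0,\gamma^1\in\Dom(V)$ and $d:=d(\gamma^0,\gamma^1)$. The key enabling observation is that any subsegment $(\gamma^\tau)_{\tau\in[\rho,\sigma]}$, affinely reparametrized to $[0,1]$, is itself a geodesic between $\gamma^\rho$ and $\gamma^\sigma$ of length $(\sigma-\rho)d$. Applying either hypothesis to all such subgeodesics will therefore yield information about $u$ on the whole interval, not merely at the triple $(0,\tau,1)$.

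For ``(i) $\Rightarrow$ (ii)'', applying \eqref{stat-conv} to the reparametrized subgeodesic from $\gamma^a$ to $\gamma^c$ at the fractional parameter $(b-a)/(c-a)$ yields
\[
u(b)\le \frac{c-b}{c-a}\,u(a)+\frac{b-a}{c-a}\,u(c)-\frac{Kd^2}{2}(b-a)(c-b)
\]
for all $0\le a<b<c\le 1$, i.e.\ $u$ is $Kd^2$-convex on $[0,1]$ in the sense of Lemma \ref{K-conv-lem}. Consequently $u$ is ac on $(0,1)$. Specializing to $a=0$, $c=1$ and sending $b\to 0^+$ or $b\to 1^-$ delivers upper semicontinuity at the endpoints, while the same inequality rearranges to $(u(b)-u(0))/b\le u(1)-u(0)-\tfrac{Kd^2}{2}(1-b)$ and $(u(1)-u(b))/(1-b)\ge u(1)-u(0)+\tfrac{Kd^2}{2}b$; passing to $\liminf$ at $0^+$ and $\limsup$ at $1^-$ and subtracting gives exactly $\partial^+_\tau u(1-)-\partial^-_\tau u(0+)\ge Kd^2$.

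For ``(ii) $\Rightarrow$ (i)'', applying the hypothesis to each subgeodesic $(\gamma^\tau)_{\tau\in[\rho,\sigma]}$ and working out the chain rule for the reparametrization produces
\[
\partial^+_\tau u(\sigma-)-\partial^-_\tau u(\rho+)\ge K(\sigma-\rho)\,d^2
\]
for every $0\le\rho<\sigma\le 1$. Introducing $v(\tau):=u(\tau)-\tfrac{Kd^2}{2}\tau^2$ absorbs the affine term and leaves $\partial^+_\tau v(\sigma-)\ge\partial^-_\tau v(\rho+)$. Since $v$ is ac on $(0,1)$, at every Lebesgue point of $v'$ both one-sided secant slopes coincide with $v'$; the above inequality hence forces $v'$ to be nondecreasing on a set of full measure, and Lemma \ref{K-conv-lem} (applied with $K=0$) yields convexity of $v$ on $(0,1)$. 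Upper semicontinuity at the endpoints then lets me take $a\to 0^+$ and $c\to 1^-$ in the convexity inequality to obtain convexity of $v$ on $[0,1]$, which, unwound, is the strong $K$-convexity of $V$ along $\gamma$. The chief obstacle is precisely this last direction: bridging the one-sided $\limsup$/$\liminf$ slopes in (ii) with an honest pointwise derivative amenable to Lemma \ref{K-conv-lem}. It is the absolute continuity of $u$ built into the hypothesis (ii) that supplies Lebesgue points of $v'$ and makes this bridge possible; without it, the endpoint-slope monotonicity alone would be too weak to recover convexity.
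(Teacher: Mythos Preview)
Your proof is correct and follows the same route as the paper: restrict to subgeodesics and invoke Lemma~\ref{K-conv-lem}. You have simply filled in the details (the endpoint upper semicontinuity, and the passage from one-sided slope monotonicity to convexity via points of differentiability of $v$) that the paper's one-line proof leaves to the reader.
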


\begin{proof} 
Note that an application of (ii) to the restriction of $\gamma$ to the interval $[\rho,\sigma]$ yields
 \begin{equation*}
\partial^+_\tau
V(\gamma^{\tau})\big|_{\tau=\sigma-}\ge 
\partial^-_\tau
V(\gamma^{\tau})\big|_{\tau=\rho+}+\frac{K}{\sigma-\rho}
d^2(\gamma^0,\gamma^1).
\end{equation*}
Thus the claim easily follows from Lemma \ref{K-conv-lem}.
\end{proof}

\begin{proposition}\label{slope-conv-weak}
For every $V:  X\to (-\infty,\infty]$  and $K\in\R$ the implications 
  ``(i) $\Rightarrow$ (ii) $\Rightarrow$ (iii)''
hold true for the following assertions:
\begin{itemize}
\item[(i)] $V$ is weakly $K$-convex

\item[(ii)] there exists $\lambda\in\R$ such that
for  every $x^0,x^1\in\Dom(V)$ there exists a geodesic $(\gamma^\tau)_{\tau\in[0,1]}$ 
 connecting $x^0$ and $x^1$  with
\begin{equation}\label{w-dyn-conv}
V(x^0)+V(x^1)-
V(\gamma^\tau)-V(\gamma^{1-\tau})\ge \big(K\tau - \lambda\tau^2\big) d^2(x^0,x^1)
\end{equation}
for all $\tau\in[0,1/2]$
\item[(iii)] $V$ is weakly midpoint  $K'$-convex for each $K'<K$.
\end{itemize}
\end{proposition}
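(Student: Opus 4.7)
The implication (i) $\Rightarrow$ (ii) is obtained by symmetrizing the defining inequality. Given $x^0, x^1 \in \Dom(V)$, I would pick a geodesic $\gamma$ witnessing the weak $K$-convexity and evaluate the convexity inequality at both $\tau$ and $1-\tau$ along that \emph{same} $\gamma$; adding the two bounds, the affine combinations of $V(x^0), V(x^1)$ collapse to $V(x^0)+V(x^1)$, giving
\[V(\gamma^\tau)+V(\gamma^{1-\tau}) \le V(x^0)+V(x^1) - K\tau(1-\tau)d^2(x^0,x^1).\]
Since $K\tau(1-\tau)=K\tau-K\tau^2$, this is (ii) with the choice $\lambda:=K$.

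For (ii) $\Rightarrow$ (iii) the plan is an iterative ``pinching'' scheme: since the bound in (ii) at small $\tau$ encodes the infinitesimal strength $K\tau d^2$, iterating (ii) at small $\tau$ should amplify this into a genuine midpoint estimate, at the cost of a loss $K-K'>0$. Fix $K'<K$, $x^0,x^1\in\Dom(V)$, and write $d:=d(x^0,x^1)$. Set $(y_0^0,y_0^1):=(x^0,x^1)$ and inductively, given $(y_k^0,y_k^1)$, use (ii) to choose a geodesic $\gamma_k$ connecting them and put $y_{k+1}^0:=\gamma_k^\tau$, $y_{k+1}^1:=\gamma_k^{1-\tau}$ for a small parameter $\tau\in(0,1/2)$ to be fixed later. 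Then $d(y_k^0,y_k^1)=(1-2\tau)^k d$, and telescoping the inequalities from (ii) (a geometric series with ratio $(1-2\tau)^2$) produces
\[V(y_n^0)+V(y_n^1) \le V(x^0)+V(x^1) - \frac{(K-\lambda\tau)\bigl[1-(1-2\tau)^{2n}\bigr]}{4(1-\tau)}\,d^2.\]

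To extract an actual midpoint, I would apply (ii) one more time to the pair $(y_n^0,y_n^1)$ at parameter $1/2$, obtaining a point $w_n$ which is a midpoint of $y_n^0$ and $y_n^1$. The bound $d(x^0,y_n^0)\le\sum_{k=0}^{n-1}\tau(1-2\tau)^k d=\tfrac12[1-(1-2\tau)^n]d$ combined with $d(y_n^0,w_n)=\tfrac12(1-2\tau)^n d$ gives $d(x^0,w_n)\le d/2$, and analogously $d(w_n,x^1)\le d/2$; the reverse triangle inequality $d\le d(x^0,w_n)+d(w_n,x^1)$ then forces both distances to equal $d/2$, so $w_n$ is in fact a midpoint of $x^0,x^1$. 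Plugging the telescoped bound into the midpoint inequality from (ii) yields
\[V(w_n) \le \tfrac12\bigl[V(x^0)+V(x^1)\bigr] - C_n(\tau)\,d^2,\]
with $C_n(\tau)\to \frac{K-\lambda\tau}{8(1-\tau)}$ as $n\to\infty$. Since this limit tends to $K/8$ as $\tau\searrow 0$, one first picks $\tau$ so small that $\tfrac{K-\lambda\tau}{1-\tau}>K'$, then chooses $n$ large enough to guarantee $C_n(\tau)>K'/8$; the resulting $w_n$ witnesses weak midpoint $K'$-convexity for the pair $x^0,x^1$.

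The main obstacle I anticipate is the bookkeeping in the second step: the nontrivial point is that the final midpoint $w_n$ (of the \emph{pinched} pair $y_n^0,y_n^1$, produced on an auxiliary geodesic) is automatically a midpoint of the \emph{original} pair $x^0,x^1$, which requires the above triangle-inequality argument; once this is in hand, the remaining verification that $C_n(\tau)$ converges to $\tfrac{K-\lambda\tau}{8(1-\tau)}$ is a routine geometric-series calculation and the $K\to K'$ trade-off is obtained by inspection.
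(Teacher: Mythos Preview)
Your proof is correct. For (i)$\Rightarrow$(ii) you do exactly what the paper does: add the weak $K$-convexity inequalities at $\tau$ and $1-\tau$ along the same witnessing geodesic. (Your choice $\lambda=K$ is the natural one; the paper's stated $\lambda=-K$ appears to be a slip unless $K\le 0$.)

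For (ii)$\Rightarrow$(iii) both arguments are the same ``iterated pinching'' idea, but executed differently. The paper uses a \emph{variable} step size $\tau_i=\frac{1}{2(n-i)}$ so that after $n$ iterations the two pinched points coalesce exactly at a midpoint; the telescoped lower bound is then $\big(K\tfrac{n+1}{4n}-\lambda\tfrac{1}{4n}\big)d^2$, which tends to $\tfrac{K}{4}d^2$ as $n\to\infty$. You instead fix a small $\tau$, iterate $n$ times, and then take one final midpoint of the pinched pair; this forces you to prove separately (via the triangle inequality) that the midpoint $w_n$ of $(y_n^0,y_n^1)$ is also a midpoint of $(x^0,x^1)$, which you handle correctly. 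In fact this verification is equally needed in the paper's approach (its ``$x^{1/2}$'' is a priori only the midpoint of the last pinched pair), but the paper leaves it implicit. The paper's variable-step scheme is a little slicker because it avoids the extra limiting parameter $\tau$ and the final midpoint step; your fixed-step scheme costs a bit more bookkeeping (the geometric series and the double limit $n\to\infty$, $\tau\to 0$) but is equally valid and, by making the midpoint check explicit, arguably more complete.
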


\begin{proof}  To see   ``(i) $\Rightarrow$ (ii)'' simply add up the convexity inequalities with parameters $\tau$ and $1-\tau$ to obtain
 $$
V(\gamma^\tau)+V(\gamma^{1-\tau})\le V(\gamma^0)+V(\gamma^1)-K\tau(1-\tau)d^2(x^0,x^1).$$
Choosing $\lambda=-K$ yields the claim.

 For ``(ii) $\Rightarrow$ (iii)'', fix $x^0,x^1$ and $n\in\N$. For $i=0,\ldots,n-1$, construct iteratively pairs $x^{(i+1)/(2n)}$, $x^{1-(i+1)/(2n)}$ of points on a connecting geodesic from $x^0$ to $x^1$ by applying  \eqref{w-dyn-conv} with $\tau=\frac1{2n-i)}$ to the previous pairs
 $x^{i/(2n)}$, $x^{1-i/(2n)}$. Then
 \begin{equation*}
V(x^{i/(2n)})+V(x^{1-i/(2n)})-
V(x^{(i+1)/(2n)})-V(x^{1-(i+1)/(2n)})\ge \Big( K\frac{n-i}{2n^2}- \lambda\frac1{4n^2}\Big) d^2(x^0,x^1).
\end{equation*}
 Adding up these estimates yields
  \begin{equation*}
V(x^0)+V(x^1)-2V(x^{1/2})\ge \Big( K\frac{n(n-1)}{4n^2}- \lambda\frac1{4n}\Big) d^2(x^0,x^1).
\end{equation*}
 Since the RHS converges to $\frac K4 d^2(x^0,x^1)$ as $n\to\infty$, this proves the claim.
\end{proof}

In many cases of applications which we have in mind, the functions under considerations are known to share regularity properties of semiconvex functions.

\begin{definition}\label{upp-reg}
 We say that a function $V:  X\to (-\infty,\infty]$ is \emph{upper regular} if for each geodesic $\gamma: [0,1]\to X$ the composed function $u=V\circ \gamma:[0,1]\to (0,\infty]$ is usc on $[0,1]$,
 ac on $(0,1)$, and satisfies
\begin{equation*}\label{reg-tau}
\partial^+_\tau u(\tau-)\le \partial^-_\tau u(\tau+)
\end{equation*}
for each $\tau\in(0,1)$ as well as
\begin{equation*}\label{reg-01}
\liminf_{\tau\nearrow 1}\partial^-_\tau u(\tau+)\le \partial^+_\tau u(1-),\qquad
\limsup_{\tau\searrow 0}\partial^+_\tau u(\tau-)\ge \partial^-_\tau u(0+).
\end{equation*}
\end{definition}
Actually, one of the latter two conditions is redundant: they imply each other by time reversal of $\gamma$. More explicitly, the first of them states that
\begin{equation*}
\liminf_{\tau\nearrow 1}\liminf_{\sigma\searrow \tau}\frac1{\sigma-\tau}\Big[u(\sigma)-u(\tau)\Big]\le
\limsup_{\sigma\nearrow 1}\frac1{1-\sigma}\Big[u(1)-u(\sigma)\Big]
\end{equation*}
whereas the explicit version of \eqref{reg-tau} is
\begin{equation*}
\limsup_{\sigma\nearrow \tau}\frac1{\tau-\sigma}\Big[u(\tau)-u(\sigma)\Big]\le
\liminf_{\sigma\searrow \tau}\frac1{\sigma-\tau}\Big[u(\sigma)-u(\tau)\Big].
\end{equation*}

\begin{remark*}
Every strongly $K$-convex function on a geodesic space is upper regular.
\end{remark*}

\begin{theorem}\label{slope-conv}
Assume that $V:  X\to (-\infty,\infty]$ an upper regular function on a geodesic space $(X,d)$.
Then the following are equivalent
\begin{itemize}
\item[(i)] $u$ is strongly $K$-convex

\item[(ii)] $u$ is weakly $K$-convex

\item[(iii)] 
for  every pair of points $x^0,x^1\in \Dom(V)$   there exists
  a geodesic $(\gamma^\tau)_{\tau\in[0,1]}$ connecting $x^0$ and $x^1$  such that
\begin{equation*}
\partial^+_\tau
V(\gamma^{1-})\ge 
\partial^-_\tau
V(\gamma^{0+})+K
d^2(x^0,x^1)
\end{equation*}

\item[(iv)] 
for  every geodesic $(\gamma^\tau)_{\tau\in[0,1]}$ 
  with
$\gamma^0,\gamma^1\in\Dom(V)$
  \begin{equation*}
\partial^+_\tau
V(\gamma^{1-})\ge 
\partial^-_\tau
V(\gamma^{0+})+K
d^2(\gamma^0,\gamma^1).
\end{equation*}
\end{itemize}

\end{theorem}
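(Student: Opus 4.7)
The plan is to close the cycle (i)$\Rightarrow$(ii)$\Rightarrow$(iii)$\Rightarrow$(i), observing that (i)$\Leftrightarrow$(iv) is already the content of Proposition~\ref{slope-conv-str} (and needs no regularity), while (iv)$\Rightarrow$(iii) is immediate. The implication (i)$\Rightarrow$(ii) is trivial from the definitions. For (ii)$\Rightarrow$(iii) I use the standard endpoint-slope computation: along the geodesic $\gamma$ provided by weak $K$-convexity, setting $u=V\circ\gamma$, divide the convexity inequality $u(\tau)\le(1-\tau)u(0)+\tau u(1)-K\tau(1-\tau)d^2(x^0,x^1)/2$ by $\tau$ (resp.\ $1-\tau$) and take the appropriate liminf as $\tau\searrow 0$ (resp.\ limsup as $\tau\nearrow 1$) to obtain $\partial^+_\tau u(1-)-\partial^-_\tau u(0+)\ge K\,d^2(x^0,x^1)$.

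The nontrivial step is (iii)$\Rightarrow$(i), where upper regularity is used decisively. Fix $x^0,x^1\in\Dom(V)$ and let $\gamma$ be a geodesic supplied by (iii); set $u=V\circ\gamma$. Upper regularity guarantees that $u$ is usc on $[0,1]$, ac on $(0,1)$, and satisfies $\partial^+_\tau u(\tau-)\le\partial^-_\tau u(\tau+)$ at every interior $\tau$ together with the one-sided limit relations at $\tau=0,1$. By the equivalence (ii)$\Leftrightarrow$(iii) of Lemma~\ref{K-conv-lem}, to establish $K d^2(x^0,x^1)$-convexity of $u$ on $[0,1]$ it suffices to show $u'(a)\le u'(b)-K d^2(x^0,x^1)(b-a)$ for a.e.\ $0<a<b<1$, i.e.\ $u''\ge K d^2(x^0,x^1)$ distributionally. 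For each such pair $(a,b)$, I apply (iii) to $(\gamma^a,\gamma^b)$ to obtain an auxiliary geodesic $\gamma_{a,b}$ satisfying the endpoint slope inequality with the appropriate constant $K\,d^2(\gamma^a,\gamma^b)=K(b-a)^2 d^2(x^0,x^1)$. Upper regularity of $V$ — holding along \emph{every} geodesic, in particular both $\gamma$ and $\gamma_{a,b}$ — is then used to compare the slopes of $V$ at the common endpoints $\gamma^a,\gamma^b$ measured along $\gamma_{a,b}$ with those measured along $\gamma|_{[a,b]}$, so that the slope inequality along $\gamma_{a,b}$ transfers to the required estimate on the difference $u'(b)-u'(a)$. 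This yields strong $K$-convexity of $V$ along the chosen $\gamma$; the same scheme run along any other geodesic (upper regular by hypothesis) produces (i) in full generality.

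The main obstacle is the bridging step: (iii) only guarantees \emph{some} geodesic with the endpoint slope inequality, and this geodesic need not coincide with the subgeodesic $\gamma|_{[a,b]}$ or with any prescribed $\gamma'$. The left/right $\tau$-derivatives at $\gamma^a$ and $\gamma^b$ are a priori geodesic-dependent quantities, and the pointwise inequality $\partial^+_\tau u(\tau-)\le\partial^-_\tau u(\tau+)$ together with the endpoint limit conditions of Definition~\ref{upp-reg} must be invoked to reconcile these values across different geodesics emanating from the same base points. Making this reconciliation quantitative enough to propagate the endpoint slope estimate from $\gamma_{a,b}$ to $\gamma|_{[a,b]}$ — and then to assemble the resulting local estimates into a distributional lower bound on $u''$ — is the technical heart of the argument.
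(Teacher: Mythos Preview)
Your overall scheme matches the paper's, and the easy implications are fine. (One small correction: $(iv)\Rightarrow(i)$ \emph{does} require the usc/ac conditions on $V\circ\gamma$, which here are supplied by upper regularity; Proposition~\ref{slope-conv-str} builds those into its hypothesis (ii), so it is not quite ``no regularity''.) The substantive step is indeed the bridging $(iii)\Rightarrow(iv)$ (equivalently $(iii)\Rightarrow(i)$), and here your sketch has a genuine gap.

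You write that upper regularity ``along both $\gamma$ and $\gamma_{a,b}$'' is used to compare the one-sided slopes at the common endpoints $\gamma^a,\gamma^b$. But Definition~\ref{upp-reg} only compares left and right derivatives of $V$ \emph{along a single fixed geodesic}; it says nothing about slopes measured along two different geodesics that happen to meet at a point. The device that makes the comparison work --- and which your proposal does not supply --- is to \emph{concatenate}: form the curve $\hat\gamma$ equal to $\gamma$ on $[0,a]\cup[b,1]$ and to (the suitably reparametrized) $\gamma_{a,b}$ on $[a,b]$. Since both pieces are minimizing between the correct endpoints, $\hat\gamma$ is itself a geodesic from $\gamma^0$ to $\gamma^1$, so upper regularity applies along $\hat\gamma$. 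At the interior junctions $a,b$ this yields precisely
$\partial^+_\tau V(\gamma_{a,b}^{b-})\le \partial^-_\tau V(\gamma^{b+})$ and $\partial^+_\tau V(\gamma^{a-})\le \partial^-_\tau V(\gamma_{a,b}^{a+})$,
which is the transfer from $\gamma_{a,b}$ to $\gamma$ that you need. The paper runs this with $a=\sigma_n\searrow0$, $b=\tau_n\nearrow1$ and then invokes the endpoint clauses of Definition~\ref{upp-reg} to pass to the limit, obtaining (iv) for the arbitrary geodesic $\gamma$ directly. Your variant --- working with all interior $a<b$ and concluding $u''\ge K\,d^2(\gamma^0,\gamma^1)$ distributionally via Lemma~\ref{K-conv-lem} --- would also go through once the concatenation trick is in place, but as written the proposal identifies the obstacle without resolving it.
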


\begin{proof} The
 implication ``(i) $\Rightarrow$ (ii)'' is trivial. To see   ``(ii) $\Rightarrow$ (iii)'' simply add up the convexity inequalities with parameters $\tau$ and $1-\tau$ to obtain
 $$\frac1\tau\Big[V(\gamma^1)-V(\gamma^{1-\tau})\Big]\ge
 \frac1\tau\Big[V(\gamma^\tau)-V(\gamma^{0})\Big]+
K(1-\tau)d^2(x^0,x^1).$$
Passing to the limit $\tau\to0$ yields the claim.

 For   ``(iii) $\Rightarrow$ (iv)'' let an arbitrary geodesic $\gamma$ be given (with finite values of $V$ at the endpoints).
 For suitable sequences $\sigma_n\searrow0$ and $\tau_n\nearrow1$ choose geodesics $\tilde\gamma_n: \to X$ connecting $\gamma^{\sigma_n}$ to
 $\gamma^{\tau_n}$ for which property (iii) (with appropriate rescaling) is satisfied:
   \begin{equation*}
\partial^+_\tau
V(\tilde\gamma^{\tau_n-})\ge 
\partial^-_\tau
V(\tilde\gamma^{\sigma_n+})+K (\tau_n-\sigma_n)
d^2(\gamma^0,\gamma^1)
\end{equation*}
 
Let $\gamma_n: [0,1]\to X$ denote the geodesic which on $[\sigma_n,\tau_n]$ coincides with $\tilde\gamma_n$ and on the rest with $\gamma$. (It is easy to see that this indeed is a geodesic.)
Since $V$ is upper regular along $\gamma$ we firstly conclude that
 \begin{equation*}
\partial^-_\tau
V(\gamma^{\tau_n+})\ge 
\partial^+_\tau
V(\gamma^{\sigma_n-})+K (\tau_n-\sigma_n)
d^2(\gamma^0,\gamma^1)
\end{equation*}
and then secondly -- by considering the limit $n\to\infty$ -- that
 \begin{equation*}
\partial^+_\tau
V(\gamma^{1-})\ge 
\partial^-_\tau
V(\gamma^{0+})+K
d^2(\gamma^0,\gamma^1).
\end{equation*}
The
 implication ``(iv) $\Rightarrow$ (i)'' follows from Lemma \ref{K-conv-lem}.
\end{proof}

%Recall that a function $v: [0,1]\to(-\infty,\infty]$ is called \emph{semiconvex} if it is  $K$-convex for some $K$. A function $V$ on a geodesic space is strongly semiconvex if it is semiconvex along each geodesic (with $K$ depending on the geodesic).

\subsection{Time-dependent Geodesic Spaces}

 For the subsequent discussion, our basic setting will be a
space $X$ equipped with a 1-parameter family of geodesic metrics
$(d_t)_{ t\in I}$ where $I\subset\R$ is a fixed interval which is assumed to be open from the left.
(More generally, one might allow $d_t$ to be pseudo metrics  where the existence of connecting geodesics is only requested for pairs $x,y\in X$ with $d_t(x,y)<\infty$.)
%For convenience, we mostly choose $I=(0,T]$.

Given a function
 $\gamma: [\sigma,\rho]\to X, \tau\mapsto \gamma^\tau$,   its \emph{action} %and the length 
 (w.r.t.\ $d_t$) is given by
 \begin{equation}\label{action}{\frak a}_t(\gamma)=\sup\Big\{\sum_{i=1}^{k}\frac1{\tau_{i}-\tau_{i-1}}d_t^2\left(\gamma^{\tau_{i-1}}, \gamma^{\tau_i}\right):
 \ \sigma\le \tau_0<\tau_1<\ldots<\tau_k\le\rho, k\in\N\Big\}.
\end{equation}
(Recall that this sum $\sum_{i=1}^{k}\ldots$ is increasing if the partition $\vec\tau$ is getting finer.)
If the action is finite, then $\gamma$ is $d_t$-continuous (i.e. it is a `curve') and 
  the \emph{infinitesimal action} 
\begin{equation*}\label{inf-action}
{\frak g}_t^\tau(\gamma)= \lim_{\tau'\to \tau}\left|\frac{d_t(\gamma^\tau,\gamma^{\tau'})}{\tau-\tau'}\right|^2
\end{equation*}
exists for a.e.\ $\tau\in [\sigma,\rho]$. It is the square of the metric derivative w.r.t.\ $d_t$.
Moreover, 
%\begin{equation*}%\label{action}
$
{\frak a}_t(\gamma)=
\int_\sigma^\rho {\frak g}_t^\tau(\gamma)d\tau.
$
%\end{equation*}
%(The limit is an increasing limit and thus coincides with the supremum.)
The curve $\gamma$ is called \emph{$d_t$-geodesic} if ${\frak a}_t(\gamma)=d_t^2(\gamma^0,\gamma^1)$. Note that in this case the  sum in \eqref{action} does not depend on the partition $\vec\tau=\{\tau_0,\ldots, \tau_k\}$.

%\medskip

For fixed $t$ and for a fixed $d_t$-geodesic $\gamma$ let us now consider the quantity
\begin{equation*}
\sum_{i=1}^{k}\frac1{\tau_{i}-\tau_{i-1}}\frac1{t-s}\left[d_t^2\left(\gamma^{\tau_{i-1}}, \gamma^{\tau_i}\right)
-d_s^2\left(\gamma^{\tau_{i-1}}, \gamma^{\tau_i}\right)\right]
\end{equation*}
for any partition $\vec\tau=\{\tau_0,\ldots, \tau_k\}$ of $[\sigma,\rho]$.
For any $s<t$ it is decreasing with $\vec\tau$ getting finer. (For $s>t$ it is increasing in $\vec\tau$.)
Thus also the quantity ${\frak b}_t^{\vec\tau}(\gamma)$ is decreasing with increasing $\vec\tau$ where we put
${\frak b}_t^{\vec\tau}(\gamma)=-\infty$ if $\partial_t^-d_{t-}^2(\gamma^{\tau_{i-1}}, \gamma^{\tau_i})=-\infty$ for some $i$ and 
\begin{equation*}
{\frak b}_t^{\vec\tau}(\gamma)=\sum_{i=1}^{k}\frac1{\tau_{i}-\tau_{i-1}}\partial_t^-d_{t-}^2\left(\gamma^{\tau_{i-1}}, \gamma^{\tau_i}\right)
\end{equation*}
otherwise.
 Let us illustrate this in the case $\{\tau^0,\tau^2\}\subset \{\tau^0,\tau^1,\tau^2\}$:
\begin{eqnarray*}
&&\sum_{i=1}^{2}\liminf_{s\nearrow t}\frac1{t-s}\frac1{\tau_{i}-\tau_{i-1}}\left[d_t^2\left(\gamma^{\tau_{i-1}}, \gamma^{\tau_i}\right)
-d_s^2\left(\gamma^{\tau_{i-1}}, \gamma^{\tau_i}\right)\right]\\
&&\le
\liminf_{s\nearrow t}\frac1{t-s}\sum_{i=1}^{2}\frac1{\tau_{i}-\tau_{i-1}}\left[d_t^2\left(\gamma^{\tau_{i-1}}, \gamma^{\tau_i}\right)
-d_s^2\left(\gamma^{\tau_{i-1}}, \gamma^{\tau_i}\right)\right]\\
&&\le\liminf_{s\nearrow t}\frac1{t-s}\frac1{\tau_{2}-\tau_{0}}\left[d_t^2\left(\gamma^{\tau_{0}}, \gamma^{\tau_2}\right)
-d_s^2\left(\gamma^{\tau_{0}}, \gamma^{\tau_2}\right)\right].
\end{eqnarray*}

We define the {\it  strain} of the geodesic $\gamma$ by 
\begin{equation}
{\frak b}_t(\gamma)=\inf_{\vec\tau}
\sum_{i=1}^{k}\frac1{\tau_{i}-\tau_{i-1}}\partial_t^-d_{t-}^2\left(\gamma^{\tau_{i-1}}, \gamma^{\tau_i}\right).
\end{equation}
In particular,  ${\frak b}_t(\gamma)\le\frac1{\rho-\sigma}
\partial_t^-d_{t-}^2(\gamma^\sigma,\gamma^\rho)$.

If $\gamma: [0,1]\to X$ and $\tilde\gamma: [\sigma,\rho]\to X$ are linear time changes of each other -- i.e.\ $\gamma(\tau)=\tilde\gamma(\sigma+\tau(\rho-\sigma))$ for all $\tau$ -- then
$${\frak a}_t(\gamma)={\frak a}_t(\tilde\gamma)\cdot(\rho-\sigma)\quad\mbox{and}\quad
{\frak b}_t(\gamma)={\frak b}_t(\tilde\gamma)\cdot(\rho-\sigma).$$

\medskip

From time to time, we assume that  there exist
 nonnegative functions $\kappa\in L^1_{loc}(I)$ or $\lambda\in L^1_{loc}(I)$ such that 
 for all $s<t$ and all $x,y$
\begin{equation}\label{upp-lip}
 \log d_t(x,y)-\log d_s(x,y)\le \int_s^t \kappa_rdr
\end{equation}
or
\begin{equation}\label{low-lip}
 \log d_t(x,y)-\log d_s(x,y)\ge -\int_s^t \lambda_rdr,
\end{equation}
resp.
The  functions $\kappa$ and $\lambda$ are called (upper or lower, resp.) \emph{log-Lipschitz controls} for $d$.
%Without restriction, we may assume that the set
%$I_\kappa:=\{ t\in I: \kappa_t<\infty\}$ coincides with the set of Lebesgue density points of  $\kappa$. Analogously for $\lambda$.
Obviously, \eqref{upp-lip} is equivalent to upper absolute continuity of $t\mapsto d_t(x,y)$ and
$$\partial_t^+ d_t(x,y)\le \kappa_t\cdot d_t(x,y)\qquad\quad\mbox{for a.e.} t\in I$$
whereas  \eqref{low-lip}
is equivalent to lower absolute continuity and
$$\partial_t^- d_t(x,y)\ge -\lambda_t\cdot d_t(x,y)\qquad\quad\mbox{for a.e.\ } t\in I.$$
These  log-Lipschitz controls on $d_t$ immediately imply
$ {\frak b}_t(\gamma) \le 2\kappa_t\cdot  {\frak a}_t(\gamma)$ and
$ {\frak b}_t(\gamma) \ge -2\lambda_t\cdot  {\frak a}_t(\gamma)$, resp.

\subsection{Dynamic Convexity}

Let a time-dependent geodesic space $(X,d_t)_{ t\in I}$ be given and a function  $V: I\times X\to (-\infty,\infty]$. We always assume that $I$ is open from the left.
For $t\in I$, put $\Dom(V_t)=\{x\in X: \ V_t(x)<\infty\}$.

\begin{definition}\label{dyn}
(i) We say that
the  function $V: I\times X\to (-\infty,\infty]$  is \emph{strongly dynamically convex} if for a.e.\ $t\in I$  and for every $d_t$-geodesic $(\gamma^\tau)_{\tau\in[0,1]}$ with $\gamma^0,\gamma^1\in\Dom(V_t)$
the function $\tau\mapsto V_t(\gamma^\tau)$ is usc on $[0,1]$, ac on $(0,1)$, and
  \begin{equation}\label{dyn-conv-0}
\partial^+_\tau V_t(\gamma^{1-})- \partial^-_\tau V_t(\gamma^{0+})\ge -\frac 12\partial_t^- d_{t-}^2(\gamma^0,\gamma^1).
\end{equation}
%Recall that $ \partial_t^-d^2 _{t-}=\liminf_{s\nearrow t}\frac1{t-s}(d^2_t-d^2_s)$.
For well-definedness of the LHS here -- and in similar expressions henceforth -- we make the convention that $(+\infty)-(+\infty)=+\infty$ as well as $(-\infty)-(-\infty)=+\infty$.

(ii)
Given $\lambda\in L^1_{loc}(I)$, we say $V$  is \emph{$\lambda$-moderate  dynamically convex} 
for a.e.\ $t$ and  
 $\forall x^0,x^1\in \Dom(V_t)$: \  $\exists$
  $d_t$-geodesic $(x^\tau)_{\tau\in[0,1]}$ connecting $x^0,x^1$  s.t. for all $\tau\in[0,1/2]$
\begin{eqnarray}\nonumber
\frac1\tau\Big[V_t(x^0)-V_t(x^\tau)
-V_t(x^{1-\tau})+V_t(x^1)\big]
\ge -\frac12\partial_t^- d_{t-}^2(x^0,x^1)
- \tau\lambda_t d_t^2(x^0,x^1).
\end{eqnarray}
\end{definition}

\begin{remark*} In most cases of application, for every $t\in I$ the function $V_t$ will be lower semicontinuous on $X$. In this case, the requirement that $\tau\mapsto V_t(\gamma^\tau)$ is usc on $[0,1]$ and ac on $(0,1)$ is equivalent to the condition that it is ac on $[0,1]$.
\end{remark*}

\begin{proposition}\label{stat-dyn-k} Given  a number $K\in\R$ and a non-empty interval $I\subset \{t\in\R: 2Kt<1\}$. Then for any  function $V$ on a static metric space $(X,d)$, the following are equivalent
\begin{itemize}

\item[(i)] $V$ is strongly $K$-convex on  $(X,d)$
\item[(ii)] $V$ is strongly dynamically convex on $(X,d_t)_{t\in I}$ where
$d_t^2=d^2\cdot(1-2Kt)$.
\end{itemize}
\end{proposition}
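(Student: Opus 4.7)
The plan is to reduce both sides of the equivalence to the same one-sided derivative inequality via Proposition~\ref{slope-conv-str}, using the fact that $d_t$ is nothing but a global rescaling of $d$.

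First, I would record two elementary observations. Since $I\subset\{2Kt<1\}$, we have $1-2Kt>0$ for all $t\in I$, so $d_t=\sqrt{1-2Kt}\cdot d$ is a genuine metric, and $(X,d_t)$ is a geodesic space. Because $d_t$ is a positive scalar multiple of $d$, a curve $\gamma:[0,1]\to X$ satisfies the constant-speed minimizing property for $d_t$ if and only if it does for $d$; hence the set of $d_t$-geodesics (as parametrized curves) is independent of $t\in I$ and coincides with the set of $d$-geodesics. Second, the map $t\mapsto d_t^2(x^0,x^1)=(1-2Kt)\,d^2(x^0,x^1)$ is $C^1$, so
\[
-\tfrac12\,\partial_t^- d_{t-}^2(x^0,x^1)=K\,d^2(x^0,x^1)
\qquad(\forall\,t\in I,\ \forall\,x^0,x^1\in X).
\]
Also, since $V$ does not depend on $t$, $\Dom(V_t)=\Dom(V)$ for every $t$.

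Next, I would invoke Proposition~\ref{slope-conv-str} applied to the static space $(X,d)$: strong $K$-convexity of $V$ is equivalent to requiring that for every $d$-geodesic $(\gamma^\tau)_{\tau\in[0,1]}$ with $\gamma^0,\gamma^1\in\Dom(V)$ the map $\tau\mapsto V(\gamma^\tau)$ is usc on $[0,1]$, ac on $(0,1)$, and
\[
\partial^+_\tau V(\gamma^{1-})-\partial^-_\tau V(\gamma^{0+})\ge K\,d^2(\gamma^0,\gamma^1).
\]
By the two observations above, this is precisely the strong dynamic convexity inequality \eqref{dyn-conv-0} for $V_t=V$ along $d_t$-geodesics on $(X,d_t)_{t\in I}$, and the regularity statements are identical as well.

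Finally, I would handle the ``a.e.\ $t$'' in Definition~\ref{dyn}(i) versus the $t$-free formulation of strong $K$-convexity. Because the $d_t$-geodesics, the function $V_t=V$, and (by the computation above) the right-hand side $-\tfrac12\partial_t^- d_{t-}^2$ are all completely independent of $t\in I$, the condition in (ii) holds for a.e.\ $t\in I$ if and only if it holds for every $t\in I$, if and only if it holds for some $t\in I$; and any of these is exactly the condition characterizing strong $K$-convexity from Proposition~\ref{slope-conv-str}. This gives (i)$\Leftrightarrow$(ii). No serious obstacle is expected: the only subtlety worth noting is the requirement $I\subset\{2Kt<1\}$, which is exactly what makes $d_t$ a well-defined metric and keeps the $t$-derivative computation valid.
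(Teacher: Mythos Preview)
Your proof is correct and follows essentially the same approach as the paper: invoke the slope characterization of strong $K$-convexity (Proposition~\ref{slope-conv-str}) and compute $\partial_t d_t^2=-2K\,d^2$. You are simply more explicit than the paper about why the $d_t$-geodesics coincide with the $d$-geodesics and why the ``a.e.\ $t$'' quantifier is harmless here.
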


\begin{proof} Recall that a function which is absolutely continuous along all geodesics is strongly
$K$-convex if and only  for every $d$-geodesic
\begin{eqnarray*}
\partial^+_\tau V_t(\gamma_t^{1-})- \partial^-_\tau V_t(\gamma_t^{0+})\ge K\cdot d^2(x^0,x^1).
\end{eqnarray*}
Moreover, by construction we have 
$-2K\cdot d^2(x^0,x^1)=\partial_t d_t^2(x^0,x^1)$. This proves the claim.
\end{proof}

One can also introduce the more general concept of \emph{dynamically $K$-convex} functions (or, more precisely, dynamically $(K,\infty)$-convex functions). However, as illustrated in the subsequent result, this does not really enlarge the scope.

\begin{proposition}\label{dyn-dyn-k} Given a number $K\ne0$ and an interval interval $I\subset \{t\in\R: 2Kt<1\}$.
Assume that $t\mapsto d_t(x,y)$ is left continuous (for all $x,y$).
Then for each function $V: I\times X\to (-\infty,\infty]$  the following are equivalent
\begin{itemize}

\item[(i)]
 $V$  is strongly
dynamically $K$-convex on $(X,d_t)_{t\in I}$ in the sense that for a.e.\ $t\in I$ and for every $d_t$-geodesic $(\gamma^\tau)_{\tau\in[0,1]}$ with $\gamma^0,\gamma^1\in\Dom(V_t)$
the function $\tau\mapsto V_t(\gamma^\tau)$ is usc on $[0,1]$, ac on $(0,1)$, and
  \begin{equation}\label{dyn-conv2}
\partial^+_\tau V_t(\gamma^{1-})- \partial^-_\tau V_t(\gamma^{0+})\ge -\frac 12\partial_t^- d_{t-}^2(\gamma^0,\gamma^1)+K\cdot d^2_t(\gamma^0,\gamma^1).
\end{equation}

\item[(ii)]
$V$  is 
strongly dynamically convex on $(X,\tilde d_t)_{t\in I}$ where
$$\tilde d_t^2=(1-2Kt)\cdot d^2_{s(t)}, \quad s(t)=-\frac1{2K}\ln(1-2Kt).$$

\end{itemize}
\end{proposition}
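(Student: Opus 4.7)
The plan is to exploit the explicit time reparametrization built into $\tilde d$ and reduce, term by term, the dynamical convexity inequality \eqref{dyn-conv-0} for $\tilde d$ at time $t$ to the dynamical $K$-convexity inequality \eqref{dyn-conv2} for $d$ at time $s(t)$. The first observation is that at any fixed $t$ the metric $\tilde d_t=\sqrt{1-2Kt}\,d_{s(t)}$ is merely a spatially constant positive rescaling of $d_{s(t)}$. Since global rescaling of a metric preserves constant-speed length-minimizing curves, the class of $\tilde d_t$-geodesics $\gamma:[0,1]\to X$ coincides with the class of $d_{s(t)}$-geodesics. Consequently, the regularity requirements on $\tau\mapsto V_t(\gamma^\tau)$ (usc on $[0,1]$, ac on $(0,1)$) and the slopes $\partial^\pm_\tau V(\gamma^\tau)$ on the left-hand sides of the two convexity inequalities are imposed on the same family of curves and coincide literally under the identification $V_t\leftrightarrow V_{s(t)}$ induced by the reparametrization.

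The core step is the chain-rule computation for $\partial_t^-\tilde d_{t-}^2$. Starting from the algebraic identity
\begin{equation*}
\tilde d_t^2(x,y)-\tilde d_r^2(x,y)=(1-2Kt)\bigl[d_{s(t)}^2(x,y)-d_{s(r)}^2(x,y)\bigr]-2K(t-r)\,d_{s(r)}^2(x,y),
\end{equation*}
dividing by $t-r$ and letting $r\nearrow t$, one applies $s'(t)=(1-2Kt)^{-1}$ and the hypothesized left continuity of $t\mapsto d_t$ (transported through the smooth increasing $s$) to obtain
\begin{equation*}
-\tfrac12\,\partial_t^-\tilde d_{t-}^2(x,y)=-\tfrac12\,\partial_s^- d_{s-}^2(x,y)\big|_{s=s(t)}+K\,d_{s(t)}^2(x,y),
\end{equation*}
which is precisely the right-hand side of \eqref{dyn-conv2} at time $s(t)$, evaluated at $x=\gamma^0$, $y=\gamma^1$. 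Combined with the first observation, inequality (ii) at time $t$ for a $\tilde d_t$-geodesic then reads, term by term, as inequality (i) at time $s(t)$ for the same curve seen as a $d_{s(t)}$-geodesic. Since $s$ is a smooth bijection between $I$ and $s(I)$ with strictly positive derivative, the `a.e.\ $t$' quantifier is preserved under this correspondence, and the equivalence follows in both directions.

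The main obstacle I anticipate is the $\liminf$ manipulation in the chain rule. One must justify that $\liminf_{r\nearrow t}$ of the difference quotient splits as $(1-2Kt)$ times the $\liminf$ of the inner factor plus the limit of the continuously converging term $-2K\,d_{s(r)}^2(x,y)$, and that within the inner factor the strictly positive limit $\frac{s(t)-s(r)}{t-r}\to s'(t)$ can similarly be extracted. Both steps rely on the positivity of the multiplicative constants and on the left continuity of $d$, which forces $d_{s(r)}^2\to d_{s(t)}^2$ as $r\nearrow t$; these are precisely the ingredients placed into the hypotheses of the proposition. Once this lim-inf bookkeeping is justified, the rest of the argument is purely algebraic.
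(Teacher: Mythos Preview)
Your proposal is correct and follows essentially the same approach as the paper: both arguments rest on the chain-rule computation
\[
\partial_t^-\tilde d_{t-}^2=-2K\,d_{s(t)}^2+(\partial_s^- d_{s-}^2)\big|_{s=s(t)},
\]
together with the observation that $\tilde d_t$-geodesics and $d_{s(t)}$-geodesics coincide (being constant positive rescalings of one another). The paper's proof is terser---it simply records the chain rule and the geodesic identification---whereas you spell out why the $\liminf$ splits correctly (positivity of $1-2Kt$ and $s'(t)$, left continuity of $d$) and why the a.e.\ quantifier survives the smooth bijection $s$; these are exactly the points the paper leaves implicit.
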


\begin{proof} By construction
\begin{eqnarray*}
\partial^-_t \tilde d_{t-}^2&=&-2K\cdot d_{s(t)}^2+
(1-2Kt)\cdot \partial^-_t d^2_{s(t)-}\\
&=&-2K\cdot d_{s(t)}^2+(1-2Kt)\cdot
(\partial^-_s d^2_{s-})\big|_{s=s(t)}\cdot \partial_t s(t)\\
&=&\left[-2K\cdot d_{s}^2+
(\partial^-_s d^2_{s-})\right]\Big|_{s=s(t)}.
\end{eqnarray*}
Since each $d_t$-geodesic is a $\tilde d_t$-geodesic (and vice versa) this proves the claim.
\end{proof}

\begin{proposition}\label{log-ric}
Assume that $\kappa$ is an upper log-Lipschitz control for the time-dependent metric space $(X,d_t)_{t\in I}$ and that $V$ is strongly dynamically convex on $I\times X$. Then for a.e.\ $t\in I$  the function $V_t$   is strongly $(-\kappa_t)$-convex on $X$.
\end{proposition}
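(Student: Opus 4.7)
The plan is to translate the upper log-Lipschitz control into an upper bound on the one-sided time-derivative $\partial_t^- d_{t-}^2$, and then plug this bound into the defining inequality \eqref{dyn-conv-0} of strong dynamic convexity. The conclusion will follow from the characterization of strong $K$-convexity given in Proposition \ref{slope-conv-str} (applied with $K=-\kappa_t$).

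First, I would rewrite the upper log-Lipschitz control \eqref{upp-lip} as
\begin{equation*}
d_s(x,y) \ge d_t(x,y)\cdot \exp\!\Bigl(-\int_s^t \kappa_r\, dr\Bigr)
\qquad (s<t),
\end{equation*}
which, after squaring and rearranging, yields
\begin{equation*}
\frac{d_t^2(x,y)-d_s^2(x,y)}{t-s}
\le d_t^2(x,y)\cdot \frac{1-\exp\!\bigl(-2\int_s^t \kappa_r\, dr\bigr)}{t-s}.
\end{equation*}
Taking $\liminf_{s\nearrow t}$ on both sides and applying the Lebesgue differentiation theorem to $\kappa\in L^1_{loc}(I)$, we obtain at every Lebesgue point $t$ of $\kappa$ the pointwise bound
\begin{equation*}
\partial_t^- d_{t-}^2(x,y) \le 2\kappa_t\cdot d_t^2(x,y)
\qquad (\forall x,y\in X).
\end{equation*}
Since almost every $t\in I$ is such a Lebesgue point, this estimate holds for a.e.\ $t$.

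Now fix any $t\in I$ that is simultaneously a Lebesgue point of $\kappa$ and a point at which the strong dynamic convexity inequality \eqref{dyn-conv-0} is available for every $d_t$-geodesic; this excludes only a null set. Let $(\gamma^\tau)_{\tau\in[0,1]}$ be an arbitrary $d_t$-geodesic with $\gamma^0,\gamma^1\in\Dom(V_t)$. By strong dynamic convexity, $\tau\mapsto V_t(\gamma^\tau)$ is usc on $[0,1]$ and ac on $(0,1)$, and combining \eqref{dyn-conv-0} with the above upper bound applied to $(x,y)=(\gamma^0,\gamma^1)$ gives
\begin{equation*}
\partial^+_\tau V_t(\gamma^{1-})-\partial^-_\tau V_t(\gamma^{0+})
\ge -\tfrac12\,\partial_t^- d_{t-}^2(\gamma^0,\gamma^1)
\ge -\kappa_t\cdot d_t^2(\gamma^0,\gamma^1).
\end{equation*}
This is precisely the characterization (ii) of strong $(-\kappa_t)$-convexity in Proposition \ref{slope-conv-str}, so $V_t$ is strongly $(-\kappa_t)$-convex on $(X,d_t)$.

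The only delicate step is the passage from the log-Lipschitz control to the bound on $\partial_t^- d_{t-}^2$: one must be careful that the bound is an \emph{upper} bound on the liminf (not merely an upper bound on the limsup), which works here because $s<t$ forces the numerator $d_t^2-d_s^2$ to be dominated termwise. Everything else is a direct substitution into the already-established equivalence from Proposition \ref{slope-conv-str}.
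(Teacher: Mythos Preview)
Your proof is correct and follows essentially the same route as the paper: use the upper log-Lipschitz control to bound $\partial_t^- d_{t-}^2(\gamma^0,\gamma^1)\le 2\kappa_t\, d_t^2(\gamma^0,\gamma^1)$, insert this into the dynamic convexity inequality \eqref{dyn-conv-0}, and read off strong $(-\kappa_t)$-convexity via Proposition~\ref{slope-conv-str}. You are simply more explicit than the paper about the Lebesgue-point argument and the derivation of the bound on $\partial_t^- d_{t-}^2$.
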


\begin{proof}
Using first the strong dynamic convexity of $V$ and then the log-Lipschitz bound for $d_t$ we conclude  that for all  $d_t$-geodesics 
\begin{eqnarray*}
\partial_\tau V_t(\gamma^{1-})- \partial_\tau V_t(\gamma^{0+})
&\ge& -\partial^-_t d^2_{t-}(\gamma^0,\gamma^1)\\
&\ge& -\kappa_t\cdot d_t^2(\gamma^0,\gamma^1).
\end{eqnarray*}
This proves the strong $(-\kappa_t)$-convexity.
\end{proof}

\begin{remark*}
For static spaces, $\lambda$-moderate  dynamic convexity implies weak midpoint $K$-convexity for each $K<0$ (see Proposition \ref{slope-conv-weak}).
This in turn implies proper convexity provided $X$ is locally compact and $V$ is lsc on $X$  as well as usc along geodesics (Theorem \ref{mid-weak-str}).
\end{remark*}

Under quite moderate assumptions,
the concepts of strong dynamic convexity, weak dynamic convexity, and $\lambda$-moderate  dynamic convexity will coincide.

We say that $V$ is upper regular if for a.e.\ $t\in I$ the function  $x\mapsto V_t(x)$ is upper regular on $(X,d_t)$ in the sense of Definition \ref{upp-reg}.

\begin{theorem}\label{prop-dyn}
Assume that $\lambda$ is a lower log-Lipschitz control for the time-dependent metric  $(d_t)_{t\in I}$ and that $V$ is an upper regular function  on  $I\times X$. 
Then for a.e.\ $t\in I$ the following are equivalent:
\begin{itemize}

\item[(i)]
%For each 
$\forall x^0,x^1\in \Dom(V_t)$:    $\exists$
   $d_t$-geodesic $(x^\tau)_{\tau\in[0,1]}$ connecting $x^0$ and $x^1$  s.t.
\begin{equation*}%\label{dyn-conv2}
\partial^+_\tau V_t(x^{1-})- \partial^-_\tau V_t(x^{0+})\ge -\frac 12\partial_t^- d_{t-}^2(x^0,x^1)
\end{equation*}
%where $\partial_a V_t(\gamma_t^{1-})=\limsup_{a \searrow 0}\frac1{a}(V_t(\gamma_t^1)-V_t(\gamma_t^{1-a}))$
%and $\partial_a V_t(\gamma_t^{0+})=\liminf_{a \searrow 0}\frac1{a}(V_t(\gamma_t^a)-V_t(\gamma_t^{0}))$.

\item[(ii)]
$\forall $
   $d_t$-geodesic $(\gamma^\tau)_{\tau\in[0,1]}$ with $\gamma^0,\gamma^1\in \Dom(V_t)$
\begin{equation*}%\label{dyn-conv3}
\partial^+_\tau V_t(\gamma^{1-})- \partial^-_\tau V_t(\gamma^{0+})\ge -\frac 12 {\frak b}_t(\gamma_t)
\end{equation*}

\item[(iii)]
$\forall $
   $d_t$-geodesic $(\gamma^\tau)_{\tau\in[0,1]}$ with 
   $\gamma^0,\gamma^1\in \Dom(V_t)$
    and all $\tau\le \frac12$
\begin{equation}\label{sieben999} 
 V_t(\gamma^0) +V_t(\gamma^1)- V_t(\gamma^\tau)-V_t(\gamma^{1-\tau}) \ge -\frac12 \int_0^\tau \frac1{1-2\sigma} \partial_t^- d_{t-}^2(\gamma^\sigma,\gamma^{1-\sigma})d\sigma
\end{equation}

\item[(iv)]
$\forall x^0,x^1\in \Dom(V_t): \, \exists
  d_t$-geodesic $(x^\tau)_{\tau\in[0,1]}$ connecting $x^0$ and $x^1$  s.t. for all $\tau\in[0,1/2]$
\begin{equation}\label{dyn-conv}
V_t(x^0)+V_t(x^1)-
V_t(x^\tau)-V_t(x^{1-\tau})\ge -\frac \tau2\partial_t^- d_{t-}^2(x^0,x^1)- \lambda_t \tau^2 d^2_t(x^0,x^1).
\end{equation}
\end{itemize}
\end{theorem}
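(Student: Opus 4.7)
My plan is to prove the cyclic chain $(iv)\Rightarrow(i)\Rightarrow(ii)\Rightarrow(iii)\Rightarrow(iv)$: the easy steps $(iv)\Rightarrow(i)$ and $(ii)\Rightarrow(iii)$ follow by direct computation, while the delicate steps $(i)\Rightarrow(ii)$ and $(iii)\Rightarrow(iv)$ invoke the upper regularity of $V$ and the lower log-Lipschitz control $\lambda$, respectively.

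For $(iv)\Rightarrow(i)$, I divide (iv) by $\tau$ and pass $\tau\searrow 0$. Choosing a sub-sequence along which $\frac{V_t(x^0)-V_t(x^\tau)}{\tau}\to -\partial^-_\tau V_t(x^{0+})$ (its $\limsup$), then a further sub-sequence along which $\frac{V_t(x^1)-V_t(x^{1-\tau})}{\tau}$ converges to some $L\le \partial^+_\tau V_t(x^{1-})$, the divided inequality becomes $-\partial^-_\tau V_t(x^{0+})+L\ge -\tfrac{1}{2}\partial_t^- d_{t-}^2(x^0,x^1)$, whence $\partial^+_\tau V_t(x^{1-})-\partial^-_\tau V_t(x^{0+})\ge L-\partial^-_\tau V_t(x^{0+})\ge -\tfrac{1}{2}\partial_t^- d_{t-}^2(x^0,x^1)$, giving (i). For $(ii)\Rightarrow(iii)$, I fix a $d_t$-geodesic $\gamma$, set $\phi(\sigma):=V_t(\gamma^0)+V_t(\gamma^1)-V_t(\gamma^\sigma)-V_t(\gamma^{1-\sigma})$, and apply (ii) to the linearly reparametrized sub-geodesic $\tilde\gamma^u=\gamma^{\sigma+u(1-2\sigma)}$ on $[0,1]$. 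The single-element partition bound $\mathfrak{b}_t(\tilde\gamma)\le \partial_t^- d_{t-}^2(\gamma^\sigma,\gamma^{1-\sigma})$ together with the chain rule $\partial_u V_t(\tilde\gamma^u)=(1-2\sigma)\partial_\tau V_t(\gamma^{\sigma+u(1-2\sigma)})$ yields the Dini lower bound $\phi'(\sigma)\ge -\tfrac{1}{2(1-2\sigma)}\partial_t^- d_{t-}^2(\gamma^\sigma,\gamma^{1-\sigma})$. Upper regularity of $V$ along $\gamma$ renders $\phi$ absolutely continuous on $(0,1/2)$, so integration from $0$ to $\tau$ produces \eqref{sieben999}.

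The first delicate step $(i)\Rightarrow(ii)$ mirrors the proof of Theorem~\ref{slope-conv}: given any $d_t$-geodesic $\gamma$ with endpoints in $\Dom(V_t)$, apply (i) to the pairs $(\gamma^{\sigma_n},\gamma^{\tau_n})$ with $\sigma_n\searrow 0$ and $\tau_n\nearrow 1$; concatenate each resulting geodesic with $\gamma$ outside $[\sigma_n,\tau_n]$ to form auxiliary geodesics $\gamma_n$; and invoke upper regularity of $V$ along $\gamma$ to pass to the limit and transfer the slope inequality to $\gamma$ itself. This yields the every-geodesic slope form $\partial^+_\tau V_t(\gamma^{1-})-\partial^-_\tau V_t(\gamma^{0+})\ge -\tfrac{1}{2}\partial_t^- d_{t-}^2(\gamma^0,\gamma^1)$. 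Applying it to each reparametrized piece $\gamma|_{[\tau_{i-1},\tau_i]}$ of an arbitrary partition, summing, and invoking upper regularity once more at interior points ($\partial^+_\tau V_t(\gamma^{\tau-})\le \partial^-_\tau V_t(\gamma^{\tau+})$) to telescope the left-hand side, then taking the infimum over partitions on the right, upgrades the bound to the strain form $-\tfrac{1}{2}\mathfrak{b}_t(\gamma)$, giving (ii).

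The hardest step is $(iii)\Rightarrow(iv)$. Using the lower log-Lipschitz bound $\partial_t^- d_{t-}^2(x,y)\ge -2\lambda_t d_t^2(x,y)$ and the geodesic scaling $d_t(\gamma^\sigma,\gamma^{1-\sigma})=(1-2\sigma)d_t(\gamma^0,\gamma^1)$, one compares $\tfrac{1}{1-2\sigma}\partial_t^- d_{t-}^2(\gamma^\sigma,\gamma^{1-\sigma})$ to $\partial_t^- d_{t-}^2(\gamma^0,\gamma^1)$; integration in $\sigma$ then converts the right-hand side of \eqref{sieben999} into $-\tfrac{\tau}{2}\partial_t^- d_{t-}^2(\gamma^0,\gamma^1)$ plus an error bounded by $\lambda_t\tau^2 d_t^2(\gamma^0,\gamma^1)$, yielding (iv). Carefully handling the $\liminf$ defining $\partial_t^-$ under the integral sign, and ensuring the comparison survives passage to the limit $s\nearrow t$, is the most delicate analytical point of the whole proof.
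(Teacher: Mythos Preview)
Your cyclic scheme $(iv)\Rightarrow(i)\Rightarrow(ii)\Rightarrow(iii)\Rightarrow(iv)$ matches the paper's, and the steps $(iv)\Rightarrow(i)$, $(ii)\Rightarrow(iii)$, $(iii)\Rightarrow(iv)$ are carried out essentially as in the paper. The one substantive difference is in $(i)\Rightarrow(ii)$: the paper applies (i) directly to each pair $(\gamma^{\tau_{i-1}},\gamma^{\tau_i})$ of a partition, obtaining \emph{new} connecting geodesics $\tilde\gamma$, patches them together, and then uses the trick of summing only over $i=2,\dots,k-1$ so that the outermost pieces remain equal to the original $\gamma$; upper regularity along this hybrid geodesic transfers the slopes back to $\gamma$. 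Your route---first upgrade (i) to the every-geodesic form $\partial^+_\tau V_t(\gamma^{1-})-\partial^-_\tau V_t(\gamma^{0+})\ge -\tfrac12\partial_t^- d_{t-}^2(\gamma^0,\gamma^1)$ via the Theorem~\ref{slope-conv} argument, then partition $\gamma$ itself and telescope---is a legitimate and arguably cleaner alternative, since in your second step no auxiliary $\tilde\gamma$ appears and the telescoping is straightforward.

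There is, however, a genuine gap in your account of step~1 of $(i)\Rightarrow(ii)$. You assert that only upper regularity is needed there and that $\lambda$ enters solely in $(iii)\Rightarrow(iv)$. This is not so. After applying (i) to $(\gamma^{\sigma_n},\gamma^{\tau_n})$ and transferring slopes via upper regularity, the right-hand side you must control is $-\tfrac12\cdot\tfrac{1}{\tau_n-\sigma_n}\,\partial_t^- d_{t-}^2(\gamma^{\sigma_n},\gamma^{\tau_n})$, and you need its $\liminf$ to dominate $-\tfrac12\partial_t^- d_{t-}^2(\gamma^0,\gamma^1)$. The partition monotonicity of $\mathfrak b_t^{\vec\tau}(\gamma)$ gives
\[
\tfrac{1}{\tau_n-\sigma_n}\partial_t^- d_{t-}^2(\gamma^{\sigma_n},\gamma^{\tau_n})
\le \partial_t^- d_{t-}^2(\gamma^0,\gamma^1)
-\tfrac{1}{\sigma_n}\partial_t^- d_{t-}^2(\gamma^0,\gamma^{\sigma_n})
-\tfrac{1}{1-\tau_n}\partial_t^- d_{t-}^2(\gamma^{\tau_n},\gamma^1),
\]
and the last two terms could be arbitrarily negative without a lower control on $\partial_t^- d_{t-}^2$. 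The lower log-Lipschitz bound $\partial_t^- d_{t-}^2(x,y)\ge -2\lambda_t d_t^2(x,y)$ (or the weaker condition $\liminf_{\sigma\searrow0}\partial_t^- d_{t-}(\gamma^0,\gamma^\sigma)\ge 0$ noted in the remark after the paper's proof) is precisely what forces these terms to vanish in the limit. The paper uses the same control at the analogous point, where it says ``due to the lower log-Lipschitz control for $d_t$, the RHS of \eqref{k-1} approaches or exceeds $-\tfrac12\mathfrak b_t(\gamma)$.'' So your claim that $\lambda$ is irrelevant to $(i)\Rightarrow(ii)$ is incorrect; once you invoke it there, the argument goes through.
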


\begin{remark*}
The fact that $ \int_0^\tau \frac1{1-2\sigma} \partial_t^- d_{t-}^2(\gamma^\sigma,\gamma^{1-\sigma})d\sigma$ is well-defined -- as an extended integral with values in $(-\infty,+\infty]$ -- should be regarded as part of assertion (iii) of the previous Theorem.

This remark will apply in analogous ways to similar integrals in Proposition \ref{triple-prop} and in Theorem \ref{prop-N-dyn}.
\end{remark*}

\begin{proof}
(i)$\Leftrightarrow$(ii): Since $\partial_t^-d^2_{t-}(\gamma^0,\gamma^1)\ge
{\frak b}_t(\gamma)$,
obviously (ii) implies (i). For the converse, let a $d_t$-geodesic $\gamma$ as well as a partition $\vec\tau=\{\tau_0,\ldots,\tau_k\}$ of $[0,1]$ be given and apply (i) for each $i=1,\ldots, k$ to the `endpoints' 
$\gamma^{\tau_{i-1}}$ and  $\gamma^{\tau_i}$. It yields the existence of a connecting $d_t$-geodesic
 $\tilde \gamma: [\tau_{i-1},\tau_i]\to X$ such that 
\begin{equation*}\label{sieben4}
\partial_\tau V_t(\tilde\gamma^{{\tau_i}-})- \partial_\tau V_t(\tilde\gamma^{\tau_{i-1}+})\ge
- \frac 12\partial_t^- d_{t-}^2(\gamma^{\tau_{i-1}},\gamma^{\tau_{i}})
\cdot\frac1{\tau_i-\tau_{i-1}}.
\end{equation*}
Patching together all these pieces, it yields a $d_t$-geodesic
$\tilde\gamma:[0,1|\to X$.
Due to the upper regularity of $V$
\begin{equation*}\label{ssss}
\partial_\tau V_t(\tilde\gamma^{\tau_i-})\le \partial_\tau V_t(\tilde\gamma^{\tau_i+})
\end{equation*}
for all $i=1,\ldots, k-1$. Thus adding up all the previous terms yields
\begin{eqnarray*}\label{sss9} 
-\partial_\tau V_t(\tilde\gamma^{0+}) +\partial_\tau V_t(\tilde\gamma^{1-})
 &\ge&
 \sum_{i=1}^{k} 
\partial_\tau V_t(\tilde\gamma^{\tau_i-})- \partial_\tau V_t(\tilde\gamma^{\tau_{i-1}+})\\
&\ge&
-\frac 12  \sum_{i=1}^{k} \frac1{\tau_i-\tau_{i-1}} \cdot \partial_t^- d_{t-}^2(\gamma^{\tau_i},\gamma^{\tau_{i-1}}).
\end{eqnarray*}
 In the limit of finer and finer partitions, the RHS approaches $ -\frac 12 {\frak b}_t(\gamma)$.
 Still some work is requested since on the LHS the slope of $V$ along (some suitable interpolating geodesic) $\tilde\gamma$ appears - however, it should be the slope along the originally given $\gamma$.
 To overcome this, we make the summation only over $i=2,\ldots, k-1$ (instead of $i=1,\ldots,k$) in order to obtain
 \begin{eqnarray}\label{k-1} 
-\partial_\tau V_t(\tilde\gamma^{\tau_1+}) +\partial_\tau V_t(\tilde\gamma^{\tau_{k-1}-})
 &\ge&
-\frac 12  \sum_{i=2}^{k-1} \frac1{\tau_i-\tau_{i-1}} \cdot \partial_t^- d_{t-}^2(\gamma^{\tau_i},\gamma^{\tau_{i-1}}).
\end{eqnarray}
By assumption $V_t$ is upper regular along each $d_t$-geodesic, thus in particular along the geodesic given by 
 $\gamma$ on $[0,\tau_1]\cup [\tau_{k-1},1]$ and by $\tilde\gamma$ on $[\tau_1,\tau_{k-1}]$.
 Hence, the LHS of the above inequality \eqref{k-1} is bounded from above by
 \begin{equation*}
-\partial_\tau V_t(\gamma^{\tau_1-}) +\partial_\tau V_t(\gamma^{\tau_{k-1}+}).
\end{equation*}
 Again by upper regularity of $V_t$ this approaches $-\partial_\tau V_t(\gamma^{0+}) +\partial_\tau V_t(\gamma^{1-})$
 as $\tau_1\to0$ and $\tau_{k-1}\to1$.
  Moreover, due to the lower log-Lipschitz control for $d_t$, the RHS of \eqref{k-1} approaches or exceeds $ -\frac 12 {\frak b}_t(\gamma)$ if the partition $\vec\tau$ gets sufficiently fine. Thus the claim is proven.
   
    $(ii)\Rightarrow(iii)$: 
     For $\sigma\in [0,\tau]$ consider geodesics $(\gamma_\sigma^a)_{a\in  [0,1]}$ connecting $\gamma^\sigma$ and $\gamma^{1-\sigma}$ defined by
     $$\gamma_\sigma^a=\gamma^{\sigma+a(1-2\sigma)}.$$
     Then
     \begin{eqnarray*}
     V_t(\gamma^\tau)-V_t(\gamma^0)&=&\int_0^\tau\partial_\sigma V_t(\gamma^\sigma)d\sigma
     =\int_0^\tau \frac1{1-2\sigma} \partial_a V_t(\gamma^a_\sigma)\Big|_{a=0}d\sigma
     \end{eqnarray*}
     and similarly
     $V_t(\gamma^1)-V_t(\gamma^{1-\tau})=\int_0^\tau \frac1{1-2\sigma} \partial_a V_t(\gamma^a_\sigma)\Big|_{a=1}d\sigma$.
   Adding up these two identities and applying (ii) to the geodesics  $(\gamma_\sigma^a)_{a\in  [0,1]}$ yields
      \begin{eqnarray*}
       V_t(\gamma^1)-V_t(\gamma^{1-\tau})
     - V_t(\gamma^\tau)+V_t(\gamma^0)&=&
     \int_0^\tau \frac1{1-2\sigma} \left[
      \partial_a V_t(\gamma^a_\sigma)\big|_{a=1}-
     \partial_a V_t(\gamma^a_\sigma)\big|_{a=0}
     \right]d\sigma\\
     &\ge&-\frac12
     \int_0^\tau \frac1{1-2\sigma} \partial_t^- d_{t-}^2(\gamma^\sigma,\gamma^{1-\sigma})
     d\sigma.
       \end{eqnarray*}
      
$(iv)\Rightarrow(v)$: 
Using the log-Lipschitz bound for $d_t$, the  claim follows from the fact that for each $d_t$-geodesic $\gamma$
$$\partial_t^- d_{t-}^2(\gamma^0,\gamma^{1})\ge
\frac1{1-2\sigma} \partial_t^- d_{t-}^2(\gamma^\sigma,\gamma^{1-\sigma})
+\frac1\sigma  \partial_t^- d_{t-}^2(\gamma^0,\gamma^{\sigma})+
\frac1\sigma  \partial_t^- d_{t-}^2(\gamma^{1-\sigma},\gamma^{1}).$$
Indeed, this implies
\begin{eqnarray*}
\int_0^\tau \frac1{1-2\sigma} \partial_t^- d_{t-}^2(\gamma^\sigma,\gamma^{1-\sigma})
     d\sigma
     &\le&
     \int_0^\tau\Big[
 \partial_t^- d_{t-}^2(\gamma^0,\gamma^{1})
-\frac1\sigma  \partial_t^- d_{t-}^2(\gamma^0,\gamma^{\sigma})-
\frac1\sigma  \partial_t^- d_{t-}^2(\gamma^{1-\sigma},\gamma^{1})\Big]d\sigma\\
&\le& \tau\cdot\partial^-_td^2_{t-}(\gamma^0,\gamma^1)+2\lambda_t \tau^2 \cdot d^2_t(\gamma^0,\gamma^1).
\end{eqnarray*}

 $(iv)\Rightarrow(i)$:  Dividing by $\tau$ and letting $\tau\to0$.
\end{proof}

\begin{remark*}
a) The implications $(ii)\Rightarrow(iii)\Rightarrow (iv)\Rightarrow (i)$ hold for each geodesic without any semiconvexity assumption on $\tau\mapsto V_t(\gamma^\tau_t)$.

b) For the equivalence $(i)\Leftrightarrow(ii)\Leftrightarrow (iii)$, the
 log-Lipschitz bound for the distance can be replaced by the significantly weaker condition that 
 \begin{equation}\label{d-regu}\liminf_{\sigma\searrow 0}\partial_t^-d_{t-}(\gamma^0,\gamma^\sigma)\ge0\end{equation}
 for a.e.\ $t\in I$ and every $d_t$-geodesic $\gamma$.
 
 c) The RHS in \eqref{sieben999}  can be equivalently replaced by
 \begin{equation*}
  -\frac12 \int_0^\tau {\frak b}_t(\gamma|_{[\sigma,1-\sigma]})d\sigma.
 \end{equation*}
\end{remark*}

The concept of (strong) dynamic convexity also allows for a re-formulation which is closer to the standard triple condition for convexity.

\begin{proposition}\label{triple-prop}
Assume that  $(d_t)_{t\in I}$ satisfies the regularity condition \eqref{d-regu}
and that for a.e.\ $t\in I$ the function $V_t$ is absolutely continuous along all  $d_t$-geodesics $(\gamma^\tau)_{\tau\in[0,1]}$. Then 
 the following are equivalent:
\begin{itemize}

\item[(i)] $V$ is strongly dynamically convex
\item[(ii)] for a.e.\ $t\in I$, 
for every 
   $d_t$-geodesic $(\gamma^\tau)_{\tau\in[0,1]}$ with 
   $\gamma^0,\gamma^1\in \Dom(V_t)$ 
    and  for all $\tau\in [0,1]$
\begin{equation}\label{triple-dyn} 
V_t(\gamma^\tau)- (1-\tau) V_t(\gamma^0) -\tau V_t(\gamma^1) \le \frac{\tau(1-\tau)}2 \int_0^1 \frac1{1-\sigma} \partial_t^- d_{t-}^2(\gamma^{\sigma\tau},\gamma^{1-\sigma+\sigma\tau})d\sigma
\end{equation}
\item[(iii)] 
for a.e.\ $t\in I$ and
for every 
   $d_t$-geodesic $(\gamma^\tau)_{\tau\in[0,1]}$ with 
   $\gamma^0,\gamma^1\in \Dom(V_t)$ 
\begin{equation}\label{single-slope} 
\partial^-_\tau
V_t(\gamma^\tau)\big|_{\tau=0+}\le V_t(\gamma^1) -V_t(\gamma^0) +\frac12 \int_0^1 \frac1{\sigma} \partial_t^- d_{t-}^2(\gamma^{0},\gamma^{\sigma})d\sigma.
\end{equation}
\end{itemize}
\end{proposition}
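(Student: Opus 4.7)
My plan is to establish the cycle (i)$\Rightarrow$(ii)$\Rightarrow$(iii)$\Rightarrow$(i). The single observation that powers everything is this: when the strong dynamic convexity inequality of (i) is applied to an arbitrary sub-geodesic $\gamma|_{[a,b]}$ rescaled to $[0,1]$, the chain rule for $\partial_\tau V_t(\gamma^{a+s(b-a)})$ transforms it into the pointwise two-sided derivative bound
\begin{equation*}
\phi'(b-)-\phi'(a+)\ \ge\ -\tfrac{1}{2(b-a)}\,\partial_t^- d_{t-}^2(\gamma^a,\gamma^b),\qquad 0\le a<b\le 1,
\end{equation*}
for $\phi:=V_t\circ\gamma$ (which is AC by hypothesis). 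Both (ii) and (iii) will turn out to be integrated forms of this bound against particular kernels.

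For (i)$\Rightarrow$(ii) I fix $\tau\in(0,1)$ and, for each $\sigma\in(0,1)$, apply the displayed bound to the sub-geodesic $\gamma|_{[\sigma\tau,\,1-\sigma(1-\tau)]}$ of length $1-\sigma$. Integrating the resulting inequality over $\sigma\in(0,1)$ and performing the changes of variables $u=1-\sigma(1-\tau)$ and $v=\sigma\tau$ telescopes the left side to $\frac{\phi(1)-\phi(\tau)}{1-\tau}-\frac{\phi(\tau)-\phi(0)}{\tau}$. Multiplying through by $-\tau(1-\tau)$ then produces precisely the convexity-defect inequality (ii). For the passage (ii)$\Rightarrow$(iii), I divide (ii) by $\tau$, rearrange into $\frac{\phi(\tau)-\phi(0)}{\tau}\le \phi(1)-\phi(0)+\frac{1-\tau}{2}\int_0^1\cdots\,d\sigma$, pass to $\liminf_{\tau\to 0+}$, and substitute $\sigma\mapsto 1-\sigma$ in the kernel to arrive at (iii). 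The interchange of $\liminf$ with the integral is the delicate point, and is precisely where the regularity condition \eqref{d-regu} enters: it rules out $-\infty$ behavior of the integrand near $\gamma^0$ and enables a Fatou/dominated-type argument.

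For (iii)$\Rightarrow$(i) I apply (iii) both to $\gamma$ and to its time-reversal $\tilde\gamma^\tau:=\gamma^{1-\tau}$. Using the identity $\partial_\tau^- V_t(\tilde\gamma^{0+})=-\partial_\tau^+V_t(\gamma^{1-})$, summing the two instances gives
\begin{equation*}
\partial_\tau^+V_t(\gamma^{1-})-\partial_\tau^-V_t(\gamma^{0+})\ \ge\ -\tfrac12\int_0^1\tfrac{1}{\sigma}\bigl[\partial_t^- d_{t-}^2(\gamma^0,\gamma^\sigma)+\partial_t^- d_{t-}^2(\gamma^1,\gamma^{1-\sigma})\bigr]\,d\sigma.
\end{equation*}
The proof of (i) then reduces to the pointwise sub-additivity estimate
\begin{equation*}
\partial_t^- d_{t-}^2(\gamma^0,\gamma^1)\ \ge\ \tfrac{1}{\sigma}\partial_t^- d_{t-}^2(\gamma^0,\gamma^\sigma)+\tfrac{1}{1-\sigma}\partial_t^- d_{t-}^2(\gamma^\sigma,\gamma^1),\qquad \sigma\in(0,1),
\end{equation*}
which I will derive by subtracting the $d_t$-geodesic identity $\tfrac{1}{\sigma}d_t^2(\gamma^0,\gamma^\sigma)+\tfrac{1}{1-\sigma}d_t^2(\gamma^\sigma,\gamma^1)=d_t^2(\gamma^0,\gamma^1)$ from the weighted Cauchy--Schwarz estimate $\tfrac{1}{\sigma}d_s^2(\gamma^0,\gamma^\sigma)+\tfrac{1}{1-\sigma}d_s^2(\gamma^\sigma,\gamma^1)\ge d_s^2(\gamma^0,\gamma^1)$ (valid for every $s$, by triangle inequality plus Cauchy--Schwarz), dividing by $t-s$ and taking $\liminf_{s\nearrow t}$. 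A change of variables $\sigma\leftrightarrow 1-\sigma$ in the reversed-geodesic term then dominates the double integral above by $\partial_t^- d_{t-}^2(\gamma^0,\gamma^1)$, yielding (i).

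The main obstacle will be the limit interchange in (ii)$\Rightarrow$(iii), because the integrand $\partial_t^- d_{t-}^2$ is itself defined as a $\liminf$ in the time parameter and need not be continuous (nor one-sided semicontinuous) in its endpoints; the hypothesis \eqref{d-regu} is tailored precisely to cover the only problematic region, the neighborhood of $\gamma^0$. All other pieces — the change of variables collapsing the integral of $\phi'$, the reversal trick for slopes at the two endpoints, and the quadratic-cost bound on a $d_t$-geodesic — are routine.
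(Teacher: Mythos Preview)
Your proof is correct and follows essentially the same route as the paper: the implication (i)$\Rightarrow$(ii) via integrating the rescaled dynamic-convexity inequality over the family of sub-geodesics $\gamma|_{[\sigma\tau,\,1-\sigma(1-\tau)]}$, the passage (ii)$\Rightarrow$(iii) by dividing by $\tau$ and letting $\tau\searrow 0$ with \eqref{d-regu} controlling the limit, and (iii)$\Rightarrow$(i) via the time-reversal trick combined with the sub-additivity estimate for $\partial_t^- d_{t-}^2$ along $d_t$-geodesics, all match the paper's argument. The only cosmetic difference is that the paper writes (i)$\Rightarrow$(ii) using the explicit reparametrization $\gamma_\sigma^a=\gamma^{\sigma\tau+(1-\sigma)a}$ rather than your direct integration of $\phi'$, and in (ii)$\Rightarrow$(iii) the paper spells out the limit passage by first applying the sub-additivity bound to dominate the integrand before invoking \eqref{d-regu}, whereas you leave this as a Fatou-type step; but the substance is identical.
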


\begin{remark*}
The RHS in \eqref{triple-dyn} can equivalently be replaced by 
$$\frac{\tau(1-\tau)}2\int_0^1 {\frak b}_t\big(\gamma\big|_{[\sigma\tau, 1-\sigma+\sigma\tau]}\big)d\sigma$$ and the RHS in \eqref{single-slope} by
$$ V_t(\gamma^1) -V_t(\gamma^0) +\frac12 \int_0^1 
{\frak b}_t\big(\gamma\big|_{[0,\sigma]}\big)
d\sigma.$$
\end{remark*}

\begin{proof}
``(i)$\Rightarrow$(ii)'':
 Given a $d_t$-geodesic $(\gamma^\tau)_{\tau\in[0,1]}$ and $\sigma\in[0,1]$, define a geodesic $(\gamma^a_\sigma)_{a\in[0,1]}$ connecting $\gamma^{\sigma\tau}$ and $\gamma^{1-\sigma(1-\tau)}$ by
$$\gamma_\sigma^a=\gamma^{\sigma\tau+(1-\sigma)a}.$$
Then
\begin{eqnarray*}
\frac1\tau\big[V_t(\gamma^\tau)-V_t(\gamma^0)\big]=
\frac1\tau\int_0^1\partial_\sigma V_t(\gamma^{\sigma\tau})d\sigma
=\int_0^1 \frac1{1-\sigma}\partial_a V_t(\gamma_\sigma^a)\big|_{a=0}d\sigma
\end{eqnarray*}
and
\begin{eqnarray*}
\frac1{1-\tau}\big[V_t(\gamma^1)-V_t(\gamma^\tau)\big]=
-\frac1{1-\tau}\int_0^1\partial_\sigma V_t(\gamma^{1-\sigma(1-\tau)})d\sigma
=\int_0^1 \frac1{1-\sigma}\partial_a V_t(\gamma_\sigma^a)\big|_{a=1}d\sigma.
\end{eqnarray*}
Adding up these two identities and applying the defining property of dynamic convexity to the geodesic $(\gamma^a_\sigma)_{a\in[0,1]}$ we obtain
\begin{eqnarray*}
V_t(\gamma^\tau)- (1-\tau) V_t(\gamma^0) -\tau V_t(\gamma^1) &=& \tau(1-\tau)
\int_0^1
\frac1{1-\sigma}\Big[\partial_a V_t(\gamma_\sigma^a)\big|_{a=0}d\sigma-
\partial_a V_t(\gamma_\sigma^a)\big|_{a=1}\Big]d\sigma\\
&\le& \frac{\tau(1-\tau)}2
\int_0^1
\frac1{1-\sigma}
 \partial_t^- d_{t-}^2(\gamma^{\sigma\tau},\gamma^{1-\sigma(1-\tau)})
d\sigma.
\end{eqnarray*}
This proves the claim. 

``(ii)$\Rightarrow$(iii)'': Reshuffling the terms in \eqref{triple-dyn} and passing to the limit $\tau\searrow0$ yields
$$\partial^-_\tau
V_t(\gamma^\tau)\big|_{\tau=0+}-\Big[ V_t(\gamma^1) -V_t(\gamma^0)\Big] \le
\frac12\liminf_{\tau\searrow0} \int_0^1 \frac{1-\tau}{1-\sigma} \partial_t^- d_{t-}^2(\gamma^{\sigma\tau},\gamma^{1-\sigma+\sigma\tau})d\sigma.
$$
It remains to prove that the RHS is dominated by 
$$
\frac12 \int_0^1 \frac{1}{1-\sigma} \partial_t^- d_{t-}^2(\gamma^{0},\gamma^{1-\sigma})d\sigma.
$$
%This indeed will follow from a twofold application of our 

To do so, first note that 
$$\frac{1}{1-\sigma} \partial_t^- d_{t-}^2(\gamma^{\sigma\tau},\gamma^{1-\sigma+\sigma\tau})
\le \frac{1}{1-\sigma+\sigma\tau}\partial_t^- d_{t-}^2(\gamma^{0},\gamma^{1-\sigma+\sigma\tau})-
\frac{1}{\sigma\tau} \partial_t^- d_{t-}^2(\gamma^{0},\gamma^{\sigma\tau})
$$
and that 
$$\liminf_{\tau\searrow0} \int_0^1\frac{1}{\sigma\tau} \partial_t^- d_{t-}^2(\gamma^{0},\gamma^{\sigma\tau})d\sigma\ge0$$ due to our assumption \eqref{d-regu}.
Moreover, note that
\begin{eqnarray*}
\liminf_{\tau\searrow0} \int_0^1\frac{1-\tau}{1-\sigma+\sigma\tau} \partial_t^- 
d_{t-}^2(\gamma^{0},\gamma^{1-\sigma+\sigma\tau})
d\sigma&=&\liminf_{\tau\searrow0}
\int_\tau^1\frac1\rho \partial_t^- 
d_{t-}^2(\gamma^{0},\gamma^\rho)d\rho\\
&\le&
\int_0^1\frac1\rho \partial_t^- 
d_{t-}^2(\gamma^{0},\gamma^\rho)d\rho
\end{eqnarray*}
where for the last inequality we again used \eqref{d-regu}.

``(iii)$\Rightarrow$(i)'': Applying (iii) to the time-reversed geodesic $\tilde\gamma^\tau=\gamma^{1-\tau}$ yields
$$-\partial^-_\tau
V_t(\gamma^\tau)\big|_{\tau=1-}-\Big[ V_t(\gamma^0) -V_t(\gamma^1)\Big] \le
\frac12 \int_0^1 \frac{1}{1-\sigma} \partial_t^- d_{t-}^2(\gamma^{\sigma},\gamma^{1})d\sigma.
$$ 
Together with \eqref{single-slope} we end up with
\begin{eqnarray*}
\partial^-_\tau
V_t(\gamma^\tau)\big|_{\tau=0+}-\partial^-_\tau
V_t(\gamma^\tau)\big|_{\tau=1-}&\le&
\frac12 \int_0^1 \Big[
\frac1{\sigma}
 \partial_t^- d_{t-}^2(\gamma^{0},\gamma^{\sigma})+
\frac{1}{1-\sigma} \partial_t^- d_{t-}^2(\gamma^{\sigma},\gamma^{1})\Big]d\sigma\\
&\le&
\frac12 \partial_t^- d_{t-}^2(\gamma^{0},\gamma^{1})
\end{eqnarray*}
which is the claim.
\end{proof}

\subsection{Dynamic $N$-Convexity}

Besides dynamic convexity, also an enforced version will be of interest involving a parameter $N\in [1,\infty]$. In applications, this parameter will play the role of an upper bound for the dimension. In the case $N=\infty$ this new concept of dynamic $N$-convexity will coincide with the previously defined dynamic convexity. For finite $N$, it will be more restrictive.

\begin{definition}\label{dyn-Nconv}
We say that
the  function $V: I\times X\to (-\infty,\infty]$  is \emph{strongly dynamically $N$-convex} if for a.e.\ $t\in I$ and for every $d_t$-geodesic $(\gamma^a)_{a\in[0,1]}$ with $\gamma^0,\gamma^1\in\Dom(V_t)$ the function $a\mapsto V_t(\gamma^a)$ is ac on $(0,1)$, usc on $[0,1]$,  and 
  \begin{equation}\label{N-dyn-conv2}
\partial^+_a V_t(\gamma_t^{1-})- \partial^-_a V_t(\gamma_t^{0+})\ge -\frac 12\partial_t^- d_{t-}^2(x^0,x^1)+\frac1N \left| V_t(x^0)-V_t(x^1)\right|^2.
\end{equation}
\end{definition}

\begin{theorem}\label{prop-N-dyn}
Assume that $\lambda$ is a lower log-Lipschitz control for the time-dependent metric space $(X,d_t)_{t\in I}$ and that $V$ is an upper regular function  on  $I\times X$. 
Then for a.e.\ $t\in I$ the following are equivalent:
\begin{itemize}

\item[(i)]
%For each 
$\forall x^0,x^1\in \Dom(V_t)$:    $\exists$
   $d_t$-geodesic $(x^\tau)_{\tau\in[0,1]}$ connecting $x^0$ and $x^1$  s.t.
\begin{equation*}
\partial^+_\tau V_t(x^{1-})- \partial^-_\tau V_t(x^{0+})\ge -\frac 12\partial_t^- d_{t-}^2(x^0,x^1)+\frac1N \left| V_t(x^0)-V_t(x^1)\right|^2
\end{equation*}
%where $\partial_a V_t(\gamma_t^{1-})=\limsup_{a \searrow 0}\frac1{a}(V_t(\gamma_t^1)-V_t(\gamma_t^{1-a}))$
%and $\partial_a V_t(\gamma_t^{0+})=\liminf_{a \searrow 0}\frac1{a}(V_t(\gamma_t^a)-V_t(\gamma_t^{0}))$.

\item[(ii)]
$\forall $
   $d_t$-geodesic $(\gamma^\tau)_{\tau\in[0,1]}$ with $\gamma^0,\gamma^1\in \Dom(V_t)$
\begin{equation*}
\partial^+_\tau V_t(\gamma^{1-})- \partial^-_\tau V_t(\gamma^{0+})\ge -\frac 12 {\frak b}_t(\gamma)+
\frac1N\int_0^1  \big| \partial_\sigma V_t(\gamma^\sigma)\big|^2d\sigma
\end{equation*}

\item[(iii)] 
 $\forall $
   $d_t$-geodesic $(\gamma^\tau)_{\tau\in[0,1]}$ with 
   $\gamma^0,\gamma^1\in \Dom(V_t)$ 
\begin{equation}
\partial^-_\tau
V_t(\gamma^\tau)\big|_{\tau=0+}\le V_t(\gamma^1) -V_t(\gamma^0) + \int_0^1 \Big[
\frac12{\frak b}_t\big(\gamma\big|_{[0,\sigma]}\big)-\frac{1-\sigma} N\cdot \big| \partial_\sigma V_t(\gamma^\sigma)\big|^2\Big]
d\sigma
\end{equation}

\item[(iv)] 
 $\forall $
   $d_t$-geodesic $(\gamma^\tau)_{\tau\in[0,1]}$ with 
   $\gamma^0,\gamma^1\in \Dom(V_t)$ 
    and  for all $\tau\in [0,1]$
\begin{eqnarray*}
V_t(\gamma^\tau)- (1-\tau) V_t(\gamma^0) -\tau V_t(\gamma^1) &\le& \frac{\tau(1-\tau)}2 \int_0^1 \frac1{1-\sigma} \partial_t^- d_{t-}^2(\gamma^{\sigma\tau},\gamma^{1-\sigma+\sigma\tau})d\sigma\\
&-&\frac1N\int_0^1\chi^{\tau,\sigma}\cdot 
\big| \partial_\sigma V_t(\gamma^\sigma)\big|^2
d\sigma
\end{eqnarray*}

\item[(v)]
$\forall $
   $d_t$-geodesic $(\gamma^\tau)_{\tau\in[0,1]}$ with $\gamma^0,\gamma^1\in \Dom(V_t)$
   and all $\tau\le \frac12$
\begin{eqnarray*} 
 V_t(\gamma^0) +V_t(\gamma^1)- V_t(\gamma^\tau)-V_t(\gamma^{1-\tau})& \ge& -
 \frac12 \int_0^\tau \frac1{1-2\sigma} \partial_t^- d_{t-}^2(\gamma^\sigma,\gamma^{1-\sigma})d\sigma\\
&&+\frac\tau N\int_0^1 \Lambda^{\tau,\sigma} \cdot\left| \partial_\sigma V_t(\gamma^\sigma)\right|^2d\sigma
\end{eqnarray*}

\item[(vi)]
$\forall x^0,x^1\in \Dom(V_t)$: \  $\exists$
  $d_t$-geodesic $(x^\tau)_{\tau\in[0,1]}$ connecting $x^0$ and $x^1$  s.t. for all $\tau\in[0,1/2]$
and   all $N'\in [N,\infty]$ with $N'\ge 2\tau[ |V_t(x^0)-V_t(x^1)|+ \lambda_t d_t^2(x^0,x^1)/2]$
\begin{eqnarray}\nonumber
\lefteqn{
\frac1\tau\Phi_{N'}\left(V_t(x^0))-V_t(x^\tau)\right)
+\frac1\tau\Phi_{N'}\left(V_t(x^1)-V_t(x^{1-\tau})\right)}\\
&\ge& -\frac12\partial_t^- d_{t-}^2(x^0,x^1)
- \lambda_t \tau d^2_t(x^0,x^1)+\frac1{N'} \left| V_t(x^0)-V_t(x^1)\right|^2
\end{eqnarray}
where $\Phi_{N'}(u)= u + \frac1{N'} u^2$.
\end{itemize}
\end{theorem}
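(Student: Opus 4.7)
The plan is to follow the structure of the proof of Theorem \ref{prop-dyn} (which treats the $N=\infty$ case), carrying the additional $\frac{1}{N}$-terms through each implication. The central new input is the following elementary monotonicity: for any absolutely continuous $u\colon[0,1]\to\R$ and any partition $\vec\tau=\{\tau_0,\ldots,\tau_k\}$, blockwise Cauchy--Schwarz shows that $\sum_i \frac{|u(\tau_i)-u(\tau_{i-1})|^2}{\tau_i-\tau_{i-1}}$ is nondecreasing under refinement and converges to $\int_0^1 |u'(\sigma)|^2\,d\sigma$ as the mesh tends to zero.

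For (i)$\Leftrightarrow$(ii), I would reproduce the partition-and-paste construction from the proof of Theorem \ref{prop-dyn}: given a $d_t$-geodesic $\gamma$ and a partition $\vec\tau$, applying (i) on each subinterval $[\tau_{i-1},\tau_i]$ produces a piecewise geodesic $\tilde\gamma$ along which
\[
-\partial_\tau V_t(\tilde\gamma^{0+})+\partial_\tau V_t(\tilde\gamma^{1-})\ge -\tfrac12\sum_i \tfrac{\partial_t^- d_{t-}^2(\gamma^{\tau_{i-1}},\gamma^{\tau_i})}{\tau_i-\tau_{i-1}}+\tfrac1N \sum_i \tfrac{|V_t(\gamma^{\tau_i})-V_t(\gamma^{\tau_{i-1}})|^2}{\tau_i-\tau_{i-1}}.
\]
Upper regularity of $V_t$ transfers the boundary slopes from $\tilde\gamma$ back to $\gamma$ under the limits $\tau_1\searrow 0$, $\tau_{k-1}\nearrow 1$. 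Refining $\vec\tau$ then sends the first partition sum to ${\frak b}_t(\gamma)$ (as in Theorem \ref{prop-dyn}) and, by the monotonicity above, the second to $\int_0^1 |\partial_\sigma V_t(\gamma^\sigma)|^2\,d\sigma$. The reverse (ii)$\Rightarrow$(i) is immediate from ${\frak b}_t(\gamma)\le \partial_t^- d_{t-}^2(\gamma^0,\gamma^1)$ and from $|V_t(x^0)-V_t(x^1)|^2\le \int_0^1 |\partial_\sigma V_t|^2\,d\sigma$.

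The implications (ii)$\Rightarrow$(iii), (ii)$\Rightarrow$(iv), (ii)$\Rightarrow$(v) follow by applying (ii) to re-parametrized sub-geodesics and integrating in $\sigma$, exactly as in the proof of Proposition \ref{triple-prop}: the substitution $\gamma^a_\sigma=\gamma^{\sigma\tau+(1-\sigma)a}$ produces the Green function weight $\chi^{\tau,\sigma}$ in (iv), and $\gamma^a_\sigma=\gamma^{\sigma+a(1-2\sigma)}$ produces $\Lambda^{\tau,\sigma}$ in (v). Each of (iii), (iv), (v) then recovers (i) by dividing through by $\tau$ and sending $\tau\searrow 0$, using the lower log-Lipschitz control to absorb $\lambda_t\tau^2$-type remainders\,---\,this step is identical to the (iv)$\Rightarrow$(i) argument in the proof of Theorem \ref{prop-dyn}.

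The main obstacle will be the equivalence (v)$\Leftrightarrow$(vi), which requires the $\Phi_{N'}$-calculus of \cite{EKS}. Expanding $\Phi_{N'}(u)=u+u^2/N'$, the left-hand side of (vi) acquires a quadratic correction $\frac{1}{N'\tau}\big[(V_t(x^0)-V_t(x^\tau))^2+(V_t(x^1)-V_t(x^{1-\tau}))^2\big]$ which, after integration in $\tau$ against (v), recombines with the $\Lambda^{\tau,\sigma}|\partial_\sigma V_t|^2$ weight to reproduce the term $\frac{1}{N'}|V_t(x^0)-V_t(x^1)|^2$ on the right. The growth restriction $N'\ge 2\tau\big[|V_t(x^0)-V_t(x^1)|+\lambda_t d_t^2(x^0,x^1)/2\big]$ is precisely the sign condition ensuring that $\Phi_{N'}$ is strictly increasing on the range where its arguments take values, allowing the inequality to be inverted. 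The algebraic book-keeping in this last step\,---\,in particular the sharp matching of the $\tau\searrow 0$ asymptotics of (vi) with (i)\,---\,is where I expect the most delicate work.
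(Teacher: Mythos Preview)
Your plan is essentially the paper's own proof: the implications $(i)\Rightarrow(ii)\Leftrightarrow(iii)\Leftrightarrow(iv)\Leftrightarrow(v)$ are obtained by carrying the extra $\frac1N$-terms through the arguments of Theorem \ref{prop-dyn} and Proposition \ref{triple-prop}, and $(vi)\Rightarrow(i)$ is the trivial limit $\tau\to0$ (where the quadratic corrections $\frac1{N'\tau}(V_t(x^0)-V_t(x^\tau))^2$ vanish). Your observation that the blockwise quotients $\sum_i\frac{|u(\tau_i)-u(\tau_{i-1})|^2}{\tau_i-\tau_{i-1}}$ are monotone under refinement is exactly the extra ingredient needed for $(i)\Rightarrow(ii)$.

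One point deserves correction. Your sketch of $(v)\Rightarrow(vi)$ speaks of ``integration in $\tau$ against $(v)$'', but no such integration occurs. The paper's argument is a direct chain of Cauchy--Schwarz inequalities on the $\sigma$-integral: from $(v)$ one drops the weight $\Lambda^{\tau,\sigma}$ outside $[\tau,1-\tau]$ to get
\[
\frac1N\int_0^1\Lambda^{\tau,\sigma}|\partial_\sigma V_t(\gamma^\sigma)|^2\,d\sigma
\ge \frac1{N(1-2\tau)}\big|V_t(\gamma^\tau)-V_t(\gamma^{1-\tau})\big|^2,
\]
and then uses the weighted Cauchy--Schwarz bound $(a+b+c)^2\le \frac{a^2}\tau+\frac{b^2}{1-2\tau}+\frac{c^2}\tau$ (with $a=V_t(\gamma^0)-V_t(\gamma^\tau)$, etc.) to peel off the quadratic corrections that constitute the $\Phi_{N'}$-terms. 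The lower bound on $N'$ is \emph{not} used in this algebra (any $N'\ge N$ works here); its role is to guarantee that the arguments of $\Phi_{N'}$ lie in the range $[-N'/2,\infty)$ where $\Phi_{N'}$ is increasing, which is needed only later in the stability proof (Lemma \ref{mono-conv}).
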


\begin{proof} The implication  $(vi)\Rightarrow (i)$ follows simply by passing to the limit $\tau\to0$.
For the proofs of the implications $(i)\Rightarrow (ii)\Leftrightarrow (iii)\Leftrightarrow (iv)\Leftrightarrow (v)$ we follow
 the argumentation in the proof of the previous Theorem and Proposition.
 
 For the implication
 $(v)\Rightarrow (vi)$ note that (v) implies that
 for all $\tau\le \frac12$ and all $N'\ge N$
\begin{eqnarray*}
\lefteqn{
 \frac1\tau\left[V_t(\gamma^0) +V_t(\gamma^1)- V_t(\gamma^\tau)-V_t(\gamma^{1-\tau})\right] + 
  \frac12 \int_0^\tau \frac1{1-2\sigma} \partial_t^- d_{t-}^2(\gamma^\sigma,\gamma^{1-\sigma})d\sigma}\\
 &\ge&
  \frac 1{N} \int_0^1 \Lambda^{\tau,\sigma}
   |\partial_\sigma V_t(\gamma^\sigma)|^2
  d\sigma\ge
   \frac 1N \int_\tau^{1-\tau} 
   |\partial_\sigma V_t(\gamma^\sigma)|^2
  d\sigma\\
   &\ge&
  \frac 1{N(1-2\tau)} \left|\int_\tau^{1-\tau} 
  \partial_\sigma V_t(\gamma^\sigma)
  d\sigma\right|^2
   =
  \frac 1{N(1-2\tau)} \left|V_t(\gamma^\tau)-
 V_t(\gamma^{1-\tau})
\right|^2\\  
   &\ge&
  \frac 1{N'} \left[
  \left|V_t(\gamma^0)-V_t(\gamma^{1})\right|^2-\frac1\tau  \left|V_t(\gamma^0)- V_t(\gamma^{\tau})\right|^2
  -\frac1\tau  \left|V_t(\gamma^{1-\tau})- V_t(\gamma^{1})\right|^2
    \right].
\end{eqnarray*}
\end{proof}

For later application of the stability argument, we need the following monotonicity property.

\begin{lemma}\label{mono-conv}
Assume that
$V_t$ is $(-\kappa_t)$-convex,
Then the function $u\mapsto \Phi(u)$ is increasing in $u$ for all $u\ge u^*:=-\frac{N'}2$ and 
$$V_t(x^0)-V_t(x^\tau)\ge u^*, \qquad V_t(x^1)-V_t(x^{1-\tau})\ge u^*$$
for all $\tau\le \frac12$.
\end{lemma}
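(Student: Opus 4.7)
The plan is to handle the two assertions of the lemma separately; both reduce to elementary manipulations.

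For the monotonicity of $\Phi_{N'}$, I would simply differentiate: $\Phi_{N'}(u)=u+\frac1{N'}u^2$ gives $\Phi'_{N'}(u)=1+\tfrac{2}{N'}u$, which is nonnegative precisely when $u\ge -\frac{N'}{2}=u^*$. (The case $N'=\infty$ reduces to the identity and is trivial.) Hence $\Phi_{N'}$ is nondecreasing on $[u^*,\infty)$.

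For the lower bound on the increment $V_t(x^0)-V_t(x^\tau)$, I would invoke Lemma \ref{K-conv-lem} applied to the function $a\mapsto V_t(x^a)$ along the chosen $d_t$-geodesic from $x^0$ to $x^1$, with $K=-\kappa_t$, giving
\begin{equation*}
V_t(x^\tau)\le (1-\tau)V_t(x^0)+\tau V_t(x^1)+\tfrac{\kappa_t}{2}\tau(1-\tau)d_t^2(x^0,x^1).
\end{equation*}
Subtracting $V_t(x^0)$ and using the trivial bounds $\tau(1-\tau)\le\tau$ (valid since $\tau\le \tfrac12$) and $-\tau[V_t(x^0)-V_t(x^1)]\le\tau|V_t(x^0)-V_t(x^1)|$ yields
\begin{equation*}
V_t(x^0)-V_t(x^\tau)\ge -\tau\Bigl[|V_t(x^0)-V_t(x^1)|+\tfrac{\kappa_t}{2}d_t^2(x^0,x^1)\Bigr].
\end{equation*}
This is $\ge -\frac{N'}{2}=u^*$ by virtue of the hypothesis $N'\ge 2\tau[\,|V_t(x^0)-V_t(x^1)|+\lambda_t d_t^2(x^0,x^1)/2\,]$ built into assertion (vi) of Theorem \ref{prop-N-dyn}, provided one reads $\kappa_t$ as the same log-Lipschitz datum $\lambda_t$ (cf.\ Proposition \ref{log-ric}). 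The symmetric bound $V_t(x^1)-V_t(x^{1-\tau})\ge u^*$ is obtained verbatim by applying the same argument to the time-reversed geodesic $a\mapsto x^{1-a}$, under which the hypothesis on $N'$ is symmetric in the endpoints.

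The only real subtlety, and hence the point I would flag as the main obstacle, is bookkeeping: one has to identify (or at least compare) the convexity constant $\kappa_t$ in the hypothesis of the lemma with the constant $\lambda_t$ controlling the admissible range of $N'$. Since both quantities represent log-Lipschitz data associated to the family $(d_t)$ and to the entropy-type functional $V_t$, this identification is natural in the intended application. Apart from this matching of constants, the proof collapses to the one-line convexity estimate displayed above in each direction.
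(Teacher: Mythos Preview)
Your proof is correct and follows essentially the same approach as the paper's own argument: the paper writes the $(-\kappa_t)$-convexity inequality as $V_t(x^0)-V_t(x^\tau)\ge \tau[V_t(x^0)-V_t(x^1)]-\kappa_t\frac{\tau(1-\tau)}2d_t^2(x^0,x^1)$ and then bounds this below by $-\tau\cdot\frac{N'}{2\tau}=u^*$ via the constraint on $N'$. Your observation about the bookkeeping mismatch between $\kappa_t$ and $\lambda_t$ is well taken---the paper silently treats these as the same datum in this lemma, and the intended application indeed identifies them.
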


\begin{proof}
 $(-\kappa_t)$-convexity of $V_t$ implies
\begin{eqnarray*}
V_t(x^0)-V_t(x^\tau)\ge \tau \left[V_t(x^0)-V_t(x^1)\right]-\kappa_t\frac{\tau(1-\tau)}2d_t^2(x^0,x^1)
\ge-\tau\cdot \frac{N'}{2\tau}=u^*.
\end{eqnarray*}
\end{proof}

\begin{definition}\label{def-controlled} Given a time-dependent metric space $(X,d_t)_{t\in I}$
and $\lambda\in L^1_{loc}(I)$, we say that a 
 function $V: I\times X\to (-\infty,\infty]$  is \emph{$\lambda$-moderate  dynamically $N$-convex} if 
 $\forall x^0,x^1\in \Dom(V_t)$: \  $\exists$
  $d_t$-geodesic $(x^\tau)_{\tau\in[0,1]}$ connecting $x^0$ and $x^1$  s.t. for all $\tau\in[0,1/2]$
and   all $N'\in [N,\infty]$ with $N'\ge 2\tau[ |V_t(x^0)-V_t(x^1)|+ \lambda_t d_t^2(x^0,x^1)/2]$
\begin{eqnarray}\label{N-dyn-conv}\nonumber
\lefteqn{
\frac1\tau\Phi_{N'}\left(V_t(x^0))-V_t(x^\tau)\right)
+\frac1\tau\Phi_{N'}\left(V_t(x^1)-V_t(x^{1-\tau})\right)}\\
&\ge& -\frac12\partial_t^- d_{t-}^2(x^0,x^1)
- \lambda_t \tau d^2_t(x^0,x^1)+\frac1{N'} \left| V_t(x^0)-V_t(x^1)\right|^2
\end{eqnarray}
where $\Phi_{N'}(u)= u + \frac1{N'} u^2$.
 
\end{definition}

\subsection{EVI Gradient Flows}

In the static case, the concept of \emph{evolution-variation inequality} as introduced and utilized e.g. in \cite{AGS-Calc, AGS-Mms, AGS-BE} turned out to be extremely powerful. We present here an analogous concept for the dynamic case. However, this seems to be less powerful for applications. We will not use it in subsequent chapters.

Given a  function $V: I\times X\to (-\infty,\infty]$, a curve $(x_t)_{t\in(r,T)}$ with $(r,T)\subset I$ will be called \emph{upward
 $(EVI)_{dyn}$-gradient flow} for $V$ if $s\mapsto d_t(x_s,y)$ is absolutely continuous, uniformly in $t$ and $y$, and if for a.e.\ $t\in(r,T)$, all $z\in X$ with $V_t(z)<\infty$, and each $d_t$-geodesic 
  $(\gamma^a)_{a\in[0,1]}$  connecting $x_t$ to $z$
  \begin{align}\label{evi-dyn}
     {\frac12\partial_s^- d^2_t(x_s,z)}\Big|_{s=t-}+
      \frac12 
     {\frak b}_t^0 (\gamma)
       ~\ge V_t(x_t)-V_t(z)\;.
  \end{align}
  where 
  ${\frak b}_t^0 (\gamma)=\int_0^1 {\frak b}_t(\gamma|_{[0,\sigma]})d\sigma.$
  
\begin{proposition} Assume that for each $x'\in X$ and $(r,T)\subset I$ there exists an  upward $(EVI)_{dyn}$-gradient flow $(x_t)_{t\in(r,T)}$ for $V$ terminating in $(T,x')$.
Then $V$ is strongly dynamically convex.
\end{proposition}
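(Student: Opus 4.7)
The plan is to adapt the classical Daneri--Savar\'e argument -- that existence of EVI-gradient flows forces geodesic semiconvexity of the driving functional -- to the present dynamic setting. The goal is to verify the slope characterization of dynamic convexity (condition (iii) in Proposition~\ref{triple-prop}) at both endpoints of an arbitrary $d_t$-geodesic, and then to combine the two slope estimates with a partition-refinement bound on the strain.

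Fix $t\in I$ at which the hypothesis supplies EVI-gradient flows terminating at any prescribed point, and a $d_t$-geodesic $(\gamma^a)_{a\in[0,1]}$ with $\gamma^0,\gamma^1\in\Dom(V_t)$. For each $T>t$ sufficiently close to $t$, invoke the hypothesis to produce upward $(EVI)_{dyn}$-gradient flows $(x_s^0)_{s\in(r,T)}$ and $(x_s^1)_{s\in(r,T)}$ terminating at $(T,\gamma^0)$ and $(T,\gamma^1)$, respectively. Applying the defining EVI inequality at $s=t$ for $(x_s^0)$ with test point $z=\gamma^1$ along the given geodesic $\gamma$, and letting $T\searrow t$ so that $x_t^0\to\gamma^0$ (by the uniform absolute continuity of $s\mapsto d_t(x_s,y)$ built into the definition of an EVI-flow), yields
\[
\tfrac12\,\partial_s^- d_t^2(x_s^0,\gamma^1)\big|_{s=t-} + \tfrac12\,{\frak b}_t^0(\gamma) \;\ge\; V_t(\gamma^0)-V_t(\gamma^1).
\]
The crucial step is the first-variation estimate
\[
\partial^-_a V_t(\gamma^{0+}) \;\le\; -\tfrac12\,\partial_s^- d_t^2(x_s^0,\gamma^1)\big|_{s=t-},
\]
expressing that the rate at which the upward EVI-flow departs from $\gamma^1$ at its terminal point $\gamma^0$ dominates the one-sided directional slope of $V_t$ along $\gamma$. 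In the smooth Riemannian setting this is an equality, obtained from the first-variation formula together with the EVI-forced identification $\dot x_t^0=\nabla V_t(\gamma^0)$; in the abstract metric setting the required inequality must be extracted from the EVI itself, by testing against intermediate points $z=\gamma^\sigma$ (with $\sigma\to 0^+$), dividing by $\sigma$, and invoking a lower log-Lipschitz control on $d_t$ to make the contribution $\tfrac1{\sigma}{\frak b}_t^0(\gamma|_{[0,\sigma]})$ vanish in the limit. This is the main obstacle of the proof. Once it is granted, the displayed EVI delivers the slope inequality
\[
\partial^-_a V_t(\gamma^{0+}) \;\le\; V_t(\gamma^1)-V_t(\gamma^0) + \tfrac12\,{\frak b}_t^0(\gamma),
\]
which is exactly condition (iii) of Proposition~\ref{triple-prop}.

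Running the symmetric argument for the flow $(x_s^1)$ with test point $z=\gamma^0$ and reversed geodesic $\tilde\gamma^a:=\gamma^{1-a}$ (observing that $\partial^-_a V_t(\tilde\gamma^{0+})=-\partial^+_a V_t(\gamma^{1-})$) gives the twin bound $\partial^+_a V_t(\gamma^{1-}) \ge V_t(\gamma^1)-V_t(\gamma^0) - \tfrac12{\frak b}_t^0(\tilde\gamma)$. Subtracting,
\[
\partial^+_a V_t(\gamma^{1-}) - \partial^-_a V_t(\gamma^{0+}) \;\ge\; -\tfrac12\bigl[{\frak b}_t^0(\gamma)+{\frak b}_t^0(\tilde\gamma)\bigr].
\]
To close, use the partition-refinement monotonicity of ${\frak b}_t$ built into its definition: for each $\sigma\in(0,1)$,
\[
{\frak b}_t(\gamma|_{[0,\sigma]})+{\frak b}_t(\gamma|_{[\sigma,1]})\;\le\; \tfrac1\sigma\partial_t^- d_{t-}^2(\gamma^0,\gamma^\sigma) + \tfrac1{1-\sigma}\partial_t^- d_{t-}^2(\gamma^\sigma,\gamma^1) \;\le\; \partial_t^- d_{t-}^2(\gamma^0,\gamma^1),
\]
so that, after integration in $\sigma$ and a change of variable to express ${\frak b}_t^0(\tilde\gamma)=\int_0^1{\frak b}_t(\gamma|_{[\sigma,1]})d\sigma$,
\[
{\frak b}_t^0(\gamma)+{\frak b}_t^0(\tilde\gamma)\;\le\;\partial_t^- d_{t-}^2(\gamma^0,\gamma^1),
\]
and the strong dynamic convexity inequality \eqref{dyn-conv-0} follows.
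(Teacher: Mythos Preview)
Your argument has a genuine gap at precisely the step you flag as ``the main obstacle'': the first-variation estimate
\[
\partial^-_a V_t(\gamma^{0+}) \;\le\; -\tfrac12\,\partial_s^- d_t^2(x_s^0,\gamma^1)\big|_{s=t-}
\]
is not established, and your sketch does not close it. Testing the EVI at $z=\gamma^\sigma$ and dividing by $\sigma$ yields control over $\tfrac1{2\sigma}\partial_s^- d_t^2(x_s^0,\gamma^\sigma)\big|_{s=t-}$; as $\sigma\to0$ this involves the derivative of a quantity that collapses to $d_t^2(x_s^0,\gamma^0)$, and has no evident relation to $\partial_s^- d_t^2(x_s^0,\gamma^1)$. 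In a bare geodesic space there is no first-variation formula to link the two, so the reduction to Proposition~\ref{triple-prop}(iii) does not go through. The additional $T\searrow t$ limit you introduce compounds the difficulty, since you would also need lower semicontinuity of $V_t$ along the sequence $x_t^0\to\gamma^0$ inside the EVI.

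The paper sidesteps this entirely by a different and much simpler device: instead of two flows terminating at the endpoints, it runs a \emph{single} EVI-flow $(x_s)$ terminating at the \emph{intermediate} point $\gamma^a$, and tests the EVI against both $z=\gamma^0$ and $z=\gamma^1$. Multiplying by $(1-a)$ and $a$ and adding, the two $\partial_s^- d_t^2$ terms combine into
\[
\tfrac12\,\partial_s^-\Big[(1-a)\,d_t^2(x_s,\gamma^0)+a\,d_t^2(x_s,\gamma^1)\Big]\Big|_{s=t-},
\]
and this is $\le0$ because the bracketed expression is $\ge a(1-a)\,d_t^2(\gamma^0,\gamma^1)$ for every $s$ (an elementary consequence of the triangle inequality: $(1-a)p^2+aq^2-a(1-a)(p+q)^2=((1-a)p-aq)^2\ge0$) with equality at $s=t$. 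The outcome is directly the triple-point inequality of Proposition~\ref{triple-prop}(ii), with no first-variation argument and no appeal to log-Lipschitz controls. Your closing estimate on ${\frak b}_t^0(\gamma)+{\frak b}_t^0(\tilde\gamma)$ is correct, but it is not reached.
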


\begin{proof}
Fix a `non-exceptional' $t\in (r,T)$,  an arbitrary $d_t$-geodesic $(\gamma^b)_{b\in[0,1]}$ and $a\in[0,1]$. 
%For suitable $r\in I$, $r<T$, l
Let 
$(x_s)_{r< s\le T}$ be the gradient flow for $V$ terminating in $x_T=\gamma^a$.
The (EVI)$_{dyn}$-property applied to $z=\gamma^0$ and $z=\gamma^1$, resp, yields
 \begin{align*}
  V_t(\gamma^a)\le V_t(\gamma^0)+   {\frac12\partial_s^- d^2_t(x_s,\gamma^0)}\Big|_{s=t-}  +
  \frac12 {\frak b}_t^0 \left((\gamma^{a(1-b)})_{b\in[0,1]}\right)
  \end{align*}
and
\begin{align*}
  V_t(\gamma^a)\le V_t(\gamma^1)+   {\frac12\partial_s^- d^2_t(x_s,\gamma^1)}\Big|_{s=t-}  +
   \frac12 {\frak b}_t^0 \left((\gamma^{a+b(1-a)})_{b\in[0,1]}\right) 
  \end{align*}
Multiplying these inequalities by $(1-a)$ or $a$, resp., and adding them up leads to
\begin{eqnarray*}
  V_t(\gamma^a)&\le& (1-a)V_t(\gamma^0) +aV_t(\gamma^1)\\
&+&   \frac12\Big((1-a)\partial_s^- d^2_t(x_s,\gamma^0)+a\partial_s^- d^2_t(x_s,\gamma^1)\Big)\Big|_{s=t-} \\
& +& \frac12 \Big((1-a){\frak b}_t^0 
 \left((\gamma^{a(1-b)})_{b\in[0,1]}\right)
 +a {\frak b}_t^0
  \left((\gamma^{a+b(1-a)})_{b\in[0,1]}\right) 
\Big)\\
&\le& (1-a)V_t(\gamma^0) +aV_t(\gamma^1)+
\frac{a(1-a)}2\int_0^1 {\frak b}_t\big(\gamma\big|_{[\sigma a, 1-\sigma+\sigma a]}\big)d\sigma
  \end{eqnarray*}
which is the claim. The last inequality here is due to the fact that
$$(1-a)d_t^2(x_s,\gamma^0)+a d_t^2(x_s,\gamma^1)\ge a(1-a) d_t^2(\gamma_0,\gamma_1)\qquad\mbox{for all }s<t$$ whereas
$(1-a)d_t^2(x_t,\gamma^0)+a d_t^2(x_t,\gamma^1)= a(1-a) d_t^2(\gamma_0,\gamma_1)$. 
\end{proof}

\begin{remark}
(i) A curve  $(x_t)$ satisfies the dynamic evolution-variation inequality $(EVI_{(0,N)})_{dyn}$ for $V$ if
 \begin{align}\label{evi-N}
     {\frac12\partial_s^- d^2_t(x_s,z)}\Big|_{s=t-}+
      \frac12 {\frak b}_t^0 \left(\gamma\right) 
      ~\ge V_t(x_t)-V_t(z)
+\frac1N\int_0^1 (1-b)\Big( \partial_b V_t(\gamma^b)\Big)^2db
\;
  \end{align} 
for all $z\in M$ and  a.e.\ $t\in(r,T)$. Here $(\gamma^b)_{b\in[0,1]}$ is a geodesic from $x_t$ to $z$.

 If for each terminating point $(T,x')$ there exists a curve $(x_t)_{r<t\le T}$ satisfying this EVI-inequality then $V$ is dynamically $(0,N)$-convex.

(ii)
We say that the function $V:(t,x)\mapsto V_t(x)$ on $[0,T)\times X$  is \emph{backward dynamically convex} iff the function $\tilde V_s(x):=V_{T-s}(x)$ on $(0,T]\times X$ is dynamically convex w.r.t.\ the family of metrics $(\tilde d_s)_{s\in[0,T)}$ where $\tilde d_s=d_{T-s}$. That is, $V$  is backward dynamically convex iff for all $t$, all $d_t$-geodesics $(\gamma^a)_{a\in[0,1]}$ and all $a\in[0,1]$
\begin{equation}\label{sieben}
\partial_aV_t(\gamma^{1-})-
\partial_aV_t(\gamma^{0+})
\ge \frac12\partial_t^+ d_{t+}^2(\gamma^0,\gamma^1).
\end{equation}
A smooth function on a smooth, time-dependent Riemannian manifold is backward dynamically convex iff
% \begin{equation*}
$\Hess_t V_t\ge  \frac12\partial_t g_t.$
%\end{equation*}

(iii)
Looking backward in time allows to re-interpret upward gradient flows as gradient flows in the usual sense (i.e.\ as downward gradient flows).
More precisely, any upward gradient flow terminating in $(T,x')$ can be regarded as a downward gradient flow emanating in $(T,x')$ and running backward in time.

\end{remark}

\subsection{The Smooth Riemannian Setting}

Convexity allows for an alternative, more explicit characterization in the Riemannian setting. It is well-known that a smooth function $V: M\to\R$  is {convex} if and only if
$\Hess \, V\ge 0$
and, more generally, it is $K$-convex if and only if
$\Hess\, V\ge K\cdot g$. 

The time-dependent extension of these results reads as follows.

\begin{proposition}
Given  a  manifold $M$ with a smooth 1-parameter family of Riemannian metric tensors  $(g_t)_{t\in I}$, then a smooth function $V: I\times M\to\R$  is {dynamically convex} if and only if
\begin{equation}
\Hess_t V_t\ge -\frac12\partial_t g_t.
\end{equation}
\end{proposition}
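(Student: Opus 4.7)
The plan is to reduce dynamic convexity of $V$ to the claimed pointwise Hessian bound via two standard Riemannian identities: the second derivative of $V_t$ along $g_t$-geodesics, and the first variation formula for the squared Riemannian distance. In the smooth setting the one-sided limits $\partial_\tau^\pm$, $\partial_t^-$ all collapse to classical derivatives, so the integrated inequality in Definition~\ref{dyn} becomes a literal second-derivative comparison.

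For the ``if'' direction, fix $t\in I$ and a $g_t$-geodesic $(\gamma^\tau)_{\tau\in[0,1]}$. Since $\nabla_{\dot\gamma}\dot\gamma=0$ and $V$ is smooth,
\begin{equation*}
\partial_\tau V_t(\gamma^{1-})-\partial_\tau V_t(\gamma^{0+})
=\int_0^1 \frac{d^2}{d\tau^2}V_t(\gamma^\tau)\,d\tau
=\int_0^1 \Hess_{g_t}V_t(\dot\gamma^\tau,\dot\gamma^\tau)\,d\tau.
\end{equation*}
For the right-hand side, I would invoke the first variation formula
\begin{equation*}
\partial_t d_t^2(\gamma^0,\gamma^1)=\int_0^1(\partial_t g_t)(\dot\gamma^\tau,\dot\gamma^\tau)\,d\tau,
\end{equation*}
which holds because $\gamma$ is a critical point of the action $\eta\mapsto\int_0^1 g_t(\dot\eta,\dot\eta)\,d\tau$ at time $t$, so variations of the connecting curve contribute nothing and only the explicit $t$-derivative of $g_t$ survives (the envelope theorem). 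Integrating the assumed pointwise inequality $\Hess_{g_t}V_t(v,v)\ge -\frac12(\partial_t g_t)(v,v)$ against $\dot\gamma^\tau$ and comparing these two identities yields the defining inequality of dynamic convexity.

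For the converse, fix $(t_0,x_0)\in I\times M$ and $v\in T_{x_0}M$, and for small $\epsilon>0$ let $\gamma_\epsilon$ be the $g_{t_0}$-geodesic with $\gamma_\epsilon^0=x_0$ and $\dot\gamma_\epsilon^0=\epsilon v$. Substituting $\gamma_\epsilon$ into the dynamic convexity inequality, dividing by $\epsilon^2$, and letting $\epsilon\to 0$, continuity of $\Hess_{g_{t_0}}V_{t_0}$ and of $\partial_t g_{t_0}$ together with the two computations above give
\begin{equation*}
\Hess_{g_{t_0}}V_{t_0}(v,v)\ge -\tfrac12(\partial_t g_{t_0})(v,v).
\end{equation*}
Since $v$, $x_0$, $t_0$ are arbitrary, this is the desired bound.

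The only real technicality is the first variation formula for $d_t^2$ and the interchange of $\partial_t$ with the integral defining the action; in the smooth setting these are classical and the minimizing geodesic depends smoothly on $t$ locally, so the envelope argument applies without qualification. Everything else is calculus along a geodesic.
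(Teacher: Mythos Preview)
Your argument is correct and follows essentially the same route as the paper: both proofs hinge on the identity $\partial_\tau V_t(\gamma^{1})-\partial_\tau V_t(\gamma^{0})=\int_0^1\Hess_{g_t}V_t(\dot\gamma^\tau,\dot\gamma^\tau)\,d\tau$ along $g_t$-geodesics together with the first variation formula $\partial_t d_t^2(\gamma^0,\gamma^1)=\int_0^1(\partial_t g_t)(\dot\gamma^\tau,\dot\gamma^\tau)\,d\tau$, and recover the pointwise inequality by localizing along short geodesics. The only cosmetic difference is that the paper verifies the equivalent triple formulation \eqref{triple-dyn} of Proposition~\ref{triple-prop} (via the Green-function representation), whereas you verify the slope inequality \eqref{dyn-conv-0} directly; your version is a bit more streamlined. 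One remark: for the ``if'' direction you only need the one-sided estimate $\partial_t^- d_{t-}^2(\gamma^0,\gamma^1)\ge\int_0^1(\partial_t g_t)(\dot\gamma^\tau,\dot\gamma^\tau)\,d\tau$, which follows immediately from $d_s^2(\gamma^0,\gamma^1)\le\int_0^1 g_s(\dot\gamma^\tau,\dot\gamma^\tau)\,d\tau$ for $s<t$ with equality at $s=t$ --- so you need not worry about cut-locus issues or smooth dependence of the minimizer.
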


%Note that a function on a static manifold is dynamically convex if and only if in the space variable it is convex  in the classical sense.

\begin{proof}
The equivalence follows from two basic facts. Firstly,  on each (static) Riemannian manifold $(M,g_t)$
\begin{eqnarray*}
-V_t(\gamma^a)+ (1-a)V_t(\gamma^0)+a V_t(\gamma^1)&=&
\int_0^1 \chi^{a,b} \cdot\Hess_t V_t(\dot\gamma^b,\dot\gamma^b)\,db\\
&=&a(1-a)\int_0^1\int_0^1 \Hess_t V_t(\dot\gamma^{ab+(1-b)\sigma},\dot\gamma^{ab+(1-b)\sigma})\,d\sigma\,db
\end{eqnarray*}
for any $g_t$-geodesic $\gamma$.
Secondly. 
\begin{eqnarray*}
\int_0^1\int_0^1 \partial_t g_t(\dot\gamma^{ab+(1-b)\sigma},\dot\gamma^{ab+(1-b)\sigma})\,d\sigma\,db
=
\int_0^1 \frac1{1-b} \partial_t d^2_t(\gamma^{ab},\gamma^{ab+(1-b)})\,db.
\end{eqnarray*}
Combing both,  yields \eqref{triple-dyn} and thus proves the forward implication. The backward implication follows by applying these identities for fixed $x,t$ and tangent vector $\xi$ to  $d_t$-geodesics $(\gamma^a_{t})_{a\in[-\epsilon/2,\epsilon/2]}$ with $\gamma^0=x$ and $\dot\gamma^0=\xi$  for $\epsilon\searrow 0$. Then \eqref{triple-dyn}  implies
\begin{eqnarray*}
\partial^2_a V_t(\gamma_t^a)\ge-\frac12\partial_t g_t(\xi,\xi).
\end{eqnarray*}
\end{proof}

\begin{lemma}
 Let $M$ be  a compact manifold $M$ equipped with a smooth 1-parameter family of Riemannian metric tensors $g_t,t\in I$, with $I$ being a left open interval and let $V: I\times M\to\R$ be a smooth function. Then for each $(T,x')\in I\times M$ there exists $r\in I, r<t$ and a smooth curve
  $x:[r, T]\to M$ terminating in $x'$ (i.e.\ $x_{T}=z$) which solves  the gradient flow equation
  \begin{align}\label{eq:smoothgradflow}
   \dot x_t=\nabla_t V_t(x_t) \qquad \forall t\in [r,T].
  \end{align}
\end{lemma}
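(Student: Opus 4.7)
The plan is to recognize this as a standard existence result for a time-dependent ODE on a compact manifold, with the only subtlety being that the flow is prescribed to terminate, rather than start, at the given point. Compactness of $M$ and smoothness of the data will take care of everything.

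First I would time-reverse the problem. Setting $y_s := x_{T-s}$, the terminal-value gradient flow equation $\dot x_t = \nabla_t V_t(x_t)$ with $x_T = x'$ becomes the initial-value problem
\begin{equation*}
\dot y_s = -\nabla_{T-s} V_{T-s}(y_s), \qquad y_0 = x'.
\end{equation*}
The right-hand side defines a smooth time-dependent vector field $W_s(y) = -\nabla_{T-s} V_{T-s}(y)$ on $M$, defined for $s$ in some right-neighborhood $[0,\delta]$ of $0$ inside the time-reversed image of $I$ (which exists because $I$ is open from the left at $T$, i.e.\ $T\in I$ and a neighborhood $(T-\delta,T]$ lies in $I$). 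Smoothness in $(s,y)$ follows from smoothness of $g_t$, $V_t$ jointly in $(t,x)$, together with the fact that $\nabla_{g_t}V_t$ is obtained from $dV_t$ by applying the inverse metric $g_t^{-1}$, a smooth operation in the metric.

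Next I would invoke the standard existence theorem for ODEs with smooth time-dependent vector fields on manifolds. Working in local charts around $x'$, the equation $\dot y_s = W_s(y_s)$, $y_0 = x'$ satisfies the hypotheses of the Picard--Lindelöf theorem (smoothness implies local Lipschitz), so a smooth solution exists on some maximal interval $[0, s^*)$. Because $M$ is compact, the solution cannot leave every compact set in finite time; hence, by the usual extension criterion, the solution in fact exists on the full interval $[0,\delta]$ on which $W_s$ is defined. Smoothness of $s\mapsto y_s$ follows from the smooth-dependence-on-time theorem for ODEs.

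Finally, setting $r := T-\delta$ and $x_t := y_{T-t}$ for $t\in[r,T]$ produces a smooth curve with $x_T = y_0 = x'$ and
\begin{equation*}
\dot x_t = -\dot y_{T-t} = -W_{T-t}(y_{T-t}) = \nabla_t V_t(x_t),
\end{equation*}
as required. No step here is really a significant obstacle; the only point that deserves care is justifying that the time-dependent vector field is genuinely smooth jointly in $(t,x)$, which reduces to smoothness of the inverse metric $t\mapsto g_t^{-1}$ (immediate since $g_t$ is smooth and positive definite on compact $M$), and that the interval $I$ allows a left neighborhood of $T$, which is exactly the standing hypothesis that $I$ is open from the left.
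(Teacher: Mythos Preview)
Your proof is correct and is essentially a detailed elaboration of the paper's own proof, which consists of the single line ``Generalized Picard iteration or limit of steepest descend scheme.'' Your time-reversal reduction to a standard initial-value ODE and invocation of Picard--Lindel\"of with the compactness extension argument is exactly what is meant by the first alternative.
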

\begin{proof}
Generalized Picard iteration or limit of steepest descend scheme.
\end{proof} 

\begin{proposition}\label{lem:gradflowevi}
  Assume that $V$ is  dynamically convex  on a manifold $M$ with a smooth 1-parameter family of Riemannian metric tensors. A smooth curve
  $x:[r, T)\to M$ is a solution to the gradient flow equation
%  \begin{align}   \dot x_t~=~-\nabla_t V_t(x_t) \qquad \forall t\in(t_0,T)  \end{align}
\eqref{eq:smoothgradflow}
 if and only if the following  \emph{dynamic Evolution Variation Inequality}
  $(EVI)_{dyn}$ holds
  \begin{align}\label{eq:smoothevikn}
     {\frac12\partial^-_s d^2_t(x_s,z)}\Big|_{s=t-} + \frac12 
     {\frak b}_t^0 (\gamma_t)
   ~\ge V_t(x_t)-V_t(z)\;
  \end{align}
for all $z\in M$ and  all $t\in(r,T)$ where $(\gamma_t^a)_{a\in[0,1]}$ denotes any $d_t$-geodesic connecting $x_t$ to $z$.
\end{proposition}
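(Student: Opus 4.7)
The plan is to identify both sides of the stated equivalence through the first variation formula for squared distance in the smooth Riemannian setting: if $\gamma_t^a$ is any $g_t$-geodesic on $[0,1]$ joining $x_t$ to $z$, then, because $s\mapsto x_s$ is smooth in $t$ and $g_t$ is smooth in $t$, $\partial_s^-$ coincides with $\partial_s$ and one has the classical identity
\begin{equation*}
\frac12\partial_s^- d_t^2(x_s,z)\Big|_{s=t-}
\;=\; -\,g_t(\dot\gamma_t^{0},\dot x_t).
\end{equation*}
In addition, $\partial_a V_t(\gamma_t^a)\big|_{a=0+}=g_t(\dot\gamma_t^{0},\nabla_tV_t(x_t))$, so whenever $x_t$ satisfies the gradient flow equation $\dot x_t=\nabla_tV_t(x_t)$ we obtain
\begin{equation*}
\tfrac12\partial_s^- d_t^2(x_s,z)\Big|_{s=t-}
\;=\; -\,\partial_a V_t(\gamma_t^a)\big|_{a=0+}.
\end{equation*}

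For the direction $(\Rightarrow)$: apply Proposition~\ref{triple-prop}, in its equivalent form (iii) rewritten with ${\frak b}_t$ (as noted in the remark following that proposition), to the $d_t$-geodesic $\gamma_t$. Dynamic convexity of $V$ yields
\begin{equation*}
\partial_a V_t(\gamma_t^a)\big|_{a=0+}
\;\le\; V_t(z)-V_t(x_t)+\tfrac12\,{\frak b}_t^{0}(\gamma_t),
\end{equation*}
and inserting the identity above is exactly \eqref{eq:smoothevikn}. This step uses only that $V$ is dynamically convex, that the $d_t$-geodesic $\gamma_t$ is smooth in $a$, and the first-variation formula; no further regularity is needed.

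For the converse $(\Leftarrow)$: fix $t\in(r,T)$ and an arbitrary tangent vector $\xi\in T_{x_t}M$, and test \eqref{eq:smoothevikn} with $z_\epsilon=\exp^{g_t}_{x_t}(\epsilon\xi)$ for $\epsilon\searrow0$. The associated $g_t$-geodesic is $\gamma_t^a=\exp^{g_t}_{x_t}(a\epsilon\xi)$, so $\dot\gamma_t^{0}=\epsilon\xi$, and Taylor expansion in $\epsilon$ gives
\begin{align*}
V_t(x_t)-V_t(z_\epsilon)&=-\epsilon\,g_t\bigl(\xi,\nabla_t V_t(x_t)\bigr)+O(\epsilon^2),\\
\tfrac12\partial_s^- d_t^2(x_s,z_\epsilon)\big|_{s=t-}&=-\epsilon\,g_t(\xi,\dot x_t)+O(\epsilon^2).
\end{align*}
It remains to control ${\frak b}_t^{0}(\gamma_t)$. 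Because $g_t$ is smooth in $t$ and $d_t^2(\gamma_t^{0},\gamma_t^{\sigma})\le C\sigma^2\epsilon^2$ on a neighborhood of $x_t$, the identity $\partial_t d_t^2(\gamma_t^{0},\gamma_t^{\sigma})=\int_0^\sigma\partial_t g_t(\dot\gamma_t^b,\dot\gamma_t^b)\,db+\ldots$ shows $\lvert{\frak b}_t(\gamma_t\vert_{[0,\sigma]})\rvert\le C'\sigma\epsilon^2$, hence $\lvert{\frak b}_t^{0}(\gamma_t)\rvert=O(\epsilon^2)$. Substituting into \eqref{eq:smoothevikn}, dividing by $\epsilon>0$, and letting $\epsilon\searrow0$ yields
\begin{equation*}
g_t\bigl(\xi,\nabla_t V_t(x_t)-\dot x_t\bigr)\ge 0.
\end{equation*}
Since $\xi$ is arbitrary (one may also replace $\xi$ by $-\xi$), we conclude $\dot x_t=\nabla_t V_t(x_t)$.

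The only subtle point in either direction is the handling of the strain term ${\frak b}_t^{0}(\gamma_t)$: in $(\Rightarrow)$ it has to be produced with the correct sign from dynamic convexity via Proposition~\ref{triple-prop}, and in $(\Leftarrow)$ one needs the uniform $O(\epsilon^2)$ bound along the short geodesics $\gamma_t$, which follows from smoothness of $g_t$ in $t$ and of the exponential map in its base variable.
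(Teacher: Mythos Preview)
Your proof is correct and follows essentially the same route as the paper's own argument: in both directions the first variation formula $\tfrac12\partial_s d_t^2(x_s,z)|_{s=t}=-\langle\dot x_t,\dot\gamma_t^0\rangle_t$ is the bridge, the forward implication comes from the single-slope form of dynamic convexity (Proposition~\ref{triple-prop}\,(iii) with the ${\frak b}_t$ rewriting), and the converse is obtained by testing the inequality along short geodesics $z_\epsilon=\exp_{x_t}^{g_t}(\epsilon\xi)$ and killing the strain term. The only cosmetic difference is that the paper bounds ${\frak b}_t^0(\tilde\gamma)$ from below via a lower log-Lipschitz control $\lambda_t$, whereas you bound it two-sidedly by $O(\epsilon^2)$ directly from smoothness of $t\mapsto g_t$; in the smooth Riemannian setting these amount to the same estimate.
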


\begin{proof}
  To prove the only if part, fix $t>r$, $z\in M$ and a constant
  speed $d_t$-geodesic $\gamma:[0,1]\to M$ connecting $x_t$ to $z$. 
The dynamic convexity of $V$ implies
\begin{eqnarray*}
\partial_a V_t(\gamma^a)\Big|_{a=0+}&\le& V_t(\gamma^1)-V_t(\gamma^0)+\frac12
{\frak b}_t^0 (\gamma).
\end{eqnarray*}
On the other hand, the first variation formula  (in the static Riemannian manifold $(M,g_t)$) yields
  \begin{align*}
\partial_a V_t(\gamma^a)\Big|_{a=0}
~&=\langle{\nabla_t V_t(x_t),\dot\gamma^0}\rangle_t~=\langle{\dot x_t,\dot\gamma^0}\rangle_t
              =-\partial_s \frac12 d_t^2(x_s,z)\Big|_{s=t}\;.
  \end{align*}
Combing both assertions gives the claim.

For the if part, fix $t>t_0$ and a constant speed $d_t$-geodesic
  $\gamma:[0,1]\to M$ with $\gamma^0=x_t$ and arbitrary  $\xi=\dot\gamma^0$. Using the dynamic Evolution
  Variation inequality 
  (applied to the rescaled curve $\tilde\gamma^a=\gamma^{\epsilon a}$)
 with
  $z=\gamma^\epsilon$ for some $\epsilon>0$
and the fact that
$\partial_s \frac12 d_t^2(x_s,z)\Big|_{s=t}=\langle{\dot x_t,\dot\gamma^0}\rangle_t$
 we obtain
  \begin{eqnarray*}
    -V_t(\gamma^\epsilon)+V_t(\gamma^0)
    &\ge&
    -\langle{\dot x_t,\dot\tilde\gamma^0}\rangle_t
+
 \frac12 
     {\frak b}_t^0 (\tilde\gamma_t)\\
      &\ge&
    -\langle{\dot x_t,\dot\tilde\gamma^0}\rangle_t
- \lambda_t \int_0^1(1-\tau)
     {\frak g}_t (\tilde\gamma_t)d\tau\\
     &=&
        -\epsilon\langle{\dot x_t,\dot\gamma^0}\rangle_t
- \lambda_t\epsilon^2\cdot d_t^2(\gamma^0,\gamma^1)/2.
  \end{eqnarray*}
 Dividing by $\epsilon$ and letting $\epsilon\to0$ yields 
 \begin{align}\label{grad-ident}
- \langle \nabla_t  V_t(x_t),\xi\rangle_t~\ge~\
-\langle{\dot x_t,\xi}\rangle_t.
  \end{align}
The fact that \eqref{grad-ident} holds true for all $\xi$ in the $g_t$-tangent space of $x_t$ implies \eqref{eq:smoothgradflow}.
\end{proof}

\begin{proposition}\label{riem-evi} Let $(x_t)_{t\in[r,T)}$  and $(y_t)_{t\in[r,T)}$ be two  upward $(EVI)_{dyn}$-gradient flow curves for a given function $V$, both flows depending smoothly on $t$. Then
for all $r\le s \le t<T$
 \begin{align}
d_t(x_t,y_t)\ge d_s(x_s,y_s).
\end{align}
\end{proposition}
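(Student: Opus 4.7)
The plan is to show that $t \mapsto d_t^2(x_t,y_t)$ is non-decreasing on $[r,T)$; since both curves and all metrics are smooth, this reduces to verifying $\frac{d}{dt}d_t^2(x_t,y_t)\ge 0$ pointwise. By the chain rule,
\[
\frac{d}{dt}d_t^2(x_t,y_t)=\partial_s d_t^2(x_s,y_t)\big|_{s=t}+\partial_s d_t^2(x_t,y_s)\big|_{s=t}+\partial_t d_t^2(x_t,y_t),
\]
the final summand being the partial time derivative of the metric at the fixed pair $(x_t,y_t)$. The strategy is to apply $(EVI)_{dyn}$ symmetrically to the two flows so that the potentials cancel, leaving a bound on the first two summands in terms of strain.

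Concretely, I would fix a constant-speed $g_t$-geodesic $\gamma:[0,1]\to M$ from $x_t$ to $y_t$ and let $\tilde\gamma^a:=\gamma^{1-a}$ denote its reversal, a geodesic from $y_t$ to $x_t$. Applying \eqref{evi-dyn} to $(x_s)$ with $z=y_t$ along $\gamma$, and to $(y_s)$ with $z=x_t$ along $\tilde\gamma$, and adding the resulting inequalities, the terms $V_t(x_t)-V_t(y_t)$ and $V_t(y_t)-V_t(x_t)$ cancel, yielding
\[
\tfrac12\partial_s d_t^2(x_s,y_t)\big|_{s=t}+\tfrac12\partial_s d_t^2(x_t,y_s)\big|_{s=t}+\tfrac12\bigl[\mathfrak{b}_t^0(\gamma)+\mathfrak{b}_t^0(\tilde\gamma)\bigr]\ge 0.
\]

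The crux of the proof is then the strain identity $\mathfrak{b}_t^0(\gamma)+\mathfrak{b}_t^0(\tilde\gamma)=\partial_t d_t^2(x_t,y_t)$ in the smooth Riemannian setting. To establish it I would use the envelope argument for the length functional: for a $g_t$-geodesic $\gamma$ one has $\partial_t d_t^2(\gamma^a,\gamma^b)=(b-a)\int_a^b\partial_t g_t(\dot\gamma^r,\dot\gamma^r)\,dr$ because first-order variations of the minimizing geodesic cancel. This identity together with the constant-speed property makes the partition infimum defining $\mathfrak{b}_t$ exact, giving $\mathfrak{b}_t(\gamma|_{[0,\sigma]})=\int_0^\sigma\partial_t g_t(\dot\gamma^r,\dot\gamma^r)\,dr$. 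Fubini then gives $\mathfrak{b}_t^0(\gamma)=\int_0^1(1-r)\partial_t g_t(\dot\gamma^r,\dot\gamma^r)\,dr$, while the substitution $r\mapsto 1-r$ yields $\mathfrak{b}_t^0(\tilde\gamma)=\int_0^1 r\,\partial_t g_t(\dot\gamma^r,\dot\gamma^r)\,dr$; summation gives $\int_0^1\partial_t g_t(\dot\gamma^r,\dot\gamma^r)\,dr=\partial_t d_t^2(x_t,y_t)$. Substituting this back into the summed EVI inequality and combining with the chain rule yields $\frac{d}{dt}d_t^2(x_t,y_t)\ge 0$; integrating over $[s,t]$ finishes the proof.

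The principal obstacle is exactly this strain identity: the definition of $\mathfrak{b}_t^0$ as an infimum over partitions followed by an integral in the endpoint parameter looks fragile, but it collapses cleanly in the smooth Riemannian case. A more conceptual alternative would be to first convert $(EVI)_{dyn}$ into the gradient-flow ODE $\dot x_t=\nabla_t V_t(x_t)$ (and analogously for $y_t$) via Proposition~\ref{lem:gradflowevi}, expand $\frac{d}{dt}d_t^2(x_t,y_t)$ using the Riemannian first variation of distance to turn the two spatial terms into $\pm 2\,\partial_a V_t(\gamma^a)|_{a=0,1}$, and then close the estimate using dynamic convexity of $V$ along $\gamma$; both routes ultimately rest on the same computation.
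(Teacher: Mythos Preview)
Your approach is essentially the same as the paper's: apply $(EVI)_{dyn}$ to $(x_s)$ with $z=y_t$ and to $(y_s)$ with $z=x_t$, add so that the potential terms cancel, and combine with the chain rule. The only difference is in how you handle the strain terms. You prove the \emph{identity} $\mathfrak{b}_t^0(\gamma)+\mathfrak{b}_t^0(\tilde\gamma)=\partial_t d_t^2(x_t,y_t)$ in the smooth Riemannian setting via the envelope argument, whereas the paper simply uses the \emph{inequality} $\mathfrak{b}_t^0(\gamma)+\mathfrak{b}_t^1(\gamma)\le \partial_t d_t^2(x_t,y_t)$ (here $\mathfrak{b}_t^1(\gamma)=\int_0^1\mathfrak{b}_t(\gamma|_{[\sigma,1]})\,d\sigma$, which equals your $\mathfrak{b}_t^0(\tilde\gamma)$). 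This inequality is immediate from the definition of the strain as an infimum over partitions: for each $\sigma$, concatenating partitions of $[0,\sigma]$ and $[\sigma,1]$ gives a partition of $[0,1]$, so $\mathfrak{b}_t(\gamma|_{[0,\sigma]})+\mathfrak{b}_t(\gamma|_{[\sigma,1]})\le \partial_t^- d_{t-}^2(\gamma^0,\gamma^1)$, and one integrates in $\sigma$. Your envelope computation is correct but works harder than necessary; the inequality suffices and does not require the smooth Riemannian structure.
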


\begin{proof}
Applying \eqref{evi-dyn} to the curve $(x_t)_t$ and the point $z=y_t$ yields
 \begin{align*}
 {\frac12\partial_r d^2_t(x_r,y_t)}\Big|_{r=t} +
 \frac12 
     {\frak b}_t^0 (\gamma_t)
      ~\ge V_t(x_t)-V_t(y_t)\;.
  \end{align*}
Interchanging the roles of $x_t$ and $y_t$ yields
 \begin{align*}
 {\frac12\partial_r d^2_t(x_t,y_r)} \Big|_{r=t}  + 
  \frac12 
     {\frak b}_t^1 (\gamma_t)
     ~\ge V_t(y_t)-V_t(x_t)\;.
  \end{align*}
  Here 
  $(\gamma_t^a)_{a\in[0,1]}$  denotes any  $d_t$-geodesic connecting $x_t$ to $y_t$.
  Note that
$$ \frac12\partial_r d^2_r(x_t,y_t)\Big|_{r=t} \ge \frac12 {\frak b}_t^0 (\gamma_t)+ \frac12 {\frak b}_t^1 (\gamma_t).$$
Adding up these three estimates leads to
 \begin{align*}
 \frac12\partial_t d^2_t(x_t,y_t)\ge \Big( {\frac12\partial_r d^2_t(x_r,y_t)}+\frac12\partial_r d^2_t(x_t,y_r) + \frac12\partial_r{ d^2_r(x_t,y_t)}\Big) \Big|_{r=t}    ~\ge 0\;
  \end{align*}
which proves the claim. 
\end{proof}

\begin{corollary}
For each $(T,x')$ there exists at most one smooth upward $(EVI)_{dyn}$-gradient flow for $V$ terminating in $(T,x')$.
\end{corollary}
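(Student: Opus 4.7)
The plan is to deduce the corollary directly from the monotonicity statement of Proposition \ref{riem-evi}. Suppose $(x_t)_{t\in[r,T)}$ and $(y_t)_{t\in[r,T)}$ are two smooth upward $(EVI)_{dyn}$-gradient flow curves for $V$, both terminating at $(T,x')$, meaning $x_t\to x'$ and $y_t\to x'$ as $t\nearrow T$. By possibly shrinking $r$, we may assume both curves are defined on a common interval $[r,T)$.

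Proposition \ref{riem-evi} gives, for every $r\le s\le t<T$, the monotonicity
\begin{equation*}
d_s(x_s,y_s)\le d_t(x_t,y_t).
\end{equation*}
Fix any $s\in[r,T)$. Taking the limit $t\nearrow T$ on the right-hand side, we use smoothness of the curves $x_\cdot,y_\cdot$ together with continuity of $(t,p,q)\mapsto d_t(p,q)$ (which follows from the smooth dependence of the family of Riemannian tensors $g_t$ on $t$) to conclude
\begin{equation*}
\lim_{t\nearrow T} d_t(x_t,y_t)=d_T(x',x')=0.
\end{equation*}
Hence $d_s(x_s,y_s)\le 0$, which forces $x_s=y_s$ for every $s\in[r,T)$.

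There is essentially no obstacle here beyond verifying the continuity used in the limit; the work has already been done in establishing Proposition \ref{riem-evi}, and the corollary is its direct consequence at the terminal point. The only point worth noting is that the argument uses contractivity \emph{backward} in time: the flow contracts distances as one moves from $t$ down to $s$, so coincidence at the terminal time $T$ propagates backward to the whole interval $[r,T)$, which is exactly the appropriate notion of uniqueness for an upward flow specified by its terminal condition.
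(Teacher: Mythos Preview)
Your proof is correct and is precisely the argument the paper intends: the corollary is stated without proof immediately after Proposition \ref{riem-evi}, so deducing uniqueness from the monotonicity $d_s(x_s,y_s)\le d_t(x_t,y_t)$ by letting $t\nearrow T$ is exactly the implied reasoning. Your remarks on the continuity needed for the limit and on the backward-in-time nature of the contraction are appropriate in the smooth Riemannian setting of Section~1.6.
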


\section{Super-Ricci Flows for MM-Spaces}

\subsection{Time-dependent Wasserstein Metrics}
Let $X$ be a Polish space equipped with a 1-parameter family $(d_t)_{t\in I}$ of geodesic metrics on $X$. We assume that each of them generates the topology of $X$.

In terms of the  metric $d_t$ for given $t$, we define the \emph{$L^2$-Wasserstein  metric} 
$W_t$ on the space of probability measures on $X$:
$$W_t(\mu,\nu)=\inf\left\{\int_{X\times X}d^2_t(x,y)\,dq(x,y):\ q\in\Cpl(\mu,\nu)\right\}^{1/2}$$
where $\Cpl(\mu,\nu)$ as usual denotes the set of all probability measures on $X\times X$ with marginals $\mu$ and $\nu$.
In general, $W_t$  might take the value $+\infty$, i.e.\ it 
will only be a pseudo metric. However, this will cause no troubles since all discussions will be restricted to the subspace
$$\Pz_t=\Big\{\mu\in \Pz: W_t(\mu,\delta_z)<\infty\quad\mbox{for some/all }z\in X\Big\}.$$

\begin{remark*}
More generally, one might assume that $d_t$  is not really a metric but just a pseudo metric on $X$, possibly attaining values 0 and $\infty$ off the diagonal.
Then also $W_t$ will be a pseudo metric on $\Pz$ which canonically induces to a metric on  the quotient space $\Pz_t/W_t$.
\end{remark*}

\begin{lemma}
For each $s,t\in I$ and each $C_1,C_2$ the following are equivalent
\begin{itemize}
\item[(i)] for all points $x,y\in X$
\begin{equation*}%\label{ddd}
d^2_t(x,y)\le C_1+C_2d^2_s(x,y);
\end{equation*}
\item[(ii)]
for  all probability measures $\mu,\nu$ on $X$
\begin{equation*}%\label{www}
W^2_t(\mu,\nu)\le C_1+C_2W^2_s(\mu,\nu).
\end{equation*}
\end{itemize}
\end{lemma}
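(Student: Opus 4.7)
The plan is to prove each direction by a standard one-line reduction, using the two opposite ``extreme'' features of the Wasserstein distance: couplings for the forward direction and Diracs for the backward direction.

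For the implication (i) $\Rightarrow$ (ii), I would fix $\mu,\nu$ and choose an optimal coupling $q \in \mathrm{Cpl}(\mu,\nu)$ for $W_s$, i.e.\ $\int d_s^2(x,y)\, dq(x,y) = W_s^2(\mu,\nu)$ (if $W_s(\mu,\nu)=\infty$ the inequality is trivial; otherwise such an optimizer exists since $d_s$ is continuous). Since $q$ is \emph{some} coupling of $\mu$ and $\nu$, it is admissible in the definition of $W_t$, so integrating the pointwise bound from (i) against $q$ yields
\begin{equation*}
W_t^2(\mu,\nu) \le \int_{X\times X} d_t^2(x,y)\, dq(x,y) \le C_1 + C_2 \int_{X\times X} d_s^2(x,y)\, dq(x,y) = C_1 + C_2 W_s^2(\mu,\nu).
\end{equation*}

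For the implication (ii) $\Rightarrow$ (i), I would simply specialize to Dirac measures: given $x,y\in X$, set $\mu = \delta_x$, $\nu = \delta_y$. The only coupling is $\delta_{(x,y)}$, so $W_t^2(\delta_x,\delta_y) = d_t^2(x,y)$ and likewise $W_s^2(\delta_x,\delta_y) = d_s^2(x,y)$, and (i) follows directly from (ii).

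No serious obstacle is expected; the only mild point to mention is the existence of an optimal $W_s$-coupling, which is standard in the Polish setting since $d_s^2$ is lower semicontinuous (indeed continuous), or alternatively one can bypass this by taking an $\varepsilon$-optimal coupling and letting $\varepsilon \searrow 0$. The argument is symmetric in $s$ and $t$ (though the statement is not), and works verbatim if $d_t, d_s$ are allowed to be pseudo metrics taking the value $+\infty$, in which case both sides may be infinite simultaneously.
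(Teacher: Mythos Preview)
Your proof is correct and follows essentially the same approach as the paper: the backward implication via the isometric embedding $x\mapsto\delta_x$, and the forward implication by integrating the pointwise bound against a (near-)optimal $d_s$-coupling. The paper opts directly for the $\varepsilon$-optimal coupling you mention as the alternative, but this is a cosmetic difference.
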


\begin{proof}
The backward implication follows from the isometric embedding $x\mapsto \delta_x$ of $X$ into the space of probability measures on $X$.
For the forward implication, note
that for all $s<t$ and each $\epsilon>0$
\begin{eqnarray*}
 W_t^2(\mu^0,\mu^1)&\le&
\int_{X\times X}  d_t^2(x^0,x^1)dq_{s}(x^0,x^1)\\
&\le& C_1+ C_2\cdot 
\int_{X\times X} d^2_{s}(x^0,x^1)\,dq_{s}(x^0,x^1)=C_1+C_2\cdot \left(W^2_{s}(\mu^0,\mu^1)+\epsilon\right)
\end{eqnarray*}
where $q_{s}$ denotes some `almost' $d_{s}$-optimal coupling of $\mu^0$ and $\mu^1$.
\end{proof}

\begin{corollary}\label{w-control} The upper log-Lipschitz bound \eqref{upp-lip} implies that for all $s<t$ and all $\mu,\nu\in\Pz$
\begin{equation}\label{w-upp-lip}
 \log W_t(\mu,\nu)-\log W_s(\mu,\nu)\le \int_s^t \kappa_rdr.
\end{equation}
whereas the lower log-Lipschitz bound \eqref{low-lip} implies 
\begin{equation}\label{w-low-lip}
 \log W_t(\mu,\nu)-\log W_s(\mu,\nu)\ge -\int_s^t \lambda_rdr.
\end{equation}
\end{corollary}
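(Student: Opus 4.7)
The plan is to derive Corollary \ref{w-control} as a direct application of the preceding Lemma, after first rewriting the log-Lipschitz bounds in multiplicative form.

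First, I would exponentiate \eqref{upp-lip}: for all $s<t$ and all $x,y\in X$,
\begin{equation*}
d_t(x,y)\le d_s(x,y)\cdot \exp\Big(\int_s^t \kappa_r\,dr\Big),
\end{equation*}
hence, squaring,
\begin{equation*}
d_t^2(x,y)\le C_2\cdot d_s^2(x,y), \qquad C_2:=\exp\Big(2\int_s^t \kappa_r\,dr\Big).
\end{equation*}
This is precisely condition (i) of the previous Lemma with $C_1=0$. The lemma therefore yields
$W_t^2(\mu,\nu)\le C_2\cdot W_s^2(\mu,\nu)$ for all $\mu,\nu\in\Pz$. Taking $\frac12\log$ of both sides gives \eqref{w-upp-lip}.

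For the lower bound, I would symmetrically exponentiate \eqref{low-lip} to get
\begin{equation*}
d_s^2(x,y)\le d_t^2(x,y)\cdot \exp\Big(2\int_s^t\lambda_r\,dr\Big),
\end{equation*}
which is again of the form required by the Lemma (with the roles of $s$ and $t$ swapped, $C_1=0$, $C_2=\exp(2\int_s^t\lambda_r\,dr)$). The lemma then gives $W_s^2(\mu,\nu)\le C_2\cdot W_t^2(\mu,\nu)$, and taking $\frac12\log$ yields \eqref{w-low-lip}.

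No real obstacle arises: the content is entirely encapsulated in the Lemma, and the corollary is just a transcription of a pointwise estimate between $d_s$ and $d_t$ into the corresponding Wasserstein estimate via optimal coupling (the step already carried out in the Lemma's proof, using a nearly $d_s$-optimal coupling and the pointwise domination of $d_t^2$ by a multiple of $d_s^2$).
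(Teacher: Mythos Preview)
Your proposal is correct and follows exactly the route the paper intends: the corollary is stated without proof precisely because it is obtained by applying the preceding Lemma with $C_1=0$ and $C_2=\exp\big(2\int_s^t\kappa_r\,dr\big)$ (resp.\ $C_2=\exp\big(2\int_s^t\lambda_r\,dr\big)$ with the roles of $s$ and $t$ exchanged). Nothing further is needed.
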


We denote the \emph{infinitesimal action} of a $W_t$-Lipschitz curve $\mu=(\mu^\tau)_{\tau\in[0,1]}$ in $\Pz_t$ by
\begin{equation*}{\frak G}_t^\tau(\mu)=\lim_{\sigma\to \tau}\left|\frac{W_t(\mu^\tau,\mu^\sigma)}{\tau-\sigma}\right|^2
\end{equation*}
and  the \emph{action} itself by
\begin{equation*}
{\frak A}_t(\mu)=
\sup_{\vec\tau}\sum_{i=1}^{k}\frac1{\tau_{i}-\tau_{i-1}}W_t^2\left(\mu^{\tau_{i-1}}, \mu^{\tau_i}\right)
=\int_0^1 {\frak G}^\tau_t(\mu)d\tau.
\end{equation*}
Of particular interest is the \emph{strain}
\begin{equation*}
{\frak B}_t(\mu)=
\inf_{\vec\tau}\sum_{i=1}^{k}\frac1{\tau_{i}-\tau_{i-1}}
\partial_t^-
W_{t-}^2\left(\mu^{\tau_{i-1}}, \mu^{\tau_i}\right)
\end{equation*}
defined for any $W_t$-geodesic $(\mu^\tau)_{\tau\in[0,1]}$ in $\Pz_t$.

%\medskip

The bounds \eqref{w-upp-lip} and \eqref{w-low-lip}
 imply
 $$-\lambda_t\cdot
W_t(\mu,\nu)\le
 \partial_t^- W_{t}(\mu,\nu)=
 \partial_t^+ W_{t}(\mu,\nu)
\le \kappa_t\cdot
W_t(\mu,\nu)$$
% and $\partial_t^+ W_t(\mu,\nu)\ge -C_*\cdot W_t(\mu,\nu)$.
for all $\mu,\nu\in\Pz$ as well as
$$ -2\lambda_t\cdot
{\frak A}_t(\mu)\le {\frak B}_t(\mu)\le 2\kappa_t\cdot
{\frak A}_t(\mu)$$
for every $W_t$-geodesic $(\mu^\tau)_{\tau\in[0,1]}$ in $\Pz_t$.

\bigskip

\subsubsection*{\bf Time-dependent Metric Measure Spaces.} \ 
A time-dependent metric measure space is a family $\big(X,d_t,m_t\big)_{t\in I}$  %or, more explicitly, a family $\big((X,d_t,f_t,m)\big)_{t\in I}$ 
where 
$X$ is a Polish space. 
The parameter set $I\subset \R$ will be a left open interval.
%; for convenience we often assume $I=(0,T]$.
For each $t$ under consideration, $m_t$ will be a Borel measure on $X$ and
$d_t$ will be a geodesic metric on $X$ which generates the given topology.
We also request that $\int\exp(-C_t d_t^2(x,z_t))dm_t(x)<\infty$ for some $z_t\in X$ and $C_t\in\R$.
Occasionally, we will assume that $d$ 
satisfies a lower log-Lipschitz bound \eqref{low-lip} or an upper log-Lipschitz bound \eqref{upp-lip}  with control function $\lambda\in L^1_{loc}(I)$ or $\kappa\in L^1_{loc}(I)$, resp.

The basic quantity for the subsequent considerations will be the time-dependent Boltzmann entropy
$$S: \quad I\times \Pz\to (-\infty,\infty], \quad (t,\mu)\mapsto \Ent(\mu|m_t)
%= \Ent(\mu|m)+\int_X f_t\,d\mu
.$$
For each $t\in I$, the function $S_t:\mu\mapsto \Ent(\mu|m_t)$ is %lower bounded and 
lower semicontinuous on $\Pz_t$. Put $\Dom(S_t)=\{\mu\in\Pz_t: S_t(\mu)<\infty\}$.
More far reaching regularity will follow from synthetic curvature bounds.

\begin{lemma}\label{rcd-upp-reg}
If for a.e.\ $t\in I$ the static space $(X,d_t,m_t)$ satisfies a Riemannian curvature dimension-condition RCD$(-\kappa_t,\infty)$ for some $\kappa_t\in\R$ (in the sense of \cite{AGS-BE, EKS}) then $S$ is upper regular.
\end{lemma}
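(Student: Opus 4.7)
The plan is to derive upper regularity of $S$ directly from the strong $(-\kappa_t)$-convexity of the Boltzmann entropy along $W_t$-geodesics, which is the content of the Riemannian curvature-dimension condition RCD$(-\kappa_t,\infty)$ in the framework of \cite{AGS-BE, EKS}. First I would fix a non-exceptional $t\in I$ and an arbitrary $W_t$-geodesic $(\mu^\tau)_{\tau\in[0,1]}$ with $\mu^0,\mu^1\in\Dom(S_t)$. By the RCD hypothesis, $S_t$ is strongly $(-\kappa_t)$-convex on $(\Pz_t,W_t)$, so with $c:=W_t(\mu^0,\mu^1)$,
$$
S_t(\mu^\tau)\le (1-\tau)S_t(\mu^0)+\tau S_t(\mu^1)+\tfrac{\kappa_t}{2}\tau(1-\tau)c^2\qquad\forall\,\tau\in[0,1].
$$
Setting $u(\tau):=S_t(\mu^\tau)$ this says precisely that the scalar function $u$ is $(-\kappa_t c^2)$-convex on $[0,1]$ in the sense of Lemma \ref{K-conv-lem}; in particular $u<\infty$ throughout $[0,1]$.

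Next I would transfer regularity from $u$ to $S$. Lemma \ref{K-conv-lem} immediately gives absolute continuity of $u$ on $(0,1)$, together with the existence of the one-sided derivatives $\partial_\tau u(\tau\pm)$ as genuine limits at every $\tau\in(0,1)$, and the monotonicity-type inequality $\partial_\tau u(\tau-)\le \partial_\tau u(\tau+)$. Because these one-sided limits exist as true limits, the limsup in $\partial^+_\tau u(\tau-)$ collapses to $\partial_\tau u(\tau-)$ and the liminf in $\partial^-_\tau u(\tau+)$ collapses to $\partial_\tau u(\tau+)$, so the interior condition of Definition \ref{upp-reg} becomes exactly that inequality. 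For upper semicontinuity on the closed interval, the semiconvexity bound yields $\limsup_{\tau\searrow 0}u(\tau)\le u(0)$ and $\limsup_{\tau\nearrow 1}u(\tau)\le u(1)$, which combined with the general lower semicontinuity of $S_t$ with respect to $W_t$ actually gives continuity of $u$ on $[0,1]$ whenever the endpoints lie in $\Dom(S_t)$.

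Finally, for the endpoint inequality at $\tau=1$ in Definition \ref{upp-reg}, I would observe that the corrected right derivative $\tau\mapsto \partial_\tau u(\tau+)+\kappa_t c^2\tau$ is non-decreasing on $(0,1)$; hence its left-limit at $1$ exists and equals $\partial_\tau u(1-)+\kappa_t c^2$, which yields $\liminf_{\tau\nearrow 1}\partial^-_\tau u(\tau+)\le \partial^+_\tau u(1-)$. The companion inequality at $\tau=0$ follows symmetrically (or by time-reversal of the geodesic). The main obstacle is securing strong---rather than merely weak---$(-\kappa_t)$-convexity of $S_t$ in the very first step: this is the principal additional content of the Riemannian reinforcement of CD, and in the AGS-BE/EKS formulation it is built into the axioms via the EVI-characterization together with essential uniqueness of $W_t$-geodesics between measures of finite entropy. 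Once that is taken as a black box, the remainder reduces to the elementary properties of real-valued semiconvex functions established in Section 1.
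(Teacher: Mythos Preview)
Your proposal is correct and follows exactly the same route as the paper: RCD$(-\kappa_t,\infty)$ gives strong $(-\kappa_t)$-convexity of $S_t$ along every $W_t$-geodesic, and semiconvexity along geodesics implies upper regularity. The paper dispatches the second step in one sentence by invoking the unnumbered remark after Definition~\ref{upp-reg} (``Every strongly $K$-convex function on a geodesic space is upper regular''), whereas you unpack that remark in detail; your closing paragraph also correctly isolates the one nontrivial input, namely that the Riemannian reinforcement of the curvature-dimension condition upgrades weak to strong geodesic convexity of the entropy.
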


\begin{proof} RCD$(-\kappa_t,\infty)$ implies that $S_t$ is $-\kappa_t$-convex along every $W_t$-geodesic in $\Pz_t$. Since semiconvexity implies upper regularity this proves the claim.
\end{proof}

\subsection{Super-Ricci Flow via  Dynamic Convexity}

\begin{definition}
(i) We say that the time-dependent mm-space $\big(X,d_t,m_t\big)_{t\in I}$  is a \emph{super-Ricci flow} if the Boltzmann entropy $S$ is  \emph{strongly dynamical convex} on  $I \times \Pz$
in the sense of Definition \ref{dyn}: \  
for a.e.\ $t\in I$ and every $\mu^0,\mu^1\in \Pz_t$  there exists
  a $W_t$-geodesic $(\mu^\tau)_{\tau\in[0,1]}$ connecting $\mu^0$ and $\mu^1$  such that
  $\tau\mapsto S_t(\mu^\tau)$ is absolutely continuous on $[0,1]$ and
\begin{eqnarray}\label{dyn-conv-W}
\partial^+_\tau S_t(\mu^{1-})-\partial^-_\tau S_t(\mu^{0+})
&\ge&- \frac 12\partial_t^- W_{t-}^2(\mu^0,\mu^1).
\end{eqnarray}

(ii) Given a function $\lambda\in L^1_{loc}(I)$,  we say that the time-dependent mm-space $\big(X,d_t,m_t\big)_{t\in I}$ is a  \emph{$\lambda$-moderate  super-Ricci flow} if 
for a.e.\ $t\in I$ and every $\mu^0,\mu^1\in \Pz_t$  there exists
  a $W_t$-geodesic $(\mu^\tau)_{\tau\in[0,1]}$ connecting $\mu^0$ and $\mu^1$  such that for all $\tau\in(0,\frac12]$
\begin{eqnarray}\label{controlled-W}
\frac1\tau\left[S_t(\mu^0))-S_t(\mu^\tau)\right]
&+&\frac1\tau\left[S_t(\mu^1)-S_t(\mu^{1-\tau})\right]\\
&\ge&- \frac 12\partial_t^- W_{t-}^2(\mu^0,\mu^1)- \lambda_t \tau \cdot W^2_t(\mu^0,\mu^1).
\nonumber
\end{eqnarray}
\end{definition}

%\begin{remark*}
%For most of the following results, one might allow the `control' $\lambda_t$ in \eqref{controlled-W} to depend on $\mu^0,\mu^1$ (and even on the choice of the connecting geodesic). For later applications to the stability result, however, a certain uniformity is requested. 
%\end{remark*}

\begin{corollary}
(i) Every upper regular, $\lambda$-moderate  super-Ricci flow is a super-Ricci flow.

(ii) Every super-Ricci flow  satisfying a lower log-Lipschitz bound  \eqref{low-lip}   with control function $\lambda\in L^1_{loc}(I)$   is a $\lambda$-moderate  super-Ricci flow.
\end{corollary}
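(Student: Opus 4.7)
The plan is to view both claims as applications of the equivalences in Theorem \ref{prop-dyn}, now for $V = S$ on the time-dependent geodesic space $(\Pz, W_t)_{t \in I}$. The super-Ricci flow inequality \eqref{dyn-conv-W} plays the role of condition (i) there, and the $\lambda$-moderate inequality \eqref{controlled-W} the role of (iv). Corollary \ref{w-control} lets us pass a lower log-Lipschitz bound from $d_t$ to $W_t$ without loss of constants, so the hypothesis on the metric in Theorem \ref{prop-dyn} is at our disposal whenever we need it.

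For Part (i), I would fix $\mu^0, \mu^1 \in \Pz_t$ of finite entropy, use the $\lambda$-moderate hypothesis to produce a $W_t$-geodesic $\mu$ satisfying \eqref{controlled-W} for every $\tau \in (0, 1/2]$, and take $\limsup_{\tau \searrow 0}$ on both sides. The elementary identities
\[
\limsup_{\tau \searrow 0} \frac{S_t(\mu^0) - S_t(\mu^\tau)}{\tau} = -\partial_\tau^- S_t(\mu^\tau)\big|_{\tau = 0+}, \qquad
\limsup_{\tau \searrow 0} \frac{S_t(\mu^1) - S_t(\mu^{1-\tau})}{\tau} = \partial_\tau^+ S_t(\mu^\tau)\big|_{\tau = 1-},
\]
together with $\limsup(A+B) \le \limsup A + \limsup B$ and the fact that $\lambda_t \tau\, W_t^2(\mu^0, \mu^1)$ vanishes in the limit, give exactly \eqref{dyn-conv-W}. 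Upper regularity of $S$ supplies usc of $\tau \mapsto S_t(\mu^\tau)$ on $[0,1]$ and ac on $(0,1)$; combined with the lower semicontinuity of $S_t$ and the finite endpoint values, the Remark after Definition \ref{dyn} extends this to absolute continuity on the closed interval.

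For Part (ii), I would fix any $W_t$-geodesic $\mu$ between $\mu^0$ and $\mu^1$ and note that every linearly rescaled subsegment $(\mu^{\sigma + a(1-2\sigma)})_{a \in [0,1]}$ is again a $W_t$-geodesic. Applying the super-Ricci flow inequality to each such subsegment and rescaling by the chain rule yields
\[
(1-2\sigma)\bigl[\partial_\tau^+ S_t(\mu^{(1-\sigma)-}) - \partial_\tau^- S_t(\mu^{\sigma+})\bigr] \ge -\tfrac{1}{2}\partial_t^- W_{t-}^2(\mu^\sigma, \mu^{1-\sigma})
\]
for every $\sigma \in [0, \tau]$. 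Integrating $\sigma$ from $0$ to $\tau$, the left-hand side telescopes via the absolute continuity of $S_t \circ \mu$ into $S_t(\mu^0) + S_t(\mu^1) - S_t(\mu^\tau) - S_t(\mu^{1-\tau})$, producing the integrated form (iii) of Theorem \ref{prop-dyn} in the Wasserstein setting. The log-Lipschitz bound on $W_t$ then converts $\int_0^\tau (1-2\sigma)^{-1} \partial_t^- W_{t-}^2(\mu^\sigma, \mu^{1-\sigma})\, d\sigma$ into at most $\tau \partial_t^- W_{t-}^2(\mu^0, \mu^1) + 2\lambda_t \tau^2 W_t^2(\mu^0, \mu^1)$, mirroring the step (iii)$\Rightarrow$(iv) in Theorem \ref{prop-dyn}; dividing by $\tau$ yields \eqref{controlled-W}.

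The main obstacle I anticipate is interpretive rather than computational: the super-Ricci flow hypothesis in Part (ii) must be read in the strong form of Definition \ref{dyn}, i.e., the slope inequality holds for every $W_t$-geodesic (as the introduction formulates it), so that the subsegment argument above is legitimate. With that interpretation fixed, the remaining technicalities are routine, namely securing absolute continuity on the \emph{closed} interval $[0,1]$ in Part (i) from upper regularity plus lower semicontinuity of $S_t$, and handling the $\limsup$ arithmetic consistently with the convention $(+\infty) - (+\infty) = +\infty$ adopted in Definition \ref{dyn}.
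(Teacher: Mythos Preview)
Your proposal is correct and follows essentially the same approach as the paper, which proves the corollary by a one-line citation of Theorem~\ref{prop-dyn} (together with Corollary~\ref{w-control} for transferring the log-Lipschitz bound from $d_t$ to $W_t$). Your Part~(i) is the implication (iv)$\Rightarrow$(i) of that theorem spelled out (with upper regularity supplying the required absolute continuity along geodesics), and your Part~(ii) is the chain (ii)$\Rightarrow$(iii)$\Rightarrow$(iv) applied in the Wasserstein setting, using the strong form of dynamic convexity so that the subsegment argument is legitimate; you also correctly flag the strong-versus-weak reading of the super-Ricci flow definition as the only genuine interpretive point.
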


\begin{proof}
Theorem \ref{prop-dyn} (and Corollary \ref{w-control}).
\end{proof}

\begin{theorem}
Assume   that for a.e.\ $t\in I$ the static space $(X,d_t,m_t)$ is infinitesimally Hilbertian and that an
upper log-Lipschitz bound  \eqref{upp-lip}   with control function $\kappa\in L^1_{loc}(I)$ 
holds.

(i) If $\big(X,d_t,m_t\big)_{t\in I}$  is a $\lambda$-moderate  super-Ricci flow or a super-Ricci flow then  $S$ is upper regular.

(ii) If $\big(X,d_t,m_t\big)_{t\in I}$  is a  super-Ricci flow then for 
a.e.\ $t$ the static space 
$(X,d_t,m_t)$ satisfies  RCD$(-\kappa_t,\infty)$. 
\end{theorem}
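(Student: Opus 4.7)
My plan is to reduce both parts to the abstract convexity-transfer machinery of Chapter~1, applied to the time-dependent Wasserstein space $(\Pz,W_t)_{t\in I}$. The key preparatory observation is that Corollary~\ref{w-control} lifts the upper log-Lipschitz bound \eqref{upp-lip} for $d$ to the analogous bound for $W$, so that $(\Pz,W_t)_{t\in I}$ itself is a time-dependent geodesic space with upper log-Lipschitz control $\kappa$. I would establish part (ii) first and then derive the super-Ricci flow case of (i) for free from it via Lemma~\ref{rcd-upp-reg}, leaving only the $\lambda$-moderate case of (i) for a separate argument.

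For part~(ii), the super-Ricci flow hypothesis is precisely that $S$ is strongly dynamically convex on $I\times\Pz$ in the sense of Definition~\ref{dyn}, i.e., the defining inequality holds along every $W_t$-geodesic with finite entropy at the endpoints. Applying Proposition~\ref{log-ric} with ambient time-dependent metric space $(\Pz,W_t)_{t\in I}$ and function $V:=S$ then yields that for a.e.\ $t\in I$ the static functional $S_t$ is strongly $(-\kappa_t)$-convex on $(\Pz_t,W_t)$, i.e.\ $(-\kappa_t)$-convex along every $W_t$-geodesic. This is precisely the (strong) Lott-Sturm-Villani CD$(-\kappa_t,\infty)$ condition for $(X,d_t,m_t)$, and combined with the assumed infinitesimal Hilbertianity gives RCD$(-\kappa_t,\infty)$.

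For part~(i) in the super-Ricci flow case, (ii) together with Lemma~\ref{rcd-upp-reg} settles the matter. In the $\lambda$-moderate case, I would start from the defining inequality \eqref{controlled-W} and use Corollary~\ref{w-control} again to replace $-\tfrac12\partial_t^-W_{t-}^2(\mu^0,\mu^1)$ by the lower bound $-\kappa_t\,W_t^2(\mu^0,\mu^1)$. Multiplying by $\tau$ then produces, for each pair $\mu^0,\mu^1\in\Pz_t$, a connecting $W_t$-geodesic $(\mu^\tau)_{\tau\in[0,1]}$ along which
\begin{equation*}
S_t(\mu^0)+S_t(\mu^1)-S_t(\mu^\tau)-S_t(\mu^{1-\tau})\ge\bigl(-\kappa_t\tau-\lambda_t\tau^2\bigr)\,W_t^2(\mu^0,\mu^1)
\end{equation*}
for all $\tau\in(0,\tfrac12]$. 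This is exactly condition~(ii) of Proposition~\ref{slope-conv-weak} with $K:=-\kappa_t$ and the given $\lambda_t$, and its implication (ii)$\Rightarrow$(iii) yields that $S_t$ is weakly midpoint $K'$-convex on $(\Pz_t,W_t)$ for each $K'<-\kappa_t$. The infinitesimal Hilbertianity assumption then allows one to upgrade this to $(-\kappa_t)$-convexity of $S_t$ along every $W_t$-geodesic, whence RCD$(-\kappa_t,\infty)$ holds for a.e.\ $t$ and upper regularity of $S$ follows from Lemma~\ref{rcd-upp-reg}.

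The main obstacle is this last upgrade in the $\lambda$-moderate case: Theorem~\ref{mid-weak-str} would close the gap under local compactness or compactness of sublevel sets, both of which fail on $(\Pz,W_t)$ in general. I would instead invoke the rigidity theory for infinitesimally Hilbertian mm-spaces developed in \cite{AGS-BE, AGS-Mms}, which ensures that weak CD$(K,\infty)$ and strong CD$(K,\infty)$ coincide (through the EVI characterization of the heat semigroup as gradient flow of entropy), so that the weak midpoint convexity extracted from Proposition~\ref{slope-conv-weak} automatically propagates to every $W_t$-geodesic. This is also the only point where the hypothesis of infinitesimal Hilbertianity is genuinely used in~(i); in~(ii) it served only to identify strong CD with RCD.
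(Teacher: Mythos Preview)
Your proof is correct and follows essentially the same approach as the paper: both arguments reduce to establishing some form of semiconvexity of $S_t$ on $(\Pz_t,W_t)$ via the upper log-Lipschitz bound, then invoke the AGS rigidity theory (infinitesimal Hilbertianity) to upgrade weak to strong CD, and finally apply Lemma~\ref{rcd-upp-reg}. The differences are organizational: you prove (ii) first and derive the super-Ricci-flow case of (i) from it, whereas the paper treats (i) first; and in the $\lambda$-moderate case you route through Proposition~\ref{slope-conv-weak} to obtain midpoint $K'$-convexity for every $K'<-\kappa_t$, whereas the paper simply sets $\tau=\tfrac12$ in \eqref{controlled-W} to get midpoint $K_t$-convexity with the cruder constant $K_t=-(2\kappa_t+\lambda_t)$ and then iterates midpoints using lower semicontinuity of $S_t$. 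Your identification of the obstacle (non-compactness of the Wasserstein space blocks Theorem~\ref{mid-weak-str}) and its resolution via \cite{AGS-Mms,AGS-BE} is exactly what the paper uses implicitly when it writes ``together with the assumption of being infinitesimally Hilbertian this yields the RCD$(K_t,\infty)$-condition.''
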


\begin{proof}
(i) From \eqref{controlled-W}
with $\tau=\frac12$ we deduce that $S_t$ is $K_t$-midpoint convex (in the sense of Theorem \ref{mid-weak-str} (iii)) with $K_t=-(2\kappa_t+\lambda_t)$.
Selecting iteratively midpoints yields the $K_t$-convexity inequality 
\eqref{stat-conv} for each dyadic $\tau\in[0,1]$. Lower semicontinuity of $S_t$ allows to deduce this inequality for all $\tau\in[0,1]$. In other words, for the static space $(X,d_t,m_t)$ we have deduced the curvature dimension condition CD$(K_t,\infty)$. 
Together with the assumption of being infinitesimally Hilbertian this yields the RCD$(K_t,\infty)$-condition. (The curvature bound will be improved in part (ii) below.)
Thus in both cases, Lemma \ref{rcd-upp-reg} applies and states that $S$ is upper regular.  

(ii) The claim follows from Proposition \ref{log-ric}. For convenience, let us repeat directly the argument:
Thanks to upper regularity, the previous Corollary applies and proves \eqref{dyn-conv-W}. Combined with the log-Lipschitz bound it yields
\begin{eqnarray*}
\partial^+_\tau S_t(\mu_t^{1-})-\partial^-_\tau S_t(\mu_t^{0+})
\ge- \kappa_t W_{t}^2(\mu^0,\mu^1).
\end{eqnarray*}
The claim thus follows with Theorem \ref{slope-conv}.
\end{proof}

As already illustrated in the previous result, there are close links between the defining property of super-Ricci flows and synthetic Ricci bounds in the sense of Lott-Sturm-Villani.
Other results of this type will follow from Propositions \ref{stat-dyn-k} and \ref{dyn-dyn-k}. We re-formulate the easier of them.

\begin{proposition} Given an infinitesimally Hilbertian mm-space $(X,d,m)$,  a number $K\in\R$ and a non-empty interval $I\subset \{t\in\R: 2Kt<1\}$. Then the following are equivalent
\begin{itemize}

\item[(i)] $(X,d,m)$ satisfies the curvature-dimension condition RCD$(K,\infty)$;
\item[(ii)]  $\big(X,d_t,m_t\big)_{t\in I}$  is a  super-Ricci flow  where
$d_t^2=d^2\cdot(1-2Kt)$ and $m_t=m$ for all $t$ under consideration.
\end{itemize}
\end{proposition}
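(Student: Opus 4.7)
My plan is to reduce the statement directly to Proposition \ref{stat-dyn-k}, applied this time at the level of the $L^2$-Wasserstein space of probability measures. Before doing so, I would collect three structural consequences of the hypotheses. First, since $m_t=m$ is time-independent, the Boltzmann entropy $S_t(\mu)=\Ent(\mu|m)$ does not depend on $t$; I may treat it as a single function $V=S$ on $\Pz$. Second, the relation $d_t^2=(1-2Kt)d^2$ lifts to the Wasserstein level via the Lemma preceding Corollary \ref{w-control} (applied in both directions with $C_1=0$), giving the pointwise identity
$$W_t^2(\mu,\nu)=(1-2Kt)\,W^2(\mu,\nu)\qquad\forall\mu,\nu\in\Pz.$$
Third, the constraint $I\subset\{t:2Kt<1\}$ ensures each $W_t$ is a genuine metric on $\Pz_t=\Pz$, and the scaling shows that the $W_t$-geodesics coincide with the $W$-geodesics as parametrized curves in $\Pz$. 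Moreover, $\partial_t^- W^2_{t-}(\mu,\nu)=-2K\,W^2(\mu,\nu)$ everywhere.

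Next, I would apply Proposition \ref{stat-dyn-k} with the roles of the generic geodesic space $(X,d)$ and function $V$ played by $(\Pz,W)$ and $V=S$, and with the dynamic family $(W_t)_{t\in I}$ constructed from $W$ exactly as in the hypothesis of that proposition. The proposition asserts that $S$ is strongly $K$-convex on the static Wasserstein space $(\Pz,W)$ if and only if $S$ is strongly dynamically convex on the time-dependent Wasserstein space $(\Pz,W_t)_{t\in I}$. The latter condition is precisely the defining property for $(X,d_t,m_t)_{t\in I}$ to be a super-Ricci flow in the sense of the preceding definition in this subsection. Hence (ii) is equivalent to strong $K$-convexity of the Boltzmann entropy along every $W$-geodesic with endpoints of finite entropy.

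It then remains to identify this last condition with (i). For an infinitesimally Hilbertian mm-space, the Riemannian curvature-dimension condition RCD$(K,\infty)$ is exactly this strong $K$-convexity of $\Ent(\cdot|m)$ along every $W$-geodesic in $\Pz_2(X)$ with endpoints in $\Dom(S)$. Combining these two equivalences closes the argument. The only delicate point worth highlighting is the distinction between the \emph{weak} $K$-convexity originally appearing in the definition of CD$(K,\infty)$ and the \emph{strong} $K$-convexity produced by Proposition \ref{stat-dyn-k}; this gap is bridged precisely by the infinitesimal Hilbertianness assumption, which yields uniqueness of $W$-geodesics between absolutely continuous measures with finite entropy and hence upgrades weak to strong convexity of the entropy. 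This is where the hypothesis enters essentially and is the main subtlety in the equivalence.
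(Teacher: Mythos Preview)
Your proposal is correct and follows exactly the approach the paper intends: the paper states this proposition without proof, merely indicating that it is a reformulation of Proposition \ref{stat-dyn-k} at the Wasserstein level, which is precisely what you carry out. Your explicit discussion of why infinitesimal Hilbertianness is needed to upgrade weak to strong $K$-convexity of the entropy is a welcome clarification that the paper leaves implicit.
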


\begin{example}\label{neg-rim}
Let $X=[-1,1]\times S^1$ equipped with a metric $g$ of constant curvature $-1$ and glued together along the boundaries (i.e.\ identifying $(-1,\alpha)$ and $(+1,\alpha)$). Put
$g_t=(1+2t)g$. Let $d_t$ and $m_t$ be the associated Riemannian distance and Riemannian volume, resp. Then  $\big(X,d_t,m_t\big)_{t\in (-1/2,\infty)}$ is a super-Ricci flow.

It should be added that this certainly will not be a `minimal super-Ricci flow' (whatever definition for the latter one might choose). For example, at any time $t_0>-1/2$ one can start  to smoothen out the positive curvature concentrated up to time $t_0$ on the rim  and then to evolve according to the classical Ricci flow equation for smooth manifolds which  for $t\to\infty$ eventually  will approach a flat torus.
\end{example}

For smooth Riemannian manifolds, super-Ricci flows are nothing but super-solutions to the Ricci flow equation. (One should take into account, however, that these are not scalar quantities. Super-solutions have to be understood in the sense of inequalities between quadratic forms.)

A time-dependent weighted Riemannian manifold is a family 
$\big(M,g_t,\tilde f_t\big)_{t\in I}$ where $M$ is a manifold equipped with a 1-parameter family 
$(g_t)_{t\in I}$ of smooth Riemannian metric tensors and a family of measurable functions $\tilde f: M\to \R$ (the `weights'). It induces canonically a 
time-dependent mm-space $\big(X,d_t,m_t\big)_{t\in I}$ where $X=M$ and 
 $d_t$ is the Riemannian distance associated with $g_t$ and
 \begin{equation*}
 dm_t=e^{-\tilde f_t}d\mathrm{vol}_{g_t}
 \end{equation*}
  for all $t\in I$.

\begin{theorem}\label{thm-riem-srf}
Let $\big(M,g_t,\tilde f_t\big)_{t\in I}$ be a time-dependent Riemannian manifold with a smooth
family $(g_t)_{t\in I}$ of  metric tensors and a smooth family $(\tilde f_t)_{t\in I}$  of weights. Then the induced time-dependent mm-space $\big(X,d_t,m_t\big)_{t\in I}$ is a super-Ricci flow if and only if for all $t\in I$
\begin{equation}\label{riem-srf}
\Ric_{g_t}+\Hess_{g_t}\tilde f_t\ge -\frac12 \partial_t g_t
\end{equation}
in the sense of inequalities between quadratic forms on the tangent bundle of $(M,g_t)$.
\end{theorem}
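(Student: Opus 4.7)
The plan is to translate the dynamic convexity inequality for $S_t$ into a pointwise inequality on $T M$ using Otto calculus and the second-variation-of-entropy formula. The forward implication ``if'' will follow by integrating the pointwise Ricci bound against the velocity field of a Wasserstein geodesic; the reverse implication ``only if'' will be obtained by a localization argument.

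\smallskip

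For the ``if'' direction, fix $t\in I$ (outside a negligible set) and a $W_t$-geodesic $(\mu^\tau)_{\tau\in[0,1]}$ with $S_t(\mu^0),S_t(\mu^1)<\infty$. The standard $g_t$-Otto calculus provides a velocity potential $\phi_\tau$ satisfying the continuity equation $\partial_\tau\mu^\tau+\nabla_t\cdot(\mu^\tau\nabla_t\phi_\tau)=0$ with $\int|\nabla_t\phi_\tau|_{g_t}^2\,d\mu^\tau=W_t^2(\mu^0,\mu^1)$, together with the second-variation identity of Cordero-Erausquin--McCann--Schmuckenschl\"ager / von Renesse--Sturm for the weighted manifold $(M,g_t,\tilde f_t)$:
\begin{equation*}
\frac{d^2}{d\tau^2}S_t(\mu^\tau)=\int\bigl[(\Ric_{g_t}+\Hess_{g_t}\tilde f_t)(\nabla_t\phi_\tau,\nabla_t\phi_\tau)+\|\Hess_{g_t}\phi_\tau\|^2_{HS}\bigr]\,d\mu^\tau.
\end{equation*}
For the time derivative of the squared Wasserstein distance, the first-variation formula in the static manifold $(M,g_t)$ gives $\partial_t d_t^2(x,y)=\int_0^1\partial_t g_t(\dot\gamma^\tau,\dot\gamma^\tau)\,d\tau$ along the $g_t$-geodesic $\gamma$; by the envelope theorem applied to Wasserstein optimization and the fact that the optimal plan is supported on $g_t$-geodesics with tangent vectors $\nabla_t\phi_\tau$, one gets
\begin{equation*}
\partial_t W_t^2(\mu^0,\mu^1)=\int_0^1\!\!\int\partial_t g_t(\nabla_t\phi_\tau,\nabla_t\phi_\tau)\,d\mu^\tau\,d\tau.
\end{equation*}
Now integrate the assumed pointwise bound $\Ric_{g_t}+\Hess_{g_t}\tilde f_t\ge -\tfrac12\partial_t g_t$ against $\mu^\tau\,d\tau$ applied to $\nabla_t\phi_\tau$, drop the nonnegative Hessian term, and use the fundamental theorem of calculus on the left-hand side to obtain
\begin{equation*}
\partial_\tau^+S_t(\mu^{1-})-\partial_\tau^-S_t(\mu^{0+})=\int_0^1\frac{d^2}{d\tau^2}S_t(\mu^\tau)\,d\tau\ge -\frac12\,\partial_t W_t^2(\mu^0,\mu^1),
\end{equation*}
which is precisely the strong dynamic convexity \eqref{dyn-conv-W}. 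Absolute continuity of $\tau\mapsto S_t(\mu^\tau)$ is automatic for smooth $g_t,\tilde f_t$ on $M$ by the same identity.

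\smallskip

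For the converse, we localize. Fix $t_0\in I$, $x_0\in M$ and $\xi\in T_{x_0}M$. Choose a smooth function $\phi$ with $\nabla_{t_0}\phi(x_0)=\xi$ and $\Hess_{g_{t_0}}\phi(x_0)=0$, and consider $\mu^0_\epsilon=c_\epsilon^{-1}\mathbf 1_{B_\epsilon(x_0)}m_{t_0}$ together with $\mu^1_\epsilon=(\exp_{x_0}\xi)_*\mu^0_\epsilon$ (or their push-forwards by $\exp_{t_0}(\epsilon\nabla_{t_0}\phi)$), producing a $W_{t_0}$-geodesic supported in a small neighborhood. Applying \eqref{dyn-conv-W} to this geodesic, substituting the identities from Step~2 above, and passing to the limit $\epsilon\to 0$, all three integrands become evaluations at $x_0$ with velocity vector $\xi$; the $\|\Hess\phi\|^2_{HS}$ term vanishes by choice of $\phi$, and one is left with $(\Ric_{g_{t_0}}+\Hess_{g_{t_0}}\tilde f_{t_0})(\xi,\xi)\ge -\tfrac12\partial_t g_{t_0}(\xi,\xi)$. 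Since $x_0$ and $\xi$ are arbitrary, this yields the pointwise bound \eqref{riem-srf}; smoothness of $t\mapsto g_t$ and $\tilde f_t$ removes the ``a.e.\ $t$'' qualifier.

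\smallskip

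The most delicate step will be Step~4, namely a careful justification of $\partial_t W_t^2(\mu^0,\mu^1)=\int_0^1\!\int\partial_t g_t(\nabla_t\phi_\tau,\nabla_t\phi_\tau)\,d\mu^\tau\,d\tau$ under only minimal regularity of the optimal plan and of $(g_t)$. One route is Kantorovich duality together with an envelope argument on the dual side (where the supremum over Kantorovich potentials avoids differentiating the plan); another is to exploit the smoothness of $g_t$ to approximate by regular transport plans where the chain rule applies directly. The remaining steps — the second variation formula and the localization — are essentially classical in the smooth weighted Riemannian setting.
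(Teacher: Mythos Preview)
Your approach is essentially the paper's: second variation of entropy via Otto calculus, integration against the velocity field, and localization for the converse. One worthwhile simplification concerns what you call ``the most delicate step''. You aim for the \emph{equality}
\[
\partial_t W_t^2(\mu^0,\mu^1)=\int_0^1\!\!\int\partial_t g_t(\nabla_t\phi_\tau,\nabla_t\phi_\tau)\,d\mu^\tau\,d\tau
\]
and propose duality/envelope machinery to justify it. The paper avoids this entirely: writing $A(s):=\int_0^1\int_X g_s(\dot F^\tau,\dot F^\tau)\,d\mu^0\,d\tau$ for the $g_s$-action of the fixed $g_t$-optimal transport, one has $W_t^2=A(t)$ and $W_s^2\le A(s)$ for all $s$. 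Hence $W_t^2-W_s^2\ge A(t)-A(s)$ for $s<t$, which immediately gives the one-sided bound $\partial_t^- W_{t-}^2\ge A'(t)$, i.e.\ $-\tfrac12\partial_t^- W_{t-}^2\le -\tfrac12 A'(t)$. Since only this inequality is needed to close the chain
\[
\partial^+_\tau S_t(\mu^{1-})-\partial^-_\tau S_t(\mu^{0+})\ \ge\ -\tfrac12 A'(t)\ \ge\ -\tfrac12\,\partial_t^- W_{t-}^2,
\]
no differentiability of $t\mapsto W_t^2$ or envelope theorem on the dual side is required. This collapses your Step~4 to a one-line observation.
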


\begin{proof} For the `if'-implication we have to verify that the Boltzmann entropy $S_t(.)=\Ent(.|m_t)$ is dynamically convex on $(\Pz,W_t)_{t\in I}$. Fix $t\in I$ (and drop it from the notation as far as possible). Given a $W_t$-geodesic $(\mu^\tau)_{\tau\in[0,1]}$ let
$F^\tau: X\to X$ for each $\tau$ be the unique optimal map with the push forward property
$(F^\tau)_\sharp \mu^0=\mu^\tau$. From seminal work of Otto, Otto/Villani, McCann, Cordero-Erausquin/McCann/Schmuckenschl\"ager, von Renesse/Sturm and others \cite{Ot,OV,CMS,vRS, St0} we know quite well that one can estimate the second derivative of the entropy (along geodesics in the Wasserstein space) in terms of the Ricci curvature of the underlying space together with the Hessian of the weight function.
Combined with the assumption \eqref{riem-srf} it immediately yields the estimate for the distributional second derivative the lower bound
\begin{eqnarray*}
\partial_\tau^2 S_t(\mu^\tau)&\ge&
\int_X \Big(\Ric_{g_t}+\Hess_{g_t}\tilde f_t\Big)\big(\dot F^\tau,\dot F^\tau\big)d\mu^0\\
&\ge&-\frac12
\int_X\partial_t g_t\big(\dot F^\tau,\dot F^\tau\big)d\mu^0.
\end{eqnarray*}
Integrating this estimate w.r.t.\ $\tau\in [0,1]$ we obtain
\begin{eqnarray*}
\partial_\tau^- S_t(\mu^{1-})-
\partial_\tau^+ S_t(\mu^{0+})
&\ge&
\int_0^1
\int_X \Big(\Ric_{g_t}+\Hess_{g_t}\tilde f_t\Big)\big(\dot F^\tau,\dot F^\tau\big)d\mu^0\,d\tau\\
&\ge&-\frac12 \int_0^1
\int_X\partial_t g_t\big(\dot F^\tau,\dot F^\tau\big)d\mu^0\,d\tau\\
&=&-\frac12 \partial_t\int_0^1
\int_X g_t\big(\dot F^\tau,\dot F^\tau\big)d\mu^0\,d\tau\\
&\ge&-\frac12 \partial_t^- W^2_{t-}(\mu^0,\mu^1).
\end{eqnarray*}
For the latter estimate, note that $W_t^2(\mu^0,\mu^1)=\int_0^1
\int_X g_t\big(\dot F^\tau,\dot F^\tau\big)d\mu^0\,d\tau$ whereas 
$W_s^2(\mu^0,\mu^1)\le \int_0^1
\int_X g_s\big(\dot F^\tau,\dot F^\tau\big)d\mu^0\,d\tau$ for all $s\not= t$.
This proves the forward implication.

\medskip

For the converse -- the `only if'-implication --  we assume that \eqref{riem-srf} is not satisfied. Then for some $\varepsilon>0$, some $s\in I$ and some $v\in TM$
\begin{equation*}
\Big(\Ric_{g_t}+\Hess_{g_t}\tilde f_t+\frac12 \partial g_t\Big)(w,w)\le -\varepsilon g_t(w,w)
\end{equation*}
for all points $(t,w)$ in a neighborhood of $(s,v)\in I \times TM$.
Now we can closely follow the argumentation in \cite{St0}, Thm. 5.2, to see that this contradicts the dynamic convexity of $S_t$.
\end{proof}

\begin{remark}
Assume for simplicity that $0\in I$ and $m_0=\mathrm{vol}_{g_0}$ and put $m_t=e^{-f_t}m_0$ as well as $\mathrm{vol}_{g_t}=e^{-\hat f_t}\mathrm{vol}_{g_0}$. Then
$f_t=\tilde f_t+\hat f_t$ and
$$\partial_t \hat f_t=-\frac12 \mathrm{tr} \partial_t g_t.$$
In particular, for a super-Ricci flow
$$\partial_t \hat f_t\le \mathrm{R}_{g_t}+\Delta_{g_t}\tilde f_t$$
where $ \mathrm{R}_{g_t}$ denotes the scalar curvature.
For an `un-weighted' manifold this simplifies to 
$\partial_t \hat f_t\le\mathrm{R}_{g_t}$ in the case of super-Ricci flows -- and with equality in the case of Ricci flows.
\end{remark}

\subsection{Super $N$-Ricci Flow}

The concept of super-Ricci flows will allow for a huge class of examples. Occasionally, it might be desirable to focus on a more restrictive class.
 
\begin{definition}\label{super-N-mms}
(i) We say that the time-dependent mm-space $\big(X,d_t,m_t\big)_{t\in I}$  is a \emph{super-$N$-Ricci flow} if the Boltzmann entropy $S$ is  \emph{strongly dynamical $N$-convex} on  $I \times \Pz$
in the sense of Definition \ref{dyn-Nconv}: \  
for a.e.\ $t\in I$ and every $\mu^0,\mu^1\in \Pz_t$  there exists
  a $W_t$-geodesic $(\mu^\tau)_{\tau\in[0,1]}$ connecting $\mu^0$ and $\mu^1$  such that
  $\tau\mapsto S_t(\mu^\tau)$ is absolutely continuous and 
\begin{eqnarray}\label{dyn-Nconv-W}
\partial^+_\tau S_t(\mu^{1-})-\partial^-_\tau S_t(\mu^{0+})
&\ge&- \frac 12\partial_t^- W_{t-}^2(\mu^0,\mu^1)
+\frac1N \big| S_t(\mu^0)-S_t(\mu^1)\big|^2.
\end{eqnarray}

(ii) Given a function $\lambda\in L^1_{loc}(I)$,  we say that the time-dependent mm-space $\big(X,d_t,m_t\big)_{t\in I}$ is a  \emph{$\lambda$-moderate  super-$N$-Ricci flow} if 
for a.e.\ $t\in I$ and every $\mu^0,\mu^1\in \Pz_t$  there exists
  a $W_t$-geodesic $(\mu^\tau)_{\tau\in[0,1]}$ connecting $\mu^0$ and $\mu^1$  such that for all $\tau\in(0,\frac12]$
  and all $N'\in[N,\infty]$ with $N'\ge 2\tau
   \left[ |S_t(\mu^0)-S_t(\mu^1)|+ \lambda_t W_t^2(\mu^0,\mu^1)/2\right]$
\begin{eqnarray}\label{N-dyn-conv-W}
\frac1\tau\Phi_{N'}\left(S_t(\mu^0))-S_t(\mu^\tau)\right)
&+&\frac1\tau\Phi_{N'}\left(S_t(\mu^1)-S_t(\mu^{1-\tau})\right)\\
&\ge&- \frac 12\partial_t^- W_t^2(\mu^0,\mu^1)- \lambda_t \tau \cdot W^2_t(\mu^0,\mu^1)
+\frac1{N'} \left| S_t(\mu^0)-S_t(\mu^1)\right|^2\nonumber
\end{eqnarray}
where $\Phi_{N'}(u)= u + \frac1{N'} u^2$.
\end{definition}

A super-$N$-Ricci flow is also a super-$N'$-Ricci flow for any $N'>N$. In particular, it is a super-$\infty$-Ricci flow.
Super-Ricci flow is the same as super-$N$-Ricci flow for $N=\infty$.

\begin{corollary}\label{2.13}
(i) Every upper regular, $\lambda$-moderate  super-$N$-Ricci flow  is a super-$N$-Ricci flow.

(ii) Every super-$N$-Ricci flow  satisfying a lower log-Lipschitz bound  \eqref{low-lip}   with control function $\lambda\in L^1_{loc}(I)$   is a $\lambda$-moderate  super-$N$-Ricci flow.
\end{corollary}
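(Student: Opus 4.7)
The plan is to apply Theorem~\ref{prop-N-dyn} with $V = S$ (the Boltzmann entropy) on the time-dependent geodesic space $(\Pz,W_t)_{t\in I}$ in place of $(X,d_t)_{t\in I}$, thus mirroring the strategy already used for the analogous (non-dimensional) corollary in the previous subsection. Both parts of the claim will then fall out by reading off the appropriate implications in Theorem~\ref{prop-N-dyn}, combined with Corollary~\ref{w-control} to transfer log-Lipschitz bounds from $d_t$ to $W_t$.

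For part (i), my first step is to observe that the defining inequality \eqref{N-dyn-conv-W} of a $\lambda$-moderate super-$N$-Ricci flow is, on the Wasserstein side, nothing but condition (vi) of Theorem~\ref{prop-N-dyn} applied to $V=S$. Since $S$ is assumed upper regular, that theorem applies directly. I would then invoke the implication $(\mathrm{vi})\Rightarrow(\mathrm{i})$, which amounts to choosing $N'=N$ (legitimate once $\tau$ is small enough to satisfy the constraint $N'\ge 2\tau[|S_t(\mu^0)-S_t(\mu^1)|+\tfrac12\lambda_t W_t^2(\mu^0,\mu^1)]$) and passing to the limit $\tau\to 0$. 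Condition (i) of Theorem~\ref{prop-N-dyn} is by construction the defining inequality \eqref{dyn-Nconv-W} of the super-$N$-Ricci flow property, so the conclusion follows.

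For part (ii), my first step is to use Corollary~\ref{w-control}: the lower log-Lipschitz bound \eqref{low-lip} for $(d_t)_{t\in I}$ with control $\lambda$ lifts to the bound \eqref{w-low-lip} for $(W_t)_{t\in I}$ with the \emph{same} control $\lambda$. Hence $(\Pz,W_t)_{t\in I}$ satisfies the lower log-Lipschitz hypothesis of Theorem~\ref{prop-N-dyn} for $V=S$. The super-$N$-Ricci flow assumption is exactly condition (i) of that theorem, and I would then chain the implications $(\mathrm{i})\Rightarrow(\mathrm{ii})\Leftrightarrow(\mathrm{iii})\Leftrightarrow(\mathrm{iv})\Leftrightarrow(\mathrm{v})\Rightarrow(\mathrm{vi})$ to arrive at (vi), which on the Wasserstein side is precisely the $\lambda$-moderate super-$N$-Ricci flow inequality \eqref{N-dyn-conv-W}.

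The main (and essentially only) point to check carefully is that the proofs of Theorem~\ref{prop-N-dyn} go through \emph{verbatim} in the Wasserstein setting, with $d_t$-geodesics replaced by $W_t$-geodesics and the log-Lipschitz control on $d_t$ replaced by its lift to $W_t$ via Corollary~\ref{w-control}. Since those arguments are purely abstract computations at the level of a geodesic metric space and do not exploit any special structure of $X$ or $V$, this transfer is routine; no regularity of $S$ beyond what is already in the hypotheses (upper regularity in part (i); absolute continuity of $\tau\mapsto S_t(\mu^\tau)$ built into the super-$N$-Ricci flow definition for the chain needed in part (ii)) is required.
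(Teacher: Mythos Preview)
Your approach is correct and matches the paper's own (implicit) proof, which consists simply of citing Theorem~\ref{prop-N-dyn} together with Corollary~\ref{w-control}, exactly as for the $N=\infty$ analogue.

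One small point: in part~(ii) you route the argument through the implication $(\mathrm{i})\Rightarrow(\mathrm{ii})$ of Theorem~\ref{prop-N-dyn} and claim that mere absolute continuity of $\tau\mapsto S_t(\mu^\tau)$ suffices. In the proof of Theorem~\ref{prop-dyn} (which Theorem~\ref{prop-N-dyn} follows), the step $(\mathrm{i})\Rightarrow(\mathrm{ii})$ genuinely uses upper regularity, not just absolute continuity. The way out is to observe that the super-$N$-Ricci flow hypothesis, read via Definition~\ref{dyn-Nconv} as \emph{strong} dynamical $N$-convexity, already gives the slope inequality along \emph{every} $W_t$-geodesic; one can then feed sub-geodesics $\gamma_\sigma^a=\gamma^{\sigma+a(1-2\sigma)}$ directly into the integration argument of $(\mathrm{ii})\Rightarrow(\mathrm{iii})\Rightarrow(\mathrm{v})\Rightarrow(\mathrm{vi})$ without ever invoking upper regularity. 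This is presumably what the paper intends, and it closes the only loose end in your write-up.
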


\begin{proposition}
Assume that  $\big(X,d_t,m_t\big)_{t\in I}$  is a  super-$N$-Ricci flow satisfying an
upper log-Lipschitz bound  \eqref{upp-lip}   with control function $\kappa\in L^1_{loc}(I)$ 
and that for a.e.\ $t\in I$ the static space $(X,d_t,m_t)$ is infinitesimally Hilbertian. Then for a.e.\ $t$ the static space
$(X,d_t,m_t)$ satisfies  RCD$^*(-\kappa_t,N)$. 
\end{proposition}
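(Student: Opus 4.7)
The plan is to carry the extra $\tfrac{1}{N}|S_t(\mu^0)-S_t(\mu^1)|^2$ term of the super-$N$-Ricci flow inequality through the same machinery that produced RCD$(-\kappa_t,\infty)$ in the preceding theorem, in order to obtain full $(-\kappa_t,N)$-convexity of $S_t$ along Wasserstein geodesics on each static slice. Under infinitesimal Hilbertianity, this is (by the Erbar--Kuwada--Sturm characterization) exactly the RCD$^*(-\kappa_t,N)$-condition.

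Since every super-$N$-Ricci flow is in particular a super-Ricci flow, the preceding theorem already delivers RCD$(-\kappa_t,\infty)$ for a.e.\ $t$. By Lemma~\ref{rcd-upp-reg} this yields upper regularity of $S_t$ along every $W_t$-geodesic, and combined with infinitesimal Hilbertianity it also gives uniqueness of the $W_t$-geodesic between any two measures in $\Dom(S_t)$. Both facts will be essential: the super-$N$-Ricci flow axiom only guarantees existence of one good geodesic per pair of endpoints, and I will need to know that the good geodesic on $[0,1]$ is also the good one on every sub-arc --- which is exactly what uniqueness provides.

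Next, fix such a $t$ and $\mu^0,\mu^1\in\Dom(S_t)$, let $(\mu^\tau)_{\tau\in[0,1]}$ be the unique connecting $W_t$-geodesic, and set $u(\tau)=S_t(\mu^\tau)$ and $W=W_t(\mu^0,\mu^1)$. Applying the super-$N$-Ricci flow inequality to the rescaled sub-arc $s\mapsto\mu^{\rho+s(\sigma-\rho)}$ and using Corollary~\ref{w-control} to bound $\partial_t^- W_{t-}^2(\mu^\rho,\mu^\sigma)\le 2\kappa_t(\sigma-\rho)^2W^2$, a division by $(\sigma-\rho)$ produces
\begin{equation*}
u'(\sigma-)-u'(\rho+)\ \ge\ -\kappa_t(\sigma-\rho)W^2+\frac{1}{N(\sigma-\rho)}\bigl|u(\sigma)-u(\rho)\bigr|^2
\end{equation*}
for every $0\le\rho<\sigma\le 1$.

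The hard step will then be to bootstrap this endpoint-difference inequality to the distributional form $u''\ge -\kappa_tW^2+\tfrac{1}{N}(u')^2$ required by Definition~\ref{dyn-Nconv}. My plan is to insert a fine partition $\rho=\tau_0<\cdots<\tau_k=\sigma$, apply the displayed inequality on each piece, and add; upper regularity of $S_t$ makes the interior jumps $u'(\tau_i+)-u'(\tau_i-)$ non-negative and hence discardable, leaving
\begin{equation*}
u'(\sigma-)-u'(\rho+)\ \ge\ -\kappa_tW^2(\sigma-\rho)+\frac1N\sum_{i=1}^k\frac{|u(\tau_i)-u(\tau_{i-1})|^2}{\tau_i-\tau_{i-1}}.
\end{equation*}
Since $u$ is absolutely continuous with an $L^2_{loc}$-derivative (its $(-\kappa_tW^2)$-semiconvexity inherited from RCD$(-\kappa_t,\infty)$ forces $u'$ to be of bounded variation on compact sub-intervals of $(0,1)$), the Riemann-type sum on the right converges to $\int_\rho^\sigma(u'(\tau))^2\,d\tau$ as the mesh shrinks. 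This is precisely the integrated $(-\kappa_tW^2,N)$-convexity of $u$, i.e.\ the $(-\kappa_t,N)$-convexity of $S_t$ along a Wasserstein geodesic between each pair of measures of finite entropy; infinitesimal Hilbertianity then upgrades this to the RCD$^*(-\kappa_t,N)$-condition.
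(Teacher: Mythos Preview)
Your proof is correct and follows essentially the same route as the paper's: combine the super-$N$-Ricci flow inequality with the upper log-Lipschitz bound to obtain the $(-\kappa_t,N)$-slope inequality along $W_t$-geodesics, then identify this with the RCD$^*(-\kappa_t,N)$-condition via \cite{EKS,AGS-BE}. The paper's proof is just two lines and defers both the ``exists $\Rightarrow$ every geodesic'' upgrade and the passage from the endpoint-difference form to full $(K,N)$-convexity to the proof technique of Theorem~\ref{slope-conv} and to the cited references, whereas you spell these out explicitly (using uniqueness of geodesics under RCD$(-\kappa_t,\infty)$ for the former and a partition/Riemann-sum argument for the latter).
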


\begin{proof} The defining property of a super-$N$-Ricci flow combined with
 the log-Lipschitz bound for the distance yields
\begin{eqnarray*}
\partial^+_\tau S_t(\mu_t^{1-})-\partial^-_\tau S_t(\mu_t^{0+})
\ge- \kappa_t W_{t}^2(\mu^0,\mu^1)+ \frac1N
\big|S_t(\mu^{1})-S_t(\mu_t^{0})\big|^2
\end{eqnarray*}
for every $W_t$-geodesic in $\Pz$. This easily is seen (cf. proof of Theorem  \ref{slope-conv}) to be equivalent to the RCD$^*(-\kappa_t,N)$-condition as presented in \cite{EKS,AGS-BE}.
\end{proof}

In the spirit of Theorem \ref{thm-riem-srf} one may formulate and prove (following the lines of argumentation in the proof of that result as well as in \cite{St0} and in \cite{EKS}) an extension and tightening depending on the extra parameter $N$.

\begin{theorem}
Let $\big(M,g_t,\tilde f_t\big)_{t\in I}$ be a time-dependent Riemannian manifold of dimension $n$ with a smooth
family $(g_t)_{t\in I}$ of  metric tensors and a smooth family $(\tilde f_t)_{t\in I}$  of weights. Then the induced time-dependent mm-space $\big(X,d_t,m_t\big)_{t\in I}$ is a super-$N$-Ricci flow if and only if $N\ge n$ and for all $t\in I$
\begin{equation}\label{riem-N-srf}
\Ric_{g_t}+\Hess_{g_t}\tilde f_t-\frac1{N-n}\nabla_t \tilde f_t \otimes \nabla_t \tilde f_t
\ge -\frac12 \partial_t g_t
\end{equation}
in the sense of inequalities between quadratic forms on the tangent bundle of $(M,g_t)$. 

In particular for $N=n$ this requires $\tilde f_t$ to be constant. That is,
the measure $m_t$ has to be -- up to renormalization -- the Riemannian volume measure $\mathrm{vol}_t$, more precisely, $\exists C_t$ s.t. $m_t=C_t\cdot \mathrm{vol}_t$ for each $t$.
\end{theorem}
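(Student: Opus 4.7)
My plan is to extend the argument of Theorem \ref{thm-riem-srf} by invoking the standard $N$-dimensional refinement of the second-variation formula for the relative entropy along Wasserstein geodesics. Throughout, fix $t\in I$ (suppressing it from the notation where possible) and let $(\mu^\tau)_{\tau\in[0,1]}$ be a $W_t$-geodesic with Brenier optimal maps $F^\tau$, Eulerian velocity $v_\tau=\nabla\psi_\tau$, and Lagrangian velocity $\dot F^\tau$. The cornerstone pointwise identity, going back to Cordero-Erausquin--McCann--Schmuckenschl\"ager and von Renesse--Sturm, reads
\begin{equation*}
\partial_\tau^2 S_t(\mu^\tau) \;=\; \int_X \Bigl[\Ric_{g_t}(v_\tau,v_\tau) + \Hess_{g_t}\tilde f_t(v_\tau,v_\tau) + \|\Hess_{g_t}\psi_\tau\|_{HS}^2\Bigr]\, d\mu^\tau.
\end{equation*}

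For the ``if''-implication, I would first combine this with the elementary algebraic inequality (valid precisely when $N\ge n$, obtained by completing a square)
\begin{equation*}
\|\Hess_{g_t}\psi_\tau\|_{HS}^2 \;\ge\; \tfrac{1}{n}(\Delta_{g_t}\psi_\tau)^2 \;\ge\; \tfrac{1}{N}(\Delta_{m_t}\psi_\tau)^2 - \tfrac{1}{N-n}\langle \nabla \tilde f_t, v_\tau\rangle_t^2,
\end{equation*}
where $\Delta_{m_t}=\Delta_{g_t}-\langle \nabla \tilde f_t,\nabla\cdot\rangle_t$ (when $N=n$ the last term is absent and the forthcoming argument forces $\nabla \tilde f_t\equiv 0$). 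Together with the hypothesis \eqref{riem-N-srf} and the identity $\partial_\tau S_t(\mu^\tau) = -\int \Delta_{m_t}\psi_\tau\, d\mu^\tau$, this yields the pointwise bound
\begin{equation*}
\partial_\tau^2 S_t(\mu^\tau) \;\ge\; -\tfrac{1}{2}\int_X \partial_t g_t(v_\tau,v_\tau)\, d\mu^\tau + \tfrac{1}{N}\bigl(\partial_\tau S_t(\mu^\tau)\bigr)^2.
\end{equation*}
Integrating in $\tau$ and using, as in Theorem \ref{thm-riem-srf}, that $\int_0^1 \int g_s(v_\tau,v_\tau)\,d\mu^\tau\,d\tau \ge W_s^2(\mu^0,\mu^1)$ for $s\neq t$ with equality at $s=t$, I obtain
\begin{equation*}
\partial_\tau^- S_t(\mu^{1-}) - \partial_\tau^+ S_t(\mu^{0+}) \;\ge\; -\tfrac{1}{2}\partial_t^- W_{t-}^2(\mu^0,\mu^1) + \tfrac{1}{N}\int_0^1 \bigl(\partial_\tau S_t(\mu^\tau)\bigr)^2 d\tau,
\end{equation*}
and a Cauchy--Schwarz step $\int_0^1 (\partial_\tau S_t(\mu^\tau))^2\,d\tau \ge |S_t(\mu^1)-S_t(\mu^0)|^2$ delivers exactly \eqref{dyn-Nconv-W}.

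For the ``only if''-implication I argue by contradiction, closely following \cite{St0, EKS}. If \eqref{riem-N-srf} fails at some $(s,v)\in I\times TM$, then on an open neighborhood the Bakry--\'Emery $N$-Ricci tensor is strictly dominated by $-\tfrac{1}{2}\partial_t g_t - \varepsilon g_t$; testing dynamic $N$-convexity against narrowly concentrated probability measures centered at the footpoint of $v$ and transported along short geodesics tangent to $v$, the Taylor expansion of both sides of \eqref{dyn-Nconv-W} in the displacement parameter contradicts the assumed inequality. The condition $N\ge n$ is forced because the bound $\|\Hess\psi\|_{HS}^2\ge \tfrac{1}{n}(\Delta\psi)^2$ is sharp on scalar Hessians: a radial potential $\psi$ with $\Hess\psi=c\,\mathrm{id}$ saturates it, so any $N<n$ would yield a configuration violating the $\tfrac{1}{N}(\partial_\tau S)^2$ requirement. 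For $N=n$, aligning $v$ with $\nabla \tilde f_s$ and sending $N\to n^+$ in \eqref{riem-N-srf} forces $\nabla \tilde f_s\equiv 0$, which is the final assertion that $m_t=C_t\cdot\mathrm{vol}_t$.

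The main obstacle I anticipate is making the Cauchy--Schwarz step compatible with the one-sided derivatives appearing in Definition \ref{super-N-mms}. A priori $\tau\mapsto S_t(\mu^\tau)$ is only semiconvex, so the pointwise second-derivative inequality must be interpreted distributionally, and $L^2$-integrability of $\partial_\tau S_t(\mu^\tau)$ must be established before Jensen applies. In the smooth compact Riemannian setting this follows from regularity of Brenier potentials and smoothness of $\tilde f_t$; for non-compact $M$ one must also exploit the tail bound $\int e^{-C_t d_t^2(x,z_t)}\,dm_t<\infty$ posited in the paper to control the integrals at infinity.
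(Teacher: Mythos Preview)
Your proposal is correct and follows exactly the approach the paper indicates: it extends the second-variation argument of Theorem~\ref{thm-riem-srf} by inserting the standard $(N,n)$-splitting inequality $\tfrac{1}{n}(\Delta\psi)^2 \ge \tfrac{1}{N}(\Delta_{m_t}\psi)^2 - \tfrac{1}{N-n}\langle\nabla\tilde f_t,\nabla\psi\rangle^2$ (the key device from \cite{EKS,St0}), then applies Jensen/Cauchy--Schwarz twice to pass from $\int(\Delta_{m_t}\psi_\tau)^2\,d\mu^\tau$ to $|S_t(\mu^1)-S_t(\mu^0)|^2$. The paper itself does not spell out these steps but simply refers to the proof of Theorem~\ref{thm-riem-srf} together with \cite{St0,EKS}; your write-up is precisely that argument made explicit, including the localization-by-contradiction for the converse and the saturation argument forcing $N\ge n$.
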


\begin{remark}
Similar characterizations will be possible for super-Ricci flows of Finsler spaces. Here the relevant quantity will be the \emph{flag Ricci curvature}, see \cite{OS1} for the static case.
\end{remark}

\subsection{Ricci Flows}

The previous characterization of super-Ricci flows is a natural extension of the synthetic lower bounds on the Ricci curvature to the time-dependent setting.
The concept of synthetic lower bounds for the Ricci curvature, formulated in terms of optimal transports, has led to plenty of results and new insights.
Despite the fact that during the last decade only lower bounds for the Ricci curvature have been studied,
there is also a surprisingly elementary, synthetic characterization of upper bounds for the Ricci curvature in terms of optimal transports.
It will be introduced and discussed in Appendix 1.
The time-dependent version will provide us with a natural definition of Ricci flows.

\begin{definition}
A time-dependent mm-space $\big(X,d_t,m_t\big)_{t\in I}$ is called \emph{weak sub-Ricci flow} if for every $\epsilon>0$ and for a.e.\ $t$ there exists a partition $X=\bigcup_i X_i$ such that for all $i$ and   for any pair of nonempty open sets $U^0,U^1\subset X_i$    there exists
  a $W_t$-geodesic $(\mu^\tau)_{\tau\in[0,1]}$ 
  such that
  $\tau\mapsto S_t(\mu^\tau)$ is finite and absolutely continuous on $(0,1)$, 
  $\mathrm{supp}(\mu^\sigma)\subset U^\sigma$ for $\sigma=0,1$ and  for all $0<\sigma<\rho<1$
\begin{eqnarray}\label{sub-ricci-flow}
\partial_\tau^+ S_t(\mu^{\tau})\big|_{\tau=\rho-}-\partial^-_\tau S_t(\mu^{\tau})\big|_{\tau=\sigma-}
\le- \frac 1{2(\rho-\sigma)}\partial_t^+ W_{t+}^2(\mu^\sigma,\mu^\rho)
+\epsilon.
\end{eqnarray}
A time-dependent mm-space $\big(X,d_t,m_t\big)_{t\in I}$ is called \emph{weak Ricci flow} if it is a super-Ricci flow and and a weak sub-Ricci flow.
\end{definition}

\begin{theorem}\label{thm-riem-sub-rf}
Let $\big(M,g_t,\tilde f_t\big)_{t\in I}$ be a time-dependent Riemannian manifold with a smooth
family $(g_t)_{t\in I}$ of  metric tensors and a smooth family $(\tilde f_t)_{t\in I}$  of weights. Then the induced time-dependent mm-space $\big(X,d_t,m_t\big)_{t\in I}$ is a weak sub-Ricci flow if and only if for all $t\in I$
\begin{equation}\label{riem-sub-rf}
\Ric_{g_t}+\Hess_{g_t}\tilde f_t\le -\frac12 \partial_t g_t
\end{equation}
in the sense of inequalities between quadratic forms on the tangent bundle of $(M,g_t)$.
\end{theorem}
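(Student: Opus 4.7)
The plan is to mirror the proof of Theorem \ref{thm-riem-srf} with inequalities reversed, together with a localization device that compensates for a sign issue absent in the super-Ricci case. The basic tool is the classical Otto--McCann--Villani second variation formula: along a $W_t$-displacement interpolation $\mu^\tau=(F^\tau)_\sharp\mu^0$ on a smooth weighted manifold,
\[
\partial_\tau^2 S_t(\mu^\tau)=\int_M\Big[\bigl(\Ric_{g_t}+\Hess_{g_t}\tilde f_t\bigr)(\dot F^\tau,\dot F^\tau)+\|\Hess_{g_t}\varphi_\tau\|_{HS}^2\Big]\,d\mu^\tau,
\]
where $\varphi_\tau$ is the Kantorovich potential generating $F^\tau$. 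In the super-Ricci setting the nonnegative $\|\Hess\varphi_\tau\|_{HS}^2$ term is harmless, but here it genuinely obstructs a pointwise \emph{upper} bound on $\partial_\tau^2 S_t$. The partition $X=\bigcup_i X_i$, the freedom to pick $U^0,U^1\subset X_i$, and the $\epsilon$-slack in \eqref{sub-ricci-flow} are precisely what allow us to make this term as small as desired by concentrating $\mu^0,\mu^1$ in tiny balls and reducing the optimal transport to a near-rigid translation.

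For the forward implication, fix $t\in I$ and $\epsilon>0$. Partition $M$ into geodesically convex coordinate charts $X_i$ whose $g_t$-diameter lies below a threshold $\delta=\delta(\epsilon,t)$. Given $U^0,U^1\subset X_i$, pick $x^\sigma\in U^\sigma$ and take $\mu^\sigma$ as the normalized restriction of $m_t$ to a ball $B_r(x^\sigma)\subset U^\sigma$ of radius $r\ll\delta$. Standard smooth optimal transport theory gives that the Brenier map between $\mu^0$ and $\mu^1$ is $C^2$-close to a rigid translation along the $g_t$-geodesic from $x^0$ to $x^1$, and hence $\|\Hess_{g_t}\varphi_\tau\|_{HS}\le\epsilon'$ uniformly in $\tau$. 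Combining the assumption \eqref{riem-sub-rf} with the identity above yields
\[
\partial_\tau^2 S_t(\mu^\tau)\le -\frac12\int \partial_t g_t(\dot F^\tau,\dot F^\tau)\,d\mu^\tau+\epsilon'.
\]
Integrating over $[\sigma,\rho]$ and using, exactly as in the proof of Theorem \ref{thm-riem-srf}, the identity $W_t^2(\mu^\sigma,\mu^\rho)=(\rho-\sigma)^2\int g_t(\dot F^\tau,\dot F^\tau)d\mu^0$ together with the reversed one-sided inequality $W_s^2(\mu^\sigma,\mu^\rho)\le(\rho-\sigma)^2\int g_s(\dot F^\tau,\dot F^\tau)d\mu^0$ for $s\ne t$ (now controlling $\partial_t^+W_{t+}^2$ from \emph{above}) produces \eqref{sub-ricci-flow} with error $\epsilon$, provided $r$ and $\delta$ are chosen small enough depending on $\epsilon$.

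For the reverse implication, assume \eqref{riem-sub-rf} fails at some $(s_0,v_0)\in I\times TM$, so that
\[
\bigl(\Ric_{g_t}+\Hess_{g_t}\tilde f_t+\tfrac12\partial_t g_t\bigr)(w,w)\ge\varepsilon\,g_t(w,w)
\]
for some $\varepsilon>0$ and all $(t,w)$ in a neighborhood of $(s_0,v_0)$. Given any partition $X=\bigcup_i X_i$, some $X_i$ contains the base point of $v_0$; choose open sets $U^0,U^1\subset X_i$ at the endpoints of a short $g_{s_0}$-geodesic tangent to $v_0$. Since $\|\Hess\varphi_\tau\|_{HS}^2\ge 0$, \emph{every} $W_{s_0}$-geodesic between measures supported in $U^0,U^1$ satisfies $\partial_\tau^2 S_{s_0}(\mu^\tau)\ge -\frac12\int\partial_t g_{s_0}(\dot F^\tau,\dot F^\tau)d\mu^\tau+\frac\varepsilon 2 W_{s_0}^2(\mu^0,\mu^1)$, whence \eqref{sub-ricci-flow} is violated for any $\epsilon<\varepsilon/4$, contradicting the weak sub-Ricci flow property. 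This part closely follows the argumentation of \cite{St0}, Thm.~5.2.

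The principal obstacle is the forward direction. Unlike in Theorem \ref{thm-riem-srf}, where the wrong-signed term in the second-variation identity can simply be dropped, here one must establish quantitative $C^2$-stability of the Brenier potential as the supports of the marginals shrink to a point, and calibrate this against the local $C^2$-bounds of $(g_t,\tilde f_t)$ on $X_i$ and against the prescribed $\epsilon$. The $\epsilon$-slack and the partition in the definition of weak sub-Ricci flow are tailored precisely to absorb these approximation errors.
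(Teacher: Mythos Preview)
Your overall strategy matches the paper's: the reverse implication is essentially identical, and for the forward implication both you and the paper aim to make the ``Hessian'' contribution in the second-variation formula small by localization. The difference lies in \emph{how} this smallness is achieved, and here your argument has a genuine gap.

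You fix the marginals first (normalized restrictions of $m_t$ to small balls $B_r(x^0),B_r(x^1)$) and then assert that ``standard smooth optimal transport theory'' forces the Brenier potential to be $C^2$-close to an affine function, so that $\|\Hess_{g_t}\varphi_\tau\|_{HS}\le\epsilon'$. This is not standard: on a general Riemannian manifold (without the Ma--Trudinger--Wang condition) the Brenier map between smooth compactly supported densities need not even be continuous, let alone $C^2$-close to a translation. One could try to rescue this by rescaling to a near-Euclidean chart, but the separation $d_t(x^0,x^1)$ is of order $\delta$ while the ball radii are $r\ll\delta$, so the relevant transport problem does not become trivially Euclidean in any obvious limit. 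Your final paragraph identifies this ``$C^2$-stability of the Brenier potential'' as the principal obstacle, but treats it as a calibration issue rather than the heart of the matter.

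The paper (via the construction in Theorem~\ref{upper-Ric} of Appendix~1) reverses the order: it first chooses the potential and only then the marginals. Pick a $g_t$-geodesic $\gamma$ from a point in $U^0$ to a point in $U^1$, and a smooth $\varphi$ with $\nabla\varphi|_{\gamma^0}=\dot\gamma^0$ and $\Hess\varphi|_{\gamma^0}=0$; set $F^\tau(x)=\exp_x(-\tau\nabla\varphi)$. Then the Jacobi-type quantity $\mathcal U^\tau=\nabla^\tau\mathcal A^\tau\circ(\mathcal A^\tau)^{-1}$ vanishes at $\tau=0$ by construction, and comparison geometry keeps $\mathrm{tr}((\mathcal U^\tau)^2)$ below $\varepsilon|\dot F^\tau|^2$ along the short geodesic. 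Only afterwards is $\mu^0$ chosen, concentrated in a small enough neighborhood of $\gamma^0$ that the estimate persists on its support; the endpoint $\mu^1=(F^1)_\sharp\mu^0$ then automatically sits in $U^1$. This sidesteps any appeal to regularity theory of optimal maps: the potential has small Hessian \emph{by fiat}, not by an a~posteriori estimate. Replacing your ``choose marginals, analyze Brenier map'' step by this ``choose potential, then marginals'' construction closes the gap.
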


\begin{proof}
For the `if'-implication we follow the argumentation from the proof of Theorem \ref{upper-Ric} in Appendix 1. Given open sets $U_0,U_1$ and $\varepsilon>0$, for each $t$ one can  construct a $W_t$-geodesic emanating in $U_0$ and ending in $U_1$ such that along this geodesic the second derivative of the entropy is almost given by the weighted Ricci curvature. More precisely,
\begin{eqnarray*}
\partial_\tau^2 S_t(\mu^\tau)&\le&
\int_X \Big(\Ric_{g_t}+\Hess_{g_t}\tilde f_t\Big)\big(\dot F^\tau,\dot F^\tau\big)d\mu^0+\varepsilon.
\end{eqnarray*}
Integrating this estimate w.r.t.\ $\tau\in [0,1]$ and combining it with the sub-Ricci flow property we obtain
\begin{eqnarray*}
\partial_\tau^- S_t(\mu^{1-})-
\partial_\tau^+ S_t(\mu^{0+})
&\le&
\int_0^1
\int_X \Big(\Ric_{g_t}+\Hess_{g_t}\tilde f_t\Big)\big(\dot F^\tau,\dot F^\tau\big)d\mu^0\,d\tau
+\varepsilon
\\
&\le&-\frac12 \int_0^1
\int_X\partial_t g_t\big(\dot F^\tau,\dot F^\tau\big)d\mu^0\,d\tau+\varepsilon\\
&=&-\frac12 \partial_t\int_0^1
\int_X g_t\big(\dot F^\tau,\dot F^\tau\big)d\mu^0\,d\tau+\varepsilon\\
&\le&-\frac12 \partial_t^+ W^2_{t+}(\mu^0,\mu^1)+\varepsilon.
\end{eqnarray*}

The `only if'-implication is easy. Assume that \eqref{riem-sub-rf} is not satisfied. Then for some $\varepsilon>0$ in a suitable neighborhood of some $(s,v)\in I\times TM$ the inequality 
\begin{equation*}
\Ric_{g_t}+\Hess_{g_t}\tilde f_t\ge -\frac12 \partial g_t+\epsilon \cdot g_t
\end{equation*}
holds true. Thus all transports with velocity fields in this neighborhood will lead to dynamic $\varepsilon$-convexity of the entropy -- being in contrast to the requested `nearly' dynamic concavity. 
\end{proof}

\begin{corollary} In the previous setting,
$\big(X,d_t,m_t\big)_{t\in I}$ is a weak Ricci flow if and only if for all $t\in I$
\begin{equation}\label{riem-rf}
\Ric_{g_t}+\Hess_{g_t}\tilde f_t= -\frac12 \partial_t g_t.
\end{equation}
\end{corollary}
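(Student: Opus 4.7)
The proof plan is essentially a direct combination of the two preceding theorems, so I do not expect any obstacles beyond bookkeeping. By the definition given just above the statement, $(X,d_t,m_t)_{t\in I}$ is a weak Ricci flow precisely when it is simultaneously a super-Ricci flow and a weak sub-Ricci flow. Hence the strategy is to invoke Theorem~\ref{thm-riem-srf} and Theorem~\ref{thm-riem-sub-rf} in parallel and intersect the resulting pointwise inequalities.

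More concretely, I would argue as follows. First, apply Theorem~\ref{thm-riem-srf} to conclude that the super-Ricci flow property is equivalent to the inequality
\begin{equation*}
\Ric_{g_t}+\Hess_{g_t}\tilde f_t\ge -\tfrac12 \partial_t g_t \qquad\text{for all }t\in I,
\end{equation*}
understood as an inequality between quadratic forms on the tangent bundle of $(M,g_t)$. Second, apply Theorem~\ref{thm-riem-sub-rf} to conclude that the weak sub-Ricci flow property is equivalent to the reverse inequality
\begin{equation*}
\Ric_{g_t}+\Hess_{g_t}\tilde f_t\le -\tfrac12 \partial_t g_t \qquad\text{for all }t\in I.
\end{equation*}
Since the two conditions must hold simultaneously, the two pointwise quadratic-form inequalities together force the equality \eqref{riem-rf} at every $t\in I$ and every tangent vector, which is precisely the weak Ricci flow condition in the smooth Riemannian setting.

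The converse direction is equally immediate: if equality \eqref{riem-rf} holds for every $t\in I$, then both the super-Ricci inequality and the sub-Ricci inequality are satisfied (with equality), so applying the two theorems again in the opposite direction yields both the super-Ricci flow property and the weak sub-Ricci flow property, whence the weak Ricci flow property. The only subtlety worth a brief sentence is to confirm that the smoothness hypotheses on $(g_t)$ and $(\tilde f_t)$ needed for Theorem~\ref{thm-riem-srf} and for Theorem~\ref{thm-riem-sub-rf} are the same ones assumed here, so no additional regularity reduction is required; the corollary then follows purely by logical conjunction without any further computation.
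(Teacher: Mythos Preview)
Your proposal is correct and matches the paper's approach exactly: the corollary is stated immediately after Theorem~\ref{thm-riem-sub-rf} without a separate proof, precisely because it follows by conjoining the two pointwise quadratic-form inequalities from Theorems~\ref{thm-riem-srf} and~\ref{thm-riem-sub-rf}. Your remark about the smoothness hypotheses being identical in both theorems is also apt, and nothing further is needed.
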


\begin{example}

\

\begin{itemize}
\item[(i)] Let $X$ be the doubling of the unit disc $\overline B_1(0)\subset \R^n$ (= gluing of two copies along their boundaries) in $n\ge 2$. Then the static space $(X,d,m)$ is \emph{not} a weak Ricci flow. 

If $\mu^0$ is supported in one of the copies of $B_1(0)$ and $\mu^1$ in the other, then optimal transports on $X$ have to pass the rim and will descry
the focusing effect of the positive curvature concentrated on the rim.

\item[(ii)] 
Let $X$ be a cone over a circle of length $\alpha<2\pi$. Then the static space $(X,d,m)$ is  a weak Ricci flow. 

In this case, the positive curvature (concentrated in the vertex) will not be detected since optimal transports never pass through the vertex \cite{BaS}.

In the forthcoming second part to this paper, we will present a more restrictive notion of `Ricci flows' which will rule out this example.
\item[(iii)] 
Let $X$ be a cone over $S^2(1/\sqrt3)\times S^2(1/\sqrt3)$. Then the static space $(X,d,m)$ is  a weak Ricci flow.  

Same argument as for (ii) -- now, however, we indeed expect that this space is a `Ricci flow' in a more restrictive sense.
\item[(iv)] 
The time-dependent mm-space from Example \ref{neg-rim} is not a weak Ricci flow. The positive curvature concentrated in the glued boundaries will be detected by optimal transports.
(Same argument as for (i).)
\end{itemize}
\end{example}

\subsection{Averaged Super-Ricci Flows}

We started our discussion of super-Ricci flows for mm-spaces with the most comprehensive definition which involves derivatives w.r.t.\ space and time. 
In a first step, we already got rid of the spatial derivatives by passing to $\lambda$-moderate  super-Ricci flows. In the next step now we will illustrate how to get rid of the time-derivatives.

For simplicity, here and in the sequel we assume that $d$ is uniformly bounded on $I\times X$ 
and lower absolutely continuos in $t$, uniformly in $x,y$, in the following sense:
there exists a nonnegative function $\eta\in L^1_{loc}(I)$  such that for all $s<t$
\begin{equation*}
\label{low-abs-cont}
d_s^2(x,y)\le d_t^2(x,y)+\int_s^t \eta_r\,dr.
\end{equation*}
Boundedness of $d$ implies that  $\Pz_t=\Pz$ for all $t$  and the lower absolute continuity
 immediately carries over to the 
Wasserstein distances: 
\begin{equation}
W_s^2(x,y)\le W_t^2(x,y)+\int_s^t \eta_r\,dr.
\end{equation}

We will henceforth also assume that all the measures $m_t$ for $t\in I$ are absolutely continuous w.r.t.\ each other with uniformly bounded densities and that they measurably depend on $t$. In other words, there exist a 
bounded measurable function $f: I\times X\to \R$ and a Borel measure $m$ such that
 $m_t=e^{-f_t}m$ for all $t$.
Note that then 
$$S_t(\mu)=\Ent(\mu|m)+\int f_t\,d\mu$$
for all $\mu\in\Pz$.
Thus the condition $S_t(\mu)<\infty$ will be independent of $t\in I$ and  also equivalent to $S_J(\mu)<\infty$ for each interval $J\subset I$. 
Let $\Dom(S)$ denote the set of these $\mu\in\Pz$.

To proceed, we need some more notation. Given any subinterval $J=(r,s]\subset I$ we put
$S_J(\mu)=\frac1{s-r}\int_r^s S_t(\mu)dt$ for $\mu\in\Pz$ and 
$S_J(\mu_J)=\frac1{s-r}\int_r^s S_t(\mu_t)dt$ for a curve $\mu_J=(\mu_t)_{t\in J}$ in $\Pz$.
Moreover, we put
\begin{equation*}
W_J(\mu_J,\nu_J)=\left[\frac1{s-r}\int_r^s W_t^2(\mu_t,\nu_t)dt\right]^{1/2},\quad
W_{\lambda,J}(\mu_J,\nu_J)=\left[\frac1{s-r}\int_r^s W_t^2(\mu_t,\nu_t)\lambda_t dt\right]^{1/2}
\end{equation*}
for curves $\mu_J=(\mu_t)_{t\in J}$ and $\nu_J=(\nu_t)_{t\in J}$ in $\Pz$
(or, in other words, for points $\mu_J$ and $\nu_J$ in $\Pz^J$).
Note that a curve $(\mu_J^a)_{a\in [0,1]}$ in $\Pz^J$ is a geodesic w.r.t.\ $W_J$ if and only if for a.e.\ $t\in J$ the curve 
$(\mu_t^a)_{a\in [0,1]}$ in $\Pz$ is a geodesic w.r.t.\ $W_t$.

\begin{definition}
$\big(X,d_t,m_t\big)_{t\in I}$ is called \emph{$\lambda$-averaged  super-$N$-Ricci flow} if for every $J=(r,s]\subset I$
and for any pair of points $\mu^0,\mu^1\in \Dom(S)$   there exists
  a $W_J$-geodesic $(\mu_J^a)_{a\in[0,1]}$ connecting $\mu^0$ and $\mu^1$  (i.e.\
  $\mu^a_t=\mu^a$ for $a\in\{0,1\}$ and a.e.\ $t\in J$) such that for all $a\in(0,\frac12]$
  and all $N'\in[N,\infty]$ with $N'\ge 2a
   \left[ |S_J(\mu^0)-S_J(\mu^1)|+  W_{\lambda,J}^2(\mu^0,\mu^1)\right]$
\begin{eqnarray}\label{N-int-conv-W}\nonumber
\frac1a\Phi_{N'}\left(S_J(\mu^0))-S_J(\mu_J^a)\right)
&+&\frac1a\Phi_{N'}\left(S_J(\mu^1)-S_J(\mu_J^{1-a})\right)\\
&\ge&- \frac 1{2(s-r)}\left[W_s^2(\mu^0,\mu^1)-W_r^2(\mu^0,\mu^1)\right]
\\
&&-  a \cdot W^2_{\lambda,J}(\mu^0,\mu^1)
+\frac1{N'} \left| S_J(\mu^0)-S_J(\mu^1)\right|^2\nonumber.
\end{eqnarray}
%where $\Phi_{N'}(u)= u + \frac1{N'} u^2$.
averaged $\lambda$-moderate  super-$N$-Ricci flows in the case $N=\infty$ will be called \emph{averaged $\lambda$-moderate  super-Ricci flows}.
\end{definition}

\begin{proposition}
$\big(X,d_t,m_t\big)_{t\in I}$ is a $\lambda$-averaged  super-Ricci flow if and only it is a $\lambda$-moderate  super-Ricci flow.
\end{proposition}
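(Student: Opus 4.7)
The plan is to prove both implications by noting that, for $N=\infty$ (so that $\Phi_{N'}$ reduces to the identity), the $\lambda$-averaged inequality on $J=(r,s]$ is exactly the $\lambda$-moderate pointwise inequality integrated in $t\in J$ and divided by $|J|$, with the only nontrivial passage being the conversion
\[
\int_r^s \partial_t^- W_{t-}^2(\mu^0,\mu^1)\,dt \;\longleftrightarrow\; W_s^2(\mu^0,\mu^1)-W_r^2(\mu^0,\mu^1).
\]

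For the forward direction ($\lambda$-moderate $\Rightarrow$ $\lambda$-averaged), fix $J=(r,s]$ and $\mu^0,\mu^1\in\Dom(S)$. The hypothesis yields, for a.e.\ $t\in J$, a $W_t$-geodesic $(\mu_t^a)_{a\in[0,1]}$ satisfying~\eqref{controlled-W} at $t$. A measurable selection argument (Kuratowski--Ryll-Nardzewski applied to the closed-valued multifunction $t\mapsto\{\text{such geodesics}\}\subset C([0,1],\Pz)$) produces a jointly measurable family and hence a $W_J$-geodesic. Integrating~\eqref{controlled-W} in $t\in J$ and dividing by $s-r$ gives the left-hand side of~\eqref{N-int-conv-W} on the nose; on the right-hand side one needs $\int_r^s\partial_t^- W_{t-}^2\,dt\le W_s^2-W_r^2$. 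Writing $\partial_t^- W_{t-}^2=\liminf_{h\searrow 0}h^{-1}[W_t^2-W_{t-h}^2]$, the lower absolute continuity assumption (via the Hardy--Littlewood maximal function of $\eta$) supplies the integrable lower bound needed to invoke Fatou, after which $h^{-1}\int_r^s[W_t^2-W_{t-h}^2]\,dt\to W_s^2-W_r^2$ at (a.e.) Lebesgue points $r,s$ by standard differentiation, and the log-Lipschitz control extends this to arbitrary $r,s$.

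For the reverse direction ($\lambda$-averaged $\Rightarrow$ $\lambda$-moderate), fix $t$ in the co-null set of common Lebesgue points for $s\mapsto S_s(\mu^i)$, $W_s^2(\mu^0,\mu^1)$ and $\lambda_s W_s^2(\mu^0,\mu^1)$, and choose $h_n\searrow 0$ realizing $\partial_t^- W_{t-}^2(\mu^0,\mu^1)=\lim_n h_n^{-1}[W_t^2-W_{t-h_n}^2]$. Apply the averaged hypothesis on $J_n=(t-h_n,t]$ to get $W_{J_n}$-geodesics $(\mu_n^a)_a$ satisfying~\eqref{N-int-conv-W}. A positive-measure argument then picks $s_n\in J_n$ at which $(\mu_{n,s_n}^a)_a$ is an honest $W_{s_n}$-geodesic and
\[
S_{s_n}(\mu_{n,s_n}^\tau)+S_{s_n}(\mu_{n,s_n}^{1-\tau})\;\le\;S_{J_n}(\mu_n^\tau)+S_{J_n}(\mu_n^{1-\tau}).
\]
Tightness (from boundedness of $d$ and of $f$, finite endpoint entropy, Polish setting) combined with log-Lipschitz continuity of $s\mapsto W_s$ produces a subsequence along which $(\mu_{n,s_n}^a)_a$ converges narrowly and uniformly in $a$ to some $(\mu^a)_a$, which is a $W_t$-geodesic from $\mu^0$ to $\mu^1$ by joint continuity of $W$. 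Inserting the reverse-Markov bound into~\eqref{N-int-conv-W} and passing to $\liminf_n$, using Lebesgue differentiation on the endpoint terms and on $W_{\lambda,J_n}^2$, the choice of $h_n$ on the $W^2$-difference, and lower semicontinuity of $\mu\mapsto\Ent(\mu|m)$, yields~\eqref{controlled-W} at $t$ along $(\mu^a)_a$.

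The main obstacle is this last limit. Ordinary lsc of entropy would only give $\liminf_n S_{s_n}(\mu_{n,s_n}^\tau)\ge S_t(\mu^\tau)$ with the wrong sign for controlling the averaged LHS from above; this is precisely what the reverse-Markov selection of $s_n$ repairs by converting the time-averaged entropy into a single-time upper bound. A secondary technical difficulty is that the weight $f_s$ is only measurable in $s$, so convergence of $\int f_{s_n}\,d\mu_{n,s_n}^\tau$ to $\int f_t\,d\mu^\tau$ is not automatic; this will be handled by a diagonal Lebesgue-differentiation argument applied to $s\mapsto\int f_s\,d\nu$ along a countable dense family of $\nu$, combined with a Lusin-type approximation.
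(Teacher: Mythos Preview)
Your forward direction is essentially the paper's: measurable selection of the $W_t$-geodesics followed by integration in $t$, with the inequality $\int_r^s\partial_t^- W_{t-}^2\,dt\le W_s^2-W_r^2$ coming from the assumed lower absolute continuity of $t\mapsto W_t^2$. The paper records this in one line; your Fatou/maximal-function detour is unnecessary but harmless.

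For the reverse direction the paper proceeds very differently and much more cheaply. It asserts that one may take a \emph{single} family $(\mu_t^a)_{a\in[0,1],\,t\in I}$ of connecting $W_t$-geodesics which serves in the averaged inequality simultaneously for all intervals $J=(r,s]$ with endpoints in a countable dense set. Once the integrand
\[
t\ \longmapsto\ \tfrac1a\bigl[S_t(\mu^0)-S_t(\mu_t^a)+S_t(\mu^1)-S_t(\mu_t^{1-a})\bigr]+a\,\lambda_t\,W_t^2(\mu^0,\mu^1)
\]
no longer depends on $J$, Lebesgue's density theorem applied to these countably many inequalities gives the pointwise $\lambda$-moderate bound for a.e.\ $t$ directly, without ever extracting a limiting geodesic. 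The compactness/tightness machinery you invoke is what the paper uses only later, in Proposition~\ref{aver-point}, and there under the additional hypothesis that $X$ is compact and $f$ is continuous.

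Your route through limiting geodesics has a genuine gap in the present generality. The reverse-Markov selection produces $s_n$ depending on the \emph{particular} $\tau$ you fixed, and bounds only $S_{s_n}(\mu_{n,s_n}^\tau)+S_{s_n}(\mu_{n,s_n}^{1-\tau})$. This yields tightness (via entropy sublevel sets of the probability measure $m$) for $\mu_{n,s_n}^\tau$ and $\mu_{n,s_n}^{1-\tau}$, but gives no control on $\mu_{n,s_n}^a$ for $a\notin\{0,\tau,1-\tau,1\}$. In a merely Polish space with bounded metric and no curvature assumption (none is made here; the RCD hypothesis appears only in Proposition~\ref{aver-ae-point}), there is no reason the whole curve $(\mu_{n,s_n}^a)_{a\in[0,1]}$ should be tight, so the claimed ``narrow and uniform in $a$'' convergence to a $W_t$-geodesic is not justified. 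Even granting a limit, the construction produces a geodesic that a priori depends on $\tau$, whereas the $\lambda$-moderate condition requires \emph{one} geodesic along which the inequality holds for all $\tau\in(0,\tfrac12]$; your proposal does not explain how to pass from the $\tau$-dependent limits to a single geodesic. The final Lusin-type step for $\int f_{s_n}\,d\mu_{n,s_n}^\tau\to\int f_t\,d\mu^\tau$ is also problematic without continuity of $f$ in $x$, which is not assumed in this proposition.
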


\begin{proof}
Let us first note that  $$W^2_s(\mu^0,\mu^1)-W^2_r(\mu^0,\mu^1)\ge\int_r^s\left( \partial_t W^2_t\right)(\mu^0,\mu^1)dt$$ since
 $t \mapsto W^2_t(\mu^0,\mu^1)$ by assumption is lower absolutely continuous. 

Now let us assume that $\big(X,d_t,m_t\big)_{t\in I}$ is a $\lambda$-moderate  super-Ricci flow.
We can find a measurable selection of the connecting geodesics (i.e.\ measurable w.r.t.\ $t$) such that we can integrate the inequality
\begin{eqnarray}\nonumber
\frac1a\left(S_t(\mu^0))-S_t(\mu_t^a)\right)
&+&\frac1a\left(S_t(\mu^1)-S_t(\mu_t^{1-a})\right)\\
&\ge&- \frac 1{2}\partial_t^-W_{t-}^2(\mu^0,\mu^1)
- \lambda_t a \cdot W^2_t(\mu^0,\mu^1)
\end{eqnarray}
w.r.t.\ $t$. This already proves that $\big(X,d_t,m_t\big)_{t\in I}$ is a $\lambda$-averaged  super-Ricci flow.

Conversely,  the a.e.-version follows from the integrated version  by applying Lebesgue's density theorem. 
To do so, note that in the integrated version one may choose a common family of connecting geodesics  $(\mu^a_t)_{a\in[0,1]}$, $t\in(r,s]$, for a dense countable set of interval endpoints $r,s$.
\end{proof}

\begin{proposition}\label{aver-ae-point}
 Assume that the metrics $(d_t)_{t\in I}$ satisfy a lower log-Lipschitz bound with control function $\lambda\in L^1_{loc}(I)$ and that for a.e.\ $t$ the static mm-space $(X,d_t,m_t)$ satisfies a RCD$(-\kappa_t,\infty)$-condition for some $\kappa_t\in\R$.
 Then 
$\big(X,d_t,m_t\big)_{t\in I}$ is a  $\lambda$-moderate  super-$N$-Ricci flow if and only if it is a $\lambda$-averaged  super-$N$-Ricci flow.
\end{proposition}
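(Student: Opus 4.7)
The plan is to prove the two implications separately, modelling the argument on the preceding proposition (the $N=\infty$ case) while paying extra attention to the nonlinear term $\Phi_{N'}$.

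\emph{Forward direction ($\lambda$-moderate $\Rightarrow$ $\lambda$-averaged).} Fix endpoints $\mu^0,\mu^1\in\Dom(S)$ and $J=(r,s]\subset I$. For a.e.\ $t\in J$ the pointwise hypothesis yields a $W_t$-geodesic $(\mu_t^a)_{a\in[0,1]}$ satisfying \eqref{N-dyn-conv-W}. Thanks to the $\mathrm{RCD}(-\kappa_t,\infty)$-condition, the bundle of $W_t$-geodesics between measures of finite entropy depends sufficiently regularly on $t$ that a measurable selection theorem provides joint measurability in $(t,a)$; the resulting family $\mu_J^a:=(\mu_t^a)_{t\in J}$ is a $W_J$-geodesic between $\mu^0$ and $\mu^1$. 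I then integrate \eqref{N-dyn-conv-W} in $t$ and divide by $|J|$: on the right, lower absolute continuity of $t\mapsto W_t^2(\mu^0,\mu^1)$ converts $\frac1{|J|}\int_J\partial_t^-W_{t-}^2\,dt$ into the finite difference $(W_s^2-W_r^2)/(s-r)$, and Jensen on $u\mapsto u^2$ produces the required $\frac1{N'}|S_J(\mu^0)-S_J(\mu^1)|^2$ term. On the left, Lemma \ref{mono-conv} places the arguments $S_t(\mu^0)-S_t(\mu_t^a)$ on the monotone branch $[-N'/2,\infty)$ of $\Phi_{N'}$; combined with the admissibility window for $N'$, one assembles the averaged lower bound for the two $\Phi_{N'}$-terms -- a step that genuinely exploits monotonicity of $\Phi_{N'}$ since its convexity pushes Jensen in the opposite direction.

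\emph{Backward direction ($\lambda$-averaged $\Rightarrow$ $\lambda$-moderate).} Fix a countable dense family $\mathcal{D}\subset\Dom(S)$ and, for each pair in $\mathcal{D}\times\mathcal{D}$ and each pair of rationals $r<s$ in $I$, select a $W_{(r,s]}$-geodesic witnessing \eqref{N-int-conv-W}. Let $I_0\subset I$ be the full-measure subset of common Lebesgue points at which $\partial_t^-W_{t-}^2(\mu^0,\mu^1)$ exists for each pair in $\mathcal{D}\times\mathcal{D}$, at which $t\mapsto S_t(\mu^i)$ and $t\mapsto\lambda_t$ are Lebesgue-regular, and at which $S_t$ is upper regular via Lemma \ref{rcd-upp-reg}. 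For $t_0\in I_0$ and endpoints $\mu^0,\mu^1\in\mathcal{D}$, apply the averaged property on $J_h=(t_0-h,t_0]$. Under $\mathrm{RCD}(-\kappa_{t_0},\infty)$, the space of $W_{t_0}$-geodesics from $\mu^0$ to $\mu^1$ is compact (uniform entropy bounds along geodesics yield tightness of the intermediate measures), and the continuity of $(d_t)$ in $t$ via \eqref{low-lip} implies that $W_t$-geodesics converge to $W_{t_0}$-geodesics as $t\to t_0$. A diagonal subsequence extraction yields a limiting $W_{t_0}$-geodesic $(\mu^a_{t_0})_{a\in[0,1]}$. Dividing \eqref{N-int-conv-W} by $h$ and passing to the limit term by term via Lebesgue differentiation gives $(W_{t_0}^2-W_{t_0-h}^2)/h\to\partial_t^-W_{t_0-}^2$ and $S_{J_h}(\mu^i)\to S_{t_0}(\mu^i)$, while $S_{J_h}(\mu_{J_h}^a)\to S_{t_0}(\mu^a_{t_0})$ is controlled by lower semicontinuity of $S_{t_0}$ together with the $(-\kappa_{t_0})$-convex upper bound along the limiting geodesic. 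Continuity of $\Phi_{N'}$ then delivers \eqref{N-dyn-conv-W} at $t_0$ for the dense family; a density plus upper regularity argument extends it to all $\mu^0,\mu^1\in\Dom(S)$.

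\emph{Main obstacle.} The crux is the backward direction: extracting a genuine $W_{t_0}$-geodesic from the $h$-dependent $W_{J_h}$-geodesics and ensuring convergence of $S_{J_h}(\mu_{J_h}^a)$ to $S_{t_0}(\mu^a_{t_0})$ along it. Without $\mathrm{RCD}(-\kappa_t,\infty)$, neither the compactness of the geodesic bundle nor the continuity/semiconvexity of the entropy along the limit is at our disposal; with RCD in place, Wasserstein compactness and Lebesgue differentiation in $t$ close the argument.
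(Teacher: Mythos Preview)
Your forward direction has a genuine gap. Integrating \eqref{N-dyn-conv-W} over $t\in J$ and then trying to pass from $\frac1{|J|}\int_J \Phi_{N'}\big(S_t(\mu^0)-S_t(\mu_t^a)\big)\,dt$ to $\Phi_{N'}\big(S_J(\mu^0)-S_J(\mu_J^a)\big)$ cannot be saved by monotonicity of $\Phi_{N'}$: you need the average of $\Phi_{N'}(u_t)$ to be \emph{bounded above} by $\Phi_{N'}$ of the average, and that is exactly the Jensen direction you already identify as wrong. Monotonicity on $[-N'/2,\infty)$ only lets you compare $\Phi_{N'}(u)$ with $\Phi_{N'}(u')$ when $u\le u'$; it says nothing about averages. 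There is a second, independent obstruction: the admissibility window $N'\ge 2a\big[\,|S_t(\mu^0)-S_t(\mu^1)|+\lambda_t W_t^2(\mu^0,\mu^1)/2\,\big]$ must hold \emph{for each $t$} in order to invoke \eqref{N-dyn-conv-W} pointwise, and the averaged constraint on $N'$ does not imply this.

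The paper avoids both problems by not integrating \eqref{N-dyn-conv-W} at all. It uses the RCD hypothesis to obtain upper regularity of $S$ (Lemma~\ref{rcd-upp-reg}), and then Theorem~\ref{prop-N-dyn} upgrades the $\lambda$-moderate hypothesis to the stronger form~(v) of that theorem, in which the dimensional term appears as $\frac aN\int_0^1\Lambda^{a,b}|\partial_b S_t(\mu_t^b)|^2\,db$. \emph{This} quantity is genuinely integrable in $t$: Cauchy--Schwarz (applied to $\partial_b S_t$ in the $t$-variable) gives $\frac1{|J|}\int_J|\partial_b S_t(\mu_t^b)|^2\,dt\ge |\partial_b S_J(\mu_J^b)|^2$, and then the chain of estimates from the proof of Theorem~\ref{prop-N-dyn}~(v)$\Rightarrow$(vi) produces the averaged $\Phi_{N'}$-inequality together with the correct constraint on $N'$. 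The $W$-term is handled separately via the lower log-Lipschitz bound, as in the proof of Theorem~\ref{prop-dyn}. In short, the missing idea in your forward direction is to first pass through Theorem~\ref{prop-N-dyn} before integrating.

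For the backward direction, your compactness/extraction argument is more elaborate than necessary and leans on compactness of $X$ that is not assumed here (that hypothesis appears only in the next Proposition~\ref{aver-point}). The paper's route is much shorter: Jensen for the convex $\Phi_{N'}$ goes in the \emph{correct} direction here, bounding the LHS of \eqref{N-int-conv-W} from above by $\frac1{a|J|}\int_J\big[\Phi_{N'}(S_t(\mu^0)-S_t(\mu_t^a))+\Phi_{N'}(S_t(\mu^1)-S_t(\mu_t^{1-a}))\big]\,dt$, and then Lebesgue differentiation in $t$ (using a common measurable family of geodesics over a countable dense set of interval endpoints, exactly as in the $N=\infty$ proposition) yields the pointwise inequality.
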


\begin{proof}
The case $N=\infty$ is already covered by the previous Proposition. The case $N<\infty$ requires some more care.
Assume that $\big(X,d_t,m_t\big)_{t\in I}$ is a $\lambda$-moderate  super-$N$-Ricci flow and that a.e.\ of the static spaces is a RCD$(-\kappa_t,\infty)$-space. The latter implies that the Boltzmann entropy is upper regular. Hence, by Theorem \ref{prop-N-dyn} for a.e.\ $t$, every $W_t$-geodesic 
$(\mu_t^a)_{a\in[0,1]}$  and
for all $a\in [0,1/2]$
\begin{eqnarray*}%\label{s-t} \nonumber
S_t(\mu_t^0)+S_t(\mu_t^1)&-& S_t(\mu_t^a)-S_t(\mu_t^{1-a})\\
&\ge& -\frac12\int_0^a \frac1{1-2b}
\big(\partial^-_t W^2_{t-}\big)(\mu_t^{b},\mu_t^{1-b})\,db
+\frac aN \int_0^1\Lambda^{a,b} |\partial_b S_t(\mu_t^b)|^2\,db.
\end{eqnarray*}
Choosing a measurable selection of interpolating geodesics, this can be integrated w.r.t.\ $t$ which then  yields
\begin{eqnarray}
\label{s-J} \nonumber
\lefteqn{S_J(\mu_J^0)+S_J(\mu_J^1)- S_J(\mu_J^a)-S_J(\mu_J^{1-a})}\\
&\ge& -\frac1{2|J|}\int_J \int_0^a \frac1{1-2b}
\big(\partial^-_t W^2_{t-}\big)(\mu_t^{b},\mu_t^{1-b})\,db\,dt
+\frac a{N}\int_0^1\Lambda^{a,b}\frac1{|J|}\int_J |\partial_b S_t(\mu_t^b)|^2\,dt\,db.
\end{eqnarray}
The last term allows to apply Cauchy-Schwarz inequality and then can be estimated as in the proof of Theorem \ref{prop-N-dyn}
as follows
\begin{eqnarray*}
\lefteqn{\frac a{N}\int_0^1\Lambda^{a,b}\frac1{|J|}\int_J |\partial_b S_t(\mu_t^b)|^2\,dt\,db}\\
&\ge&
\frac a{N}\int_0^1\Lambda^{a,b} |\partial_b S_J(\mu_J^b)|^2\,db\\
&\ge&\frac a{N(1-2a)}\big| S_J(\mu_J^a)-S_J(\mu_J^{1-a})\big|^2\\
&\ge&
\frac1{N'}\Big[a\big| S_J(\mu_J^0)-S_J(\mu_J^{1})\big|^2-\big| S_J(\mu_J^0)-S_J(\mu_J^{a})\big|^2-\big| S_J(\mu_J^{1-a})-S_J(\mu_J^{1})\big|^2\Big]
\end{eqnarray*}
for any $N'\ge N$
whereas the first term on the RHS of \eqref{s-J} will be estimated as in the proof of Theorem \ref{prop-dyn} in the following way
\begin{eqnarray*}
\lefteqn{-\frac1{2|J|}\int_J \int_0^a \frac1{1-2b}
\big(\partial^-_t W^2_{t-}\big)(\mu_t^{b},\mu_t^{1-b})\,db\,dt}\\
&\ge&
\frac1{2|J|}\int_J \int_0^a\Big[ 
-\big(\partial^-_t W^2_{t-}\big)(\mu_t^{0},\mu_t^{1})
+\frac1 b \big(\partial^-_t W^2_{t-}\big)(\mu_t^{0},\mu_t^{b})
+\frac1b \big(\partial^-_t W^2_{t-}\big)(\mu_t^{1-b},\mu_t^{1})
\Big]
\,db\,dt
\end{eqnarray*}
Requiring now that all geodesics under consideration will start and end in the same points $\mu_0$ and $\mu_1$, independent of $t$, the first term on the RHS of the last 
inequality can be expressed more explicitly as
$$\frac1{2|J|}\int_J \int_0^a
-\big(\partial^-_t W^2_{t-}\big)(\mu^{0},\mu^{1})
\,db\,dt=\frac a{2(s-r)}\big[ W^2_r(\mu^0,\mu^1)-W^2_s(\mu^0,\mu^1)
\big].$$
Moreover, the requested log-Lipschitz bound of the metric allows to estimate the two remaining terms on the RHS of the previous inequality from below as follows
\begin{eqnarray*}
\lefteqn{\frac1{2|J|}\int_J \int_0^a\Big[ \frac1 b \big(\partial^-_t W^2_{t-}\big)(\mu_t^{0},\mu_t^{b})
+\frac1b \big(\partial^-_t W^2_{t-}\big)(\mu_t^{1-b},\mu_t^{1})
\Big]
\,db\,dt}\\
&=&-\frac1{|J|}\int_J \int_0^a\lambda_t\cdot W_t^2(\mu^0,\mu^1)\, 2b\,db\,dt\\
&=&-a^2\cdot W^2_{\lambda,J}(\mu^0,\mu^1).
\end{eqnarray*}
This proves the claim.

\medskip

For the converse, note that the Cauchy-Schwarz inequality allows to estimate the left hand side of \eqref{N-int-conv-W} from above
by
$$
\frac1a\frac1{s-r}\int_r^s \Phi_{N'}\left(S_t(\mu^0))-S_t(\mu_t^a)\right)
+\Phi_{N'}\left(S_t(\mu^1)-S_t(\mu_t^{1-a})\right)dt.$$
Moreover, for each $a\in\{0,1\}$ boundedness and measurability of 
 $t\mapsto F_t=\int f_t(x)d\mu^a$ implies that for a.e.\ $t$
\begin{eqnarray*}
S_t(\mu^a)-S_{J}(\mu^a)= F_t-\frac1{s-r}\int_{r}^s F_u\,du\to 0
\end{eqnarray*}
whenever $r\to t$ and $s\to t$. This allows to handle the last term on the right hand side of 
 \eqref{N-int-conv-W}. Hence, we may apply Lebesgue's density theorem as in the proof of the previous Proposition to deduce the  claim for a.e.\ $t$. 
\end{proof}

Under appropriate assumptions, the averaged formulation of super-Ricci flows even allows to deduce that the defining property holds true pointwise for \emph{every} $t$.

\begin{proposition}\label{aver-point}
 Assume that  $X)$ is compact and that $f$ is continuous in $x$ and  $t$.
Then every  $\lambda$-averaged super-$N$-Ricci flow
$\big(X,d_t,m_t\big)_{t\in(0,T]}$ satisfies the defining inequality \eqref{N-dyn-conv-W} for every $t\in I$.
\end{proposition}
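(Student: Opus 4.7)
The plan is to apply the averaged hypothesis on shrinking intervals $J_n = (t_0-\epsilon_n, t_0] \subset I$ around a given $t_0$ and pass to the limit, using compactness of $X$ and continuity of $f$ to push the averaged entropy information to the pointwise instant $t_0$. First I would choose $\epsilon_n \searrow 0$ along which the difference quotient $\frac{1}{\epsilon_n}[W^2_{t_0}(\mu^0,\mu^1) - W^2_{t_0-\epsilon_n}(\mu^0,\mu^1)]$ converges to $\partial_t^- W_{t-}^2(\mu^0,\mu^1)|_{t=t_0}$ (possible since this quantity is defined as a $\liminf$), and, by a further subsequence, along which $\frac{1}{\epsilon_n}\int_{t_0-\epsilon_n}^{t_0}\lambda_t\,dt$ converges to $\lambda_{t_0}$ (a.e. $t_0$ by Lebesgue differentiation; at other points interpret $\lambda_{t_0}$ as a canonical representative). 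For each $n$ the $\lambda$-averaged super-$N$-Ricci flow hypothesis on $J_n$ produces a $W_{J_n}$-geodesic $(\mu^{a,n}_t)_{a\in[0,1],\,t\in J_n}$ from $\mu^0$ to $\mu^1$ satisfying \eqref{N-int-conv-W}.

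Next I would construct a candidate pointwise geodesic at $t_0$ by time-averaging: set $\bar\mu^{a,n} := \frac{1}{|J_n|}\int_{J_n}\mu^{a,n}_t\,dt$ as a weak integral. Compactness of $X$ makes $\Pz(X)$ sequentially weakly compact. Using joint convexity of the squared Wasserstein distance under convex combinations of marginals, together with the uniform convergence $W_t^2 \to W_{t_0}^2$ provided by the assumed continuity-in-$t$ of $d$, one checks that
\begin{equation*}
W_{t_0}^2(\bar\mu^{a,n},\bar\mu^{b,n}) \le \tfrac{1}{|J_n|}\int_{J_n} W_{t_0}^2(\mu^{a,n}_t,\mu^{b,n}_t)\,dt \le (a-b)^2 W_{t_0}^2(\mu^0,\mu^1) + o(1).
\end{equation*}
A diagonal extraction over dyadic $a$, followed by Lipschitz extension to all $a\in[0,1]$, yields a further subsequence along which $\bar\mu^{a,n}$ converges weakly to some $\nu^a$ for every $a$, and the limit $(\nu^a)_{a\in[0,1]}$ is a genuine $W_{t_0}$-geodesic from $\mu^0$ to $\mu^1$ (equality in the resulting triangle inequality).

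Finally I would pass to the limit in \eqref{N-int-conv-W}. Writing $S_t(\mu) = \Ent(\mu|m) + \int f_t\,d\mu$, uniform continuity of $f$ on the compact set $[t_0-\epsilon_1,t_0]\times X$ gives $\sup_{\mu\in\Pz(X)}|\int f_t\,d\mu - \int f_{t_0}\,d\mu| \to 0$ as $t \to t_0$; combined with convexity of $\Ent(\cdot|m)$ (Jensen in $\Pz$), this yields $S_{J_n}(\mu^{a,n}_{J_n}) \ge S_{t_0}(\bar\mu^{a,n}) - \delta_n$ with $\delta_n \to 0$, and $S_{J_n}(\mu^i) \to S_{t_0}(\mu^i)$ for $i = 0,1$. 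Together with lower semicontinuity of $\Ent$ and the monotonicity of $\Phi_{N'}$ on its relevant range (Lemma \ref{mono-conv}, whose domain hypothesis is exactly the admissibility bound on $N'$ in Definition \ref{def-controlled}), this gives
\begin{equation*}
\limsup_n \tfrac{1}{a}\Phi_{N'}\big(S_{J_n}(\mu^0)-S_{J_n}(\mu^{a,n}_{J_n})\big) \le \tfrac{1}{a}\Phi_{N'}\big(S_{t_0}(\mu^0)-S_{t_0}(\nu^a)\big),
\end{equation*}
and similarly for the $(1-a)$-term. The right-hand side of \eqref{N-int-conv-W} converges along the subsequence to the right-hand side of \eqref{N-dyn-conv-W} at $t_0$ by construction of $\epsilon_n$, continuity of $W_t^2$, and the already-established convergence $S_{J_n}(\mu^i) \to S_{t_0}(\mu^i)$. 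Combining $\limsup$ of LHS $\leq$ LHS at $t_0$ with $\lim$ of RHS $=$ RHS at $t_0$, together with the averaged inequality LHS$_n \ge$ RHS$_n$, yields \eqref{N-dyn-conv-W} at $t_0$ along the geodesic $(\nu^a)$.

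The main obstacle is the interplay between time-averaging (needed to exploit convexity of entropy so as to relate $S_{J_n}(\mu^{a,n}_{J_n})$ to $S_{t_0}(\nu^a)$) and the Wasserstein geodesic structure: one must verify that the time-averaged curves are asymptotically $W_{t_0}$-geodesic, so the weak limit $(\nu^a)$ is a bona fide geodesic rather than merely a Lipschitz curve. A secondary subtlety is ensuring a single limit geodesic validates the inequality for all admissible parameters $(a,N')$ simultaneously; this follows since the averaged hypothesis already supplies such a uniform geodesic on each $J_n$, and the extraction procedure together with continuity of $\Phi_{N'}$ in both $u$ and $N'$ preserves this uniformity in the limit.
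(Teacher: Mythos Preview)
Your proposal is correct and follows essentially the same route as the paper: take shrinking intervals $J_n=(r_n,s]$ with $r_n\nearrow s$, time-average the $W_t$-geodesics to obtain $\bar\mu^{a,n}=\frac{1}{|J_n|}\int_{J_n}\mu^{a}_t\,dt$, extract a weak limit $W_s$-geodesic via compactness of $\Pz(X)$, and pass to the limit in \eqref{N-int-conv-W} using Jensen's inequality plus lower semicontinuity of $\Ent(\cdot\,|\,m)$ for the interior points and uniform continuity of $f$ for the endpoint entropy terms. You supply a few details the paper leaves implicit---choosing the subsequence so that the Wasserstein difference quotient realizes the $\liminf$ defining $\partial_t^- W_{t-}^2$, the joint-convexity argument showing the time-averaged curve is asymptotically geodesic, and the invocation of Lemma~\ref{mono-conv} to handle $\Phi_{N'}$---but the overall architecture is the same.
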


\begin{proof}
For fixed $s\in I$ consider the averaged super-$N$-Ricci flow property \eqref{N-int-conv-W} on time intervals $J_n=[r_n,s]$ for a sequence of numbers $r_n\nearrow s$. 
 Put $\mu^a_{J_n}=\frac1{s-r_n}\int_{r_n}^s \mu_t^a dt$. Compactness of $X$ implies that for each $a\in [0,1]$ -- after passing to a suitable subsequence -- $\mu_{J_n}^a\to \mu_s^a$ weakly as $n\to\infty$ for some $W_s$-geodesic $(\mu^a_s)_{s\in [0,1]}$.
 (Indeed, this first will be proven for all rational $a$.)
 %We may assume that all the geodescis under consideration are chosen do be independent of $n$.
 The claimed pointwise super-$N$-Ricci flow property at the given time $s$ will follow from the averaged versions if we know that
 \begin{itemize}
 \item for $a\in\{0,1\}$
 $$\limsup_{n\to\infty}  S_{J_n}(\mu^a)\le S_s(\mu^a)$$
 \item for all $a\in [0,1]$
 $$\liminf_{n\to\infty}  S_{J_n}(\mu_{J_n}^a)\ge S_s(\mu_s^a).$$
 \end{itemize}
The former follows (as in the final step of the previous proof) from continuity of $t\mapsto f_t(x)$. Recall that $S_t$ depends on $t$ only via $f_t$. More precisely,
$S_t(\mu)=S_\diamond(\mu)+\int f_td\mu$ with $S_\diamond(\mu):=\Ent(\mu|m)$.

 For the latter, note that Jensen's inequality and lower semicontinuity of $\mu\mapsto S_\diamond (\mu)$ imply
 $$\liminf_{n\to\infty}  \frac1{s-r_n}\int_{r_n}^sS_{\diamond}(\mu_{t}^a)dt\ge \liminf_{n\to\infty}S_\diamond(\mu_{J_n}^a)\ge S_\diamond(\mu_s^a)$$
 for all $a\in [0,1]$. Moreover,  continuity of $f$  in $t$  and  $x$ (in fact, uniform continuity, due to compactness) implies
 $$\liminf_{n\to\infty}  \frac1{s-r_n}\int_{r_n}^s\int_X f_t \,d\mu_{t}^a\, dt\ge 
 \liminf_{n\to\infty}  \frac1{s-r_n}\int_{r_n}^s \int_X f_s \,d\mu_{t}^a\, dt\ge 
  \int_X  f_s \,d\mu_{s}^a.$$
  Since 
  $$S_{J_n}(\mu^a_{J_n})= \frac1{s-r_n}\int_{r_n}^sS_{\diamond}(\mu_{t}^a)dt
  + \frac1{s-r_n}\int_{r_n}^s\int_X f_t \,d\mu_{t}^a\, dt$$ and 
  $S_s(\mu^a_s)=S_\diamond(\mu^a_s)+\int f_s\,d\mu^a_s$ this proves the claim.
\end{proof}

\section{Stability and Compactness of Super-Ricci Flows}

One of the most important property of the synthetic Ricci bounds  in the sense of Lott-Sturm-Villani for `static' mm-spaces is their stability under convergence -- and as a consequence a far reaching compactness result which extends Gromov's famous pre-compactness result. Convergence in this context is most naturally formulated in terms of optimal transports. It essentially coincides with measured Gromov-Hausdorff convergence (but leads to slightly stronger results).
Let us now try to carry over these concepts and results to the time-dependent setting.

\subsection{The Distance ${\mathbb D}_I$ between Time-dependent MM-Spaces}

%Without restriction, we may assume that $T_0=0$.
For the sequel, we fix a bounded interval $I$ of length $|I|$.
The mm-spaces $(X,d_t,m_t)_{t\in I}$ to be considered %in the sequel 
will always be requested to have finite measures $m_t$ which are equivalent to each other with bounded densities. Thus we always may represent them as
$$m_t=e^{-f_t}m$$
with a family of bounded measurable functions $(f_t)_{t\in I}$ and some `reference' probability measure $m$. 
Instead of $(X,d_t,m_t)_{t\in I}$ we then will write $(X,d_t,f_t,m)_{t\in I}$.
The choice of the probability measure $m$, a priori, is not unique. 
For well-definedness, we assume that it is the normalized $m_T$ for $T$ being the midpoint of $I$, i.e.\ $m=\frac1{m_T(X)}\, m_T$ with $T=\frac12(T_0+T_1)$ for $I=(T_0,T_1]$.
(Any other choice of $T\in I$  also would be fine.)

In the spirit of the ${\mathbb D}$-distance between equivalence classes of mm-spaces introduced by the author in \cite{St1},
 the $L^{2,1}$-transportation distance between equivalence classes of time-dependent mm-spaces will be defined by
 
  \begin{eqnarray*}
\lefteqn{ {\mathbb D}_I\Big( (X,d_t,f_t,m), (\tilde X,\tilde d_t,\tilde f_t,\tilde m)\Big)}\nonumber\\
&=&
 \inf\left\{
 \left(\frac1{|I|}\int_I\int_{X\times \tilde X} \hat d_t(x,y)^2\, d\hat m(x,y)\,dt\right)^{1/2}
 +
\frac1{|I|}\int_I\int_{X\times \tilde X} | f_t(x)-\tilde f_t(y)|\, d\hat m(x,y)\,dt\right\}
 \end{eqnarray*}
 where the imfimum is taken over all  $\hat m\in \mathrm{Cpl}(m, \tilde m)$ and over all $\hat d_t\in \mathrm{Cpl}(d_t, \tilde d_t)$ for a.e.\ $t\in I$.
 Here $\mathrm{Cpl}(m, \tilde m)$ denotes the well-known set of all couplings of the probability measures $m, \tilde m$ (i.e.\ measures on the product space $X\times \tilde X$ with the given measures as marginals). Analogously,  $\mathrm{Cpl}(d_t, \tilde d_t)$ denotes the set of couplings of the metrics $d_t$ and $\tilde d_t$. These are pseudo-metrics on the disjoint union of $X$ and $\tilde X$ which on the respective subsets coincide with the given metrics, see \cite{St1} for the precise definition and a detailed discussion.
 
Time-dependent mm-spaces  $(X,d_t,f_t,m)_{t\in I}$ and  $(\tilde X,\tilde d_t,\tilde f_t, \tilde m)_{t\in I}$ will be considered as  equivalent if their ${\mathbb D}_I$-distance is 0.
 
 More general $L^{p,q}$-transportation distances could be considered which minimize 
 $$ \left(\int_I\int_{X\times \tilde X} \hat d_t(x,y)^p\, d\hat m(x,y)\,dt\right)^{1/p}
 +
 \left(\int_I\int_{X\times \tilde X} | f_t(x)-\tilde f_t(y)|^q\, d\hat m(x,y)\,dt\right)^{1/q}$$ instead of the previous %quadratic
  expressions. In most of the subsequent applications, however, both the distances $d_t$ and the weights $f_t$ will be uniformly bounded. Then all these transportation distances will be equivalent to each other and thus for simplicity we restrict to the case $p=2, q=1$.
 
 \begin{lemma}
 ${\mathbb D}_I$ is a metric on the space of equivalence classes of time-dependent mm-spaces.
 \end{lemma}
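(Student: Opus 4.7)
The three axioms to verify are (a) symmetry, (b) the triangle inequality, and (c) that ${\mathbb D}_I$ vanishes only between equivalent time-dependent mm-spaces. Item (a) is immediate since any coupling or metric coupling can be reversed, and (c) is tautological by the very definition of the equivalence relation used to form the quotient. The substance of the lemma is therefore the triangle inequality, from which well-definedness on equivalence classes (i.e.\ transitivity of the relation ${\mathbb D}_I=0$) will also follow.

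For the triangle inequality, fix three time-dependent mm-spaces $(X_i,d^i_t,f^i_t,m^i)_{t\in I}$, $i=1,2,3$, and, given $\varepsilon>0$, choose $\hat m^{12}\in\mathrm{Cpl}(m^1,m^2)$ together with a family $\hat d^{12}_t\in\mathrm{Cpl}(d^1_t,d^2_t)$ that is $\varepsilon$-near optimal for ${\mathbb D}_I$ between the first two spaces, and analogously pick $\hat m^{23}$ and $\hat d^{23}_t$. The plan is then a double gluing construction. First, disintegrate
$$\hat m^{12}(dx,dy)=m^2(dy)\,Q^{12}(y,dx),\qquad \hat m^{23}(dy,dz)=m^2(dy)\,Q^{23}(y,dz),$$
and set $\hat m(dx,dy,dz)=m^2(dy)\,Q^{12}(y,dx)\,Q^{23}(y,dz)$ on $X_1\times X_2\times X_3$; its $(X_1,X_3)$-marginal $\hat m^{13}$ lies in $\mathrm{Cpl}(m^1,m^3)$. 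Second, on $X_1\sqcup X_2\sqcup X_3$ define for each $t$
$$\hat d^{13}_t(x,z)=\inf_{y\in X_2}\bigl[\hat d^{12}_t(x,y)+\hat d^{23}_t(y,z)\bigr],$$
extending by $\hat d^{12}_t$ and $\hat d^{23}_t$ on the remaining pairs; as in the static case \cite{St1} this yields a pseudo-metric coupling of $d^1_t$ and $d^3_t$.

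Equipped with these objects, the $L^1$ contribution is handled by integrating the pointwise bound $|f^1_t(x)-f^3_t(z)|\le|f^1_t(x)-f^2_t(y)|+|f^2_t(y)-f^3_t(z)|$ against $d\hat m\,dt/|I|$ and then marginalizing. For the $L^2$-type contribution I would invoke Minkowski's inequality on the product measure space $\bigl(I\times X_1\times X_2\times X_3,\,dt\otimes d\hat m/|I|\bigr)$:
\begin{align*}
\Bigl(\frac{1}{|I|}\int_I\!\!\int \hat d^{13}_t(x,z)^2\,d\hat m^{13}\,dt\Bigr)^{1/2}
&\le \Bigl(\frac{1}{|I|}\int_I\!\!\int\bigl(\hat d^{12}_t(x,y)+\hat d^{23}_t(y,z)\bigr)^2 d\hat m\,dt\Bigr)^{1/2}\\
&\le \Bigl(\frac{1}{|I|}\int_I\!\!\int \hat d^{12}_t(x,y)^2 d\hat m^{12} dt\Bigr)^{1/2}+\Bigl(\frac{1}{|I|}\int_I\!\!\int \hat d^{23}_t(y,z)^2 d\hat m^{23} dt\Bigr)^{1/2}.
\end{align*}
Summing the two estimates, taking the infimum over the data for the outer pair, and letting $\varepsilon\searrow 0$ yields ${\mathbb D}_I$ between the first and third spaces bounded by the sum of the other two distances. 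The principal technical obstacle, exactly as in the static ${\mathbb D}$-construction of \cite{St1}, is to guarantee that $(t,x,z)\mapsto\hat d^{13}_t(x,z)$ is jointly measurable; this is handled by realizing the infimum over a fixed countable dense subset of the Polish space $X_2$ and invoking standard measurable-selection arguments.
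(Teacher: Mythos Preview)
Your proof is correct and follows exactly the approach the paper indicates: the paper's own proof consists of the single sentence ``The triangle inequality follows with the standard argument based on the gluing lemma,'' and you have spelled out precisely that argument (gluing of measures via disintegration, gluing of metric couplings via the infimum construction, Minkowski for the $L^2$ part and the pointwise triangle inequality for the $L^1$ part).
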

 
 \begin{proof}
 The triangle inequality follows with the standard argument based on the gluing lemma.
 \end{proof}

Given a bounded interval $I$, a continuous increasing function (`modulus of continuity')  $\Phi:\R_+\to\R_+$ with $\Phi(0)=0$, and constants $K, L\in(0,\infty)$, let ${\mathbb X}_I(K, L.\Phi)$ denote the space of equivalence classes of time-dependent mm-spaces $(X,d_t,f_t,m)_{t\in I}$ 
 \begin{itemize}
 \item with compact separable $X$, probability measure $m$,
  \item with  bounded measurable $(t,x)\mapsto f_t(x)$, lower semicontinuous in $x$,
  \item with geodesic metrics $d_t$ on $X$ each of which generates the topology of $X$, uniformly bounded and uniformly continuous in $t$
  \begin{equation}
  \label{dt-lip}
 d_t(x,y)\le L, \qquad |d_t(x,y)-d_s(x,y)|\le \Phi( |s-t|)
 \end{equation}
 \item and with uniform lower bounds on the Ricci curvature in the sense of Lott-Sturm-Villani, i.e.\  each static space $(X,d_t,m_t)$ satisfies the curvature-dimension condition CD$(-K,\infty)$.
 \end{itemize}

\begin{lemma}\label{DI-Dt}
For time-dependent mm-spaces 
$(X,d_t,f_t,m)_{t\in I}$ and $(\tilde X,\tilde d_t,\tilde f_t,\tilde m)_{t\in I}$ within the class ${\mathbb X}_I(K,L,\Phi)$ and each $s\in I$
$$
 {\mathbb D}\Big( \big(X,d_s,m\big), \big(\tilde X,\tilde d_s,\tilde m\big)\Big)\le \Phi_1\Big(|I|^{1/3}\cdot 
{\mathbb D}_I\Big( \big(X,d_t,f_t,m\big)_{t\in I}, \big(\tilde X,\tilde d_t,\tilde f_t,\tilde m\big)_{t\in I}\Big)^{2/3}\Big)$$
where $\Phi_1(r)=\Phi(r)+r$.
\end{lemma}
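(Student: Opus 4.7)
The plan is to reduce an almost-optimal time-averaged coupling from the definition of ${\mathbb D}_I$ to a single-time coupling at the given $s$. Writing $D := {\mathbb D}_I\bigl((X,d_t,f_t,m)_{t\in I},(\tilde X,\tilde d_t,\tilde f_t,\tilde m)_{t\in I}\bigr)$, I would choose $t^* \in I$ close to $s$ at which the spatial cross-coupling is especially good, and then propagate this coupling back to time $s$ by inflating the cross-distances by $\Phi(|t^*-s|)$, using the uniform modulus-of-continuity estimate $|d_t - d_s| \le \Phi(|t-s|)$ from \eqref{dt-lip} (and its analogue for $\tilde d$).

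More concretely, for any $\epsilon > 0$ I fix $\hat m \in \Cpl(m,\tilde m)$ and measurable couplings $\hat d_t \in \Cpl(d_t,\tilde d_t)$ with $\tfrac{1}{|I|}\int_I \int \hat d_t^2\,d\hat m\,dt \le (D+\epsilon)^2$. For a parameter $\delta \in (0,1]$ to be chosen, Markov's inequality shows the ``bad set'' $B := \bigl\{t : \int \hat d_t^2\,d\hat m > (D+\epsilon)^2/\delta\bigr\}$ has measure at most $\delta|I|$; intersecting its complement with a window around $s$ of length slightly exceeding $\delta|I|$ and contained in $I$ produces a time $t^* \in I$ with $|t^*-s| \le \delta|I|$ and $\int \hat d_{t^*}^2\,d\hat m \le (D+\epsilon)^2/\delta$. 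Next, I would define $\hat d_s^*$ on the disjoint union $X \sqcup \tilde X$ as the largest pseudo-metric satisfying $\hat d_s^*|_{X\times X} \le d_s$, $\hat d_s^*|_{\tilde X\times \tilde X} \le \tilde d_s$, and $\hat d_s^*(x,y) \le \hat d_{t^*}(x,y) + \Phi(|t^*-s|)$ on cross-pairs (equivalently, the path-length pseudo-metric generated by these three elementary bounds). The critical step is to verify that the inequalities are in fact equalities on the same-side pairs, so that $\hat d_s^* \in \Cpl(d_s,\tilde d_s)$: for $x,x' \in X$ any detour $x \to y \to x'$ through some $y \in \tilde X$ costs at least $\hat d_{t^*}(x,y) + \hat d_{t^*}(x',y) + 2\Phi(|t^*-s|) \ge d_{t^*}(x,x') + 2\Phi(|t^*-s|) \ge d_s(x,x') + \Phi(|t^*-s|)$, using the triangle inequality for $\hat d_{t^*}$ and then \eqref{dt-lip}; longer alternating paths reduce to this two-crossing case by grouping consecutive same-side segments via triangle inequality, and the argument on $\tilde X$ is symmetric.

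Once $\hat d_s^* \in \Cpl(d_s,\tilde d_s)$ is established, Minkowski's inequality in $L^2(\hat m)$ yields
\[
{\mathbb D}\bigl((X,d_s,m),(\tilde X,\tilde d_s,\tilde m)\bigr) \le \Bigl(\int \hat d_{t^*}^2\,d\hat m\Bigr)^{1/2} + \Phi(|t^*-s|) \le \frac{D+\epsilon}{\sqrt{\delta}} + \Phi(\delta|I|).
\]
Balancing the two summands via the choice $\delta := \bigl((D+\epsilon)/|I|\bigr)^{2/3}$ (valid as long as this is $\le 1$; the complementary regime is handled by the trivial bound ${\mathbb D} \le L$ together with the monotonicity of $\Phi_1$) makes both equal $(D+\epsilon)^{2/3}|I|^{1/3}$, so the right-hand side equals $\Phi_1\bigl(|I|^{1/3}(D+\epsilon)^{2/3}\bigr)$; letting $\epsilon \searrow 0$ and using continuity of $\Phi_1$ concludes. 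In my view the main obstacle is the pseudo-metric construction of the middle step: one must verify that the inflated cross-distance $\hat d_{t^*}(x,y) + \Phi(|t^*-s|)$ is consistent with both $d_s$ and $\tilde d_s$, so that the induced pseudo-metric does not cut short the intrinsic distances on either side and hence is a genuine coupling in the sense required by the definition of ${\mathbb D}$.
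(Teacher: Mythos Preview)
Your proof is correct and arrives at the same bound via the same final optimization, but the route differs from the paper's in an interesting way.

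The paper does not single out one good time $t^*$ via Markov's inequality. Instead it introduces, for a subinterval $J\ni s$, the $L^2$-time-averaged metrics $d_J(x,y)=\bigl(\tfrac1{|J|}\int_J d_t^2(x,y)\,dt\bigr)^{1/2}$ (and $\tilde d_J$, $\hat d_J$). Since each $\hat d_t$ is a pseudo-metric, Minkowski's inequality makes $\hat d_J$ one as well, and it restricts to $d_J$ and $\tilde d_J$ on the two sides automatically; hence $\hat d_J\in\Cpl(d_J,\tilde d_J)$ with $\int\hat d_J^2\,d\hat m\le \tfrac{|I|}{|J|}\epsilon^2$. Then the paper uses the triangle inequality for ${\mathbb D}$ through the intermediate spaces $(X,d_J,m)$ and $(\tilde X,\tilde d_J,\tilde m)$, coupling $d_s$ with $d_J$ by placing corresponding points at distance $\Phi(|J|)/2$ (valid because $|d_s-d_J|\le\Phi(|J|)$). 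This yields ${\mathbb D}\le \Phi(|J|)+\sqrt{|I|/|J|}\,\epsilon$, and choosing $|J|=|I|^{1/3}\epsilon^{2/3}$ gives the claim.

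What each approach buys: the paper's averaging sidesteps precisely the step you flag as the main obstacle---there is no need to verify by hand that an inflated cross-distance defines a genuine coupling, because the $L^2$-average of pseudo-metrics is a pseudo-metric for free. Your pigeonhole argument, on the other hand, avoids the auxiliary object $d_J$ altogether and works directly at a single time; the price is the chain-shortening verification for $\hat d_s^*$, which is correct but needs the observation that each ``dip'' into $\tilde X$ (two crossings bracketing a $\tilde d_s$-segment) costs at least the $d_s$-distance between its $X$-endpoints, after which the triangle inequality for $d_s$ on the remaining $X$-points finishes. (An equivalent and slightly cleaner way to phrase your construction is to invoke the triangle inequality for ${\mathbb D}$ through $(X,d_{t^*},m)$ and $(\tilde X,\tilde d_{t^*},\tilde m)$, exactly parallel to the paper's use of $(X,d_J,m)$.) Both proofs share the same harmless looseness in the regime where the optimal $\delta$ (resp.\ $|J|/|I|$) would exceed $1$.
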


\begin{remark*}
 Calculations with transport distances require quite some care: for instance, equivalence of distances (which is a `multiplicative estimate') does not allow for any estimate on the correponding transport distances (which is an `additive estimate').
 \end{remark*}
 
\begin{proof}
Choose $\epsilon>
{\mathbb D}_I\Big( \big(X,d_t,f_t,m\big)_{t\in I}, \big(\tilde X,\tilde d_t,\tilde f_t,\tilde m\big)_{t\in I}\Big)$. Then there exist couplings $\hat d_t$ of $d_t$ and $\tilde d_t$ (for a.e.\ $t$) as well as a coupling $\hat m$ of $m$ and $\tilde m$ such that
$$
\int_I\int_{X\times \tilde X} \hat d_t(x,y)^2\, d\hat m(x,y)\,dt
\le\epsilon^2 
\cdot {|I|}.
$$
For $s\in J\subset I$ define $d_J(x,y)=\left(\frac1{|J|}\int_J d_t^2(x,y)dt\right)^{1/2}$ and similarly $\tilde d_J(x,y)$.
Then $$\hat d_J(x,y)=\left(\frac1{|J|}\int_J \hat d_t^2(x,y)dt\right)^{1/2}$$
is a coupling of $d_J$ and $\tilde d_J$. Thus
\begin{equation}
 {\mathbb D}^2\Big( \big(X,d_J,m\big), \big(\tilde X,\tilde d_J,\tilde m\big)\Big)\le 
\frac1{|J|}\int_J\int_{X\times \tilde X} \hat d_t(x,y)^2\, d\hat m(x,y)\,dt\le \frac{|I|}{|J|}\cdot 
\epsilon^2.
\end{equation}
The bound \eqref{dt-lip}  implies
$|d_t(x,y)-d_s(x,y)|\le \Phi( |J|)$ for all $t\in J$ and thus
$$|d_J(x,y)-d_s(x,y)|\le \Phi( |J|).$$
Hence, the distances $d_J$ and $d_s$ -- being regarded as distances on two disjoint copies of $X$ -- can be coupled by defining the distance of corresponding points by $\Phi(|J|)/2$. In other words,
$$
 {\mathbb D}\Big( \big(X,d_s,m\big), \big(X,d_J, m\big)\Big)\le \Phi( |J|)/2.$$
 And of course with the same argument
 $
 {\mathbb D}\Big( \big(\tilde X,\tilde d_s,\tilde m\big), \big(\tilde X,\tilde d_J, \tilde m\big)\Big)\le \Phi( |J|)/2$.
 Thus
 \begin{eqnarray*}
  {\mathbb D}\Big( \big(X,d_s,m\big), \big(\tilde X,\tilde d_s,\tilde m\big)\Big)&\le& 
   {\mathbb D}\Big( \big(X,d_s,m\big), \big(X,d_J, m\big)\Big)
   + {\mathbb D}\Big( \big(X,d_J,m\big), \big(\tilde X,\tilde d_J,\tilde m\big)\Big)\\
 &&+   {\mathbb D}\Big( \big(\tilde X,\tilde d_s,\tilde m\big), \big(\tilde X,\tilde d_J, \tilde m\big)\Big)\\
 &\le& \Phi( |J|)+\sqrt{\frac{|I|}{|J|}}\cdot 
\epsilon.
 \end{eqnarray*}
 Choosing $J={|I|}^{1/3}\cdot \epsilon^{2/3}$
 yields the claim.
\end{proof}

\subsection{The Stability Result for  Super-Ricci Flows}

 \begin{theorem}[Stability]\label{3.3}
 For each $N\in[1,\infty]$, fixed $K,L, \Phi$ and $\lambda\in L^1_{loc}(I)$, the class of $\lambda$-averaged  super-$N$-Ricci flows within ${\mathbb X}_I(K, L,\Phi)$ is closed w.r.t.\ ${\mathbb D}_I$-convergence.
 \end{theorem}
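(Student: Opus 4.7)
The plan is to adapt the ${\mathbb D}$-stability argument for the static Lott--Sturm--Villani curvature-dimension condition to the present time-dependent, averaged framework. The crucial feature of the defining inequality \eqref{N-int-conv-W} is that it involves only \emph{differences} of the time-averaged entropy $S_J$, the \emph{endpoint} Wasserstein distances $W_r(\mu^0,\mu^1)$ and $W_s(\mu^0,\mu^1)$, and the $t$-averaged weighted Wasserstein term $W^2_{\lambda,J}$: no time derivative of the metric and no spatial slope of the entropy appears. All of these ingredients are therefore amenable to passage to the limit once the measures and geodesics have been transported to a common ambient space.

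Let $(X_n,d^n_t,f^n_t,m_n)_{t\in I}$ be a sequence of $\lambda$-averaged super-$N$-Ricci flows in ${\mathbb X}_I(K,L,\Phi)$ converging in ${\mathbb D}_I$ to some $(X,d_t,f_t,m)_{t\in I}$, and choose couplings $\hat m_n\in\mathrm{Cpl}(m_n,m)$ and measurable selections $\hat d^n_t\in\mathrm{Cpl}(d^n_t,d_t)$ realizing the convergence. By Lemma~\ref{DI-Dt}, for each fixed $s\in I$ the static slices $(X_n,d^n_s,m^n_s)$ converge in ${\mathbb D}$ to $(X,d_s,m_s)$, so LSV-stability of $\mathrm{CD}(-K,\infty)$ puts the limit into ${\mathbb X}_I(K,L,\Phi)$. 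Given $\mu^0,\mu^1\in\Dom(S)$ with densities $\rho^i$ w.r.t.\ $m$, I would define $\mu^{i,n}$ as the $X_n$-marginal of $\rho^i(y)\,d\hat m_n(x,y)$. Jensen's inequality gives the upper bound $\Ent(\mu^{i,n}|m_n)\le\Ent(\mu^i|m)$, while the matching $\liminf$ bound follows from static ${\mathbb D}$-stability of entropy; combined with the ${\mathbb D}_I$-convergence of $f^n_t$ this upgrades to $S^n_J(\mu^{i,n})\to S_J(\mu^i)$ for $i=0,1$.

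Next, I would apply the $\lambda$-averaged super-$N$-Ricci flow property of $X_n$ to the pair $(\mu^{0,n},\mu^{1,n})$ and the interval $J$ to obtain a $W^n_J$-geodesic $(\mu^{a,n}_J)_{a\in[0,1]}$ satisfying \eqref{N-int-conv-W} on $X_n$. The uniform $\mathrm{CD}(-K,\infty)$ bound together with $\mathrm{diam}\le L$ yields uniform tightness, uniform two-sided entropy control along the curves, and uniform Lipschitz regularity of $a\mapsto\mu^{a,n}_t$ in $W^n_t$; \eqref{dt-lip} additionally controls the $t$-regularity. Viewing $\mu^{a,n}_t$ as measures on the disjoint union $X_n\sqcup X$ through $\hat d^n_t$, a diagonal extraction produces a subsequence (still indexed by $n$) and a $W_J$-geodesic $(\mu^a_J)_{a\in[0,1]}$ on $X$ from $\mu^0$ to $\mu^1$, with $\mu^{a,n}_t\to\mu^a_t$ jointly in Wasserstein sense for every $a$ and a.e.\ $t\in J$. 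Passing to the limit in \eqref{N-int-conv-W} then rests on: (i) convergence of the endpoint Wasserstein distances at times $r,s$ from slice-wise static ${\mathbb D}$-stability; (ii) dominated convergence of $W^{n,2}_{\lambda,J}$ using $\lambda\in L^1_{loc}(I)$ and the diameter bound; (iii) the liminf inequality $\liminf_n S^n_J(\mu^{a,n}_J)\ge S_J(\mu^a_J)$ obtained by Fatou in $t$; and (iv) continuity together with monotonicity of $\Phi_{N'}$ on $[-N'/2,\infty)$ granted by Lemma~\ref{mono-conv}. Combining these yields
\[ \mathrm{LHS}_X \;\ge\; \limsup_{n\to\infty}\mathrm{LHS}_n \;\ge\; \lim_{n\to\infty}\mathrm{RHS}_n \;=\; \mathrm{RHS}_X, \]
which is exactly \eqref{N-int-conv-W} on the limit space.

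The step I expect to be the main obstacle is (iii). It requires pointwise-in-$t$ convergence $\mu^{a,n}_t\to\mu^a_t$ at a.e.\ $t\in J$ (not merely convergence of the time-averages), so that static ${\mathbb D}$-stability of entropy can be applied slice by slice, together with a uniform-in-$(n,t)$ lower bound on $S^n_t(\mu^{a,n}_t)$ to legitimise Fatou. The former follows from the temporal equicontinuity granted by \eqref{dt-lip} combined with the $\mathrm{CD}(-K,\infty)$ geodesic regularity; the latter is a standard consequence of the lower curvature bound together with $\mathrm{diam}\le L$. Once these two ingredients are in place, the rest is a routine combination of slice-wise static LSV stability and Fubini in $t$.
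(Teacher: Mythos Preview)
Your overall plan coincides with the paper's: transport the endpoints to $X_n$ via the coupling kernel, apply \eqref{N-int-conv-W} there, extract a limit $W_J$-geodesic on $X$, and pass to the limit using convergence of the endpoint data together with a $\liminf$ inequality for the interpolated entropies and the monotonicity of $\Phi_{N'}$ from Lemma~\ref{mono-conv}. You also correctly single out step (iii) as the crux.

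There is, however, a genuine gap in how you propose to handle (iii). You write that the slice-wise $\liminf$ follows from ``static ${\mathbb D}$-stability of entropy''. But the relevant functional is $S^n_t(\cdot)=\Ent(\cdot\,|\,m_n)+\int f^n_t\,d(\cdot)$, and the couplings $\hat m_n$ at our disposal are between the \emph{reference} measures $m_n$ and $m$, not between the time-$t$ weighted measures $m^n_t$ and $m_t$. Lower semicontinuity under ${\mathbb D}$-convergence of $(X_n,d^n_t,m_n)$ to $(X,d_t,m)$ controls only the $\Ent(\cdot\,|\,m_n)$-part; the weight term $\int f^n_t\,d\mu^{a,n}_t$ must be compared separately with $\int f_t\,d\mu^a_t$, and here your ``uniform two-sided entropy control'' is too weak. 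The paper closes this gap by first reducing to endpoints $\mu^0,\mu^1$ with \emph{bounded} densities and then invoking Rajala's theorem \cite{Raj}: under $\mathrm{CD}(-K,\infty)$ the connecting $W^n_t$-geodesic can be chosen so that the densities $\rho^{a,n}_t$ are uniformly bounded by a constant $C''$ depending only on the endpoint bound, $K$, and $L$. This density bound, preserved by the transport kernel back to $X$, yields the decisive estimate
\[
\Big|\int_{X_n} f^n_t\,d\mu^{a,n}_t-\int_X f_t\,d\tilde\mu^{a,n}_t\Big|
\le C''\int_{X\times X_n}\big|f_t(x)-f^n_t(y)\big|\,d\hat m_n(x,y),
\]
which, after time-averaging over $J$, is dominated by $C''\,|I|\,{\mathbb D}_I(\ldots)\to 0$. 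The remaining $\liminf$ for $\int f_t\,d\tilde\mu^{a,n}_t$ then uses only the lower semicontinuity of $f_t$ on $X$. Without the Rajala density bound (or an equivalent device), your step (iii) does not go through.

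Two smaller points the paper treats explicitly and you should as well: (a) the reduction to endpoints with bounded densities, needed already to obtain $S^n_J(\mu^{i,n})\to S_J(\mu^i)$ for $i=0,1$ and to feed Rajala's theorem; and (b) the constraint on $N'$: one assumes \emph{strict} inequality in the bound $N'>2a\big[|S_J(\mu^0)-S_J(\mu^1)|+W^2_{\lambda,J}(\mu^0,\mu^1)/2\big]$ so that, after the convergences $S^n_J(\mu^{i,n})\to S_J(\mu^i)$ and $W^n_{\lambda,J}\to W_{\lambda,J}$, the corresponding constraint on $X_n$ is eventually satisfied and \eqref{N-int-conv-W} may be applied there.
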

 That is, `uniformly bounded' ${\mathbb D}_I$-limits of $\lambda$-averaged  super-$N$-Ricci flows are $\lambda$-averaged  super-$N$-Ricci flows.

\begin{proof}
(i)
We follow the strategy from \cite{St1} for the proof of stability for synthetic Ricci bounds in the static case.
Fix $K, L\in(0,\infty)$ and $N\in[1,\infty]$ as well as a sequence of time-dependent mm-spaces
 $(X^n,d_t^n,f^n_t,m^n)_{t\in I}$ which converges to a mm-space
  $(X,d_t,f_t,m)_{t\in I}$.
For each $n$ choose an optimal (or almost optimal) coupling $q^n(dx,dy)$ of $m(dx)$ and $m^n(dy)$. 
Define Markov kernels
$q^n(x,dy)$ and $q^n(dx,y)$ via disintegration of $q^n(dx,dy)$ by
$$q^n(dx,dy)=q^n(x,dy)m(dx)=q^n(dx,y)m^n(dy).$$
These kernels later will be used 
to transport probability densities from $X^n$ to $X$ and vice versa.

Let probability measures $\mu^0$ and $\mu^1$ on $X$ be given as well as an interval $J\subset I$ and numbers $a\le \frac12$ and $N'\ge N$ satisfying
$N'\ge 2a
   \left[ |S_J(\mu^0)-S_t(\mu^1)|+  W_{\lambda,J}^2(\mu^0,\mu^1)/2\right]$.
Indeed, we may assume  strict inequality 
\begin{equation}\label{N-a-bound}
N'>2a
   \left[ |S_J(\mu^0)-S_t(\mu^1)|+  W_{\lambda,J}^2(\mu^0,\mu^1)/2\right].
\end{equation} (The limit case will follow by approximation.)

By means of the `transport kernel' $q^n (dx,y)$ we now transport the measures $\mu^0$ and $\mu^1$ from $X$ to $X^n$. That is, we define corresponding probability measures 
$\mu^{n,b}=\rho^{n,b}\cdot m^n$ for $b\in\{0,1\}$ where
$\rho^{n,b}(y)=\int_X \rho^{b}(x) \, q^n (dx,y)$.
%For each $t$ under consideration, let $(\mu_t^{n,b})_{b\in[0,1]}$ denote  $W_t^n$-geodesics of probability measures on $X^n$ connecting $\mu^{n,0}$ and $\mu^{n,1}$ and satisfying the super-Ricci flow property as requested in \eqref{N-int-conv-W}.
%
Finally, these probability measures %-- which constitutes a $W_t^n$-geodesic in $\Pz(X^n)$ -- 
will be transported via the kernel $q^n (x,dy)$ onto probability measures 
$\tilde\mu_t^{n,b}=\tilde\rho_t^{n,b}\cdot m$ on $X$ for  $b\in \{0,1\}$ with
$\tilde\rho_t^{n,b}(x)=\int_{X^n} \rho_t^{n,b}(y) \, q^n (x,dy)$.

\medskip

(ii)
According to the previous Lemma, for each $t\in I$ as $n\to\infty$ the mm-spaces $(X^n,d^n_t,m^n)$ will converge to the mm-space $(X,d_t,m)$ w.r.t.\ the metric ${\mathbb D}$.
More precisely, we know that
$$\int \big(\hat d_t^n\big)^2\, d\hat m^n\to0$$
as $n\to\infty$ where $\hat m^n$ is a coupling of $m$ and $m^n$, (almost) optimal w.r.t.\ the defining functional for ${\mathbb D}_I$.

Thus according to \cite{St1}, Lemma 4.19, we 
can conclude that
$$\hat W_t^n\big(\mu^b,\mu^{n,b}\big)\to0$$
for $b=0,1$ and thus
$$W_t^n(\mu^{n.0},\mu^{n,1})\to W_t(\mu^0,\mu^1)$$
as $n\to\infty$. 
This property will be requested for the endpoints of the interval $J$, i.e. for $t=r$ and $t=s$ in case $J=(r,s]$.

\medskip

(iii)
Without restriction, we may assume that the given measures $\mu^0$ and $\mu^1$ have bounded densities w.r.t.\ $m$, say
$\rho^0\le C'$ and $\rho^1\le C'$.
This bound is preserved by the transport kernel. 
Thus 
$\rho^{n,b}\le C'$ for $b\in\{0,1\}$ and  all $n$. 
Therefore, for $b\in\{0,1\}$
\begin{eqnarray*}
\left| \int_J \int_X f_td\mu^bdt-  \int_J \int_{X^n} f^n_td\mu^{n,b}dt\right|
&=&\left|
 \int_J \int_X \int_{X^n} \left[ f_t(x)-  f^n_t(y)\right] \rho^b(x)  q^n(dx,dy)\,dt\right|\\
 &\le & C'
 \int_J \int_X \int_{X^n} \left| f_t(x)-  f^n_t(y)\right| q^n(dx,dy)\,dt\\
 &\le & C'\cdot |I| \cdot {\mathbb D}_I\Big( 
 (X^n,d^n_t,f^n_t,m^n), (X,d_t,f_t,m)\Big)
\end{eqnarray*}
and similarly
\begin{eqnarray*}
\left| \int_J \int_X f_td\tilde\mu^{n,b}dt-  \int_J \int_{X^n} f^n_td\mu^{n,b}dt\right|
&=&\left|
 \int_J \int_X \int_{X^n} \left[ f_t(x)-  f^n_t(y)\right]  \rho_t^{n,b}(y) q^n(dx,dy)\,dt\right|\\
 &\le & C'\cdot |I|\cdot {\mathbb D}_I\Big( 
 (X^n,d^n_t,f^n_t,m^n), (X,d_t,f_t,m)\Big).
\end{eqnarray*}
Let us consider the entropy w.r.t.\ the reference measures
$S_\diamond(\mu)=\Ent(\mu| m)$ and $S^n_\diamond(\mu)=\Ent(\mu| m^n)$.
Since entropy decreases under the transport kernels (\cite{St1}, Lemma 4.19), it follows that 
$$ S_\diamond(\mu^0)\ge S^n_\diamond(\mu^{n,0})\ge S_\diamond(\tilde\mu^{n,0})$$
and
$$ S_\diamond(\mu^1)\ge S^n_\diamond(\mu^{n,1})\ge S_\diamond(\tilde\mu^{n,1}).$$
Thus, in particular, $S_\diamond(\tilde\mu^{n,b})\to S_\diamond(\mu^b)$ as $n\to\infty$
by lower semicontinuity of $S_\diamond$ and the fact that $\tilde\mu^{n,b}\to \mu^b$ for $b\in\{0,1\}$ (again by adaption of the argument in \cite{St1}, Lemma 4.19: it yields convergence in transportation distance induced by the metric $(\int_J d_t^2\,dt)^{1/2}$ on $X$ and thus  weak convergence).
Therefore in turn also
$$S_\diamond^n(\mu^{n,b})\to S_\diamond(\mu^b).$$

%Furthermore, we conclude that 
\medskip

(iv)
Recall that
$$S_t(\mu)=S_\diamond(\mu)+\int f_td\mu \quad \mbox{and}\quad
S^n_t(\mu)=S^n_\diamond(\mu)+\int f^n_td\mu $$
for all $\mu$. Hence,
\begin{equation} S_J^n(\mu^{n,b}) \to  S_J(\mu^b) \
\mbox{ as well as } \
S_J(\tilde\mu^{n,b}) \to  S_J(\mu^b)
\end{equation}
as $n\to\infty$ for $b\in\{0,1\}$.
Moreover, by Lebesgue's dominated convergence theorem (since $\lambda$ is integrable and all the $W_t(.,.)$'s are uniformly bounded and converging for a.e.\ $t$)
\begin{equation} W^n_{\lambda,J}\big(\mu^{n,0},\mu^{n,1}\big)\to W_{\lambda,J}\big(\mu^{0}, \mu^1\big)
 \
\mbox{ and } \
W_{\lambda,J}\big(\tilde\mu^{n,0},\tilde\mu^{n,1}\big)\to W_{\lambda,J}\big(\mu^{0}, \mu^1\big)
\end{equation}
as $n\to\infty$ .

Thus for sufficiently large $n$ the constraints 
\begin{equation}\label{cons-n}
N'> 2a
   \left[ |S^n_J(\mu^{n,0})-S^n_J(\mu^{n,1})|+ W^n_{\lambda,J}(\mu^{n,0},\mu^{n,1})^2/2\right]
   \end{equation}
   as well as
   \begin{equation}\label{cons-til}
   N'> 2a
   \left[ |S_J(\tilde\mu^{n,0})-S_J(\tilde\mu^{n,1})|+  W_{\lambda,J}(\tilde\mu^{n,0},\tilde\mu^{n,1})^2/2\right]
   \end{equation}
   will be satisfied. Inequality \eqref{cons-n} allows to apply the defining property of averaged super-$N$-Ricci flows 
   which guarantees that
for a.e.\ $t$ under consideration there exists a
 $W_t^n$-geodesics $(\mu_t^{n,b})_{b\in[0,1]}$ of probability measures on $X^n$ connecting $\mu^{n,0}$ and $\mu^{n,1}$ such that
   \begin{eqnarray}\nonumber
\frac1b\Phi_{N'}\left(S^n_J(\mu^{n,0}))-S^n_J(\mu_J^{n,b})\right)
&+&\frac1b\Phi_{N'}\left(S^n_J(\mu^{n,1})-S^n_J(\mu_J^{n,1-b})\right)\\
&\ge&- \frac 1{2(s-r)}\left[W^n_s(\mu^{n,0},\mu^{n,1})^2-W^n_r(\mu^{n,0},\mu^{n,1})^2\right]
\\
&&-  a \cdot W^n_{\lambda,J}(\mu^{n,0},\mu^{n,1})^2
+\frac1{N'} \left| S^n_J(\mu^{n,0})-S^n_J(\mu^{n,1})\right|^2\nonumber
\end{eqnarray}
   for all $b\le a$.
   Our goal for the remaining part of the proof
   is to demonstrate that this inequality is preserved if we switch from $(\mu_t^{n,b})$
  to  $(\tilde \mu_t^{n,b})$ and also if we then pass to the limit $n\to\infty$.

\medskip

(v) For each $n$ and $t$, we know that the static space $(X^n,d_t^n, e^{-f_t^n}m^n)$ satisfies a synthetic lower Ricci bound
CD$(-K,\infty)$. Thus following the argumentation in \cite{Raj}, Thm.\ 1.3, the interpolating geodesics $(\mu_t^{n,b})_{b\in[0,1]}$ can be chosen with uniformly bounded densities. That is, such that
$$\rho^{n,b}_t\le C''$$
with $C''= C'\cdot \exp(K\, L^2/12)$ where $L$ is the uniform upper bound for the diameter of all the spaces under consideration. 

This bound on the densities is preserved by the transport map if we again transport these probability measures from $X^n$ to $X$. We define
 $\tilde\mu_t^{n,b}=\tilde\rho_t^{n,b}\cdot m$ for all $b\in [0,1]$ (previously we defined this only for the endpoints) with
$\tilde\rho_t^{n,b}(x)=\int_{X^n} \rho_t^{n,b}(y) \, q^n (x,dy)$. Then for all $b\in[0,1]$
\begin{eqnarray*}
\left| \int_J \int_X f_td\tilde\mu^{n,b}dt-  \int_J \int_{X^n} f^n_td\mu^{n,b}dt\right|
&=&\left|
 \int_J \int_X \int_{X^n} \left[ f_t(x)-  f^n_t(y)\right]  \rho_t^{n,b}(y) q^n(dx,dy)\,dt\right|\\
 &\le & C''
 \int_J \int_X \int_{X^n} \left| f_t(x)-  f^n_t(y)\right| q^n(dx,dy)\,dt\\
 &\le & C''\cdot |I|\cdot {\mathbb D}_I\Big( 
 (X^n,d^n_t,f^n_t,m^n), (X,d_t,f_t,m)\Big).
\end{eqnarray*}
Using again the argument that transport maps do not increase the entropy we get
$$S^n_\diamond(\mu^{n,b}_t)\ge S_\diamond(\tilde\mu^{n,b}_t)$$
for all $b\in[0,1]$ and thus (together with the previous convergence result for the weights)
\begin{equation}\label{+1}
\liminf_{n\to\infty}S^n_t(\mu^{n,b}_t)\ge\liminf_{n\to\infty} S_t(\tilde\mu^{n,b}_t)
\end{equation}
for all $t\in J$ as well as
$$
\liminf_{n\to\infty}S^n_J(\mu^{n,b}_J)\ge\liminf_{n\to\infty} S_J(\tilde\mu^{n,b}_J).$$

\medskip

(vi)
Due to the compactness of $X$, there exist measures $\tilde\mu_t^b$ such that (after passing to a suitable subsequence)
$\tilde\mu_t^{n,b} \to\tilde\mu_t^b$
as $n\to\infty$ for all $b\in [0,1]$. 
Following the argumentation in \cite{St1}, Thm.\ 4.20, one verifies that $(\tilde\mu^b_t)_{b\in[0,1]}$ is a $W_t$-geodesic of probability measures on $X$ with $\tilde\mu^0_t=\mu^0$ and $\tilde\mu^1_t=\mu^1$.
Let us briefly illustrate the basic idea in the case $b=1/2$. The family $(\mu^{n,1/2}_t)_{t\in J}$ is a midpoint of the 
families
$(\mu^{n,0}_t)_{t\in J}$  and $(\mu^{n,1}_t)_{t\in J}$ w.r.t.\ the distance $W^n_J$, i.e.\ 
$$\int_J W_t^n\big( \mu^{n,0}_t, \mu^{n,1/2}_t\big)^2dt+
\int_J W_t^n\big( \mu^{n,1/2}_t, \mu^{n,1}_t\big)^2dt=\frac12
\int_J W_t^n\big( \mu^{n,0}_t, \mu^{n,1}_t\big)^2dt.$$
For sufficiently large $n$, the families  $(\mu^{n,1/2}_t)_{t\in J}$ and
 $(\tilde\mu^{n,1/2}_t)_{t\in J}$ are arbitrarily close in $\hat W_J^n$, the $L^2$-Wasserstein distance on an ambient space $\hat X^n$ in which $X$ and $X^n$ are isometrically embedded in an optimal way.
 Similarly, the family $(\tilde\mu^{n,0}_t)_{t\in J}$ and the constant family $(\mu^0)_{t\in J}$ are arbitrarily close w.r.t.\ $\hat W_J^n$
 and so are the families  $(\tilde\mu^{n,1}_t)_{t\in J}$ and  $(\mu^1)_{t\in J}$ .
Hence, given $\epsilon>0$, for sufficiently large $n$ the family
$(\tilde\mu^{n,1/2}_t)_{t\in J}$ is an $\epsilon$-midpoint of  $(\mu^0)_{t\in J}$ and  $(\mu^1)_{t\in J}$ w.r.t.\ $W_J$.
Passing to the limit $n\to\infty$ this proves that
$(\tilde\mu^{1/2}_t)_{t\in J}$ is  a midpoint of  $(\mu^0)_{t\in J}$ and  $(\mu^1)_{t\in J}$ w.r.t.\ $W_J$.
This in turn implies that for a.e.\ $t\in J$ the measure $\tilde\mu^{1/2}_t$ is  a midpoint of  $\mu^0$ and  $\mu^1$ w.r.t.\ $W_t$.

\medskip

(vii) Lower semicontinuity of $S_\diamond$ implies
$$\liminf_{n\to\infty}S_\diamond(\tilde\mu_t^{n,b}) \ge S_\diamond(\tilde\mu_t^b)$$
and lower semicontinuity of $f_t$ (in $x$) implies
$$\liminf_{n\to\infty}\int_X f_t d\tilde\mu^{n,b} \ge\int_X f_t d \tilde\mu^b.$$
Thus
\begin{equation}\label{+2}
\liminf_{n\to\infty}S_t(\tilde\mu_t^{n,b}) \ge S_t(\tilde\mu_t^b)
\end{equation}
for all $t\in J$   and every $b\in [0,1]$ as well as 
   $$\liminf_{n\to\infty} S_J(\tilde\mu_J^{n,b}) \ge S_J(\tilde\mu_J^{b}).$$

Finally, note that 
$u\mapsto \Phi_{N'}(u):=u+\frac1{N'} u^2$ is increasing in $u\in [-\frac {N'}2,\infty)$
and recall from Lemma \ref{mono-conv} that inequality \eqref{cons-til} implies
$$S_J(\mu^0) -
S_J(\tilde\mu_J^{n,b})\ge -\frac {N'}2$$
for $b\le a$  and $n$ sufficiently large.
Hence,
 $$\limsup_{n\to\infty}\Phi_{N'}( S_J(\mu^0)-
 S_J(\tilde\mu_J^{n,b})) \le \Phi_{N'}(S_J(\mu^0)-S_J(\tilde\mu_J^b)
 )$$
for all  $b\le a$. 
Similarly,
 $$\limsup_{n\to\infty}\Phi_{N'}( S_J(\mu^1)-
 S_J(\tilde\mu_J^{n,1-b})) \le \Phi_{N'}(S_J(\mu^1)-S_J(\tilde\mu_J^{1-b})
 ).$$
This proves the claim.
\end{proof}

Given $K,L,\Phi$ and $\lambda$ as before, let 
${\mathbb X}_I(K, L,\Phi,\lambda)$ denote the subspace of ${\mathbb X}_I(K,L,\Phi)$ consisting of equivalence classes of time-dependent mm-spaces $(X,d_t,f_t,m)_{t\in I}$ 
 which in addition satisfy the upper log-Lipschitz bound   \eqref{low-lip} and for which a.e.\ of the static spaces 
$(X,d_t,m_t)$ is infinitesimally Hilbertian.

\begin{corollary}
 For each $N\in[1,\infty]$, the class of  super-$N$-Ricci flows within ${\mathbb X}_I(K, L,\Phi,\lambda)$ is closed w.r.t.\ ${\mathbb D}_I$-convergence.
 \end{corollary}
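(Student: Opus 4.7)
The plan is to reduce to the averaged stability Theorem \ref{3.3} via the equivalences established in Corollary \ref{2.13} and Proposition \ref{aver-ae-point}. Let $(X^n, d_t^n, f_t^n, m^n)_{t\in I}$ be a sequence of super-$N$-Ricci flows in $\mathbb{X}_I(K,L,\Phi,\lambda)$ converging in $\mathbb{D}_I$ to a limit $(X, d_t, f_t, m)_{t\in I} \in \mathbb{X}_I(K,L,\Phi,\lambda)$; the aim is to show the limit is itself a super-$N$-Ricci flow.

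First, I would promote the sequence from the pointwise to the averaged formulation. The lower log-Lipschitz bound built into $\mathbb{X}_I(K,L,\Phi,\lambda)$ combined with Corollary \ref{2.13}(ii) shows that each $(X^n, d_t^n, m_t^n)_{t\in I}$ is a $\lambda$-moderate super-$N$-Ricci flow. The CD$(-K,\infty)$-condition together with infinitesimal Hilbertianness --- both part of the defining structure of $\mathbb{X}_I(K,L,\Phi,\lambda)$ --- yield that a.e.\ static slice is RCD$(-K,\infty)$, so Proposition \ref{aver-ae-point} upgrades each approximating term further to a $\lambda$-averaged super-$N$-Ricci flow.

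Second, Theorem \ref{3.3} applied to this $\mathbb{D}_I$-convergent sequence of $\lambda$-averaged super-$N$-Ricci flows implies that the limit is itself a $\lambda$-averaged super-$N$-Ricci flow. Since the limit lies in $\mathbb{X}_I(K,L,\Phi,\lambda)$, the same structural hypotheses (CD$(-K,\infty)$, infinitesimal Hilbertianness, lower log-Lipschitz bound) hold at the limit, so Proposition \ref{aver-ae-point} now runs backwards and produces a $\lambda$-moderate super-$N$-Ricci flow. Lemma \ref{rcd-upp-reg} guarantees upper regularity of the limit Boltzmann entropy, and a final invocation of Corollary \ref{2.13}(i) turns the $\lambda$-moderate super-$N$-Ricci flow into a pointwise super-$N$-Ricci flow.

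Since every individual step is furnished by a previously established result, the main hurdle is really bookkeeping: the RCD$(-K,\infty)$-structure must be available for each approximating term \emph{and} for the limit in order to invoke Proposition \ref{aver-ae-point} in both directions, and the log-Lipschitz control must be available to invoke Corollary \ref{2.13}(ii). Both are secured uniformly by the definition of $\mathbb{X}_I(K,L,\Phi,\lambda)$, so the whole reduction
\[
\text{pointwise}\ \Rightarrow\ \lambda\text{-moderate}\ \Rightarrow\ \lambda\text{-averaged}\ \stackrel{\ref{3.3}}{\Rightarrow}\ \lambda\text{-averaged}\ \Rightarrow\ \lambda\text{-moderate}\ \Rightarrow\ \text{pointwise}
\]
goes through cleanly.
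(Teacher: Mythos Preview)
Your proof is correct and follows exactly the same route as the paper: pass from pointwise to $\lambda$-moderate via Corollary~\ref{2.13}(ii), then to $\lambda$-averaged via Proposition~\ref{aver-ae-point}, apply the stability Theorem~\ref{3.3}, and reverse the chain at the limit. Your explicit invocation of Lemma~\ref{rcd-upp-reg} to secure upper regularity before applying Corollary~\ref{2.13}(i) is a detail the paper leaves implicit, but otherwise the arguments coincide.
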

 That is, `uniformly bounded' ${\mathbb D}_I$-limits of  super-$N$-Ricci flows are   super-$N$-Ricci flows.

\begin{proof}
Within the class ${\mathbb X}_I(K, L,\Phi,\lambda)$, let  a sequence of time-dependent metric measure spaces
 $(X^n,d_t^n,f^n_t,m^n)_{t\in I}$ be given which ${\mathbb D}_I$-converges to 
  a time-dependent mm-space
 $(X,d_t,f_t,m)_{t\in I}$.
 Assume that all the $(X^n,d_t^n,f^n_t,m^n)_{t\in I}$ are super-$N$-Ricci flows. According to Corollary \ref{2.13} then they are also $\lambda$-moderate super-$N$-Ricci flows
 and in turn by Proposition \ref{aver-ae-point} they are $\lambda$-averaged super-$N$-Ricci flows. Thus by the previous Theorem also the limit space  $(X,d_t,f_t,m)_{t\in I}$ is a $\lambda$-averaged super-$N$-Ricci flow. Again by Proposition \ref{aver-ae-point} together with Corollary \ref{2.13}, it is then also a  $\lambda$-moderate super-$N$-Ricci flow as well as a super-$N$-Ricci flow.
\end{proof}

\subsection{The Compactness Result for Super-Ricci Flows}

 Given an interval $I$, constants $K,L\in(0,\infty)$, and a modulus of continuity $\Phi$, let ${\mathbb X}^\diamond_I(K,L,\Phi)$ denote the subspace of ${\mathbb X}_I(K,L,\Phi)$ consisting of equivalence classes of time-dependent mm-spaces $(X,d_t,f_t,m)_{t\in I}$ 
 which in addition
 satisfy
 \begin{equation}\label{cc-lip} d_t(x,y)/d_s(x,y) \le L,\qquad |f_t(x)|\le L,
  \end{equation} 
 \begin{equation} |f_t(x)-f_t(y)|\le \Phi\big(d_t(x,y)\big),\qquad |f_s(x)-f_t(x)|\le \Phi( |s-t|)
  \end{equation} 
  for all $s,t\in I$ and all $x,y\in X$.
% Note that \eqref{cc-lip} is significantly weaker than the log-Lipschitz continuity as formulated in \eqref{upp-lip} and \eqref{low-lip}.
 
 \begin{theorem}[Compactness]
 For each $N\in[1,\infty]$ and $\lambda\in L^1_{loc}(I)$, the class of $\lambda$-averaged  super-$N$-Ricci flows within ${\mathbb X}^\diamond_I(K,L,\Phi)$ is compact w.r.t.\ ${\mathbb D}_I$-convergence.
\end{theorem}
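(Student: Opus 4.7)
The plan is to split the claim into pre-compactness of the ambient space ${\mathbb X}^\diamond_I(K,L,\Phi)$ together with closedness of the $\lambda$-averaged super-$N$-Ricci flow property inside it; closedness is exactly the stability Theorem \ref{3.3}, so the real task reduces to showing that ${\mathbb X}^\diamond_I(K,L,\Phi)$ is ${\mathbb D}_I$-pre-compact. Given a sequence $(X^n,d^n_t,f^n_t,m^n)_{t\in I}$ in ${\mathbb X}^\diamond_I(K,L,\Phi)$, I would first apply Sturm's static ${\mathbb D}$-compactness theorem from \cite{St1} to the time-slice spaces $(X^n,d^n_T,m^n)$ at the reference time $T$: these are probability mm-spaces of diameter $\le L$ satisfying CD$(-K,\infty)$, hence ${\mathbb D}$-pre-compact. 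After passing to a subsequence this produces a limit $(X,d_T,m)$ together with, for each $n$, a coupling $\hat m^n\in\mathrm{Cpl}(m^n,m)$ and a coupling $\hat d_T^n\in\mathrm{Cpl}(d_T^n,d_T)$ with $\int (\hat d_T^n)^2\,d\hat m^n\to 0$.

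I would then bootstrap this single-time coupling to all $t\in I$. Using the joint modulus of continuity $|d_t^n(x,y)-d_s^n(x,y)|\le\Phi(|t-s|)$ (and the same on $X$), set $\hat d_t^n:=\hat d_T^n$ on $X^n\times X$ and keep $d_t^n,d_t$ on the diagonal blocks, adding $2\Phi(|t-T|)$ where necessary to restore the pseudo-metric inequality. This yields a valid $\hat d_t^n\in\mathrm{Cpl}(d_t^n,d_t)$ with
\begin{equation*}
\frac{1}{|I|}\int_I\!\!\int \hat d_t^n(x,y)^2\,d\hat m^n(x,y)\,dt \le \int \hat d_T^n(x,y)^2\,d\hat m^n(x,y)+C\sup_{t\in I}\Phi(|t-T|)^2,
\end{equation*}
the leading term tending to zero as $n\to\infty$. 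If $\sup_{t\in I}\Phi(|t-T|)$ is not already small enough, subdivide $I$ into finitely many pieces on each of which the modulus is small, run the static compactness argument at the midpoint of each piece, and patch the resulting limits and couplings via a standard diagonal argument.

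For the weights I would exploit $|f_t^n(x)|\le L$, $|f_t^n(x)-f_t^n(y)|\le\Phi(d_t^n(x,y))$ and $|f_s^n(x)-f_t^n(x)|\le\Phi(|s-t|)$. Disintegrate $\hat m^n$ over its $X$-marginal as $\hat m^n(dx,dy)=q^n(dx\mid y)\,m(dy)$ and push the weights to $X$ by $\tilde f_t^n(y):=\int f_t^n(x)\,q^n(dx\mid y)$. The family $\{\tilde f_t^n\}$ is uniformly bounded on the compact space $I\times(X,d_T)$ and, because $\hat d_T^n\to 0$ in $\hat m^n$-average allows one to transfer equicontinuity through the coupling, uniformly equicontinuous. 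Arzelà--Ascoli (after a further subsequence) gives a continuous limit $f$ on $I\times X$ satisfying the same bounds, and a direct estimate using $|f_t^n(x)-f_t(y)|\le|f_t^n(x)-\tilde f_t^n(y)|+|\tilde f_t^n(y)-f_t(y)|$ yields
\begin{equation*}
\frac{1}{|I|}\int_I\!\!\int |f_t^n(x)-f_t(y)|\,d\hat m^n(x,y)\,dt\longrightarrow 0.
\end{equation*}

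Finally, the limit $(X,d_t,f_t,m)_{t\in I}$ still lies in ${\mathbb X}^\diamond_I(K,L,\Phi)$: the uniform bounds and moduli of continuity pass to the limit by lower semicontinuity, and by Lemma \ref{DI-Dt} one has ${\mathbb D}$-convergence of the time slices at every $t$, so the CD$(-K,\infty)$-condition at each $t$ is inherited via the static stability result of \cite{St1}. Combining this pre-compactness with Theorem \ref{3.3} proves the claim. The main obstacle is the simultaneous control of the space and time variables: the static coupling $\hat m^n$ extracted at time $T$ must be made to work \emph{uniformly} for all of $(d_t^n,f_t^n)_{t\in I}$, and the joint equicontinuity baked into the definition of ${\mathbb X}^\diamond_I$ is precisely what makes the necessary diagonal extraction and patching argument succeed.
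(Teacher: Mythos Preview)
Your overall decomposition into pre-compactness plus stability (Theorem~\ref{3.3}) matches the paper, but the pre-compactness argument has a genuine gap: you never construct the limit metrics $d_t$ on $X$ for $t\neq T$. The coupling $\hat d_t^n\in\mathrm{Cpl}(d_t^n,d_t)$ you write down presupposes that $d_t$ already exists on the limit space, but at that stage only $(X,d_T,m)$ has been defined. The ``subdivide and patch'' remedy does not fix this, since running static compactness at several midpoints $T_1,\ldots,T_k$ yields a priori \emph{different} limit spaces $(X_j,d_{T_j},m_j)$, and you give no mechanism for identifying them as one underlying $X$ carrying a family $(d_t)_{t\in I}$. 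The paper resolves this by treating $d^n$ as a single function on $I\times X^n\times X^n$, uniformly equicontinuous with respect to the reference metric $d_T^n$ (using $|d_t^n-d_s^n|\le\Phi(|t-s|)$ and $d_t^n/d_T^n\le L$), pulling it back via $\epsilon_n$-isometries $\iota_n:X\to X^n$ supplied by the static convergence, and applying Arzel\`a--Ascoli to obtain the limit $d:I\times X\times X\to\mathbb R$; the same device handles $f$. Your push-forward $\tilde f_t^n(y)=\int f_t^n(x)\,q^n(dx\!\mid\! y)$ is only $m$-a.e.\ defined, and its equicontinuity in $y$ does not follow from $L^2$-smallness of the coupling alone.

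There is a second, smaller gap: the CD$(-K,\infty)$-condition at time $t$ concerns $(X,d_t,m_t)$ with $m_t=e^{-f_t}m$, whereas Lemma~\ref{DI-Dt} yields ${\mathbb D}$-convergence only of $(X^n,d_t^n,m^n)$, i.e.\ with the reference measure rather than $m_t^n$. The paper explicitly flags that static CD-stability is therefore not directly applicable and instead re-runs the transport-kernel estimates from the proof of Theorem~\ref{3.3} to verify the required entropy inequalities for the limit.
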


 That is, each `uniformly bounded' sequence of  $\lambda$-averaged  super-$N$-Ricci flows 
 has a ${\mathbb D}_I$-converging subsequence and a limit
which again is a $\lambda$-averaged  super-$N$-Ricci flow.

\begin{proof}
Let $T$ denote the midpoint of the interval $I$.
For each time-dependent metric measure space $(X,d_t,f_t,m)_{t\in I}$ under consideration, we  consider the static mm-space $(X,d_T,m)$ as `reference space', equipped with uniformly equicontinuous functions 
 $f:I\times X\to\R$ and $d:I\times X\times X\to\R$.
Indeed, for both functions, continuity in $t$ with modulus $\Phi$ was explicitly requested.  Uniform continuity in $x$ is expressed as follows:
$$|f_t(x)-f_t(y)|\le \Phi(d_t(x,y)\le \Phi(L\cdot d_T(x,y))$$
and
$$|d_t(x,y)-d_t(x'y')|\le d_t(x,x')+d_t(y,y')\le L\cdot [d_T(x,x')+d_T(y,y')].$$

Now let a ${\mathbb D}_I$-Cauchy sequence
$$\Big((X^n,d^n_t,f^n_t,m^n)_{t\in I}\Big)_{n\in\N}$$
of $\lambda$-averaged super-$N$-Ricci flows  in ${\mathbb X}_I(K,L,\Phi)$ be given.
Then according to the previous Lemma, the family $\big(X^n,d^n_T,m^n\big)_{n\in\N}$ of static mm-spaces is  a Cauchy sequence w.r.t.\ ${\mathbb D}$. Moreover, 
each of the static mm-spaces $(X^n,d^n_T,m^n)$ has diameter $\le L$ and satisfies  the curvature-dimension condition CD$(-K,\infty)$. (The passage from  $m_T^n$ to its normalization $m^n$ does not effect this.)
Thus by the compactness result from \cite{St1}, there exists a mm-space $(X,d_T,m)$ such that -- after passing to a suitable subsequence -- 
$$(X^n,d^n_T,m^n)\stackrel{\mathbb D}\longrightarrow (X,d_T,m).$$

\medskip

Now consider on these approximating spaces the sequence of uniformly equicontinuous functions $f^n: I\times X^n\to\R$ (w.r.t.\ to the metrics $d_T^n$). For a suitable subsequence we obtain convergence (along suitable $\epsilon_n$-isometries $\iota_n: X\to X^n$) towards a uniformly continuous functions $f:I\times X\to\R$.
Similarly, given the sequence of uniformly equicontinuous functions $d^n: I\times X^n\times X^n\to\R$. 
 For a suitable subsequence we obtain convergence towards a uniformly continuous function $d:I\times X\times X\to\R$ (`Arzela-Ascoli').
Symmetry and triangle inequality carry over from the approximating functions to the limit.
The fact that $d^n_T(.,.)=d^n(.,.,T)$ is preserved by the limit and yields 
$d_T(.,.)= d(.,.,T)$.

The convergence of $f^n$ to $f$  is uniform. (Indeed, $|f-f_n\circ\iota_n|\le 2\, \Phi(\epsilon)$ on $X$ as soon as $|f-f_n\circ\iota_n|\le  \epsilon$ on an $\epsilon$-net due to the uniform continuity).
Similarly, for the convergence of $d^n$ to $d$.
Thus it leads to convergence in ${\mathbb D}_I$.

It remains to prove that for each $t\in I$ the limit space $(X,d_t,m_t)$ satisfies the CD$(-K,\infty)$-condition.
This is clear for $t=T$ (by the stability result for lower Ricci bounds in the static case), but requires some care for $t\not= T$:
in general, the assertion is not a direct consequence of the stability result for lower Ricci bounds since optimal couplings for the ${\mathbb D}_I$-distance and associated transport kernels are not w.r.t.\ measure $m_t$ but w.r.t.\ $m_T$.

Our argument closely follows ths proof of the previous Theorem \ref{3.3}. Given $\mu^0,\mu^1$ on $X$, we define as before $\mu^{n,0}, \mu^{n,1}$ as their images on $X^n$ under the transport kernels (same as before).
These measures on $X^n$ will now be connected by a geodesic $(\mu_t^{n,b})_{b\in[0,1]}$ such that
$b\mapsto S^n_t(\mu_t^{n,b})$ is $(-K)$-convex.
By means of the same transport kernels as in the previous proof, the geodesic $(\mu_t^{n,b})_{b\in[0,1]}$ will be mapped onto a curve $(\tilde \mu_t^{n,b})_{b\in[0,1]}$ of probability measures on $X$ which, as $n\to\infty$, will converge to a $W_t$-geodesic
$(\tilde \mu_t^{b})_{b\in[0,1]}$.
With the estimates and arguments from the previous proof we conclude that
\begin{equation*}
\liminf_{n\to\infty}S^n_t(\mu^{n,b}_t)\ge S_t(\tilde\mu_t^b)
\end{equation*}
for all $t\in I$ and all $b\in[0,1]$,
see \eqref{+1}, \eqref{+2}.
(Indeed, this  first will be verified for a.e.\ $t$. Due to continuity in $t$ of both $f$ and $d$, it  thus holds for all $t$.)
Moreover,
\begin{equation*}
\lim_{n\to\infty}S^n_t(\mu^{n,b}_t)= S_t(\tilde\mu_t^b)
\end{equation*}
for $b\in\{0,1\}$.
Indeed, the convergence $S^n_\diamond(\mu^{n,b}_t)\to S_\diamond(\tilde\mu_t^b)$
 was already deduced before. The convergence $\int f_t^nd\mu_t^{n,b}\to \int fd\tilde\mu_t^b$ follows from
 \begin{eqnarray*}
 \left| \int_X f_td\tilde\mu^{n,b}dt-   \int_{X^n} f^n_td\mu^{n,b}\right|
&\le&
\left|\frac1{\delta} \int_{t-\delta}^t \int_X f_sd\tilde\mu^{n,b}ds-  \int_J \int_{X^n} f^n_sd\mu^{n,b}ds\right|
+2\Phi(\delta)
\\
 &\le & \frac{C'\, |I|}\delta {\mathbb D}_I\Big( 
 (X^n,d^n_s,f^n_s,m^n)_{s\in I}, (X,d_s,f_s,m)_{s\in I}\Big)+2\Phi(\delta)
 \\
 &= & \Phi_2\Big( |I|\cdot {\mathbb D}_I\Big( 
 (X^n,d^n_s,f^n_s,m^n)_{s\in I}, (X,d_s,f_s,m)_{s\in I}\Big)\Big)
\end{eqnarray*}
if we put $\Phi_2(r)=2\Phi(\sqrt r)+C'\sqrt r$ and choose $\delta=\sqrt{|I|\cdot{\mathbb D}_I(\ldots)}$.
Thus the $(-K)$-convexity of the entropy carries over from 
$(X^n,d^n_t,m_t^n)$
to $(X,d_t,m_t)$. This proves the claim.
\end{proof}

 Finally, let us introduce the  space  ${\mathbb X}_I^\diamond(K,L,\Phi,\lambda)$  of equivalence classes of time-dependent mm-spaces $(X,d_t,f_t,m)_{t\in I}$ in
 ${\mathbb X}^\diamond_I(K,L,\Phi)$
 which in addition satisfy the upper log-Lipschitz bound   \eqref{low-lip} and for which a.e.\ of the static spaces 
$(X,d_t,m_t)$ is infinitesimally Hilbertian.

 \begin{corollary}
 For each $N\in[1,\infty]$, the class of  super-$N$-Ricci flows within ${\mathbb X}^\diamond_I(K,L,\Phi,\lambda)$ is compact w.r.t.\ ${\mathbb D}_I$-convergence.
 \end{corollary}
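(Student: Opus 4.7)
The plan is to reduce the statement to the compactness theorem for averaged super-$N$-Ricci flows already established within ${\mathbb X}^\diamond_I(K,L,\Phi)$, using the various equivalences proved earlier to switch between the pointwise, $\lambda$-moderate and $\lambda$-averaged formulations.

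First, let a sequence $(X^n, d_t^n, f_t^n, m^n)_{t\in I}$ of super-$N$-Ricci flows in ${\mathbb X}^\diamond_I(K,L,\Phi,\lambda)$ be given. By Corollary \ref{2.13}(ii), each member is a $\lambda$-moderate super-$N$-Ricci flow, and then by Proposition \ref{aver-ae-point} each is a $\lambda$-averaged super-$N$-Ricci flow. Since ${\mathbb X}^\diamond_I(K,L,\Phi,\lambda)\subset {\mathbb X}^\diamond_I(K,L,\Phi)$, the compactness theorem for averaged super-$N$-Ricci flows yields a subsequence and a limit $(X,d_t,f_t,m)_{t\in I}\in{\mathbb X}^\diamond_I(K,L,\Phi)$ which is itself a $\lambda$-averaged super-$N$-Ricci flow.

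It then remains to show that the limit actually lies in the smaller class ${\mathbb X}^\diamond_I(K,L,\Phi,\lambda)$, i.e.\ that the lower log-Lipschitz bound \eqref{low-lip} and the infinitesimal Hilbertianity of a.e.\ static slice pass to the limit. The log-Lipschitz bound is a pointwise inequality on the distances, and since the Arzel\`a-Ascoli-type argument in the proof of the previous compactness theorem produced $d_t(x,y)$ as a uniform limit of the $d_t^n$ (at every fixed $t$), the inequality \eqref{low-lip} is preserved by passage to the limit $n\to\infty$. For the infinitesimal Hilbertianity, recall that the sequence of static slices $(X^n, d_t^n, m_t^n)$ converges in ${\mathbb D}$ to $(X,d_t,m_t)$ for a.e.\ $t$ (by Lemma \ref{DI-Dt}) and each of them satisfies RCD$(-\kappa_t,\infty)$ (with a curvature constant controlled by $K$, $L$, $\Phi$ via the log-Lipschitz bound and the assumed CD condition). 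Stability of the RCD condition under measured Gromov-type convergence then implies that the limit slice $(X,d_t,m_t)$ is itself an RCD$(-\kappa_t,\infty)$ space and thus, in particular, infinitesimally Hilbertian.

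Once membership in ${\mathbb X}^\diamond_I(K,L,\Phi,\lambda)$ is established, Proposition \ref{aver-ae-point} applied in the reverse direction turns the $\lambda$-averaged super-$N$-Ricci flow property of the limit into the $\lambda$-moderate one, and Corollary \ref{2.13}(i), using the upper regularity of the entropy guaranteed by the RCD condition on a.e.\ slice (Lemma \ref{rcd-upp-reg}), finally promotes it to a genuine super-$N$-Ricci flow. The main obstacle here is verifying that infinitesimal Hilbertianity together with the quantitative lower Ricci bound really does pass to the ${\mathbb D}_I$-limit at a.e.\ time slice; this is where the static stability theory of RCD spaces is essential and is the only nontrivial ingredient beyond the compactness/stability results already proven.
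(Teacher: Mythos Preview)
Your proof follows essentially the same strategy as the paper's: convert super-$N$-Ricci flows to $\lambda$-averaged ones via Corollary~\ref{2.13} and Proposition~\ref{aver-ae-point}, apply the Compactness Theorem, then convert back. The only notable difference is in how infinitesimal Hilbertianity of the limit is verified. The paper checks this only at the single reference time $T$ (the midpoint of $I$): the spaces $(X^n, d_T^n, m^n)$ already ${\mathbb D}$-converge to $(X, d_T, m)$ and each satisfies RCD$(-K,\infty)$, so stability of RCD gives the conclusion at time $T$; infinitesimal Hilbertianity then propagates to all other $t$ because the metrics $d_t$ are mutually bi-Lipschitz (via the bound $d_t/d_s\le L$ in ${\mathbb X}^\diamond_I$) and the measures are mutually equivalent with bounded densities, so the Cheeger energies at different times are comparable as forms. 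Your route---checking RCD at a.e.\ $t$ via slice-wise ${\mathbb D}$-convergence---is also viable but requires a small extra step: Lemma~\ref{DI-Dt} only yields convergence of $(X^n, d_t^n, m^n)$ with the \emph{reference} measures, so to get convergence of $(X^n, d_t^n, m_t^n)$ you must pass from $m^n$ to $m_t^n=e^{-f_t^n}m^n$ using the uniform convergence of the weights established in the compactness proof. The paper's single-time argument buys a cleaner justification with no such bookkeeping.
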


 That is, each `uniformly bounded' sequence of   super-$N$-Ricci flows 
 has a ${\mathbb D}_I$-converging subsequence and a limit
which again is a   super-$N$-Ricci flow.

\begin{proof}
According to Corollary \ref{2.13} and Proposition \ref{aver-ae-point}, every 
super-$N$-Ricci flow within the given class  ${\mathbb X}^\diamond_I(K,L,\Phi,\lambda)$ is also a 
$\lambda$-averaged super-$N$-Ricci flow. Thus by the previous Compactness Theorem,
for each sequence 
$$\big((X^n,d^n_t,m_t^n)_{t\in I}\big)_{n\in\N}$$
of super-$N$-Ricci flow within this class
we get the existence of a converging subsequence towards a limit $(X,d_t,m_t)_{t\in I}$ which is a $\lambda$-averaged super-$N$-Ricci flow. Again by Proposition \ref{aver-ae-point} and  Corollary \ref{2.13}, this limit is indeed also a super-$N$-Ricci flow.

The upper log-Lipschitz bound   \eqref{low-lip} 
obviously carries over to the limit space. Moreover, we know that for each $n\in \N$ the reference space $(X^n,d^n_T,m^n)$ satisfies a RCD$(K,\infty)$-condition and indeed these spaces ${\mathbb D}$-convergence to $(X,d_T,m)$.
The RCD$(K,\infty)$-condition is preserved by the convergence \cite{AGS-Mms}.
Thus, in particular, $X$ is infinitesimally Hilbertian.
\end{proof}

\section{Appendix: Synthetic Upper Bounds for Ricci Curvature of MM-Spaces}

In this chapter, we  present a %straightforward 
synthetic definition of generalized upper bounds for the Ricci curvature of mm-spaces. Before doing so, let us recall that more than a decade ago, the author \cite{St1,St2} -- and independently Lott \& Villani \cite{LV1} --  introduced a
 synthetic definition of generalized lower bounds for the Ricci curvature of mm-spaces. These definitions of lower Ricci bounds initiated a wave of investigations on analysis and geometry of mm-spaces and led to many new insights and results. Until now, however, no similar concept for upper bounds on the Ricci curvature was available. Surprisingly enough, such a definition can be given easily and very much in the same spirit as for the lower bounds.

For a completely different approach to \emph{both sided} bounds on the Ricci curvature -- also applicable to mm-spaces -- we refer to recent work of Naber \cite{Na}.

\begin{definition} We say that a
 mm-space $\big(X,d,m\big)$ has  \emph{Ricci curvature $\le K$} if   for any $K'>K$ there exists an open covering $X=\bigcup_i X_i$ of $X$ such that for each $i$ and any pair of nonempty open sets $U^0,U^1\subset X_i$    there exists
  a geodesic $(\mu^\tau)_{\tau\in[0,1]}$ with
   $\mathrm{supp}(\mu^\sigma)\subset U^\sigma$ for $\sigma=0,1$ such that
  $\tau\mapsto S(\mu^\tau)$ is   finite and $K'$-concave on $(0,1)$. That is, for all 
 $\rho,\tau,\sigma\in(0,1)$ with $\rho<\tau<\sigma$
\begin{equation}\label{concave}
S(\mu^\tau)\ge \frac{\sigma-\tau}{\sigma-\rho}S(\mu^\rho)+\frac{\tau-\rho}{\sigma-\rho}S(\mu^\sigma)-\frac{K'}2\frac{(\tau-\rho)(\sigma-\tau)}{(\sigma-\rho)^2}\cdot
W^2(\mu^\rho,\mu^\sigma).
\end{equation}
\end{definition}

Recall that $K'$-concavity of $\tau\mapsto S(\mu^\tau)$ is equivalent to absolute continuity and validity of the distributional inequality $\partial_\tau^2 S(\mu^\tau)\le K' \cdot W^2(\mu^0,\mu^1)$.
Moreover, it is equivalent to absolute continuity and the fact that 
$$\partial^-_\tau S(\mu^\tau)\big|_{\tau=\sigma-}-
\partial^+_\tau S(\mu^\tau)\big|_{\tau=\rho+}\le \frac{K'}{\sigma-\rho} \cdot W^2(\mu^\rho,\mu^\sigma)$$
for all $0<\rho<\sigma<1$.

The previous definition also allows for a straightforward extension (in the spirit of the variable lower bounds from \cite{St3}) to variable upper bounds $k$ for the Ricci curvature on $X$. 

\begin{remark} Given a continuous function $k:X\to \R$, we say 
 that the
 mm-space $\big(X,d,m\big)$ has  \emph{Ricci curvature $\le k$} if   for any continuous function $k'>k$ there exists an open covering $X=\bigcup_i X_i$ of $X$ such that for each $i$ and any pair of nonempty open sets $U^0,U^1\subset X_i$    there exists
  a geodesic $(\mu^\tau)_{\tau\in[0,1]}$ with
   $\mathrm{supp}(\mu^\sigma)\subset U^\sigma$ for $\sigma=0,1$ and a probability measure $\Theta\in \Gamma(X)$ such that $\mu^\tau = (e^\tau)_\sharp\Theta$  such that
 for all 
 $\rho,\tau,\sigma\in(0,1)$ with $\rho<\tau<\sigma$
\begin{equation}\label{concave-variable}
\infty>S(\mu^\tau)\ge \frac{\sigma-\tau}{\sigma-\rho}S(\mu^\rho)+\frac{\tau-\rho}{\sigma-\rho}S(\mu^\sigma)-{(\sigma-\rho)}\int_\rho^\sigma \chi^{\frac{\tau-\rho}{\sigma-\rho},\frac{a-\rho}{\sigma-\rho}
}k'(\gamma^a)\,| \dot\gamma|^2 da\,d \Theta(\gamma).
\end{equation}
 Here as usual   $\Gamma(X)$ denotes the space of all geodesics $\gamma:[0,1]\to X$
and   $e^\tau$ is the evaluation map $\gamma\mapsto\gamma^\tau$.
\end{remark}

\begin{theorem}\label{thm-riem-upper-ric}\label{upper-Ric}
Given a weighted Riemannian manifold  $(M, g,\tilde f)$ with a smooth
metric tensor $g$ and a smooth (`weight') function $\tilde f$. Let $d$ be the induced Riemannian distance, let $\mathrm{vol}$ be the induced Riemannian volume and put $m=e^{-\tilde f}\mathrm{vol}$. Then the   mm-space $(X,d,m)$ has Ricci curvature $\le K$ if and only if 
\begin{equation}\label{riem-sub-rf}
\Ric_{g}+\Hess_{g}\tilde f\le K\cdot g
\end{equation}
in the sense of inequalities between quadratic forms on the tangent bundle of $(M,g)$.
\end{theorem}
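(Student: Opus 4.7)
The strategy is to translate the synthetic condition into the classical second-variation identity for Boltzmann entropy along Wasserstein geodesics, exploiting the fact that the definition of synthetic upper Ricci bound only requires \emph{existence} of one $K'$-concave interpolation between open sets inside each piece of a chosen cover.

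For the ``if'' direction, suppose $\Ric_g + \Hess_g \tilde f \le K\, g$, fix $K' > K$ and set $2\varepsilon := K' - K$. I would cover $M$ by geodesically convex balls $X_i$ of radius $r_i$ so small that, in normal coordinates centered at each $X_i$'s center $z_i$, any linear function $\phi(x) = \langle v, \log_{z_i}(x)\rangle$ satisfies the curvature-driven estimate $\|\Hess_g \phi\|_{HS}^2 \le \varepsilon\, |\nabla\phi|_g^2$ throughout $X_i$. Given open $U^0, U^1 \subset X_i$, select $x^0 \in U^0$, $x^1 \in U^1$, take such a $\phi$ producing the corresponding displacement, and let $\mu^0$ be the normalized Riemannian volume on a ball inside $U^0$ small enough that $\mu^\tau := (\exp(\tau \nabla \phi))_\sharp \mu^0$ is a smooth Wasserstein geodesic with $\mu^1$ supported in $U^1$. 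The classical Otto--von Renesse--Sturm--Cordero-Erausquin--McCann--Schmuckenschl\"ager second-variation identity then reads
\begin{equation}\label{eq:ovr-plan}
\partial_\tau^2 S(\mu^\tau) = \int_M (\Ric_g + \Hess_g \tilde f)(\nabla\phi^\tau, \nabla\phi^\tau)\, d\mu^\tau + \int_M \|\Hess_g \phi^\tau\|_{HS}^2\, d\mu^\tau,
\end{equation}
where $\phi^\tau$ is the evolved Kantorovich potential. Combined with the curvature hypothesis, the Hessian estimate transported along the flow, and $\int |\nabla\phi^\tau|^2\, d\mu^\tau = W^2(\mu^0, \mu^1)$, this yields $\partial_\tau^2 S(\mu^\tau) \le (K+\varepsilon)\, W^2 < K'\, W^2$, which via Lemma \ref{K-conv-lem} applied to $-S(\mu^\cdot)$ is equivalent to the chord inequality \eqref{concave}.

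For the ``only if'' direction I argue by contraposition. If the pointwise inequality fails at some $(x_0, v_0) \in TM$, there exist $\delta > 0$, a neighborhood $U \ni x_0$, and a conic open set $\mathcal{V} \subset TM|_U$ of directions near parallel translates of $v_0$ on which $(\Ric_g + \Hess_g \tilde f)(w, w) \ge (K + 2\delta)\, |w|^2$. Taking $K' \in (K, K + 2\delta)$, for any candidate cover satisfying the defining property at level $K'$, the piece $X_{i_0}$ containing $x_0$ admits nonempty open $U^0, U^1$ aligned along $\exp_{x_0}(v_0)$ and small enough that every Wasserstein geodesic from a measure in $U^0$ to a measure in $U^1$ has displacement field $\nabla\phi^\tau$ concentrated in $\mathcal{V}$. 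For each such geodesic, dropping the nonnegative Hessian term in \eqref{eq:ovr-plan} gives $\partial_\tau^2 S(\mu^\tau) \ge (K + 2\delta)\, W^2 > K'\, W^2$, contradicting $K'$-concavity.

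The main obstacle is the quantitative control of the Hessian remainder in \eqref{eq:ovr-plan} in the ``if'' direction: one must show that the linear-in-normal-coordinates potential $\phi$ extends to a genuine Kantorovich potential whose induced displacement both transports the chosen $\mu^0$ into the prescribed target neighborhood $U^1$ and keeps $\|\Hess_g \phi^\tau\|_{HS}^2 \le \varepsilon\, |\nabla\phi^\tau|_g^2$ along the entire interpolation. This is precisely what forces one to use a shrinking covering rather than a single global condition, and is the reason the synthetic upper Ricci bound is formulated with covering freedom.
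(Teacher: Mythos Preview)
Your proposal follows essentially the same route as the paper: both directions rest on the second-variation identity \eqref{eq:ovr-plan} (the paper writes the remainder as $\mathrm{tr}\big((\mathcal U^\tau)^2\big)$ with $\mathcal U^\tau=\nabla^\tau(dF^\tau)\circ(dF^\tau)^{-1}$ rather than $\|\Hess\phi^\tau\|_{HS}^2$, which is the same object), and the ``only if'' argument by contraposition is identical. The one minor difference in the ``if'' direction is the centering of the potential: the paper picks $\varphi$ with $\Hess\varphi|_{\gamma^0}=0$ \emph{at the initial point of the chosen transport geodesic} (so $\mathcal U^0=0$ exactly) and then invokes Rauch/Bishop--G\"unter comparison along the short geodesic to keep $\mathcal U^\tau$ small, whereas you center at the ball's center $z_i$ and rely on smallness of $\Hess\phi$ throughout $X_i$; either choice works, but the paper's makes the Riccati comparison step a touch cleaner since it starts from zero rather than from ``small''.
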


\begin{proof}
Assume that \eqref{riem-sub-rf} holds true and that $K'=K+\varepsilon$ for some $\varepsilon>0$. Let $M=\bigcup_i M_i$ be an open covering of $M$ by convex domains $M_i$ with sufficiently small diameter (where `sufficiently' will depend on bounds on the sectional curvature on $M_i$ and on $\varepsilon$).

   Given open sets $U^0,U^1\subset M_i$,
   choose a geodesic $\gamma$ in $M_i$ with endpoints $\gamma^\sigma\in U^\sigma$ for $\sigma=0,1$ and a smooth function $\varphi:M\to\R$ with
   $$\nabla\varphi\big|_{\gamma^0}=\dot\gamma^0,\qquad
   \Hess\,\varphi\big|_{\gamma^0}=0.$$
  
Put $F^\tau(x)=\exp_x(-\tau\nabla\varphi)$ and consider the geodesic in $\Pz$ induced by 
$\mu^\tau=(F^\tau)_\sharp\mu^0$ with bounded densities $u^\tau$ w.r.t.\ $m$ where $\mu^0$ is chosen to be concentrated in a small neighborhood of $\gamma^0$. For sufficiently small $M_i$ this indeed will be a geodesic and each $\mu^\tau$ will be supported in a small neighborhood of $\gamma^\tau$. Moreover, following \cite{St0,St1} put
$$\mathcal A^\tau(x)=dF^\tau(x), \quad
\mathcal U^\tau=\nabla^\tau\mathcal A^\tau\circ(\mathcal A^\tau)^{-1},\quad
y_t=\log\det \mathcal A^\tau.$$
It is well-known (e.g. \cite{St0}, equation (4.16)) that convexity and concavity properties of $\tau\mapsto\Ent(\mu^\tau|m)$ can be expressed in terms the above quantities due to the fact that
\begin{eqnarray}
\partial_\tau^2\Ent(\mu^\tau|m)&=&
\partial_\tau^2\left[\int u^0\log u^0\,dm-\int y^\tau\,u^0\,dm+\int\tilde f(F^\tau)u^0\,dm\right]\nonumber\\
&=&\int\left[ -\partial^2_\tau y^\tau+ \Hess\,\tilde f (\dot F^\tau,\dot F^\tau)
\right]u^0\,dm.
\label{ent-ableitung}
\end{eqnarray}
Since by construction $\mathcal U^0=0$, comparison geometry (`Rauch' and `Bishop-G\"unter') implies that $\mathcal U^\tau$ is `small' along $\gamma$ (in any given norm) provided the curve $\gamma$ is `short'. (The precise quantitative estimate will depend on bounds for the sectional curvature along $\gamma$.) 
Thus
\begin{eqnarray}
 -\partial^2_\tau y^\tau=
 -\mathrm{tr} (\nabla^\tau\mathcal U^\tau)
 &=&\mathrm{tr} \left((\mathcal U^\tau)^2\right)+\Ric(\dot F^\tau,\dot F^\tau)\nonumber\\
 &\le&\varepsilon\cdot |\dot F^\tau|^2+\Ric(\dot F^\tau,\dot F^\tau).
 \label{ent-ableitung-2}
\end{eqnarray}
By continuity, this estimate will extend to a suitable neighborhood of $\gamma$ and without restriction, we may assume that the curve $\tau\mapsto \mu^\tau$ is supported by such a small neighborhood of $\gamma$.
 Thus combining \eqref{ent-ableitung} and \eqref{ent-ableitung-2} then yields
      \begin{eqnarray*}
\partial_\tau^2\Ent(\mu^\tau|m)&=&\int\left[ -\partial^2_\tau y^\tau+ \Hess\,\tilde f (\dot F^\tau,\dot F^\tau)
\right]u^0\,dm\\
&\le&\int\left[
\varepsilon\cdot |\dot F^\tau|^2+\big(\Ric+ \Hess\,\tilde f \big)(\dot F^\tau,\dot F^\tau)\right]u^0\,dm\\
&\le&K'\cdot \int |\dot F^\tau|^2u^0\,dm=K'\cdot W^2(\mu^0,\mu^1)
\end{eqnarray*}
       which is the `if' implication of the claim.

The `only if'-implication is easy. Assume that \eqref{riem-sub-rf} is not satisfied. Then for some $\varepsilon>0$ in a suitable neighborhood of some $(s,v)\in I\times TM$ the inequality 
\begin{equation}
\Ric_{g}+\Hess_{g}\tilde f_t\ge (K+2\epsilon) \cdot g
\end{equation}
holds true. Thus all transports with velocity fields in this neighborhood will lead to  $(K+2\varepsilon)$-convexity of the entropy, being in contrast to the requested  $(K+\varepsilon)$-concavity. 
\end{proof}

\section{Appendix: Super-Ricci Flows for Diffusion Operators}

Let us now present another, completely different approach to super-Ricci flows. It is formulated in terms of the so-called $\Gamma$-calculus of Bakry, \'Emery and Ledoux \cite{BE,Le, BGL}.

\subsection{Time-dependent Diffusion Operators}

Throughout this chapter, we assume that we are given a 1-parameter family  $(\LL_t)_{t\in [0,T)}$  of  linear operators defined on an algebra $\A$ of functions on a set $X$ such that ${\LL_t}(\A)\subset\A$ for each $t$.
(The fact that we choose the time interval to start at 0 is just for convenience; here and in the sequel, we could choose any other time interval $[S,T]$.)
We assume that we are given a topology on $\A$ such that limits and derivatives make sense.
%(No topological or measurability assumptions on $X$ are requested, no measure is involved.)
In terms of these data we define the square field operators
$\Gamma_t(f,g)=\frac12[{\LL_t}(fg)-f{\LL_t}g-f{\LL_t}g]$.
We assume that  ${\LL_t}$ is a \emph{diffusion operator}  in the sense that
\begin{itemize}
\item $\Gamma_t(u,u)\ge0$ for all $u\in\A$
\item $\psi(u_1,\ldots,u_k)\in\A$ for every $k$-tuple of functions $u_1,\ldots,u_k$ in $\A$ and every $C^\infty$-function $\psi:\R^k\to\R$ vanishing at the origin and
%\begin{equation}
$
{\LL}_t\psi(u_1,\ldots,u_k)=\sum_{i=1}^k \psi_i(u_1,\ldots,u_k)\cdot {\LL}_tu_i + \sum_{i,j=1}^k \psi_{ij}(u_1,\ldots,u_k)\cdot \Gamma_t(u_i,u_j)
$%\end{equation}
where $\psi_i:=\frac{\partial}{\partial y_i}\psi$ and $\psi_{ij}:=\frac{\partial^2}{\partial y_i\,\partial y_j}\psi$.
\end{itemize}
The Hessian of $u$ at time $t$ and a point $x\in X$ is the bilinear form on $\A$ given by
$${\HH}_tu(v,w)(x)=\frac12\Big[\Gamma_t\big(v,\Gamma_t(u,w)\big)+\Gamma_t\big(w,\Gamma_t(u,v)\big)-\Gamma_t\big(u,\Gamma_t(v,w)\big)\Big](x)$$
for $u,v,w\in\A$.
Similarly, 
 the $\Gamma_2$-operator is defined via iteration of the square field operator as
$$\Gamma_{2,t}(u,v)(x)=\frac12[{\LL_t}\Gamma_t(u,v)-\Gamma_t(u,{\LL_t}v)(x)-\Gamma_t(u,{\LL_t}v)](x).$$
 We put $\Gamma_t(u)=\Gamma_t(u,u)$ and $ \Gamma_{2,t}(u)=\Gamma_{2,t}(u,u)$.
In terms of the $\Gamma_2$-operator we define the {Ricci tensor} at the space-time point $(t,x)\in [0,T)\times X$ by
\begin{eqnarray*}
\RR_t(u)(x)=\inf\big\{ \Gamma_{2,t}(u+v)(x): \ v \in\A_x^0\big\}
\end{eqnarray*}
for $u\in\A$ where
$$\A_x^0=\{v=\psi(v_1,\ldots,v_k): \ k\in\N, v_1,\ldots,v_k\in\A, \psi \mbox{ smooth with } \psi_i(v_1,\ldots,v_k)(x)=0 \  \forall i\}.$$
Note that the definition  of $\RR$ here slightly differs from that in \cite{St4}.

We can  always extend the definition of ${\LL_t}$ and $\Gamma_t$ to the algebra  generated by the elements in $\A$ and the constant functions which leads to ${\LL_t}1=0$ and $\Gamma_t(1,f)=0$ for all $f$.

\medskip

For the sequel we assume in addition that we are given a 2-parameter family $(P^s_{t})_{0\le s\le t<T}$ of linear operators on $\A$ satisfying for all $s\le r\le t$ and all $u\in\A$
\begin{equation*}
P^t_{t}u=u, \quad P^r_{t}(P^s_{r}u)=P^s_{t}u\end{equation*}
\begin{equation*} (P^s_{t}u)^2\le P^s_{t}(u^2)\end{equation*}
\begin{equation*} s\mapsto P^s_{t}u \mbox{ and }t\mapsto P^s_{t}u \mbox{ continuous}
\end{equation*}
\begin{equation}\label{eins}
\partial_s P^s_{t}u=-P^s_{t}(\LL_s u)
\end{equation}
\begin{equation}\label{zwei}
\partial_t P^s_{t}u=\LL_t P^s_{t} u.
\end{equation}
Such a \emph{propagator} $(P^s_{t})$ for the given family of operators  $(\LL_t)$ will exist in quite general situations under mild assumptions. We also require that for each 1-parameter family $(u_r)_{r\in(s',t')}$ which is differentiable within $\A$ w.r.t.\ $r$
\begin{equation}\label{drei}
\partial_r P^s_{t}u_r= P^s_{t} (\partial_r u_r), \quad \partial_r \Gamma_t(u_r,v)= \Gamma_t(\partial_r u_r,v)
\end{equation}

\subsection{Super-Ricci Flows}

\begin{definition}
We say that  $(\LL_t)_{t\in [0,T)}$  is a \emph{super-Ricci flow} if
\begin{equation}\label{vier}
\partial_t \Gamma_t\le 2\RR_{t}.
\end{equation}
It is called Ricci flow if $\partial_t \Gamma_t= 2\RR_{t}$.
% If instead %of (1) $-\partial_t \Gamma_t\le 2\Gamma_{2,t}$then we  say that  $(\LL_t)_{t\in [0,T)}$  is a \emph{backward super-Ricci flow}.
\end{definition}

\begin{lemma}\label{gamma-ric}  $(\LL_t)_{t\in [0,T)}$  is a super-Ricci flow if and only if
\begin{equation}\label{fuenf}
\partial_t \Gamma_t\le 2\Gamma_{2,t}.
\end{equation}
\end{lemma}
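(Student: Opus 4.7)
The plan is to argue that both inequalities are equivalent at each point $(t,x)$ and each $u\in\A$, by exploiting the variational definition of $\RR_t$. The easy direction (super-Ricci flow $\Rightarrow$ \eqref{fuenf}) is immediate: taking $v=0\in\A_x^0$ in the definition of $\RR_t$ gives the pointwise bound $\RR_t(u)(x)\le \Gamma_{2,t}(u)(x)$, and combining this with $\partial_t\Gamma_t\le 2\RR_t$ yields $\partial_t\Gamma_t\le 2\Gamma_{2,t}$.

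For the converse, the key observation is that for every $v\in\A_x^0$ one has the pointwise identity
\begin{equation*}
\Gamma_t(u+v)(x)=\Gamma_t(u)(x),\qquad \partial_t\Gamma_t(u+v)(x)=\partial_t\Gamma_t(u)(x),
\end{equation*}
so the left-hand side of \eqref{fuenf} is unchanged if we replace $u$ by $u+v$. To verify this, I would write $v=\psi(v_1,\ldots,v_k)$ with $\psi_i(v_1,\ldots,v_k)(x)=0$ for all $i$ and invoke the diffusion property of $\LL_t$, which gives
\begin{equation*}
\Gamma_t(v,w)=\sum_{i=1}^k \psi_i(v_1,\ldots,v_k)\,\Gamma_t(v_i,w)
\end{equation*}
for every $w\in\A$ and every $t$. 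Evaluating at $x$, the coefficients $\psi_i(v_1,\ldots,v_k)(x)$ all vanish, so $\Gamma_t(v,w)(x)=0$ for all $t$. Expanding $\Gamma_t(u+v)=\Gamma_t(u)+2\Gamma_t(u,v)+\Gamma_t(v)$ and evaluating at $x$ then gives the first identity for every $t$, and differentiating in $t$ yields the second.

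With this invariance established, I would apply hypothesis \eqref{fuenf} to the function $u+v\in\A$ at the point $x$:
\begin{equation*}
\partial_t\Gamma_t(u)(x)=\partial_t\Gamma_t(u+v)(x)\le 2\,\Gamma_{2,t}(u+v)(x).
\end{equation*}
Since this holds for every $v\in\A_x^0$, taking the infimum of the right-hand side over $\A_x^0$ produces $\partial_t\Gamma_t(u)(x)\le 2\RR_t(u)(x)$, which is the defining inequality \eqref{vier} of a super-Ricci flow. I do not anticipate a genuine obstacle; the only subtlety is the bookkeeping that matches the variational definition of $\RR_t$ against the fact that both $\Gamma_t(u)$ and its time derivative are tensorial in the sense of depending only on the first-order behaviour of $u$ at $x$.
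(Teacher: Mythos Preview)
Your proposal is correct and follows essentially the same approach as the paper: both arguments rest on the observation that $\Gamma_s(u+v)(x)=\Gamma_s(u)(x)$ for all $s$ whenever $v\in\A_x^0$ (via the chain rule), so that $\partial_t\Gamma_t$ is insensitive to adding such $v$, after which one optimizes over $v$. The only cosmetic difference is that the paper phrases the converse with an $\epsilon$-approximate minimizer $v$ rather than passing directly to the infimum, but the content is identical.
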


\begin{proof} Obviously  (\ref{vier}) implies (\ref{fuenf}) since $\Gamma_{2,t}\ge\RR_t$. For the converse, fix $t,x$, $u$, $\epsilon>0$ and choose suitable $v=\psi(v_1,\ldots,v_k)\in\A_x^0$ such that
$\RR_t(u)(x)\ge \Gamma_{2,t}(u+v)(x)-\epsilon$.
The fact that $v\in\A_x^0$ implies by chain rule that $\Gamma_s(u+v)(x)=\Gamma_s(u)(x)$ for all $s$. 
Thus in particular
$$\partial_t\Gamma_t(u+v)(x)=\partial_t\Gamma_t(u)(x).$$
This proves the claim. 
\end{proof}

\begin{remark}
 $(\LL_t)_{t\in [0,T)}$  is a Ricci flow if and only if in addition to \eqref{fuenf} for each $x$, each $\epsilon>0$ and each $u\in\A$ there exists  $v\in\A_x^0$ such that
\begin{equation}
\partial_t \Gamma_t(u)(x)+\epsilon\ge 2\Gamma_{2,t}(u+v)(x).
\end{equation}
\end{remark}

\begin{corollary}\label{ric-gam}
Let  $X=M$ be  a  manifold $M$ with a smooth 1-parameter family of Riemannian metrics  $(g_t)_{t\in[0,T]}$ and let $(L_t)_{t\in[0,T]}$ be the associated Laplace-Beltrami operators acting on the algebra $\A$ of smooth functions which vanish at infinity. Then for each $x\in X$ the following are equivalent:
\begin{itemize}
\item[(i)]
$-\partial_t g_t\le 2\Ric_{t}$
\item[(ii)]
$\partial_t \Gamma_t\le 2\RR_{t}$
\item[(iii)]
$\partial_t \Gamma_t\le 2\Gamma_{2,t}$.
\end{itemize}
\end{corollary}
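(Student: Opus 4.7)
The equivalence (ii)$\Leftrightarrow$(iii) is precisely Lemma \ref{gamma-ric}, so the task reduces to showing (i)$\Leftrightarrow$(ii). My plan is to compute both sides of (ii) pointwise in terms of Riemannian quantities and read off (i) as a pointwise inequality of quadratic forms on the tangent bundle.

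First, I would compute the time derivative of $\Gamma_t$. For the Laplace-Beltrami operator $L_t=\Delta_{g_t}$ one has $\Gamma_t(u)=g_t^{ij}\partial_i u\,\partial_j u=g_t(\nabla_t u,\nabla_t u)$, and since $\partial_t g_t^{ij}=-g_t^{ik}g_t^{jl}\partial_t g_{kl}$, a direct computation gives
\begin{equation*}
\partial_t\Gamma_t(u)(x)=-\big(\partial_t g_t\big)\!\big(\nabla_t u(x),\nabla_t u(x)\big).
\end{equation*}
Note that because $u\in\A$ is arbitrary, $\nabla_t u(x)$ sweeps out all of $T_xM$ as $u$ varies, so (ii) (tested against all $u$ at the point $x$) is equivalent to a pointwise inequality of quadratic forms involving $\RR_t$.

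Next, I would identify $\RR_t(u)(x)$ with the Ricci quadratic form. The classical Bochner formula gives $\Gamma_{2,t}(u)=|\Hess_{g_t}u|_{g_t}^{\,2}+\Ric_{g_t}(\nabla_t u,\nabla_t u)$. For any $v\in\A_x^0$ one has $(\nabla_t v)(x)=0$ by definition, hence $\nabla_t(u+v)(x)=\nabla_t u(x)$ and $\Gamma_{2,t}(u+v)(x)=|\Hess_{g_t}(u+v)(x)|_{g_t}^{\,2}+\Ric_{g_t}\big(\nabla_t u(x),\nabla_t u(x)\big)$. The Hessian contribution is $\ge 0$, and it can be made to vanish at $x$: take local coordinates $v_1,\ldots,v_n$ vanishing at $x$ and a quadratic $\psi$ in these to produce a $v\in\A_x^0$ with any prescribed symmetric matrix as its Hessian at $x$, in particular $\Hess_{g_t}v(x)=-\Hess_{g_t}u(x)$. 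Taking the infimum therefore yields
\begin{equation*}
\RR_t(u)(x)=\Ric_{g_t}\!\big(\nabla_t u(x),\nabla_t u(x)\big).
\end{equation*}

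Combining the two computations, (ii) holds at $(t,x)$ applied to $u$ if and only if
\begin{equation*}
-\big(\partial_t g_t\big)\!\big(\nabla_t u(x),\nabla_t u(x)\big)\le 2\,\Ric_{g_t}\!\big(\nabla_t u(x),\nabla_t u(x)\big),
\end{equation*}
and since $\nabla_t u(x)$ realizes every tangent vector, this is precisely (i). The main obstacle in executing this plan cleanly is the pointwise matching in the definition of $\RR_t$: one must verify that the prescribed-Hessian construction really produces an admissible $v\in\A_x^0$ within the given algebra (no genuine issue here since $\A$ contains smooth compactly vanishing functions), and that the chain-rule diffusion identity for $L_t$ handles the mixed terms in $\Gamma_{2,t}(u+v)$ so that the Bochner identity transfers without residue.
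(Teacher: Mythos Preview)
Your proposal is correct and follows essentially the same approach as the paper: reduce to (i)$\Leftrightarrow$(ii) via Lemma~\ref{gamma-ric}, compute $\partial_t\Gamma_t(u)=-(\partial_t g_t)(\nabla_t u,\nabla_t u)$, and identify $\RR_t(u)(x)$ with $\Ric_{g_t}(\nabla_t u,\nabla_t u)$. The only difference is that where the paper cites \cite{St4} for the identity $\RR_t(u)=\Ric_{g_t}(\nabla_t u,\nabla_t u)$, you supply the argument directly via Bochner's formula and the Hessian-killing construction in $\A_x^0$, making your version more self-contained.
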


\begin{proof}
It remains to prove the equivalence of (i) and (ii). For this, note that
$g_t(\nabla_tu,\nabla_tu)=g^*_t(Du,Du)=\Gamma_t(u,u)$ and thus 
$$\partial_t g_t(\nabla_su,\nabla_su)\Big|_{s=t}=-\partial_t \Gamma_t(u,u).$$
Moreover, note that according to \cite{St4}, $\Ric_{t}(\nabla_t u,\nabla_t u)=\RR_{t}(u,u)$ for each $u\in \A$.
\end{proof}

\begin{theorem}\label{ric-heat} Under appropriate regularity assumptions on $(P^s_t)_{s\le t}$, the following are equivalent
\begin{itemize}
\item[(i)] $\partial_t \Gamma_t(u)\le 2\Gamma_{2,t}(u)\qquad (\forall u\in\A)$
\item[(ii)]
$\Gamma_t(P^s_{t}u)\le P^s_{t}(\Gamma_s(u))\qquad (\forall u\in\A)$
\end{itemize}
\end{theorem}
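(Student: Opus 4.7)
The plan is to adapt the classical Bakry--\'Emery interpolation argument to the two-parameter propagator setting. For fixed $s\le t$ and $u\in\A$, the key auxiliary object will be
$$\varphi(r):=P^r_{t}\bigl(\Gamma_r(P^s_{r}u)\bigr),\qquad r\in[s,t].$$
The boundary values, using $P^t_t=\mathrm{id}$ and $P^s_s u=u$, are
$\varphi(s)=P^s_t\bigl(\Gamma_s(u)\bigr)$ and $\varphi(t)=\Gamma_t(P^s_t u)$, so the gradient estimate (ii) is precisely the assertion $\varphi(t)\le\varphi(s)$. I would therefore aim to show, under (i), that $\varphi$ is nonincreasing on $[s,t]$.

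First, I would compute $\varphi'(r)$ by applying the product rule together with the three differentiation identities \eqref{eins}, \eqref{zwei}, \eqref{drei}. Writing $v_r:=P^s_r u$, the outer propagator contributes $-P^r_t\bigl(L_r\Gamma_r(v_r)\bigr)$ via \eqref{eins}; the inner propagator contributes $P^r_t\bigl(2\Gamma_r(v_r,L_r v_r)\bigr)$ via \eqref{zwei} and the derivation rule in \eqref{drei}; the explicit $r$-dependence of $\Gamma_r$ contributes $P^r_t\bigl((\partial_r\Gamma_r)(v_r)\bigr)$. Collecting terms and using the definition $\Gamma_{2,r}(v)=\tfrac12\bigl[L_r\Gamma_r(v)-2\Gamma_r(v,L_r v)\bigr]$, the bracketed expressions combine to give
$$\varphi'(r)=P^r_t\bigl[(\partial_r\Gamma_r)(v_r)-2\Gamma_{2,r}(v_r)\bigr].$$
Assumption (i) makes the integrand nonpositive, and the positivity preservation of $P^r_t$ (which I would list as part of the "appropriate regularity assumptions") then yields $\varphi'(r)\le 0$, completing (i)$\Rightarrow$(ii).

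For the reverse implication (ii)$\Rightarrow$(i), the same $\varphi$ is used but now its monotonicity is extracted from (ii) rather than proved: applying (ii) on the sub-interval $[r,t]$ with $u$ replaced by $v_r=P^s_r u$ (and using the semigroup identity $P^r_t P^s_r=P^s_t$) gives $\Gamma_t(P^s_t u)\le P^r_t(\Gamma_r(v_r))$, i.e. $\varphi(t)\le\varphi(r)$ for every $r\in[s,t]$. Hence $\varphi$ attains its minimum at the right endpoint and $\varphi'(t^-)\le 0$. Substituting $r=t$ into the formula for $\varphi'$ derived above yields
$$(\partial_t\Gamma_t)(P^s_t u)\le 2\Gamma_{2,t}(P^s_t u),$$
and letting $s\nearrow t$, so that $P^s_t u\to u$ by the continuity axiom, delivers (i) for arbitrary $u\in\A$.

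The main obstacle is not algebraic but regulatory: the entire argument hinges on the "appropriate regularity assumptions" being strong enough to justify (a) differentiation under $P^r_t$ and the combined chain rule for the three-fold $r$-dependence of $\varphi$, (b) positivity preservation of the propagators $P^r_t$ so that the sign of the integrand transfers to the sign of $\varphi'(r)$, and (c) the continuity $P^s_t u\to u$ as $s\nearrow t$ within $\A$ used in the final limit. Once one has committed to a framework (e.g.\ a "standard algebra" in the sense of Bakry--Gentil--Ledoux, with $\A$ stable under all operations and the propagator strongly continuous), each of these ingredients is routine; the purpose of the hypothesis clause is precisely to package them.
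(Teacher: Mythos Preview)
Your proposal is correct and follows essentially the same route as the paper: the interpolation function $\varphi(r)=P^r_t\bigl(\Gamma_r(P^s_r u)\bigr)$ is exactly the paper's $q_r$, and your computation of $\varphi'(r)=P^r_t\bigl[(\partial_r\Gamma_r)(v_r)-2\Gamma_{2,r}(v_r)\bigr]$ coincides with the paper's formula. For the converse the paper argues marginally more directly---it simply uses (ii) at the endpoints to get $q_t\le q_s$, writes $\tfrac1{t-s}\int_s^t\partial_r q_r\,dr\le 0$, and lets $s\to t$---whereas you first apply (ii) on sub-intervals $[r,t]$ to obtain $\varphi(t)\le\varphi(r)$ for all $r$; this extra step is harmless but unnecessary.
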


\begin{proof}
For $r\in [s,t]$ consider
the function $q_r:=P^r_{t}\Gamma_r(P^s_{r}u)$. Differentiating w.r.t.\ $r$ yields, due to properties (\ref{eins}), (\ref{zwei}) and (\ref{drei}),
\begin{eqnarray}
\partial_r q_r&=&
P^r_{t}\Big(-\LL_r\Gamma_r(P^s_{r}u)+(\partial_r \Gamma_r)(P^s_{r}u)+2\Gamma_r(\partial_r P^s_{r}u, P^s_{r}u)\Big)\nonumber\\
&=&
P^r_{t}\Big(-\LL_r\Gamma_r(v)+\partial_r \Gamma_r(v)+2\Gamma_r(\LL_r v,v)\Big)\nonumber\\
&=&
P^r_{t}\Big(-2\Gamma_{2,r}(v)+\partial_r \Gamma_r(v)\Big)\label{forw-backw}
\end{eqnarray}
where we have put $v= P^s_{r}u$.
Thus assertion (i) implies $\partial_r q_r\le0$ for all $r\in [s,t]$ which in turn yields $q_s\ge q_t$. This is assertion (ii).

Conversely, assertion (ii) implies 
$$0\ge \frac1{t-s}(q_t-q_s)=\frac1{t-s}\int_s^t \partial_r q_r\,dr$$
and in the limit $s\to t$ thus $\partial_t q_t\le0$. According to the previous calculation, however,
$$\partial_t q_t=-2\Gamma_{2,t}(u)+\partial_t \Gamma_t(u).$$
This proves (i)
\end{proof}

\begin{remark}
The proof shows  also the equivalence of 
\begin{itemize}
\item[(i')] $\partial_t \Gamma_t(u)\ge 2\Gamma_{2,t}(u)\qquad (\forall u\in\A)$
\item[(ii')]
$\Gamma_t(P_{s,t}u)\ge P_{s,t}(\Gamma_s(u))\qquad (\forall u\in\A)$
\end{itemize}
as well as the equivalence of the two statements with "$=$" instead of "$\ge$".
Note, however, that in the Riemannian case the assertions  $\partial_t \Gamma_t(u)\ge 2\Gamma_{2,t}(u)$
and  $\partial_t \Gamma_t(u)= 2\Gamma_{2,t}(u)$ are no proper differential equations on the tangent bundle since the RHS involves not only the vector fields $\nabla u$ but also higher derivatives of $u$. In contrast to that, (i) of Theorem \ref{ric-heat} allows a reformulation (see Corollary \ref{ric-gam}) as an inequality on the tangent bundle.
\end{remark}

\subsection{Super-$N$-Ricci Flows}
Given any extended number $N\in[1,\infty]$ we define the {$N$-Ricci tensor} at $(t,x)$ by
\begin{eqnarray*}
\RR_{N,t}(u)(x)=\inf\big\{ \Gamma_{2,t}( u+v)(x)-\frac1N(\LL_t(u+v)^2(x): \  v\in\A_x^0\big\}.
\end{eqnarray*}
(Again recall that the definition  of $\RR_N$ here slightly differs from that in [St14].)

\begin{definition}\label{super-N-gam}  We say that $(\LL_t)_{t\in [0,T)}$  is a super-$N$-Ricci flow if
\begin{equation}
\partial_t \Gamma_t\le 2\RR_{N,t}.
\end{equation}
If equality holds then 
$(\LL_t)_{t\in [0,T)}$  is called $N$-Ricci flow. 
\end{definition}
A Ricci flow is a $N$-Ricci flow for the particular choice $N=\infty$, i.e.\ a solution to $
\partial_t \Gamma_t= 2\RR_{t}$.

\begin{theorem}  Under appropriate regularity assumptions on $(P^s_t)_{s\le t}$, the following are equivalent
\begin{itemize}
\item[(i)] $\partial_t \Gamma_t(u)\le 2\RR_{N,t}(u)\qquad (\forall u\in\A, \forall t)$
\item[(ii)] $\partial_t \Gamma_t(u)\le 2\Gamma_{2,t}(u)-\frac2N (\LL_t u)^2\qquad (\forall u\in\A, \forall t)$
\item[(iii)]
$\Gamma_t(P^s_{t}u)+\frac2N \int_s^t \left(P^r_t \LL_r P^s_{r}u\right)^2\,dr \le P^s_{t}(\Gamma_s(u))\qquad (\forall u\in\A, \forall s\le t)$
\end{itemize}
\end{theorem}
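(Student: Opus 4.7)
The plan is to first establish the equivalence (i)$\Leftrightarrow$(ii) by a pointwise ``perturbation'' argument in the spirit of Lemma \ref{gamma-ric}, and then to establish (ii)$\Leftrightarrow$(iii) by the forward-backward interpolation technique used in the proof of Theorem \ref{ric-heat}, now refined to keep track of the extra $(\LL u)^2$ term.

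For (i)$\Leftrightarrow$(ii): the implication (i)$\Rightarrow$(ii) is immediate, since taking $v=0$ in the infimum defining $\RR_{N,t}(u)(x)$ gives $\RR_{N,t}(u)(x)\le \Gamma_{2,t}(u)(x)-\tfrac1N(\LL_tu)^2(x)$. For (ii)$\Rightarrow$(i), fix $t,x,u$ and $\epsilon>0$, and choose $v\in\A_x^0$ with $\RR_{N,t}(u)(x)\ge \Gamma_{2,t}(u+v)(x)-\tfrac1N(\LL_t(u+v))^2(x)-\epsilon$. The defining property of $\A_x^0$ together with the chain rule yields $\Gamma_s(u+v)(x)=\Gamma_s(u)(x)$ for every $s$, and in particular $\partial_t\Gamma_t(u+v)(x)=\partial_t\Gamma_t(u)(x)$. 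Applying (ii) to the function $u+v$ at the point $x$ and letting $\epsilon\searrow 0$ gives (i).

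For (ii)$\Rightarrow$(iii): as in the proof of Theorem \ref{ric-heat}, fix $s\le t$ and $u\in\A$, set $v_r:=P^s_r u$ and $q_r:=P^r_t\Gamma_r(v_r)$ for $r\in[s,t]$, and recall the identity \eqref{forw-backw},
\begin{equation*}
\partial_r q_r \;=\; P^r_t\bigl(-2\Gamma_{2,r}(v_r)+\partial_r\Gamma_r(v_r)\bigr).
\end{equation*}
Assumption (ii) applied to $v_r$ gives $-2\Gamma_{2,r}(v_r)+\partial_r\Gamma_r(v_r)\le -\tfrac2N(\LL_r v_r)^2$, and the Cauchy--Schwarz-type property $(P^r_t f)^2\le P^r_t(f^2)$ of the propagator then yields $\partial_r q_r \le -\tfrac2N(P^r_t\LL_r P^s_r u)^2$. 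Integrating in $r$ from $s$ to $t$ and using $q_t=\Gamma_t(P^s_t u)$, $q_s=P^s_t\Gamma_s(u)$ produces exactly (iii).

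For (iii)$\Rightarrow$(ii): rewrite (iii) as $\int_s^t\partial_r q_r\,dr=q_t-q_s\le -\tfrac2N\int_s^t(P^r_t\LL_r P^s_r u)^2\,dr$, divide by $t-s$, and let $s\nearrow t$. Under the stated regularity assumptions, the integrands are continuous in $r$ up to $r=t$, so the averaged integrals converge to their values at $r=s=t$; on the left this value is $\partial_r q_r\big|_{r=s=t}=-2\Gamma_{2,t}(u)+\partial_t\Gamma_t(u)$ (since $P^t_t=\mathrm{id}$ and $v_t=u$), while on the right it is $-\tfrac2N(\LL_t u)^2$, yielding (ii). The main subtlety is precisely the justification of this boundary limit: one needs joint continuity (in $r$ and $s$) of the map $r\mapsto\partial_r q_r$ and of $r\mapsto(P^r_t\LL_r P^s_r u)^2$ near the diagonal $r=s=t$, which is what the phrase ``appropriate regularity assumptions on $(P^s_t)$'' is meant to ensure.
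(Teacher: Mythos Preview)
Your proposal is correct and follows essentially the same route as the paper: the equivalence (i)$\Leftrightarrow$(ii) is reduced to the argument of Lemma~\ref{gamma-ric}, and (ii)$\Leftrightarrow$(iii) is obtained from the forward--backward interpolation identity \eqref{forw-backw} together with the propagator Cauchy--Schwarz inequality $(P^r_t f)^2\le P^r_t(f^2)$, integrated for one direction and differentiated at the diagonal for the other. Your write-up is in fact slightly more explicit than the paper's on the limiting argument for (iii)$\Rightarrow$(ii), which the paper dispatches by referring back to the proof of Theorem~\ref{ric-heat}.
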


\begin{proof} The equivalence of (i) and (ii) is proven in the same manner as  Lemma \ref{gamma-ric}.
For the implication (ii)$\Rightarrow$(iii) we follow the argumentation of the proof for the previous theorem. Using assertion (ii) we obtain for $q_r:=P^r_{t}\Gamma_r(P^s_{r}u)$ as above from \eqref{forw-backw}
\begin{eqnarray*}
\partial_r q_r&=&
P^r_{t}\Big(-2\Gamma_{2,r}(v)+\partial_r \Gamma_r(v)\Big)\\
&\le& -\frac2N P^r_{t}\Big((\LL_r v)^2\Big)\le -\frac2N \Big(P^r_{t}\LL_r v\Big)^2
\end{eqnarray*}
with $v=P^s_r u$ as before.
Integrating this estimate from $s$  to $t$ yields
$$\Gamma_t(P^s_{t}u)- P^s_{t}(\Gamma_s(u))\le -\frac2N \int_s^t\Big( P^r_t \LL_r P^s_{r}u\Big)^2\,dr$$
which is the claimed assertion (iii). Conversely, differentiating the latter estimate  as in the previous proof yields (ii).
\end{proof}

\begin{remark}
 If we start with the operators $\LL_t$, then soon or later the question will come up to choose measures $m_t$ such that for given $t$ the operator $L_t$ is symmetric (or, more precisely, essentially self-adjoint) on $L^2(X,m_t)$. In non degenerate finite-dimensional cases, such a measure $m_t$ will be uniquely determined (up to multiplicative constants) by the operator $\LL_t$, provided there exists one.
 
 It should be pointed out, however, that the approach via $\Gamma$-calculus also applies to non-reversible operators -- and in this respect is more general than the approach via mm-spaces. 
\end{remark}

\subsection{Starting with the Gradients}
Instead of starting with a 1-parameter family of operators, alternatively we may start with a 1-parameter family of square field operators.
To do so, assume that $X$ is a Polish space with a
 1-parameter family $(m_t)_{t\in[0,T]}$ of measures on it. 
Let $\A$ be an algebra of measurable functions on $X$ such that $\A$ is dense in $L^2(X,m_t)$.
The basic ingredient will be a 1-parameter family $(\Gamma_t)_{t\in[0,T]}$ of 
\begin{itemize}
\item
symmetric, positive semidefinite bilinear forms $\Gamma_t$ on $\A$
\item
each of which has the diffusion property
$$\Gamma_t(\Psi(u_1,\ldots,u_k),v)=\sum_{i=1}^k \Psi_i(u_1,\ldots,u_k)\Gamma_t(u_i,v).$$
\end{itemize}
The operators $\LL_t$ then are defined through the property
$$\int \Gamma_t(u,v)\,dm_t=-\int \LL_tu \, v\, dm_t\qquad(\forall u,v\in\A).$$
We say that the family $(\Gamma_t,m_t)_{t\in(0,T)}$ is a super-Ricci flow if property \eqref{fuenf} is satisfied where $\Gamma_{2,t}$ is defined in terms of $\LL_t$.
 
 \medskip
 
 \begin{remark}
 In cases of applications, the measures typically are given as $m_t=e^{-f_t}m_t^\diamond$ for some $f_t\in \A$ and some %$\sigma$-finite
  (`reference') measures $m_t^\diamond$ on $X$.
Often we will first consider $\Gamma_t$ with respect to the  measure $m_t^\diamond$ and then in a second step consider the effect of the time-dependent weights $f_t$.
 Define $\LL_t^\diamond$ as an operator on $\A$ by
 $$\int \LL^\diamond_tu \, v\, dm_t^\diamond=-\int_X \Gamma_t(u,v)\,dm_t^\diamond\qquad (\forall u,v\in \A)$$
 and define similarly $L_t$ by replacing all $m_t^\diamond$ by $m_t$.
 Then $\LL_t=\LL_t^\diamond-\Gamma_t(.,f_t)$ and thus 
$$\Gamma_{2,t}=\Gamma_{2,t}^\diamond+\HH_tf_t, \qquad \RR_t=\RR_t^\diamond+\HH_tf_t.$$
In particular, the family $(L_t)_{t\in(0,T)}$ defined by the family
$(\Gamma_t,f_t)_{t\in(0,T)}$ is a super-Ricci flow if and only if
$$\partial_t \Gamma_t\le \Gamma_{2,t}^\diamond+\HH_tf_t$$
which imposes no restriction on the evolution of the weights $f_t$. Each family of weight functions  $(f_t)_{t\in(0,T)}$ provides a differential inequality for  square field operators.
\end{remark}

\begin{example}
(i) Let $\psi: \R_+\to \R_+$ be any smooth positive function and $z: \R_+\to \R^n$ an \emph{arbitrary} curve in $\R^n$.
Put $$f_t(x)=\dot\psi_t\cdot \|x-z_t\|^2/2,\qquad \Gamma_t(u)={\psi_t}\cdot\|\nabla u\|^2$$
and let $m_t^\diamond$ (for all $t$) be the Lebesgue measure in $\R^n$.
Then 
$(L_t)_{t\in(0,T)}$ is a super-Ricci flow. Indeed, 
$$\partial_t \Gamma_t= \Gamma_{2,t}^\diamond+\HH_tf_t.$$

(ii) 
Let us consider a time-dependent  weighted, $n$-dimensional  Riemannian space $(X, g_t)$ with measures $m_t=e^{-f_t}vol_{t}$ and operators $L_t=\Delta_t-\nabla_t f_t \nabla_t$. Being a super-$N$-Ricci flow for $N=n$ implies that $f_t$ is constant in $x$ (without restriction $\equiv 1$).
\end{example}

\subsection{Outlook}
In a forthcoming paper \cite{KoS}, accompanying this here, we will define and analyze the heat propagator 
associated with the canonical family of Laplacians $(L_t)_{t\in [0,T,)}$ on time-dependent metric measure spaces $(X,d_t,m_t)_{t\in [0,T,)}$. It will turn out that in great generality the latter is a super-$N$-Ricci flow in the sense of Definition \ref{super-N-mms} if and only if $(L_t)_{t\in [0,T,)}$ is a super-$N$-Ricci flow in the sense of Definition \ref{super-N-gam}.

Moreover, these properties will be proven to be equivalent to the gradient estimate (iii) in Theorem 5.7 for the forward heat flow $P^s_t$ (acting on functions) as well as to certain transport inequalities for the dual flow $\hat P^s_t$ (acting on probability measures). In the case $N=\infty$, the latter will
coincide with the monotonicity property w.r.t.\ time-dependent Wasserstein distances for the dual heat flow  
$$W_s(\hat P^s_t\mu,\hat P^s_t\nu)\le W_t(\mu,\nu)$$
proposed by   McCann/Topping \cite{MT} as a characterization of super-Ricci flows.

\end{document}